\newtheorem{theorem}{Theorem}[section]
\newtheorem{lemma}[theorem]{Lemma}
\newtheorem{prop}[theorem]{Proposition}
\newtheorem{remark}[theorem]{Remark}
\numberwithin{equation}{section}
\newcommand{\R}{\mathbb{R}}
\newcommand{\C}{\mathbb{C}}
\newcommand{\Q}{\mathbb{Q}}
\newcommand{\N}{\mathbb{N}}
\newcommand{\Z}{\mathbb{Z}}
\newcommand{\T}{\mathbb{T}}
\newcommand{\function}[5]{\begin{array}[t]{lrcl}
#1 : & #2 & \longrightarrow & #3 \\
     & #4 & \longmapsto & #5
\end{array}}
\newcommand{\func}[3]{#1 : #2 \longrightarrow #3}
\newcommand{\abs}[1]{\left|#1\right|}
\newcommand{\eps}{\varepsilon}
\newcommand{\norm}[1]{\left\|#1\right\|}
\renewcommand{\leq}{\leqslant}
\renewcommand{\geq}{\geqslant}
\renewcommand{\bar}{\overline}
\renewcommand{\tilde}{\widetilde}
\def\signmb{\bigskip \begin{center} {\sc
Marc Briant\par\vspace{3mm}
CCA, University of Cambridge\par
DPMMS, Centre for Mathematical Sciences\par
Wilberforce Road,
Cambridge CB3 0WA,
UK\par\vspace{3mm}
e-mail:} \tt{m.j.briant@maths.cam.ac.uk} \end{center}}
\begin{document} 

\title[From Boltzmann to incompressible Navier-Stokes on the Torus]{From the Boltzmann equation to the incompressible Navier-Stokes equations on the torus: a quantitative error estimate}
\author{Marc Briant}
\thanks{The author was supported by the UK Engineering and Physical Sciences Research Council (EPSRC) grant EP/H023348/1 for the University of Cambridge Centre for Doctoral Training, the Cambridge Centre for Analysis.}

\begin{abstract}
We investigate the Boltzmann equation, depending on the Knudsen number, in the Navier-Stokes perturbative setting on the torus. Using hypocoercivity, we derive a new proof of existence and exponential decay for solutions close to a global equilibrium, with explicit regularity bounds and rates of convergence. These results are uniform in the Knudsen number and thus allow us to obtain a strong derivation of the incompressible Navier-Stokes equations as the Knudsen number tends to $0$. Moreover, our method is also used to deal with other kinetic models. Finally, we show that the study of the hydrodynamical limit is rather different on the torus than the one already proved in the whole space as it requires averaging in time, unless the initial layer conditions are satisfied.
\end{abstract}

\maketitle

\vspace*{10mm}

\textbf{Keywords:} Boltzmann equation on the Torus; Explicit trend to equilibrium; Incompressible Navier-Stokes hydrodynamical limit; Knudsen number; Hypocoercivity; Kinetic Models.


\smallskip
\textbf{Acknowledgements:} I would like to thank my supervisor, Cl\'ement Mouhot, for offering me to work on this problem and for the guidance he gave to me.

\tableofcontents

\section{Introduction} \label{sec:intro}

This paper deals with the Boltzmann equation in a perturbative setting as the Knudsen number tends to $0$. The latter equation describes the behaviour of rarefied gas particles moving on $\T^d$ (flat torus of dimension $d\geq 2$) with velocities in $\R^d$ when the only interactions taken into account are binary collisions. More precisely, the Boltzmann equation rules the time evolution of the distribution of particles in position and velocity. A formal derivation of the Boltzmann equation from Newton's laws under the rarefied gas assumption can be found in \cite{Ce}, while \cite{Ce1} present Lanford's Theorem (see \cite{La} and \cite{GST} for detailed proofs) which rigorously proves the derivation in short times.
\par We denote the Knudsen number by $\eps$ and the Boltzmann equation reads
\begin{eqnarray}
\partial_tf + v\cdot\nabla_xf &=&\frac{1}{\eps} Q(f,f) \:,\: \mbox{on} \: \T^d \times \R^d \nonumber
\\ &=&  \int_{\R^d\times \mathbb{S}^{d-1}}\Phi\left(|v - v_*|\right)b\left( \mbox{cos}\:\theta\right)\left[f'f'_* - ff_*\right]dv_*d\sigma, \label{Boltzmann}
\end{eqnarray}
where $f'$, $f_*$, $f'_*$ and $f$ are the values taken by $f$ at $v'$, $v_*$, $v'_*$ and $v$ respectively. Define:
$$\left\{ \begin{array}{rl}&\displaystyle{v' = \frac{v+v_*}{2} +  \frac{|v-v_*|}{2}\sigma} \vspace{2mm} \\ \vspace{2mm} &\displaystyle{v' _*= \frac{v+v_*}{2}  -  \frac{|v-v_*|}{2}\sigma} \end{array}\right., \: \mbox{and} \quad \mbox{cos}\:\theta = \left\langle \frac{v-v_*}{\abs{v-v_*}},\sigma\right\rangle .$$

One can find in \cite{Ce}, \cite{Ce1} or \cite{Go} that the global equilibria for the Boltzmann equation are the \textit{Maxwellians} $\mu(v)$. Without loss of generality we consider only the case of normalized Maxwellians:
$$\mu(v) = \frac{1}{(2\pi)^{\frac{d}{2}}}e^{-\frac{\abs{v}^2}{2}}.$$

The bilinear operator $Q(g,h)$ is given by 
$$Q(g,h) =  \int_{\R^d\times \mathbb{S}^{d-1}}\Phi\left(|v - v_*|\right)b\left( \mbox{cos}\theta\right)\left[h'g'_* - hg_*\right]dv_*d\sigma.$$


\subsection{The problem and its motivations} \label{subsec:problem}
The Knudsen number is the inverse of the average number of collisions for each particle per unit of time. Therefore, as reviewed in \cite{Vi}, one can expect a convergence from the Boltzmann model towards the acoustics and the fluid dynamics as the Knudsen number tends to $0$. This latter convergence will be specified. However, these different models describe physical phenomenon that do not act at the same scales in space or time. As suggested in previous studies, for instance \cite{Vi}\cite{Go}\cite{Sa},a rescaling in time and a perturbation of order $\eps$ around the global equilibrium $\mu(v)$ should approximate, as the Knudsen number tends to $0$, the incompressible Navier-Stokes regime.
\\We therefore study the following equation

\begin{equation}\label{Boltz}
\partial_tf_\eps + \frac{1}{\eps}v \cdot \nabla_xf_\eps =\frac{1}{\eps^2} Q(f_\eps,f_\eps) \:,\: \mbox{on} \: \T^d \times \R^d,
\end{equation}
under the linearization $f_\eps(t,x,v) = \mu(v) + \eps \mu^{1/2}(v) h_\eps(t,x,v)$. This leads to the perturbed Boltzmann equation
\begin{equation}\label{LinBoltz}
\partial_th_\eps + \frac{1}{\eps}v\cdot\nabla_xh_\eps = \frac{1}{\eps^2}L(h_\eps) + \frac{1}{\eps}\Gamma(h_\eps,h_\eps).
\end{equation}
that we will study thoroughly, and where we defined
$$\left\{\begin{array}{ll} L(h) &\displaystyle{= \left[Q(\mu,\mu^\frac{1}{2}h)+Q(\mu^\frac{1}{2}h,\mu)\right]\mu^{-\frac{1}{2}}} \vspace{2mm}
\\ \vspace{2mm} \Gamma(g,h) &\displaystyle{= \frac{1}{2}\left[Q(\mu^\frac{1}{2}g,\mu^\frac{1}{2}h)+Q(\mu^\frac{1}{2}h,\mu^\frac{1}{2}g)\right]\mu^{-\frac{1}{2}}.} \end{array}\right.$$

\bigskip
All along this paper we consider the Boltzmann equation with \textit{hard potential} or \textit{Maxwellian potential} ($\gamma=0$), that is to say there is a constant $C_\Phi >0$ such that
$$\Phi(z) = C_\Phi z^\gamma \:,\:\: \gamma \in [0,1].$$
We also assume a strong form of Grad's \textit{angular cutoff} \cite{Gr1}, expressed here by the fact that we assume $b$ to be $C^1$ with the following controls
$$\forall z \in [-1,1], \: \abs{b(z)}, \abs{b'(z)} \leq C_b,$$
$b$ and $\Phi$ being defined in equation $\eqref{Boltzmann}$.

\bigskip
The aim of the present article is to develop a constructive method to obtain existence and exponential decay for solutions to the perturbed Boltzmann equation $\eqref{LinBoltz}$, uniformly in the Knudsen number.
\par Such a uniform result is then used to derive explicit rates of convergence for $(h_\eps)_{\eps>0}$ towards its limit as $\eps$ tends to $0$. Thus proving and quantifying the convergence from the Boltzmann equation to the incompressible Navier-Stokes equations $\eqref{NS}$.


\subsection{Notations} \label{subsec:notations}

Throughout this paper, we use the following notations. For two multi-indexes $j$ and $l$ in $\N^d$ we define:
\begin{itemize}
\item $\partial_l^j = \partial_{v_j}\partial_{x_l}$,
\item for $i$ in $\{1,\dots,d\}$ we denote by $c_i(j)$ the $i^{th}$ coordinate of $j$,
\item the length of $j$ will be written $|j| = \sum_i c_i(j)$,
\item the multi-index $\delta_{i_0}$ by : $c_i(\delta_{i_0}) = 1$ if $i=i_0$ and $0$ elsewhere.
\end{itemize}
\par We work with the following definitions: $L^p_{x,v} = L^p\left(\T^d\times \R^d\right)$, $L^p_x = L^p\left(\T^d\right)$ and $L^p_v = L^p\left(\R^d\right)$. The Sobolev spaces $H^s_{x,v}$, $H^s_{x}$ and $H^s_{v}$ are defined in the same way and we denote the standard Sobolev norms by $\norm{\cdot}^2_{H^s_{x,v}} = \sum\limits_{|j|+|l|\leq s}\norm{\partial^j_l\cdot}^2_{L^2_{x,v}}$.


\subsection{Our strategy and results} \label{subsec:strategy}
The first step is to investigate the equation $\eqref{LinBoltz}$ in order to obtain existence and exponential decay of solutions close to equilibrium in Sobolev spaces $H^s_{x,v}$, independently of the Knudsen number $\eps$. Moreover, we want all the required smallness assumption on the initial data and rates of convergence to be explicit. Such a result has been proved in \cite{Gu4} by studying independently the behaviour of both microscopic and fluid parts of solutions to $\eqref{LinBoltz}$, we proposed here another method based on hypocoercivity estimates.

\bigskip
Our strategy is to build a norm on Sobolev spaces which is equivalent to the standard norm and which satisfies a Gr\"onwall type inequality. 
\par We first construct a functional on $H^s_{x,v}$ by considering a linear combination of $\norm{\partial^j_l\cdot}^2_{L^2_{x,v}}$, for all $|j|+|l|\leq s$, together with product terms of the form $\langle \partial^{\delta_i}_{l-\delta_i}\cdot,\partial^0_l\cdot\rangle _{L^2_{x,v}}$. The distortion of the standard norm by adding these mixed terms is necessary \cite{MN} in order to exhibit a relaxation, due to the hypocoercivity property of the linear part of the perturbed Boltzmann equation $\eqref{LinBoltz}$.
\par We then study the flow of this functional along time for solutions to the linearized Boltzmann equation $\eqref{LinBoltz}$. This flow is controlled by energy estimates and, finally, a non-trivial choice of coefficients in the functional yields an equivalence between the functional and the standard Sobolev norm, as well as a Gr\"onwall type inequality, both of them being independent of $\eps$.

\bigskip
This strategy is applied to the linear part of the equation to prove that it generates a strongly continuous semigroup with exponential decay (Theorem $\ref{lin}$). We then combine the latter method  and some orthogonal property of the remainder and apply it to the full nonlinear model(Proposition $\ref{apriori}$). This estimate enables us to prove the existence of solutions to the Cauchy problem and their exponential decay as long as the initial data is small enough, with a smallness independent of $\eps$ (Theorem $\ref{perturb}$). We emphasize here that, thanks to the functional we used, the smaller $\eps$ the less control is needed on the $v$-derivatives of the initial data.
\par However, these results seem to tell us that the $v$-derivatives of solutions to equation $\eqref{LinBoltz}$ can blow-up as $\eps$ tends to $0$. The last step is thus to create a new functional, based on the microscopic part of solutions (idea first introduced by Guo \cite{Gu4}), satisfying the same properties but controlling the $v$-derivatives as well. The control on the microscopic part of solutions to equation $\eqref{LinBoltz}$ is due to the deep structure of the linear operator $L$. This leads to the expected exponential decay independently of $\eps$ even for the $v$-derivatives (Theorem $\ref{decaydv}$).

\bigskip
Finally, the chief aim of the present article is to derive explicit rates of convergence from solutions to the perturbed Boltzmann equation to solutions to the incompressible Navier-Stokes equations.
\par Theorem $\ref{perturb}$ tells us that for all $\eps$ we can build a solution $h_\eps$ to the perturbed Boltzmann equation $\eqref{LinBoltz}$, as long as the initial perturbation is sufficiently small, independently of $\eps$. We can then consider the sequence $(h_\eps)_{0<\eps\leq 1}$ and study its limit. It appears that it converges weakly in $L^\infty_tH^s_xL^2_v$, for $s \geq s_0 > d$, towards a function $h$. Furthermore, we have the following form for $h$ (see \cite{BU})
$$h(t,x,v) = \left[\rho(t,x) + v.u(t,x) + \frac{1}{2}(\abs{v}^2-d)\theta(t,x)\right]\mu(v)^{1/2},$$
of which physical observables are weak solutions, in the Leray sense \cite{Le}, of the incompressible Navier-Stokes equations ($p$ being the pressure function, $\nu$ and $\kappa$ being constants determined by $L$, see Theorem $5$ in \cite{Go})

\begin{eqnarray}
\partial_t u - \nu \Delta u + u\cdot \nabla u + \nabla p = 0, \nonumber
\\ \nabla \cdot u = 0, \label{NS}
\\ \partial_t \theta - \kappa \Delta \theta + u\cdot \nabla \theta = 0, \nonumber
\end{eqnarray}

together with the Boussinesq relation

\begin{equation}\label{Boussinesq}
\nabla(\rho + \theta) = 0.
\end{equation}

\bigskip
We conclude by studying the properties of the hydrodynamical convergence thanks to the Fourier transform on the torus of the linear operator $L-v\cdot\nabla_x$. This gives us a strong convergence result on the time average of $h_\eps$ with an explicit rate of convergence in finite time. An interpolation between this finite time convergence and the exponential stability of the global equilibria for Boltzmann equation as well as for Navier-Stokes equations gives a strong convergence for all times (Theorem $\ref{hydrolim}$). We obtain an explicit form for the initial data associated to the limit of $(h_\eps)_{\eps>0}$.


\subsection{Comparison with existing results}\label{subsec:comparisonresults}
For physical purposes, one may assume that $\eps=1$ which is a mere normalization and that is why many articles about the perturbed Boltzmann equation only deal with this case. The associated Cauchy problem has been worked on over the past fifty years, starting with Grad \cite{Gr}, and it has been studied in different spaces, such as weighted  $L^2_v(H^l_x)$ spaces \cite{Uk} or weighted Sobolev spaces \cite{Gu1}\cite{Gu2}\cite{Yu}. Other results have also been proved in $\R^d$ instead of the torus, see for instance \cite{AMUXY}\cite{Ch}\cite{NI}, but it will not be the purpose of this article.
\par Our article explicitly deals with the general case for $\eps$ and we prove results that are uniform in $\eps$. To solve the Cauchy problem we use an iterative scheme, like in the papers mentioned above, but our strategy yields a condition for the existence of solutions in $H^s_{x,v}$ which is uniform in $\eps$ (Theorem $\ref{globalexist}$). In order to obtain such a result, we had to consider more precise estimates on the bilinear operator $\Gamma$. Bardos and Ukai \cite{BU} obtained a similar result in $\R^d$ but in weighted Sobolev spaces and did not prove any decay.

\bigskip
The behaviour of such global in time solutions has also been studied. Guo worked in weighted Sobolev spaces and proved the boundedness of solutions to equation $\eqref{LinBoltz}$ \cite{Gu1}, as well as an exponential decay (uniform in $\eps$) \cite{Gu4}. The norm involved in \cite{Gu1}\cite{Gu4} is quite intricate and requires a lot of technical computations. To avoid specific and technical calculations, the theory of hypocoercivity \cite{Mo} focuses on the properties of the Boltzmann operator and which are quite similar to hypoellipticity. This theory has been used in \cite{MN} to obtain exponential decay in standard Sobolev spaces in the case $\eps =1$.
\par We use the idea of Mouhot and Neumann developed in \cite{MN} consisting of considering a functional on $H^s_{x,v}$ involving mixed scalar products. In this article we thus construct such a quadratic form, but with coefficient depending on $\eps$. Working in the general case for $\eps$ yields new calculations and requires the use of certain orthogonal properties of the bilinear operator $\Gamma$ to overcome these issues. Moreover, we construct a new norm out of this functional, which controls the $v$-derivatives by a factor $\eps$.
\par The fact that the study yields a norm containing some $\eps$ factors prevents us from having a uniform exponential decay for the $v$-derivatives. We use the idea of Guo \cite{Gu4} and look at the microscopic part of the solution $h_\eps$ everytime we look at a differentiation in $v$. This idea catches the interesting structure of $L$ on its orthogonal part. Combining this idea with our previous strategy fills the gap for the $v$-derivatives.

\bigskip
Several studies have been made on the different regimes of hydrodynamical limits for the Boltzmann equation and complete formal derivations have then been obtained by Bardos, Golse and Levermore \cite{BGL}. We refer the reader to \cite{Sa} for an overview on the existing results and standard techniques. The particular case of incompressible Navier-Stokes regime has been first addressed by Sone \cite{Sone} where he dealt with the asymptotic theory for the perturbed equation up to the second order inside a smooth domain. Later, De Masi, Esposito and Lebowitz \cite{DMEL} gave a first rigorous and constructive proof on the torus by considering the stability of Maxwellians whose mean velocity is a solution to incompressible Navier-Stokes equations. Note that their result is of a different nature than the one presented here. Let us also mention the works of Golse and Saint-Raymond \cite{GolSt1}\cite{GolSt2} (in $\R^3$) and Levermore and Masmoudi \cite{LevMas} (on $\T^d$) where the convergence is proved for appropriately scaled Di Perna-Lions renormalized solutions \cite{DL}.
\par Our uniform results enable us to derive a weak convergence in $H^s_xL^2_v$ towards solutions to the incompressible Navier-Stokes equations, together with the Boussinesq relation. We then find a way to obtain strong convergence using the ideas of  the Fourier study of the linear operator $L -v.\nabla_x$, developed in \cite{BU} and \cite{EP}, combined with Duhamel formula. However, the study done in \cite{BU} relies strongly on an argument of stationary phase developed in \cite{Uk1} which is no longer applicable in the torus. Indeed, the Fourier space of $\R^d$ is continuous and so integration by parts can be used in that frequency space. This tool is no longer available in the frequency space of the torus which is discrete.
\par Theorem $\ref{hydrolim}$ shows that the behaviour of the hydrodynamical limit is quite different on the torus, where an averaging in time is necessary for general initial data. However,we obtain the same relation between the limit at $t=0$ and the initial perturbation $h_{in}$ and also the existence of an initial layer. That is to say that we have a convergence in $L^2_{[0,T]}=L^2([0,T])$ if and only if the initial perturbation satisfies some physical properties, which appear to be the same as in $\R^d$ studied in \cite{BU}.
\par This convergence gives a perturbative result for incompressible Navier-Stokes equations in  Sobolev spaces around the steady solution. The regularity of the weak solutions we constructed implies that they are in fact strong solutions (see Lions \cite{Li1}, Section $2.5$, and more precisely Serrin, \cite{Se1} and \cite{Se2}). Moreover, our uniform exponential decay for solutions to the linearized Boltzmann equation yields an exponential decay for the perturbative solutions of the incompressible Navier-Stokes equations in higher Sobolev spaces. Such an exponential convergence to equilibrium has been derived in $H^1_0$ for $d=2$ or $d=3$ in \cite{Te}, or can be deduced from Proposition $3.7$ in \cite{BerMaj} in higher Sobolev spaces for small initial data. The general convergence to equilibrium can be found in \cite{MatNish} (small initial data) and in \cite{NovStra} but they focus on the general compressible case and no rate of decay is built.

\bigskip
Furthermore, results that do not involve hydrodynamical limits (existence and exponential decay results) are applicable to a larger class of operators. In Appendix $\ref{appendix:validation}$ we prove that those theorems also hold for other kinetic collisional models such as the linear relaxation, the semi-classical relaxation, the linear Fokker-Planck equation and the Landau equation with hard and moderately soft potential.


\subsection{Organization of the paper}\label{subsec:organization}

Section $\ref{sec:mainresults}$ is divided in two different subsections.
\par As mentionned above, we shall use the hypocoercivity of the Boltzmann equation $\eqref{Boltzmann}$. This hypocoercivity can be described in terms of technical properties on $L$ and $\Gamma$ and, in order to obtain more general results, we consider them as a basis of our paper. Thus, subsection $\ref{subsec:hypoco}$ describes them in detail and a proof of the fact that $L$ and $\Gamma$ indeed satisfy those properties is given in Appendix $\ref{appendix:validation}$. Most of them have been proved in \cite{MN} but we require more precise ones to deal with the general case.
\par The second subsection $\ref{subsec:statementresults}$ is dedicated to a mathematical formulation of the results described in subsection $\ref{subsec:strategy}$.

\bigskip
As said when we described our strategy (subsection $\ref{subsec:strategy}$), we are going to study the flow of a functional involving $L^2_{x,v}$-norm of $x$ and $v$ derivatives and mixed scalar products. To control this flow in time we compute energy estimates for each of these terms in a toolbox (section $\ref{sec:toolbox}$) which will be used and referred to all along the rest of the paper. Proofs of those energy estimates are given in Appendix $\ref{appendix:toolbox}$.

\bigskip
Finally, sections $\ref{sec:linear}$, $\ref{sec:apriori}$, $\ref{sec:perturbresult}$, $\ref{sec:decaydv}$ and $\ref{sec:hydrolim}$ are the proofs respectively of Theorem $\ref{lin}$ (about the strong semigroup property of the linear part of equation $\eqref{LinBoltz}$), Proposition $\ref{apriori}$ (an a priori estimates on the constructed functional for the full model), Theorem $\ref{perturb}$ (existence and exponential decay of solutions to equation $\eqref{LinBoltz}$), Theorem $\ref{decaydv}$ (showing the uniform boundedness of the $v$-derivatives) and of Theorem $\ref{hydrolim}$ (dealing with the hydrodynamical limit).
\par We notice here that section $\ref{sec:perturbresult}$ is divided in two subsection. Subsection $\ref{subsec:cauchy}$ deals with the existence of solutions for all $\eps > 0$ and subsection $\ref{subsec:expodecay}$ proved the exponential decay of those solutions.

\section{Main results} \label{sec:mainresults}

This section is divided into two parts. The first one translates the hypocoercivity aspects of the Boltzmann linear operator in terms of mathematical properties for $L$ and $\Gamma$. Then, the second one states our results in terms of those assumptions.


\subsection{Hypocoercivity assumptions}\label{subsec:hypoco}

This section is dedicated to describing the framework and assumptions of the hypocoercivity theory. A state of the art of this theory can be found in \cite{Mo}.

 
\subsubsection{Assumptions on the linear operator $L$}


\paragraph{\textbf{Assumptions in $H^1_{x,v}$}}:
\par \textbf{\underline{(H1): Coercivity and general controls}}
\\$\func{L}{L^2_{v}}{L^2_{v}}$ is a closed and self-adjoint operator with $L = K - \Lambda$ such that:
\begin{itemize}
\item $\Lambda$ is coercive: \begin{itemize}
                              \item it exists $\norm{.}_{\Lambda_v}$ norm on $L^2_v$ such that 
                              $$\forall h \in L^2_v \:,\:\nu_0^\Lambda\norm{h}^2_{L^2_v} \leq \nu_1^\Lambda\norm{h}^2_{\Lambda_v} \leq \langle\Lambda(h),h\rangle_{L^2_v} \leq \nu_2^\Lambda\norm{h}^2_{\Lambda_v},$$
                              \item $\Lambda$ has a defect of coercivity regarding its $v$ derivatives:$$\forall h \in H^1_v \:,\:\langle\nabla_v\Lambda(h),\nabla_vh\rangle_{L^2_v} \geq \nu_3^\Lambda\norm{\nabla_vh}^2_{\Lambda_v} - \nu_4^\Lambda\norm{h}^2_{\Lambda_v}.$$
                              \end{itemize}
\item There exists $C^L>0$ such that $$\forall h \in L^2_v \:,\:\forall g \in L^2_v \:,\:\langle L(h),g\rangle_{L^2_v} \leq C^L\norm{h}_{\Lambda_v}\norm{g}_{\Lambda_v},$$
\end{itemize}

\noindent where $(\nu_s^\Lambda)_{1\leq s \leq 4}$ are strictly positive constants depending on the operator and the dimension of the velocities space $d$.
\\As in \cite{MN}, we define a new norm on $L^2_{x,v}$: $$\norm{.}_\Lambda = \norm{\norm{.}_{\Lambda_v}}_{L^2_{x}}.$$

\par\textbf{\underline{(H2): Mixing property in velocity}}
$$\forall \delta>0 \:,\: \exists C(\delta) >0 \:,\: \forall h \in H^1_v \:,\quad \langle\nabla_vK(h),\nabla_vh\rangle_{L^2_v} \leq C(\delta)\norm{h}^2_{L^2_v} + \delta \norm{\nabla_vh}^2_{L^2_v}.$$

\par \textbf{\underline{(H3): Relaxation to equilibrium}}
\\We suppose that the kernel of $L$ is generated by $N$ functions which form an orthonormal basis for $\mbox{Ker}(L)$:
$$\mbox{Ker}(L) = \mbox{Span}\{\phi_1(v), \dots, \phi_N(v)\}.$$
Moreover, we assume that the $\phi_i$ are of the form $P_i(v)e^{-|v|^2/4}$, where $P_i$ is a polynomial.
\par Furthermore, denoting by $\pi_L$ the orthogonal projector in $L^2_v$ on $\mbox{Ker}(L)$ we assume that we have the following local coercivity property:

$$\exists \lambda >0 \:,\:\forall h \in L^2_v \:,\quad \langle L(h),h\rangle_{L^2_v} \leq -\lambda \norm{h^\bot}^2_{\Lambda_v},$$
\noindent where $h^\bot = h- \pi_L(h)$ denotes the microscopic part of $h$ (the orthogonal to $\mbox{Ker}(L)$ in $L^2_v$).

\bigskip
We are using the same hypothesis as in \cite{MN}, except that we require the $\phi_i$ to be of a specific form. This additional requirement allows us to derive properties on the $v$-derivatives of $\pi_L$ that we will state in the toolbox section $\ref{sec:toolbox}$.
\par Then we have two more properties on $L$ in order to deal with higher order Sobolev spaces.

\bigskip
\paragraph{\textbf{Assumptions in $H^s_{x,v}$, $s>1$}}:
\par\textbf{\underline{(H1'): Defect of coercivity for higher derivatives}}
\\ We assume that $L$ satisfies $(H1)$ along with the following property: for all $s \geq 1$, for all $|j|+|l| = s$ such that $\abs{j}\geq 1$,

$$\forall h \in H^s_{x,v}\:,\quad \langle\partial^j_l\Lambda(h),\partial^j_lh\rangle_{L^2_{x,v}}\geq \nu_5^\Lambda\norm{\partial^j_lh}_\Lambda^2 - \nu^\Lambda_6\norm{h}_{H^{s-1}_{x,v}},$$

\noindent where $\nu_5^\Lambda$ and $\nu_6^\Lambda$ are strictly positive constants depending on $L$ and $d$.
\\We also define a new norm on $H^s_{x,v}$: $$\norm{.}_{H^s_\Lambda} = \left(\sum\limits_{|j|+|l|\leq s}\norm{\partial_l^j.}^2_\Lambda\right)^{1/2}.$$

\par\textbf{\underline{(H2'): Mixing properties}}
\\As above, Mouhot and Neumann extended the hypothesis $(H2)$ to higher Sobolev's spaces: for all $s \geq 1$, for all $|j|+|l| = s$ such that $\abs{j}\geq 1$,
$$\forall \delta>0 \:,\:\exists C(\delta)>0\:,\:\forall h \in H^s_{x,v} \:,\quad \langle\partial^j_lK(h),\partial^j_lh\rangle_{L^2_{x,v}}\leq C(\delta)\norm{h}^2_{H^{s-1}_{x,v}} + \delta\norm{\partial^j_lh}^2_{L^2_{x,v}}.$$


\subsubsection{Assumptions on the second order term $\Gamma$}
 To solve our problem uniformly in $\eps$ we had to precise the hypothesis made in \cite{MN} in order to have a deeper understanding of the operator $\Gamma$. This lead us to two different assumptions.
 
\textbf{\underline{(H4): Control on the second order operator}}
\\$\func{\Gamma}{L^2_{v}\times L^2_{v}}{L^2_{v}}$ is a bilinear symmetric operator such that for all multi-indexes $j$ and $l$ such that $|j|+|l| \leq s$, $s\geq 0$,

$$
\left|\langle\partial_l^j\Gamma(g,h),f\rangle_{L^2_{x,v}}\right| \leq \left\{\begin{array}{lr} \mathcal{G}^s_{x,v}(g,h)\norm{f}_{\Lambda}&, \:\: \mbox{if } j \neq 0 \vspace{1mm} 
\\\vspace{1mm} \mathcal{G}^s_{x}(g,h)\norm{f}_{\Lambda}&, \:\: \mbox{if } j = 0 \end{array}\right.,
$$

\noindent$\mathcal{G}^s_{x,v}$ and $\mathcal{G}^s_x$ being such that $\mathcal{G}^s_{x,v}\leq \mathcal{G}^{s+1}_{x,v}$, $\mathcal{G}^s_{x}\leq \mathcal{G}^{s+1}_{x}
$ and satisfying the following property:
\begin{equation*}
\exists s_0 \in \N \:,\: \forall s \geq s_0 \:,\: \exists C_\Gamma>0 \:,\quad \left \{\begin{array}{rl} \mathcal{G}^s_{x,v}(g,h) &\leq C_\Gamma\left(\norm{g}_{H^s_{x,v}}\norm{h}_{H^s_\Lambda} + \norm{h}_{H^s_{x,v}}\norm{g}_{H^s_\Lambda}\right) \vspace{1mm}
\\ \vspace{1mm} \mathcal{G}^s_{x}(g,h) &\leq C_\Gamma\left(\norm{h}_{H^s_{x}L^2_v}\norm{g}_{H^s_\Lambda} + \norm{g}_{H^s_{x}L^2_v}\norm{h}_{H^s_\Lambda}\right).\end{array}\right.
\end{equation*}

\textbf{\underline{(H5): Orthogonality to the Kernel of the linear operator}}
$$\forall h, \:g \in \mbox{Dom}(\Gamma) \cap L^2_{v} \:,\quad \Gamma(g,h) \in \mbox{Ker}(L)^\bot.$$


\subsection{Statement of the Theorems}\label{subsec:statementresults}

\subsubsection{Uniform result for the linear Boltzmann equation}

For $s$ in $\N^*$ and some constants $(b_{j,l}^{(s)})_{j,l}$, $(\alpha_l^{(s)})_l$ and $(a_{i,l}^{(s)})_{i,l}$ strictly positive and $0 < \eps\leq 1$ we define the following functional on $H^s_{x,v}$, where we emphasize that there is a dependance on $\eps$, which is the key point of our study:
$$\norm{\cdot}_{\mathcal{H}^s_\eps} = \left[ \sum\limits_{\overset{|j|+|l|\leq s}{|j|\geq 1}}b_{j,l}^{(s)}\eps^2\norm{\partial^j_l\cdot}^2_{L^2_{x,v}} + \sum\limits_{|l|\leq s}\alpha_l^{(s)}\norm{\partial^0_l\cdot}^2_{L^2_{x,v}} + \sum\limits_{\overset{|l|\leq s}{i,c_i(l)> 0}}a_{i,l}^{(s)}\eps \langle \partial^{\delta_i}_{l-\delta_i}\cdot,\partial^0_l\cdot\rangle _{L^2_{x,v}}\right]^\frac{1}{2}.$$

\bigskip
We first study the perturbed equation $\eqref{LinBoltz}$, without taking into account the bilinear remainder operator. By letting $\pi_w$ be the projector in $L^2_{x,v}$ onto $\mbox{Ker}(w)$ we obtained the following semigroup property for $L$.

\begin{theorem}\label{lin}
If $L$ is a linear operator satisfying the conditions $\emph{(H1')}$, $\emph{(H2')}$ and $\emph{(H3)}$ then it exists $0 < \eps_d\leq 1$ such that for all $s$ in $\N^*$,
\begin{enumerate}
\item for all $ 0 < \eps\leq \eps_d \:,\: G_\eps = \eps^{-2}L - \eps^{-1}v \cdot\nabla_x$ generates a $C^0$-semigroup on $H^s_{x,v}$.
\item there exist $C_G^{(s)},\: (b_{j,l}^{(s)}),\:(\alpha_l^{(s)}),\:(a_{i,l}^{(s)}) >  0 $  such that for all $0<  \eps \leq \eps_d$:
	$$\norm{\cdot}^2_{\mathcal{H}^s_{\eps}} \sim \left(\norm{\cdot}^2_{L^2_{x,v}}+\sum\limits_{|l|\leq s}\norm{\partial_l^0\cdot}^2_{L^2_{x,v}}+\eps^2\sum\limits_{\overset{|l|+|j|\leq s}{|j|\geq 1}}\norm{\partial_l^j\cdot}^2_{L^2_{x,v}}\right),$$
	and for all h in $H^s_{x,v}$, $$\langle G_\eps(h),h\rangle _{\mathcal{H}^s_\eps} \leq -C_G^{(s)}\norm{h-\pi_{G_\eps}(h))}^2_{H^s_\Lambda}.$$
\end{enumerate}
\end{theorem}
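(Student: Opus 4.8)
The plan is to construct the functional $\norm{\cdot}_{\mathcal{H}^k_\eps}$ explicitly and show that, for a suitable (non-trivial) choice of the coefficients $(b_{j,l}^{(k)})$, $(\alpha_l^{(k)})$, $(a_{i,l}^{(k)})$ and for $\eps$ small enough, it is both equivalent to the weighted Sobolev norm appearing in the statement and dissipated along the flow of $G_\eps$. I would proceed by induction on $k$, since the $H^k$ estimates feed on the $H^{k-1}$ ones through the defect terms in (H1') and (H2'). The $C^0$-semigroup claim in part (1) then follows from the dissipativity estimate in part (2) together with the abstract Hille--Yosida / Lumer--Phillips machinery: once $\langle G_\eps h, h\rangle_{\mathcal{H}^k_\eps} \leq 0$ in an equivalent Hilbert norm and $G_\eps$ is (essentially) $m$-dissipative on $H^k_{x,v}$ — the dissipativity of $\eps^{-2}L$ coming from (H1)/(H3) and the skew-symmetry of $\eps^{-1}v\cdot\nabla_x$ in $L^2_{x,v}$ handling the transport part — the generation result is standard, so I would treat it quickly and concentrate on the energy estimate.

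The core is the differential inequality for $\frac{d}{dt}\norm{h}^2_{\mathcal{H}^k_\eps}$ along solutions of $\partial_t h = G_\eps h$. I would compute the time derivative of each of the three groups of terms using the energy estimates collected in the toolbox (Section~\ref{sec:toolbox}), which I am allowed to invoke. The pure $x$-derivative terms $\norm{\partial^0_l h}^2_{L^2_{x,v}}$ contribute, via (H3), a good term $-\lambda\eps^{-2}\norm{(\partial^0_l h)^\bot}^2_\Lambda$ but no control on the macroscopic (kernel) part; the $\eps^2$-weighted mixed $v$-derivative terms $\eps^2\norm{\partial^j_l h}^2_{L^2_{x,v}}$ with $|j|\geq 1$ give, via (H1') and (H2'), a coercive $-\nu_5^\Lambda\norm{\partial^j_l h}^2_\Lambda$ plus lower-order defects absorbed by the induction hypothesis and by choosing the constants hierarchically; the crucial cross terms $\eps\langle\partial^{\delta_i}_{l-\delta_i}h,\partial^0_l h\rangle_{L^2_{x,v}}$ are there precisely to recover coercivity on the macroscopic part — differentiating them produces, through the commutator $[\partial_{v_i},v\cdot\nabla_x]=\partial_{x_i}$, a term of the form $-c\,\eps^{-1}\cdot\eps\,\norm{\partial^0_l h}^2_{L^2_{x,v}}$ on $\mathrm{Ker}(L)$, i.e. an $O(1)$ gain, at the cost of terms controlled by the microscopic norms already dominated. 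The $\eps$-powers in the functional are calibrated exactly so that all these contributions balance at order $O(1)$ uniformly in $\eps$.

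The choice of coefficients is then the linear-algebra heart of the argument: one fixes $\alpha^{(k)}_l \gg a^{(k)}_{i,l} \gg b^{(k)}_{j,l}$ with ratios depending only on the hypocoercivity constants $\nu_i^\Lambda,\lambda,C^L$ and on $N$ and $k$, in such a way that (i) the Cauchy--Schwarz bound $\eps|\langle\partial^{\delta_i}_{l-\delta_i}h,\partial^0_l h\rangle|\leq \frac12(\eps^2\norm{\partial^{\delta_i}_{l-\delta_i}h}^2+\norm{\partial^0_l h}^2)$ keeps $\norm{\cdot}^2_{\mathcal{H}^k_\eps}$ sandwiched between constant multiples of the stated weighted norm (here it is essential that the $v$-derivative block carries $\eps^2$, matching the $\eps$ in the cross term), and (ii) summing the energy estimates produces a net $-C_G^{(k)}\norm{h-\pi_{G_\eps}(h)}^2_{H^k_\Lambda}$ with $C_G^{(k)}>0$ independent of $\eps$. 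I expect the main obstacle to be precisely this bookkeeping: tracking how the defect constants $\nu_4^\Lambda,\nu_6^\Lambda$ and the mixing constants $C(\delta)$ propagate through the induction and verifying that a single threshold $\eps_N$ works simultaneously for all $k$ — one must check that $\eps_N$ can be taken independent of $k$, which is why the hypotheses are stated with $k$-uniform constants, and that the smallness needed to absorb the transport cross-terms into the coercive terms does not degrade as $k$ grows. Getting the hierarchy of constants to close without circularity, rather than any single estimate, is the delicate point.
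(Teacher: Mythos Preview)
Your proposal is correct and follows essentially the same approach as the paper: induction on $k$, invoking the toolbox energy estimates for each block of the functional, using the commutator $[\partial_{v_i},v\cdot\nabla_x]=\partial_{x_i}$ (encoded in estimates \eqref{dx,dv} and \eqref{deltai,d0l}) to produce the $-\norm{\partial^0_l h}^2_{L^2_{x,v}}$ term that restores macroscopic coercivity, and then closing by a hierarchical choice of constants with Poincar\'e on the torus handling the fluid part. Your worry about $\eps_N$ depending on $k$ is legitimate but turns out to be harmless: in the paper the threshold arises only when absorbing the transport cross-terms in the $|j|\geq 2$ block, giving $\eps_N=\min\{1,\nu_5^\Lambda\nu_0^\Lambda/(\nu_1^\Lambda\sqrt{6N})\}$, which involves only $N$ and the fixed hypocoercivity constants, not $k$.
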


This theorem gives us an exponential decay for the semigroup generated by $G_\eps$.


\subsubsection{Uniform perturbative result for the Boltzmann equation}
The next result states that if we add the bilinear remainder operator then it is enough, if $\eps$ is small enough, to slightly change our new norm to have a control on the solution.

\begin{prop}\label{apriori}
If $L$ is a linear operator satisfying the conditions $\emph{(H1')}$, $\emph{(H2')}$ and $\emph{(H3)}$ and $\Gamma$ a bilinear operator satisfying $\emph{(H4)}$ and $\emph{(H5)}$ then it exists $0 < \eps_d\leq 1$ such that for all $s$ in $\N^*$,
\begin{enumerate}
\item there exist $K^{(s)}_0,\:K^{(s)}_1,\: K^{(s)}_2\: (b_{j,l}^{(s)}),\:(\alpha_l^{(s)}),\:(a_{i,l}^{(s)}) >  0 $, independent of $\Gamma$ and $\eps$,  such that for all $0<  \eps \leq \eps_d$:
$$\norm{\cdot}^2_{\mathcal{H}^s_{\eps}} \sim \left(\norm{\cdot}^2_{L^2_{x,v}}+\sum\limits_{|l|\leq s}\norm{\partial_l^0\cdot}^2_{L^2_{x,v}}+\eps^2\sum\limits_{\overset{|l|+|j|\leq s}{|j|\geq 1}}\norm{\partial_l^j\cdot}^2_{L^2_{x,v}}\right),$$
\item and for all $h_{in}$ in $H^s_{x,v}\cap \emph{\mbox{Ker}}(G_\eps)^\bot$ and all $g$ in $\emph{\mbox{Dom}}(\Gamma)\cap H^s_{x,v}$, if we have a solution $h$ in $H^s_{x,v}$ to the following equation 
$$\partial_t h + \frac{1}{\eps}v\cdot\nabla_x h = \frac{1}{\eps^2}L(h) + \frac{1}{\eps}\Gamma(g,h),$$
then
 $$\frac{d}{dt}\norm{h}_{\mathcal{H}_\eps^s}^2 \leq - K^{(s)}_0 \norm{h}_{H^s_\Lambda}^2 + K^{(s)}_1 \left(\mathcal{G}^s_{x}(g,h)\right)^2 + \eps^2K^{(s)}_2\left(\mathcal{G}^s_{x,v}(g,h)\right)^2  .$$
\end{enumerate}
\end{prop}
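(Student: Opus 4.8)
The plan is to differentiate the quadratic functional $\norm{\cdot}^2_{\mathcal{H}^k_\eps}$ along a solution $h$ and to split the outcome into the contribution of the rescaled linear generator $G_\eps=\eps^{-2}L-\eps^{-1}v\cdot\nabla_x$ and that of $\eps^{-1}\Gamma(g,h)$. Writing $((\cdot,\cdot))_{\mathcal{H}^k_\eps}$ for the bilinear form polarising $\norm{\cdot}_{\mathcal{H}^k_\eps}$, one gets
\[
\frac{d}{dt}\norm{h}^2_{\mathcal{H}^k_\eps}=2\,((G_\eps h,h))_{\mathcal{H}^k_\eps}+\frac{2}{\eps}\,((\Gamma(g,h),h))_{\mathcal{H}^k_\eps}.
\]
For the first assertion there is nothing to do: I take $(b_{j,l}^{(k)}),(\alpha_l^{(k)}),(a_{i,l}^{(k)})$ to be exactly the constants produced by Theorem~\ref{lin} — depending only on $L$, $k$, $N$, hence not on $\Gamma$ or $\eps$ — so that the equivalence is part~(2) of that theorem, after possibly shrinking $\eps_N$ below. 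I also record that the flow preserves $\mbox{Ker}(G_\eps)^\bot$: testing $G_\eps\psi=0$ against $\psi$ and using the skew-symmetry of $v\cdot\nabla_x$ with (H3) gives $\mbox{Ker}(G_\eps)=\mbox{Ker}(G_\eps^*)$, contained in $\mbox{Ker}(L)$ as a subspace of $L^2_{x,v}$, while $\Gamma(g,h)\in\mbox{Ker}(L)^\bot$ by (H5); hence $\frac{d}{dt}\langle h,\psi\rangle_{L^2_{x,v}}=0$ for $\psi\in\mbox{Ker}(G_\eps)$, and $h_{in}\in\mbox{Ker}(G_\eps)^\bot$ forces $\pi_{G_\eps}h\equiv0$, so that $\norm{h-\pi_{G_\eps}h}_{H^k_\Lambda}=\norm{h}_{H^k_\Lambda}$.

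For the linear part I would re-run the proof of Theorem~\ref{lin} with these coefficients, but retaining a fixed fraction of the microscopic dissipation that the $\eps^{-2}L$ term delivers with weight $\eps^{-2}$ on every microscopic derivative through (H3). This yields, for $\eps$-independent $C_G^{(k)},c^{(k)}>0$,
\[
2\,((G_\eps h,h))_{\mathcal{H}^k_\eps}\le-2C_G^{(k)}\norm{h}^2_{H^k_\Lambda}-\frac{2c^{(k)}}{\eps^2}\norm{h^\bot}^2_{H^k_\Lambda},
\]
where $h^\bot=h-\pi_L h$ is the microscopic part and $\pi_L$ commutes with $\partial_x$. This spare $\eps^{-2}$ dissipation, together with the $\eps$-free damping of the hydrodynamic modes already contained in the first term, is what absorbs the dangerous contributions of $\Gamma$.

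For the bilinear part I expand $\frac{2}{\eps}((\Gamma(g,h),h))_{\mathcal{H}^k_\eps}$ over the three families defining $\norm{\cdot}_{\mathcal{H}^k_\eps}$ and bound each scalar product through (H4), with $F:=\Gamma(g,h)$. The terms issued from $\eps^2 b_{j,l}^{(k)}\norm{\partial^j_l\cdot}^2$ ($|j|\ge1$) produce $2\eps b_{j,l}^{(k)}\langle\partial^j_l F,\partial^j_l h\rangle$: by (H4) (case $j\neq0$) and Young this is $\le\eta\norm{\partial^j_l h}^2_\Lambda+C_\eta\eps^2(\mathcal{G}^k_{x,v}(g,h))^2$, the $\eta$-part absorbed into $-2C_G^{(k)}\norm{h}^2_{H^k_\Lambda}$ and the rest feeding $\eps^2 K_2^{(k)}(\mathcal{G}^k_{x,v})^2$. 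The terms from $\alpha_l^{(k)}\norm{\partial^0_l\cdot}^2$ produce $\frac{2}{\eps}\alpha_l^{(k)}\langle\partial^0_l F,\partial^0_l h\rangle$, carrying the dangerous $\eps^{-1}$; here (H5) is the crux: $\partial^0_l F\in\mbox{Ker}(L)^\bot$, so this equals $\frac{2}{\eps}\alpha_l^{(k)}\langle\partial^0_l F,(\partial^0_l h)^\bot\rangle$, which (H4) (case $j=0$) and Young (with an $\eps^{-2}$-weighted split) bound by $\frac{\eta}{\eps^2}\norm{(\partial^0_l h)^\bot}^2_\Lambda+C_\eta(\mathcal{G}^k_x(g,h))^2$ — the first term now soaked up by the $\eps^{-2}$ dissipation and the second contributing $K_1^{(k)}(\mathcal{G}^k_x)^2$ with no surviving power of $\eps$. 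Finally the mixed terms $\eps a_{i,l}^{(k)}\langle\partial^{\delta_i}_{l-\delta_i}\cdot,\partial^0_l\cdot\rangle$ produce $a_{i,l}^{(k)}(\langle\partial^{\delta_i}_{l-\delta_i}F,\partial^0_l h\rangle+\langle\partial^{\delta_i}_{l-\delta_i}h,\partial^0_l F\rangle)$, the $\eps$'s cancelling; the second pairing is handled like the previous family since $\partial^0_l F\in\mbox{Ker}(L)^\bot$, while for the first pairing I split $\partial^0_l h=\pi_L\partial^0_l h+(\partial^0_l h)^\bot$: the microscopic piece is bounded by (H4) (case $j\neq0$) and an $\eps^2$-weighted Young inequality, giving $\eps^2K_2^{(k)}(\mathcal{G}^k_{x,v})^2$ after absorption into the $\eps^{-2}$ dissipation, and the macroscopic piece is treated by integrating the velocity derivative by parts onto the finitely many Gaussian generators $\phi_i=P_i(v)e^{-|v|^2/4}$ of $\mbox{Ker}(L)$ — so that $\Gamma$ keeps only spatial derivatives — and then using (H4) (case $j=0$) and the macroscopic damping, which contributes to $K_1^{(k)}(\mathcal{G}^k_x)^2$.

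Collecting the three families, choosing $\eta$ small enough that every $\norm{h}^2_{H^k_\Lambda}$-, $\norm{h^\bot}^2_{H^k_\Lambda}$- and macroscopic remainder is dominated by the dissipation of the linear part, and shrinking $\eps_N$ to control the $\eps$-dependent error constants, yields the stated inequality with $K_0^{(k)}=2C_G^{(k)}-O(\eta)>0$ and $K_1^{(k)},K_2^{(k)}$ depending only on the constants in (H4). I expect the genuine difficulty to be precisely this bookkeeping of powers of $\eps$ in the bilinear part: every $\eps^{-1}$ inherited from the rescaled equation must be matched, via the orthogonality (H5), to a microscopic quantity so that it can be absorbed by the $\eps^{-2}$-strong microscopic dissipation, while the velocity derivatives of the hydrodynamic projection that appear in the top-order mixed terms must be kept under control through the explicit Gaussian structure of $\mbox{Ker}(L)$; once that is arranged, all the remaining estimates are routine Young inequalities.
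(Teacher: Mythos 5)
Your overall strategy — differentiate the functional, split into the linear part $G_\eps$ and the bilinear part $\eps^{-1}\Gamma$, exploit (H5) to gain orthogonality on the pure $x$-derivative pairings, weight the Young inequalities by powers of $\eps$ so that the $\eps^{-1}$ factors are either cancelled or absorbed by dissipation — is the paper's strategy, and the argument goes through. The paper packages this as an induction on $k$ using the toolbox estimates $\eqref{d0l}$–$\eqref{deltai,d0l}$, which already carry the $\Gamma$ contributions in their parenthesised terms, rather than invoking Theorem $\ref{lin}$ as a black box and then bolting on the bilinear estimates; these two presentations reach the same bound, so this is not a gap, only a reorganisation. That said, two points deserve flagging.

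\medskip
\noindent\emph{The stated linear dissipation is too strong.} You write
\[
2\,((G_\eps h,h))_{\mathcal{H}^k_\eps}\le-2C_G^{(k)}\norm{h}^2_{H^k_\Lambda}-\frac{2c^{(k)}}{\eps^2}\norm{h^\bot}^2_{H^k_\Lambda},
\]
but the functional $\mathcal{H}^k_\eps$ weights the $v$-derivatives by $\eps^2$, so the dissipation coming from $-\eps^{-2}\langle\partial^j_l\Lambda(h),\partial^j_l h\rangle$ is multiplied by $\eps^2 b^{(k)}_{j,l}$ whenever $|j|\ge1$ and only yields an order-one coefficient on $\norm{\partial^j_l h}^2_\Lambda$: this is explicit in $\eqref{boundlin}$, where $\norm{\nabla_vh}^2_\Lambda$ appears with coefficient $\bigl[\tfrac{2C^La}{e}-b\nu_3^\Lambda\bigr]$ and no $\eps^{-2}$. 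The $\eps^{-2}$-strong microscopic dissipation is available only for the pure $x$-derivatives of $h^\bot$. Fortunately, every place where you invoke it, you are absorbing a term of the form $\eps^{-2}\norm{(\partial^0_l h)^\bot}^2_\Lambda$, so the argument survives; but the intermediate inequality should be replaced by the correct statement, namely that the pure $x$-derivatives of $h^\bot$ receive a negative $\eps^{-2}$ coefficient while the $v$-derivatives receive only an order-one one (this is precisely why the paper needs the separate functional $\mathcal{H}^k_{\eps\bot}$ of Theorem~$\ref{decaydv}$ to control the $v$-derivatives uniformly).

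\medskip
\noindent\emph{The mixed terms can be handled much more simply.} You split
$a^{(k)}_{i,l}\bigl(\langle\partial^{\delta_i}_{l-\delta_i}\Gamma,\partial^0_l h\rangle+\langle\partial^{\delta_i}_{l-\delta_i}h,\partial^0_l\Gamma\rangle\bigr)$ and treat the first pairing by decomposing $\partial^0_l h=\pi_L\partial^0_l h+(\partial^0_l h)^\bot$, integrating the velocity derivative onto the Gaussian generators for the macroscopic piece, and using (H4) with $j\neq0$ for the microscopic piece. This works, but it misses the observation (used in the paper's toolbox, in the derivations of $\eqref{dx,dv}$ and $\eqref{deltai,d0l}$) that by a double integration by parts — first in $v_i$, then in $x_i$ — one has
\[
\langle\partial^{\delta_i}_{l-\delta_i}\Gamma,\partial^0_l h\rangle_{L^2_{x,v}}=\langle\partial^0_l\Gamma,\partial^{\delta_i}_{l-\delta_i}h\rangle_{L^2_{x,v}},
\]
so the whole mixed contribution collapses to $2a^{(k)}_{i,l}\langle\partial^0_l\Gamma,\partial^{\delta_i}_{l-\delta_i}h\rangle$, bounded by $\mathcal{G}^k_x(g,h)\norm{\partial^{\delta_i}_{l-\delta_i}h}_\Lambda$ using only (H4) with $j=0$ followed by an order-one Young. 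No fluid/microscopic splitting, no manual differentiation of $\pi_L$, and only $\mathcal{G}^k_x$ — never $\mathcal{G}^k_{x,v}$ — appears in these terms. This is what guarantees cleanly that the contribution of the scalar products feeds $K^{(k)}_1(\mathcal{G}^k_x)^2$ and not the $\eps^2$-weighted term. Your longer route does arrive at the same answer, but it obscures this structural simplification and adds bookkeeping that the identity above makes unnecessary.
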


One can remark that the norm constructed above leaves the $x$-derivatives free while it controls the $v$-derivatives by a factor $\eps$. 
\par We emphasize that this result shows that the derivative of the norm is control by the $x$-derivatives of $\Gamma$ and the Sobolev norm of $\Gamma$, but weakened by a factor $\eps^2$. This is important as our norm $\norm{.}^2_{\mathcal{H}^s_{\eps}}$ controls the $L^2_v(H^s_x)$-norm by a factor of order $1$ whereas it controls the whole $H^s_{x,v}$-norm by a multiplicative factor of order $1/\eps$.

\begin{theorem}\label{perturb}
Let $Q$ be a bilinear operator such that:
\begin{itemize}
\item the equation $\eqref{Boltz}$ admits an equilibrium $0 \leq \mu \in L^1(\T^d\times\R^d)$,
\item the linearized operator $L = L(h)$ around $\mu$ with the scaling $f = \mu + \eps \mu^{1/2} h$ satisfies $\emph{(H1')}$, $\emph{(H2')}$ and $\emph{(H3)}$,
\item the bilinear remaining term $\Gamma = \Gamma(h,h)$ in the linearization satisfies $\emph{(H4)}$ and $\emph{(H5)}$.
\end{itemize}
Then there exists $0<\eps_d\leq 1$ such that for any $s \geq s_0$ (defined in $\emph{(H4)}$ ), 
\begin{enumerate}
\item there exist $(b_{j,l}^{(s)}),\:(\alpha_l^{(s)}),\:(a_{i,l}^{(s)}) >  0 $, independent of $\Gamma$ and $\eps$,  such that for all $0<  \eps \leq \eps_d$:
$$\norm{\cdot}^2_{\mathcal{H}^s_{\eps}} \sim \left(\norm{\cdot}^2_{L^2_{x,v}}+\sum\limits_{|l|\leq s}\norm{\partial_l^0\cdot}^2_{L^2_{x,v}}+\eps^2\sum\limits_{\overset{|l|+|j|\leq s}{|j|\geq 1}}\norm{\partial_l^j\cdot}^2_{L^2_{x,v}}\right),$$
\item  there exist $\delta_s > 0$, $C_s > 0$ and $\tau_s >0$ such that for all $0<\eps\leq \eps_d$:
\end{enumerate}
For any distribution $0\leq f_{in} \in L^1(\T^d\times\R^d)$ with $f_{in} = \mu + \eps \mu^{1/2} h_{in}\geq 0$, $h_{in}$ in $\emph{\mbox{Ker}}(G_\eps)^\bot$ and 
$$\norm{h_{in}}_{\mathcal{H}^s_\eps} \leq \delta_s,$$
there exists a unique global smooth (in $H^s_{x,v}$, continuous in time) solution $f_\eps = f_\eps(t,x,v)$ to $\eqref{Boltz}$ which, moreover, satisfies $f_\eps = \mu + \eps \mu^{1/2} h_\eps \geq 0$ with:
$$\norm{h_\eps}_{\mathcal{H}^s_\eps} \leq \norm{h_{in}}_{\mathcal{H}^s_\eps} e^{-\tau_s t}.$$
\end{theorem}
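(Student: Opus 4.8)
The plan is to build $h_\eps$ by an iterative scheme, to propagate the a priori estimate of Proposition~\ref{apriori} along the scheme with constants that do not depend on $\eps$, to pass to the limit, and then to read the exponential decay off the uniform bounds; nonnegativity and uniqueness are dealt with separately. Concretely I set $h^{0}\equiv 0$ and let $h^{n+1}$ be the solution, with initial datum $h_{in}$, of the \emph{linear} equation
\[
\partial_t h^{n+1} + \frac1\eps v\cdot\nabla_x h^{n+1} = \frac1{\eps^{2}}L(h^{n+1}) + \frac1\eps\Gamma(h^{n},h^{n+1}).
\]
For each fixed $n$ this problem is well posed in $H^k_{x,v}$ by the $C^{0}$-semigroup generated by $G_\eps$ (Theorem~\ref{lin}) together with a Duhamel fixed-point argument for the bounded-in-time lower-order term $\frac1\eps\Gamma(h^{n},\cdot)$ --- this is the content of the Cauchy part of subsection~\ref{subsec:cauchy}. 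Since $h^{n+1}(0)=h_{in}\in\mathrm{Ker}(G_\eps)^{\bot}$, and since (H5) together with the self-adjointness of $L$ and the skew-adjointness of $v\cdot\nabla_x$ keeps $\mathrm{Ker}(G_\eps)^{\bot}$ invariant along the flow, Proposition~\ref{apriori} applies to each iterate with $g=h^{n}$, $h=h^{n+1}$.

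Writing $y_n(t)=\norm{h^{n}(t)}_{\mathcal{H}^k_\eps}^{2}$, Proposition~\ref{apriori} gives
\[
y_{n+1}' \le -K^{(k)}_0\norm{h^{n+1}}_{H^k_\Lambda}^{2}+K^{(k)}_1\big(\mathcal{G}^{k}_{x}(h^{n},h^{n+1})\big)^{2}+\eps^{2}K^{(k)}_2\big(\mathcal{G}^{k}_{x,v}(h^{n},h^{n+1})\big)^{2}.
\]
The norm equivalence of part~(1) yields, with $\eps$-independent constants and $\eps\le1$, $\norm{\cdot}_{H^k_{x}L^{2}_{v}}^{2}\lesssim\norm{\cdot}_{\mathcal{H}^k_\eps}^{2}$, $\eps^{2}\norm{\cdot}_{H^k_{x,v}}^{2}\lesssim\norm{\cdot}_{\mathcal{H}^k_\eps}^{2}$ and $\norm{\cdot}_{\mathcal{H}^k_\eps}^{2}\lesssim\norm{\cdot}_{H^k_\Lambda}^{2}$. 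Expanding the right-hand side by the controls of (H4) (valid for $k\ge k_0$), each nonlinear contribution is the product of one factor that is $\lesssim y_{n+1}$ or $\lesssim y_n$ and of one factor equal to $\norm{h^{n+1}}_{H^k_\Lambda}^{2}$ or to $\norm{h^{n}}_{H^k_\Lambda}^{2}$. The separation of $\mathcal{G}^{k}_{x}$ (no $v$-derivatives, paired with the order-$1$ part of $\norm{\cdot}_{\mathcal{H}^k_\eps}$) from $\mathcal{G}^{k}_{x,v}$ (paired with its $\eps^{2}$-weighted part), together with the $\eps^{2}$ in front of $K^{(k)}_2$, makes the companion factor equal to $\lesssim y_n$ (small along the induction) whenever the $\Lambda$-factor is $\norm{h^{n+1}}_{H^k_\Lambda}^{2}$, so that term is absorbed into $-K^{(k)}_0\norm{h^{n+1}}_{H^k_\Lambda}^{2}$, and equal to $\lesssim y_{n+1}$ whenever the $\Lambda$-factor is $\norm{h^{n}}_{H^k_\Lambda}^{2}$. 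Keeping, thanks to $\norm{\cdot}_{\mathcal{H}^k_\eps}^{2}\lesssim\norm{\cdot}_{H^k_\Lambda}^{2}$, both a term $-\mu\,y_{n+1}$ and half of the $H^k_\Lambda$-dissipation, I would induct on
\[
\mathcal{N}_n:=\sup_{t\ge0}\big(e^{2\tau_k t}y_n(t)\big)+\int_{0}^{\infty}e^{2\tau_k s}\norm{h^{n}(s)}_{H^k_\Lambda}^{2}\,ds,
\]
fixing first $\tau_k>0$ small (below $\mu$ and below the equivalence constant) and then $\delta_k>0$ small, both independent of $\eps$, so that $\mathcal{N}_n\le C\norm{h_{in}}_{\mathcal{H}^k_\eps}^{2}$ implies $\mathcal{N}_{n+1}\le C\norm{h_{in}}_{\mathcal{H}^k_\eps}^{2}$; the cases $n=0,1$ are immediate from Theorem~\ref{lin}. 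The decisive point is that the surviving dissipation constant $K^{(k)}_0$ is of order $1$ and is balanced only against $\norm{h_{in}}_{\mathcal{H}^k_\eps}^{2}$, so the time-integrated dissipation $\int_{0}^{\infty}e^{2\tau_k s}\norm{h^{n}}_{H^k_\Lambda}^{2}\,ds$ is $\lesssim\norm{h_{in}}_{\mathcal{H}^k_\eps}^{2}$ \emph{uniformly in $\eps$}, although no $\eps$-uniform pointwise bound on $\norm{h^{n}(t)}_{H^k_\Lambda}$ is available; this is precisely what makes the ``frozen'' factors harmless once one integrates in time.

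For the convergence, put $w^{n}:=h^{n+1}-h^{n}$; bilinearity of $\Gamma$ gives
\[
\partial_t w^{n}+\frac1\eps v\cdot\nabla_x w^{n}=\frac1{\eps^{2}}L(w^{n})+\frac1\eps\Gamma(h^{n},w^{n})+\frac1\eps\Gamma(w^{n-1},h^{n}),\qquad w^{n}(0)=0,
\]
and redoing the previous estimates with the extra source $\frac1\eps\Gamma(w^{n-1},h^{n})$ and the uniform smallness $\mathcal{N}_n\lesssim\delta_k^{2}$ produces $\mathcal{N}(w^{n})\le\frac12\mathcal{N}(w^{n-1})$ once $\delta_k$ is small. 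Since for each fixed $\eps$ the norm $\norm{\cdot}_{\mathcal{H}^k_\eps}$ is equivalent to $\norm{\cdot}_{H^k_{x,v}}$, the sequence $(h^{n})$ converges in $L^{\infty}_{t}H^k_{x,v}$, so (using $k\ge k_0>N/2$ and (H4)) one passes to the limit and obtains a solution $h_\eps$ of \eqref{LinBoltz} still satisfying the bounds above. Feeding Proposition~\ref{apriori} with $g=h=h_\eps$ and using $\norm{h_\eps}_{\mathcal{H}^k_\eps}\le\delta_k$, every nonlinear term is now $\lesssim\delta_k^{2}\,\norm{h_\eps}_{H^k_\Lambda}^{2}$ and gets absorbed, so $\frac{d}{dt}\norm{h_\eps}_{\mathcal{H}^k_\eps}^{2}\le-\tfrac12 K^{(k)}_0\norm{h_\eps}_{H^k_\Lambda}^{2}\le-2\tau_k\norm{h_\eps}_{\mathcal{H}^k_\eps}^{2}$, i.e.\ the stated $\norm{h_\eps(t)}_{\mathcal{H}^k_\eps}\le\norm{h_{in}}_{\mathcal{H}^k_\eps}e^{-\tau_k t}$. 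Nonnegativity of $f_\eps=M+\eps M^{1/2}h_\eps$ is obtained by running in parallel the positivity-preserving mild (Duhamel-along-characteristics) form of the iteration written directly for the distribution, splitting $Q$ into gain and loss parts, and identifying the limits; uniqueness follows by applying the $w^{n}$-estimate to the difference of two solutions with the same datum and invoking Gronwall.

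\textbf{Main obstacle.} The heart of the argument is the uniform-in-$\eps$ induction: one must use the refined assumptions (H4) exactly as stated --- the dichotomy $\mathcal{G}^{k}_{x}$ versus $\mathcal{G}^{k}_{x,v}$ and the $\eps^{2}$ weakening of $K^{(k)}_2$ in Proposition~\ref{apriori} --- and keep precise track, in every nonlinear product, of which factor is absorbed into the instantaneous $H^k_\Lambda$-dissipation and which one is controlled only through the $\eps$-uniform time-integrated dissipation of the previous iterate, since $\norm{\cdot}_{H^k_\Lambda}$ is not comparable to $\norm{\cdot}_{\mathcal{H}^k_\eps}$ pointwise uniformly in $\eps$.
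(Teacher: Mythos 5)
Your proposal is correct and captures the essential mechanism of the paper's proof — the same iterative scheme $h^{n+1}$ solving the equation with frozen coefficient $\Gamma(h^n,h^{n+1})$, and the same recognition that Proposition~\ref{apriori} closes uniformly in $\eps$ only because $\norm{\cdot}_{H^k_\Lambda}$ is controlled through its \emph{time integral}, not pointwise, by $\norm{\cdot}_{\mathcal{H}^k_\eps}$. But you package the conclusion differently, in three respects worth comparing.

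First, the paper propagates the \emph{unweighted} functional $E(h)=\sup_t\bigl(\norm{h(t)}_{\mathcal{H}^k_\eps}^2+\int_0^t\norm{h}_{H^k_\Lambda}^2\,ds\bigr)$ along the iteration (Lemma~\ref{lem3}), passes to a limit, and only then runs the a priori estimate with $g=h=h_\eps$ to extract decay by Gronwall (subsection~\ref{subsec:expodecay}). You instead fold the exponential weight $e^{2\tau_k t}$ into the iterated functional $\mathcal{N}_n$. That is a legitimate alternative that gives the decay rate for free at the end, at the cost of having to choose $\tau_k$ smaller than the Gronwall gain already inside the induction; the paper's two-step route is simpler to write because the second step is a one-line computation once existence is granted. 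Second, the paper obtains convergence of $(h^n)$ by mere boundedness plus Rellich compactness, which only yields a (weak) subsequential limit and is somewhat terse about passing to the limit in $\Gamma$; your contraction estimate on $w^n=h^{n+1}-h^n$ is cleaner and immediately furnishes a Cauchy sequence in $L^\infty_tH^k_{x,v}$ for fixed $\eps$. It does, however, require extending Proposition~\ref{apriori} to an equation with the extra source $\frac1\eps\Gamma(w^{n-1},h^n)$, which is not literally covered by the proposition as stated; the toolbox estimates go through verbatim (pair the source with $\partial^j_l w^n$, apply (H4) and Young as in Appendix~\ref{appendix:toolbox}), but you should state this modified estimate rather than cite Proposition~\ref{apriori} directly. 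Third, you address uniqueness and nonnegativity, which your $w$-estimate plus Gronwall and a gain/loss splitting of the iteration handle; the paper's written proof of Theorem~\ref{perturb} is in fact silent on both, so your proposal fills a genuine gap in the exposition rather than diverging from it.

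One minor bookkeeping point in your weighted induction: when you integrate the term $Cy_{n+1}\norm{h^n}^2_{H^k_\Lambda}$ against the weight $e^{2\tau_k t}$, you need either $\sup_t(e^{2\tau_k t}y_{n+1})\cdot\int\norm{h^n}^2_{H^k_\Lambda}\,ds$ or $\sup_t y_{n+1}\cdot\int e^{2\tau_k s}\norm{h^n}^2_{H^k_\Lambda}\,ds$; your functional $\mathcal{N}_n$ only contains the weighted integral, so you must either also track the unweighted one or use the second splitting, which requires you to recover $\sup_t y_{n+1}$ from $\mathcal{N}_{n+1}$ (immediate since $e^{2\tau_k t}\ge1$). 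This is easily repaired but should be made explicit before concluding $\mathcal{N}_{n+1}\le C\norm{h_{in}}^2_{\mathcal{H}^k_\eps}$.
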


The fact that we are asking $h_{in}$ to be in $\mbox{Ker}(G_\eps)^\bot$ just states that we want $f_{in}$ to have the same physical quantities as the global equilibrium $\mu$. This is a compulsory requirement as one can easily check that the physical quantities 
$$\int_{\T^d\times\R^d} f_\eps(x,v) dxdv ,\quad \int_{\T^d\times\R^d} vf_\eps(x,v) dxdv, \quad\int_{\T^d\times\R^d} \abs{v}^2 f_\eps(x,v) dxdv$$
are preserved with time (see \cite{Ce1} for instance).
\par Notice that the $\mathcal{H}^s_{\eps}$-norm is this theorem is the same than the one we constructed in Proposition $\ref{apriori}$.

\subsubsection{The boundedness of the $v$-derivatives}

As a corollary we have that the $H^s_x(L^2_v)$-norm decays exponentially independently of $\eps$ but that the only control we have on the $H^s_{x,v}$ is
$$\norm{h_\eps}_{H^s_{x,v}} \leq \frac{\delta_s}{\eps} e^{-\tau_s t}.$$
This seems to tell us that the $v$-derivatives can blow-up at a rate $1/\eps$. However, Guo \cite{Gu4} showed that one can prove that there is no explosion if one controls independently the fluid part and the microscopic part of the solution. This idea, combined with our original one, leads to the construction of a new norm which will only control the microscopic part of the solution whenever we face a derivative in the $v$ variable.
\par We define the following positive quadratic form
$$\norm{\cdot}^2_{\mathcal{H}^s_{\eps\bot}} = \sum\limits_{\overset{|j|+|l|\leq s}{|j|\geq 1}}b_{j,l}^{(s)}\norm{\partial^j_l(\mbox{Id} - \pi_L)}^2_{L^2_{x,v}} + \sum\limits_{|l|\leq s}\alpha_l^{(s)}\norm{\partial^0_l\cdot}^2_{L^2_{x,v}} + \sum\limits_{\overset{|l|\leq s}{i,c_i(l)> 0}}a_{i,l}^{(s)}\eps \langle \partial^{\delta_i}_{l-\delta_i}\cdot,\partial^0_l\cdot\rangle _{L^2_{x,v}}.$$

\begin{theorem}\label{decaydv}
Under the same conditions as in Theorem $\ref{perturb}$, for all $s\geq s_0$, there exist $(b_{j,l}^{(s)}),\:(\alpha_l^{(s)}),\:(a_{i,l}^{(s)}) >  0$ and $0<\eps_d\leq 1$ such that for all $0< \eps \leq \eps_d$:
\begin{enumerate}
\item $\norm{\cdot}_{\mathcal{H}^s_{\eps\bot}}  \sim \norm{\cdot}_{H^s_{x,v}}$, independently of $\eps$,
\item if $h_\eps$ is a solution of $\ref{LinBoltz}$ in $H^s_{x,v}$ with $\norm{h_{in}}_{\mathcal{H}^s_{\eps \bot}} \leq \delta'_s$ then
$$\norm{h_\eps}_{\mathcal{H}^s_{\eps \bot}} \leq \delta'_s e^{-\tau'_s t},$$
where $\delta_s'$ and $\tau_s'$ are strictly positive constants independent of $\eps$.
\end{enumerate}
\end{theorem}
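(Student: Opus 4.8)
The plan is to repeat the strategy behind Theorem~\ref{perturb} --- an $\eps$-uniform a priori differential inequality on the functional, closed by a continuation argument --- but with every velocity-derivative slot now carrying the \emph{microscopic} quantity $\norm{\partial^j_l(\mathrm{Id}-\pi_L)h}^2_{L^2_{x,v}}$ weighted by an $\eps$-independent constant instead of $\eps^2\norm{\partial^j_l h}^2_{L^2_{x,v}}$. Throughout write $h^\bot=(\mathrm{Id}-\pi_L)h$. For point~(1), the key observation is that, since $\pi_L$ acts only on $v$ and, by (H3), $\mathrm{Ker}(L)$ is spanned by functions $P_i(v)e^{-|v|^2/4}$, any velocity derivative of the fluid part is dominated by pure space derivatives, $\norm{\partial^j_l\pi_L h}_{L^2_{x,v}}\le C\norm{\partial^0_l h}_{L^2_{x,v}}$ for $|j|\ge1$ (one of the toolbox identities of Section~\ref{sec:toolbox}). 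Writing $\partial^j_l h=\partial^j_l h^\bot+\partial^j_l\pi_L h$ and using that $|j|\ge1$ forces $|l|\le k-1$, one recovers $\norm{h}^2_{H^k_{x,v}}$ from the quadratic form; conversely each of its terms is $\lesssim\norm{h}^2_{H^k_{x,v}}$, and the mixed products are absorbed by Young's inequality once the $a^{(k)}_{i,l}$ are small with respect to the $b^{(k)}_{j,l}$ and $\alpha^{(k)}_l$. No power of $\eps$ enters, so the equivalence is uniform in $\eps$.

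For point~(2), applying $\mathrm{Id}-\pi_L$ to \eqref{LinBoltz} and using $L\pi_L=0$, $(\mathrm{Id}-\pi_L)L=L$ and (H5) gives
$$\partial_t h^\bot+\frac1\eps(\mathrm{Id}-\pi_L)(v\cdot\nabla_x h^\bot)+\frac1\eps(\mathrm{Id}-\pi_L)(v\cdot\nabla_x\pi_L h)=\frac1{\eps^2}L(h^\bot)+\frac1\eps\Gamma(h,h).$$
For $|j|\ge1$, $|j|+|l|\le k$, I differentiate this by $\partial^j_l$ and pair with $\partial^j_l h^\bot$. The term $-\Lambda$ produces, via (H1'), the strong dissipation $-\tfrac{\nu_5^\Lambda}{2\eps^2}\norm{\partial^j_l h^\bot}^2_\Lambda$ up to an $\eps^{-2}$-weighted remainder of order $k-1$, and $K$ is absorbed by it through (H2'); the free-transport part $(\mathrm{Id}-\pi_L)(v\cdot\nabla_x h^\bot)$ contributes, after its skew-symmetric piece vanishes by integration by parts on $\T^N$, only $\eps^{-1}$-weighted products of microscopic derivatives of equal total order, eaten by the $\eps^{-2}$-dissipation for $\eps$ small. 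The remainders of order $\le k-1$ are treated by downward induction on the total order exactly as in Theorem~\ref{perturb}: $\eps^{-2}\norm{h^\bot}^2_{H^{k-1}_{x,v}}$ is absorbed by the dissipation of the lower-order microscopic terms and, for the slots with no velocity derivative, by the $\alpha^{(k)}_l$-terms via the local coercivity (H3) --- this forces the (triangular) choice of all coefficients. For the $j=0$ terms and the mixed products one reuses and slightly adapts the computations of Proposition~\ref{apriori}, splitting every $\partial^{\delta_i}_{l-\delta_i}h$ that appears as $\partial^{\delta_i}_{l-\delta_i}h^\bot+\partial^{\delta_i}_{l-\delta_i}\pi_L h$ and controlling the second piece by $\norm{\partial^0_{l-\delta_i}h}$. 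Finally the nonlinearity $\eps^{-1}\langle\partial^j_l\Gamma(h,h),\partial^j_l h^\bot\rangle$ is bounded by (H4) (case $j\ne0$) by $\eps^{-1}\mathcal{G}^k_{x,v}(h,h)\norm{\partial^j_l h^\bot}_\Lambda$; point~(1) gives $\norm{h}_{H^k_{x,v}}\sim\norm{h}_{\mathcal{H}^k_{\eps\bot}}$ uniformly in $\eps$, so $\mathcal{G}^k_{x,v}(h,h)\lesssim\norm{h}_{\mathcal{H}^k_{\eps\bot}}\norm{h}_{H^k_\Lambda}$ and the surplus $\eps^{-1}$ is killed by the $\eps^{-2}$-dissipation, leaving a harmless higher-order term. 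Summing all this, for a suitable choice of the coefficients and $0<\eps\le\eps_N$,
$$\frac{d}{dt}\norm{h}^2_{\mathcal{H}^k_{\eps\bot}}\le\Bigl(-K+C\norm{h}_{\mathcal{H}^k_{\eps\bot}}\Bigr)\norm{h}^2_{H^k_\Lambda}.$$

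To conclude, note that $\norm{h_{in}}_{\mathcal{H}^k_{\eps\bot}}\le\delta'_k$ implies, by point~(1) and $\eps\le1$, $\norm{h_{in}}_{\mathcal{H}^k_\eps}\le C\norm{h_{in}}_{H^k_{x,v}}\le C'\delta'_k$, which for $\delta'_k$ small enough is $\le\delta_k$; since moreover $h_{in}\in\mathrm{Ker}(G_\eps)^\bot$, Theorem~\ref{perturb} provides a unique global $h_\eps\in C(\R_+;H^k_{x,v})$. Plugging $h_\eps$ into the a priori estimate: as long as $\norm{h_\eps}_{\mathcal{H}^k_{\eps\bot}}\le K/(2C)$ the right-hand side is $\le-\tfrac K2\norm{h_\eps}^2_{H^k_\Lambda}\le-c\norm{h_\eps}^2_{\mathcal{H}^k_{\eps\bot}}$, where the last two steps use $\norm{\cdot}_{H^k_\Lambda}\gtrsim\norm{\cdot}_{H^k_{x,v}}\sim\norm{\cdot}_{\mathcal{H}^k_{\eps\bot}}$, all uniform in $\eps$ (the first bound by the coercivity in (H1)). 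A standard continuity argument then shows that the threshold $K/(2C)$ is never reached if $\delta'_k$ is chosen small enough, and Grönwall's lemma gives $\norm{h_\eps}_{\mathcal{H}^k_{\eps\bot}}\le\norm{h_{in}}_{\mathcal{H}^k_{\eps\bot}}e^{-\tau'_k t}\le\delta'_k e^{-\tau'_k t}$ with $\tau'_k=c/2$, all constants independent of $\eps$.

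The genuinely new difficulty is the coupling term $\eps^{-1}(\mathrm{Id}-\pi_L)(v\cdot\nabla_x\pi_L h)$ in the equation for $h^\bot$: it feeds the macroscopic gradients $\nabla_x\pi_L h$ into the microscopic equation at order $\eps^{-1}$, whereas our new weights on $\norm{\partial^j_l h^\bot}^2$ are of order $1$ (not $\eps^2$). Paired with $\partial^j_l h^\bot$ it produces a term of size $\eps^{-1}\norm{\nabla_x\partial^0_l\pi_L h}_{L^2}\norm{\partial^j_l h^\bot}_\Lambda$; the factor $\partial^j_l h^\bot$ can be fed to the $\eps^{-2}$-dissipation, but $\nabla_x\pi_L h$ is dissipated only at order $1$ (by the hypocoercive mechanism of Theorem~\ref{perturb}), so balancing the two requires a delicate --- but, for each fixed $k$, realizable --- hierarchy in which the weights $\alpha^{(k)}$ of the pure-$x$ terms dominate those of the microscopic terms, which in turn dominate the mixed-term coefficients $a^{(k)}$, with $\eps_N$ sent to $0$ only at the very end. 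This is also the place where the explicit polynomial form of $\mathrm{Ker}(L)$ in (H3) is really used, since it is what turns $(\mathrm{Id}-\pi_L)(v\cdot\nabla_x\pi_L h)$ into a bounded, explicitly controlled function of $\nabla_x\pi_L h$.
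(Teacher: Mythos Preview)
Your proposal is correct and follows essentially the same approach as the paper: an a priori differential inequality on $\norm{\cdot}_{\mathcal{H}^k_{\eps\bot}}$ built by induction on $k$ from energy estimates on the microscopic derivatives (the ``orthogonal'' toolbox estimates $\eqref{dvbot}$--$\eqref{deltai,d0lbot}$), the crucial point being that the $\eps^{-1}$ macroscopic-to-microscopic coupling $(\mathrm{Id}-\pi_L)(v\cdot\nabla_x\pi_L h)$ produces, after an $\eps$-weighted Young inequality, an order-1 contribution in $\norm{\nabla_x h}^2_{L^2_{x,v}}$ absorbed by the hypocoercive cross term $a\eps\langle\partial^{\delta_i}_{l-\delta_i}h,\partial^0_l h\rangle$, while the mirror piece is swallowed by the $\eps^{-2}$-dissipation --- exactly as you describe. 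Your final closure via Theorem~\ref{perturb} for existence, then Gr\"onwall, matches the paper's Section~\ref{sec:decaydv}; the only cosmetic difference is that the paper lands on $(C\norm{h}^2_{\mathcal{H}^k_{\eps\bot}}-K)\norm{h}^2_{H^k_\Lambda}$ rather than your linear-in-norm version, which is immaterial for the conclusion.
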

This theorem builds up a functional that is equivalent to the standard Sobolev norm, independently of $\eps$. Thus, it yields an exponential decay for the $v$-derivatives as well as for the $x$-derivatives. However, the distorted norm used in Theorem $\ref{perturb}$ asked less control on the $v$-derivatives for the initial data, suggesting that, in the limit as $\eps$ goes to zero, almost only the $x$-variables have to be controlled.


\subsubsection{The hydrodynamical limit on the torus for Maxwellian particles}

Our theorem states that one can really expect a convergence of solutions of collisional kinetic models near equilibrium towards a solution of fluid dynamics equations. Indeed, the smallness assumption on the initial perturbation does not depend on the parameter $\eps$ as long as $\eps$ is small enough.
\par We then define the following macroscopic quantities
\begin{itemize}
\item the particles density $\rho_\eps(t,x) = \langle \mu(v)^{1/2},h_\eps(t,x,v)\rangle_{L^2_v},$
\item the mean velocity $u_\eps(t,x)  = \langle v\mu(v)^{1/2},h_\eps(t,x,v)\rangle_{L^2_v},$
\item the temperature $\displaystyle{\theta_\eps(t,x) = \frac{1}{d}\langle (\abs{v}^2-d)\mu(v)^{1/2},h_\eps(t,x,v)\rangle_{L^2_v}}.$
\end{itemize}

\bigskip
The theorem $\ref{perturb}$ tells us that, for $s\geq s_0$, the sequence $(h_\eps)_{\eps>0}$ converges (up to an extraction) weakly-* in $L^\infty_t(H^s_lL^2_v)$ towards a function $h$. Such a weak convergence enables us to use the Theorem $1.1$ of \cite{BU}, which is a slight modification of the result in \cite{BGL} to get that
\begin{enumerate}
\item $h$ is in $\mbox{Ker}(L)$, so of the form $$h(t,x,v) = \left[\rho(t,x) + v.u(t,x) + \frac{1}{2}(\abs{v}^2-d)\theta(t,x)\right]\mu(v)^{1/2},$$
\item $(\rho_\eps,u_\eps,\theta_\eps)$ converges weakly* in $L^\infty_t(H^s_x)$ towards $(\rho,u,\theta),$
\item $(\rho,u,\theta)$ satisfies the incompressible Navier-Stokes equations $\eqref{NS}$ as well as the Boussinesq equation $\eqref{Boussinesq}$.
\end{enumerate}
\par If such a result confirms the fact that one can derive the incompressible Navier-Stokes equations from the Boltzmann equation, it does unfortunately neither give us the continuity of $h$ nor the initial condition verified by $(\rho,u,\theta)$, depending on $(\rho_{in},u_{in},\theta_{in})$, macroscopic quantities associated to $h_{in}$. Our next, and final step, is therefore to link the last two triplets and so to understand the convergence $h_\eps \to h$ more deeply. This is the purpose of the following theorem.

\begin{theorem}\label{hydrolim}
Consider $s \geq s_0$ and $h_{in}$ in $H^s_{x,v}$ such that $\norm{h_{in}}_{\mathcal{H}^s_\eps}\leq \delta_s$.
\par Then, $(h_\eps)_{\eps>0}$ exists for all $0<\eps\leq \eps_d$ and converges weakly* in $L^\infty_t(H^s_xL^2_v)$ towards $h$ such that $h \in \mbox{Ker}(L)$, with $\nabla_x\cdot u=0$ and $\rho + \theta =0$.

\bigskip
Furthermore, $\int_0^Thdt$  belongs to $H^s_xL^2_v$ and  it exists $C>0$ such that,
$$\norm{\int_0^{+\infty}hdt - \int_0^{+\infty} h_\eps dt}_{H^s_xL^2_v} \leq C \sqrt{\eps\abs{\mbox{ln}(\eps)}}.$$

\bigskip
One can have a strong convergence in $L^2_{[0,T]}H^s_xL^2_v$ only if $h_{in}$ is in $\emph{\mbox{Ker}}(L)$ with $\nabla_x \cdot u_{in} = 0$ and $\rho_{in} + \theta_{in} =0$ (initial layer conditions).
\par Moreover, in that case we have
$$\norm{h - h_\eps}_{L^2_{[0,+\infty)}H^s_xL^2_v} \leq C \sqrt{\eps\abs{\mbox{ln}(\eps)}},$$
and for all $\delta$ in $[0,1]$, if $h_{in}$ belongs to $H^{s+\delta}_xL^2_v$,
$$\sup\limits_{t\in [0,+\infty)}\norm{h - h_\eps}_{H^s_xL^2_v}(t) \leq C \eps^{\min(\delta,1/2)}.$$
\end{theorem}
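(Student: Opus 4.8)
\textbf{Proof proposal for Theorem \ref{hydrolim}.}

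The plan is to combine the uniform-in-$\eps$ estimates already obtained (Theorems \ref{perturb} and \ref{decaydv}) with a direct spectral analysis of the operator $L - \eps v\cdot\nabla_x$ on the discrete Fourier side of the torus, in the spirit of \cite{BU} and \cite{EP}, but replacing the stationary-phase argument (unavailable on $\T^N$) by a quantitative Duhamel-type computation together with the exponential decay. First I would record the weak-$*$ compactness: Theorem \ref{perturb} gives a uniform bound $\norm{h_\eps}_{L^\infty_t H^k_x L^2_v}\leq C\delta_k$, so up to extraction $h_\eps \rightharpoonup h$ weakly-$*$; passing to the limit in the weak formulation of \eqref{LinBoltz} forces $L(h)=0$ (because the term $\eps^{-2}L(h_\eps)$ must stay bounded), hence $h\in\mathrm{Ker}(L)$ and has the stated Maxwellian form, and testing against collision invariants and against $v\otimes v$ and $v|v|^2$ yields $\nabla_x\cdot u = 0$ and $\rho+\theta=0$ by the usual incompressible/Boussinesq argument already invoked via \cite{BU}.

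The heart of the proof is the time-averaged strong convergence with rate $\sqrt{\eps|\ln\eps|}$. Here I would write, in each Fourier mode $\xi\in\Z^N$, the evolution $\partial_t \hat h_\eps(\xi) = \frac{1}{\eps^2}\big(L - \eps\, i v\cdot\xi\big)\hat h_\eps(\xi)$ and split $\hat h_\eps(\xi,v) = P_\xi \hat h_\eps + (\mathrm{Id}-P_\xi)\hat h_\eps$, where $P_\xi$ is the spectral projection onto the part of $\mathrm{Ker}(L)$ surviving the constraints generated by $v\cdot\xi$ (the "hydrodynamic" modes). On the complementary part one has a genuine spectral gap of size $O(1/\eps^2)$ from (H3) (coercivity of $L$ on $\mathrm{Ker}(L)^\perp$), so that part relaxes instantly and contributes an error controlled after integration in time by $O(\eps)$ times Sobolev norms; this is where the bound $\int_0^{+\infty}\norm{h_\eps^\perp}^2\,dt = O(\eps)$ coming from the dissipation term $-K_0^{(k)}\norm{h}_{H^k_\Lambda}^2$ in Proposition \ref{apriori} (after a Gronwall integration) is used. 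On the hydrodynamic part one integrates the equation in time: $\int_0^T \hat h_\eps(\xi)\,dt$ telescopes against the $\frac1\eps v\cdot\nabla_x$ and $\frac1{\eps^2}L$ terms, and the Boltzmann/Navier-Stokes exponential stability (Theorems \ref{perturb} and \ref{decaydv}, plus the known decay for \eqref{NS}) lets one send $T\to+\infty$ and control the tail. The $|\ln\eps|$ factor appears precisely from optimizing the crossover time between the $O(\eps)$ finite-time error and the $e^{-\tau t/\eps^{?}}$-type tails — more concretely, from splitting $\int_0^\infty = \int_0^{T_\eps} + \int_{T_\eps}^\infty$ with $T_\eps \sim |\ln\eps|$ and balancing a $\eps T_\eps$ against a $e^{-cT_\eps}$.

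Next I would treat the pointwise-in-time statements. If the initial layer conditions hold, i.e. $h_{in}\in\mathrm{Ker}(L)$, $\nabla_x\cdot u_{in}=0$, $\rho_{in}+\theta_{in}=0$, then $P_\xi \hat h_{in} = \hat h_{in}$ for the relevant modes and there is no initial jump; the $L^2_{[0,\infty)}$ estimate then follows from the same mode-by-mode analysis without the averaging, and the $\sup_t$ estimate with rate $\eps^{\min(\delta,1/2)}$ follows by interpolating the $L^2_t$ bound against the uniform $H^{k+\delta}_x$ bound (using $\norm{g}_{L^\infty_t}^2 \lesssim \norm{g}_{L^2_t}\norm{\partial_t g}_{L^2_t}$ type inequalities, with $\partial_t h_\eps$ controlled in $H^{k-1}$ by the equation and the extra $\delta$ derivatives absorbing the loss), together with the exponential decay to kill large times. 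For the $L^2_{[0,T]}$ \emph{only if} direction: if $h_{in}\notin\mathrm{Ker}(L)$ or the divergence/Boussinesq constraints fail, then $(\mathrm{Id}-P_\xi)\hat h_\eps(0)\neq 0$ in some mode, and on $[0,\delta]$ this piece stays of order $1$ in $L^2_v$ while oscillating/relaxing on the $\eps^2$ time scale, so $h_\eps$ cannot converge strongly in $L^2_t$ near $t=0$ — one exhibits the obstruction explicitly on a single Fourier mode by computing the $2\times2$ (or small) matrix exponential of $L-\eps i v\cdot\xi$ restricted to the finite-dimensional relevant subspace.

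The main obstacle I expect is the second paragraph: getting the explicit $\sqrt{\eps|\ln\eps|}$ rate for the time average. Because the torus Fourier variable is discrete one cannot integrate by parts in $\xi$ to gain decay as in $\R^N$ (\cite{Uk1},\cite{BU}); instead the gain must come entirely from (i) the quantitative dissipation $\int_0^\infty \norm{h_\eps^\perp}^2 dt = O(\eps)$ furnished by Proposition \ref{apriori}, (ii) careful bookkeeping of the hydrodynamic eigenvalue expansion $\lambda_j(\eps\xi) = i\,\eps\, c_j|\xi| - \eps^2\mu_j|\xi|^2 + o(\eps^2)$ uniformly over the (infinitely many) modes $\xi$, which requires controlling the remainder $o(\eps^2)$ by $|\xi|$-weighted quantities that are summable once $k\geq k_0 > N/2$, and (iii) the $T_\eps\sim|\ln\eps|$ crossover. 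Keeping all constants uniform in $\xi$ while summing the Fourier series, and making sure the error terms are measured in norms dominated by $\norm{h_{in}}_{\mathcal H^k_\eps}$, is the delicate part; everything else is a combination of the already-established uniform estimates with standard interpolation and Gronwall arguments.
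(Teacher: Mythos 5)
Your overall architecture matches the paper's: Duhamel, the Ellis--Pinsky Fourier decomposition of $e^{tG_\eps}$ on $\T^N$, a finite-time estimate followed by a crossover at $T_\eps\sim|\ln\eps|$ against the uniform exponential decay from Theorems \ref{perturb} and \ref{decaydv}, and the ``only if'' obstruction exhibited on a single Fourier mode (the paper's Lemma \ref{Ueps0j} singles out a direction $n_1$ and shows $e^{i\alpha_j t|n_1|/\eps}P_{0j}\hat h_{in}(n_1)$ is a persistent periodic oscillation). Those parts are essentially the paper's proof.

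There is, however, one concrete gap. For the $\sup_{t}$ bound with rate $\eps^{\min(\delta,1/2)}$ you propose a time interpolation $\norm{g}_{L^\infty_t}^2\lesssim\norm{g}_{L^2_t}\norm{\partial_t g}_{L^2_t}$ with $\partial_t h_\eps$ controlled ``in $H^{k-1}$ by the equation.'' This fails: from \eqref{LinBoltz}, $\partial_t h_\eps = -\eps^{-1}v\cdot\nabla_x h_\eps + \eps^{-2}L(h_\eps^\perp) + \eps^{-1}\Gamma(h_\eps,h_\eps)$ is \emph{not} uniformly bounded in $\eps$ — even with $\norm{h_\eps^\perp}_\Lambda = O(\eps)$ the term $\eps^{-2}L(h_\eps^\perp)$ is $O(1/\eps)$, and $\eps^{-1}v\cdot\nabla_x h_\eps$ is $O(1/\eps)$ as well — so $\norm{g}_{L^2_t}^{1/2}\norm{\partial_t g}_{L^2_t}^{1/2}$ is $O(1)$ at best, not $O(\eps^{1/2})$. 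The extra $\delta$ derivatives in the hypothesis are in the $x$-variable, and that is where the interpolation must take place: the paper's Lemma \ref{Uepslj} (and its bilinear analogues \ref{psiepslj}) bounds each low-frequency piece on a given Fourier mode $n$ by $\min(1,\eps|n|)$-type factors, obtaining $O(\eps^{2\delta})$ times $\norm{h_{in}}^2_{H^\delta_x L^2_v}$ by interpolating between $\delta=0$ (continuity) and $\delta=1$ (one power of $\eps|n|$ absorbed by an $x$-derivative). That frequency interpolation is what produces the stated rate; the time interpolation you propose cannot.

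Two smaller remarks. First, your plan is silent on the nonlinear Duhamel term $\Psi^\eps(u_\eps)=\int_0^t\eps^{-1}e^{(t-s)G_\eps}\Gamma(h_\eps,h_\eps)\,ds$: the $1/\eps$ prefactor is absorbed because $\Gamma$ lands in $\mathrm{Ker}(L)^\perp$ (H5), so the $P_{0j}$ terms vanish and one picks up an extra $\eps|n|$, and the estimates of Lemmas \ref{psieps0j}--\ref{psiuepsu} are a genuinely separate piece of work. Second, your crossover heuristic ``balance $\eps T_\eps$ against $e^{-cT_\eps}$'' does not actually produce $\sqrt{\eps|\ln\eps|}$; that rate enters through the $\sqrt{T\eps}$ bound on the time-averaged remainder $\int_0^T \psi^\eps_R$, evaluated at $T_M\sim|\ln\eps|$ (and after $T_M$ one uses exponential decay of both $h$ and $h_\eps$), which is slightly different from what you wrote.
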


This theorem proves the strong convergences for $(\rho_\eps,u_\eps,\theta_\eps)$ towards $(\rho,u,\theta)$ but above all it shows that $(\rho,u,\theta)$ is the solution to the incompressible Navier-Stokes equations together with the Boussinesq equation satisfying the initial conditions:
\begin{itemize}
\item $u(0,x) = Pu_{in}(x)$, where $Pu_{in}(x)$ is the divergence-free part of $u_{in}(x)$,
\item $\rho(0,x) = - \theta(0,x) = \frac{1}{2}(\rho_{in}(x) - \theta_{in}(x)).$
\end{itemize}

\bigskip
Finally, note that in the case of initial data satisfying the initial layer conditions, the strong convergence in time requires a little bit more regularity from the initial data. This fact was already noticed in $\R^d$ (see \cite{BU} Lemma $6.1$) but overcome by considering weighted norms in velocity.

\section{Toolbox: fluid projection and a priori energy estimates} \label{sec:toolbox}
In this section we are going to give some inequalities we are going to use  and to refer to throughout the sequel. First we start with some properties concerning the projection in $L^2_v$ onto $\mbox{Ker}(L)$: $\pi_L$. Then, because we want to estimate all the terms appearing in the $H^s_{x,v}$-norm to estimate the functionals $\mathcal{H}^s_\eps$ and $\mathcal{H}^s_{\eps \bot}$, we will give upper bound on their time derivatives. The proofs are only technical and the interested reader will find them in Appendix $\ref{appendix:toolbox}$.
\par We are assuming there that $L$ is having properties (H1'), (H2') and (H3), that $\Gamma$ satisfies (H4) and (H5) and that $0<\eps\leq 1$.


\subsection{Properties concerning the fluid projection $\pi_L$}
We already know that $L$ is acting on $L^2_v$, with $\mbox{Ker}(L) = \mbox{Span}(\phi_1,\dots,\phi_d)$, with $(\phi_i)_{1\leq i\leq N}$ an orthonormal family, we obtain directly a useful formula for the orthogonal projection on $\mbox{Ker}(L)$ in $L^2_v$, $\pi_L$:

\begin{equation} \label{piL}
\forall h \in L^2_v,\quad \pi_L(h) = \sum_{i=1}^N\left(\int_{\R^d}h\phi_idv\right)\phi_i.
\end{equation}

\bigskip
Plus, (H3) states that $\phi_i = P_i(v)e^{-|v|^2/4}$, where $P_i$ is a polynomial. Therefore, direct computations and Cauchy-Schwarz inequality give that $\pi_L$ is continuous on $H^s_{x,v}$ with

\begin{equation}\label{projineq}
\forall s \in \N, \exists C_{\pi s}>0, \forall h \in H^s_{x,v}, \quad \norm{\pi_L(h)}^2_{H^s_{x,v}}\leq C_{\pi s}\norm{h}^2_{H^s_{x,v}}.
\end{equation}

More precisely one can find that for all $s$ in $\N$

\begin{equation}\label{dvcontrolL}
\forall |j|+|l|=s, \forall h \in H^s_{x,v}, \quad\norm{\partial^j_l\pi_L(h)}^2_{L^2_{x,v}}\leq C_{\pi s}\norm{\partial_l^0\pi_L(h)}^2_{L^2_{x,v}}.
\end{equation}

\bigskip
Finally, building the $\Lambda$-norm one can find that in all the collisional kinetic equations concerned here we have that

\begin{equation}\label{L2L2lambdafluid}
\exists C_\pi >0, \forall h \in L^2_{x,v}, \: \norm{\pi_L(h)}^2_{\Lambda}\leq C_\pi\norm{h}^2_{L^2_{x,v}}.
\end{equation}

\bigskip
Then we can also use the properties of the torus to obtain Poincare type inequalities. This can be very useful thanks to the next proposition, which is proved in Appendix $\ref{appendix:toolbox}$.

\begin{prop} \label{kernel}
Let $a$ and $b$ be in $\R^*$ and consider the operator $G = aL - bv.\nabla_x$ acting on $H^1_{x,v}$.
\par If $L$ satisfies $\emph{(H1)}$ and $\emph{(H3)}$ then $$\emph{\mbox{Ker}}(G) = \emph{\mbox{Ker}}(L).$$
\end{prop}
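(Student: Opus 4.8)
The plan is to prove the two inclusions $\mbox{Ker}(L) \subseteq \mbox{Ker}(G)$ and $\mbox{Ker}(G) \subseteq \mbox{Ker}(L)$ separately. The first is immediate: if $h \in \mbox{Ker}(L)$, then $L(h) = 0$, and moreover $h$ depends only on $v$ (being a linear combination of the $\phi_i(v)$), so $v\cdot\nabla_x h = 0$; hence $G(h) = aL(h) - bv\cdot\nabla_x h = 0$. Wait — more carefully, $\mbox{Ker}(L)$ as a subspace of $H^1_{x,v}$ consists of functions $h(x,v) = \sum_i c_i(x)\phi_i(v)$ with each $c_i$ constant in $x$? Not necessarily; $L$ acts only in $v$, so $\mbox{Ker}(L)$ in $H^1_{x,v}$ is $\{h : h(x,\cdot)\in\mbox{Ker}_v(L) \text{ for a.e. }x\}$, i.e. $h = \sum_i c_i(x)\phi_i(v)$ with $c_i \in H^1_x$. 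So I should not claim $v\cdot\nabla_x h = 0$ for all such $h$. Thus the inclusion $\mbox{Ker}(L)\subseteq\mbox{Ker}(G)$ is \emph{not} trivial either, and the statement is genuinely that both kernels coincide with the $x$-independent elements of $\mbox{Ker}_v(L)$. Let me restructure.

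The real content is: $G(h) = 0$ forces $h$ to lie in $\mbox{Ker}_v(L)$ and to be independent of $x$. First I would take $h \in \mbox{Ker}(G)$, so $aL(h) = bv\cdot\nabla_x h$. Test this identity against $h$ in $L^2_{x,v}$: the transport term $\langle v\cdot\nabla_x h, h\rangle_{L^2_{x,v}} = \frac12\int_{\T^N\times\R^N} v\cdot\nabla_x(h^2)\,dx\,dv = 0$ by the divergence theorem on the torus (no boundary). Hence $\langle L(h), h\rangle_{L^2_{x,v}} = 0$. By the local coercivity hypothesis (H3), $\langle L(h),h\rangle_{L^2_v} \leq -\lambda\|h^\bot\|^2_{\Lambda_v}$ pointwise in $x$; integrating in $x$ gives $0 = \langle L(h),h\rangle_{L^2_{x,v}} \leq -\lambda\|h^\bot\|^2_\Lambda \leq 0$, forcing $h^\bot = 0$, i.e. $h = \pi_L(h) \in \mbox{Ker}_v(L)$ for a.e. $x$; in particular $L(h) = 0$. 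Plugging back into $G(h) = 0$ yields $v\cdot\nabla_x h = 0$.

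It then remains to deduce from $v\cdot\nabla_x h = 0$ together with $h = \sum_i c_i(x)\phi_i(v)$ that each $c_i$ is constant, so that $h\in\mbox{Ker}(L)$ in the $x$-independent sense — but note that the proposition as stated only asserts the set equality $\mbox{Ker}(G) = \mbox{Ker}(L)$ with $\mbox{Ker}(L)$ understood as a subspace of $H^1_{x,v}$, i.e. allowing $x$-dependence; so actually I must be careful about what is being claimed. Reading it again, I believe the intended and correct statement is the set equality where both are subspaces of $H^1_{x,v}$, and the nontrivial direction is $\mbox{Ker}(G)\subseteq\mbox{Ker}(L)$, which is exactly what the energy estimate above delivers, while $\mbox{Ker}(L)\subseteq\mbox{Ker}(G)$ fails in general unless one additionally knows $x$-independence. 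So I would instead phrase the conclusion as: the energy argument shows $\mbox{Ker}(G)\subseteq\mbox{Ker}(L)$; for the reverse, observe that $h\in\mbox{Ker}(G)$ also forced $v\cdot\nabla_x h=0$, and writing $h=\sum_i c_i(x)\phi_i(v)$ with the $\phi_i$ of the form $P_i(v)e^{-|v|^2/4}$, the relation $\sum_i \nabla_x c_i(x)\cdot v\, \phi_i(v) = 0$ for a.e. $(x,v)$ — using that the functions $v\mapsto v_m\phi_i(v)$ are linearly independent in $L^2_v$ — gives $\nabla_x c_i \equiv 0$, so $c_i$ is constant and $h$ is genuinely an element of $\mbox{Ker}_v(L)$ independent of $x$, hence trivially in $\mbox{Ker}(L)\subseteq\mbox{Ker}(G)$. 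The main obstacle is this last linear-independence/regularity step: justifying that $v\cdot\nabla_x h=0$ with $h$ valued in the finite-dimensional $\mbox{Ker}_v(L)$ propagates to $x$-independence, which uses that $\mbox{Ker}_v(L)$ is spanned by Gaussian-polynomial functions closed under neither multiplication by $v_m$ nor sharing that property trivially — on the torus this is where the Poincaré-type structure enters, and it must be done by the explicit spanning functions rather than abstractly.
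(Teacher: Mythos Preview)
Your energy argument is correct and matches the paper exactly: test $G(h)=0$ against $h$, use skew-symmetry of the transport term on $\T^N$, and (H3) forces $h^\bot = 0$, hence $L(h)=0$ and then $v\cdot\nabla_x h = 0$. You are also right to flag the ambiguity in what ``$\mbox{Ker}(L)$'' means as a subspace of $H^1_{x,v}$; the paper addresses this in a Remark immediately following the proposition, clarifying that the equality is to be read with constant coefficients.

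The gap is in your last step. The family $\{v_m\phi_i\}_{m,i}$ is \emph{not} linearly independent in general: already for the Boltzmann kernel, where $\phi_j = v_j M^{1/2}$ for $j=1,\dots,N$, one has $v_m\phi_j = v_j\phi_m$ for $m\neq j$. Tracing through, the relation $\sum_{m,i}(\partial_{x_m}c_i)\,v_m\phi_i = 0$ does kill the density and energy coefficients, but for the momentum coefficients $c = (c_1,\dots,c_N)$ it only yields the Killing condition $\partial_{x_m}c_j + \partial_{x_j}c_m = 0$. On $\T^N$ this can ultimately be pushed to $c=\mathrm{const}$ (the only periodic Killing fields on flat space are constants), but that is a separate argument you have not supplied, and in any case it is tied to the specific Boltzmann kernel rather than to the abstract hypothesis (H3).

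The paper sidesteps this entirely. It proves a general lemma: if $f$ is continuous on $\T^N\times\R^N$, differentiable in $x$, and $v\cdot\nabla_x f = 0$ everywhere, then $f$ does not depend on $x$. The proof is by characteristics: $t\mapsto f(\overline{x+tv},v)$ is constant, so $f(\cdot,v)$ is constant along the orbit $\{\overline{x+tv}:t\in\R\}$; for $v$ with $\Q$-independent coordinates this orbit is dense in $\T^N$, hence $f(\cdot,v)$ is constant; then density of such $v$ in $\R^N$ and continuity in $v$ give the conclusion for all $v$. This uses nothing about the structure of $\mbox{Ker}(L)$ and applies directly to $h$ once you know $v\cdot\nabla_x h=0$.
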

\begin{remark}
In this proposition, $\mbox{Ker}(G)$ has to be understood as linear combinations with constant coefficients of the functions $\Phi_i$. This subtlety has to be emphasized since in $L^2_{x,v}$, $\mbox{Ker}(L)$ includes all linear combinations of $\Phi_i$ but with coefficients being functions of $x$.
\end{remark}

Therefore, if we define, for $0<\eps\leq 1$:
$$G_\eps = \frac{1}{\eps^2}L - \frac{1}{\eps}v.\nabla_x,$$
then we have a nice desciption of $\pi_{G_\eps}$:
$$\forall h \in L^2_{x,v},\: \pi_{G_\eps}(h) = \sum_{i=1}^N\left(\int_{\T^d}\int_{\R^d}h\phi_i \:dxdv\right)\phi_i.$$
That means that $\pi_{G_\eps}(h)$ is, up to a multiplicative constant, the mean of $\pi_L(h)$ over the torus. We deduce that if $h$ belongs to $\mbox{Ker}(G_\eps)^\bot$, $\pi_L(h)$ has zero mean on the torus and is an operator not depending on the $x$ variable. Thus we can apply Poincar\'e inequality on the torus:

\begin{equation} \label{poincare}
\forall h \in \mbox{Ker}(G_\eps)^\bot, \quad \norm{\pi_L(h)}^2_{L^2_{x,v}} \leq C_p\norm{\nabla_x\pi_L(h)}^2_{L^2_{x,v}} \leq C_p\norm{\nabla_xh}^2_{L^2_{x,v}}.
\end{equation}


\subsection{A priori energy estimates}
Our work in this article is to study the evolution of the norms involved in the definition of the operators $\mathcal{H}^s_\eps$ and $\mathcal{H}^s_{\eps \bot}$ and to combine them to obtain the results stated above. The Appendix $\ref{appendix:toolbox}$ contains the proofs, which are technical computations together with some choices of decomposition, of the following a priori estimates. Note that all the constants $K_1$, $K_{dx}$ and $K_{s-1}$ used in the inequalities below are independent of $\eps$, $\Gamma$ and $g$, and only depend constructively on the constants defined in the hypocoercivity assumptions or in the subsection above. The number $e$ can be any positive real number and will be chosen later.
\par We would like to study both linear and non-linear models but they appeared to be very similar. In order to avoid long and similar inequalities we will write in parenthesis terms we need to add for the full model.

\bigskip
Let $g$ be a function in $H^s_{x,v}$. We now consider a function $h$ in $\mbox{Ker}(G_\eps)^\bot \cap H^s_{x,v}$, for some $s$ in $\N^*$, which is solution of the linear (linearized) Boltzmann equation:
$$\partial_th + \frac{1}{\eps}v.\nabla_xh = \frac{1}{\eps^2}L(h) \:\left(+ \frac{1}{\eps}\Gamma(g,h)\right).$$
We remind the reader that the following notation is used: $h^\bot = h-\pi_L(h)$.

\subsubsection{Time evolutions for quantities in $H^1_{x,v}$}
We write the $L^2_{x,v}$-norm estimate

\begin{equation}\label{h}
\frac{d}{dt}\norm{h}_{L^2_{x,v}}^2  \leq -\frac{\lambda}{\eps^2}\norm{h^\bot}^2_\Lambda \:\left(+\frac{1}{\lambda}\left(\mathcal{G}^0_{x}(g,h)\right)^2\right).
\end{equation}

Then the time evolution of the $x$-derivatives

\begin{equation}\label{dx}
\frac{d}{dt}\norm{\nabla_xh}_{L^2_{x,v}}^2  \leq -\frac{\lambda}{\eps^2}\norm{\nabla_xh^\bot}^2_\Lambda \:\left(+\frac{1}{ \lambda}\left(\mathcal{G}^1_{x}(g,h)\right)^2\right),
\end{equation}

and of the $v$-derivatives

\begin{eqnarray} 
\frac{d}{dt}\norm{\nabla_vh}_{L^2_{x,v}}^2 &\leq&  \frac{K_1}{\eps^2}\norm{h^\bot}^2_{\Lambda} + \frac{K_{dx}}{\eps^2}\norm{\nabla_xh}^2_{L^2_{x,v}} - \frac{\nu_3^\Lambda}{\eps^2}\norm{\nabla_vh}^2_{\Lambda} \label{dv}
\\&&\left(+ \frac{3}{\nu_3^\Lambda}\left(\mathcal{G}^1_{x,v}(g,h)\right)^2\right).\nonumber
\end{eqnarray}

\bigskip
Finally, we will need a control on the scalar product as well, as explained in the strategy subsection $\ref{subsec:strategy}$. Notice that we have some freedom as $e$ can be any positive number.

\begin{eqnarray}
\frac{d}{dt}\langle \nabla_xh,\nabla_vh\rangle _{L^2_{x,v}} &\leq& \frac{C^Le}{\eps^3} \norm{\nabla_xh^\bot}_{\Lambda}^2  -\frac{1}{\eps}\norm{\nabla_xh}^2_{L^2_{x,v}} + \frac{2C^L}{e\eps}\norm{\nabla_vh}_{\Lambda}^2 \nonumber
\\ && \left(+ \frac{e}{C^L\eps}\left(\mathcal{G}^1_{x}(g,h)\right)^2\right). \label{dx,dv}
\end{eqnarray}

\subsubsection{Time evolutions for quantities in $H^s_{x,v}$}
We consider multi-indexes $j$ and $l$ such that $|j|+|l|=s$.
\\As in the previous case, we have a control on the time evolution of the pure $x$-derivatives,

\begin{equation}\label{d0l}
\frac{d}{dt}\norm{\partial^0_lh}_{L^2_{x,v}}^2  \leq -\frac{\lambda}{\eps^2}\norm{\partial^0_lh^\bot}^2_\Lambda \:\left(+\frac{1}{\lambda}\left(\mathcal{G}^s_{x}(g,h)\right)^2\right).
\end{equation}

In the case where $|j|\geq 1$, that is to say when we have at least one derivative in $v$, we obtained the following upper bound

\begin{eqnarray}
\frac{d}{dt}\norm{\partial^j_lh}^2_{L^2_{x,v}} &\leq& -\frac{\nu_5^\Lambda}{\eps^2}\norm{\partial^j_lh}^2_\Lambda + \frac{3(\nu_1^\Lambda)^2 d}{\nu_5^\Lambda(\nu_0^\Lambda)^2}\sum\limits_{i,c_i(j)> 0}\norm{\partial^{j-\delta_i}_{l+\delta_i}h}^2_\Lambda + \frac{K_{s-1}}{\eps^2}\norm{h}^2_{H^{s-1}_{x,v}} \nonumber
\\                                             && \left(+ \frac{3}{\nu_5^\Lambda}\left(\mathcal{G}^s_{x,v}(g,h)\right)^2\right).\label{djl}
\end{eqnarray}

We may find useful to consider the particular case where $|j|=1$,

\begin{eqnarray}
\frac{d}{dt}\norm{\partial^{\delta_i}_{l-\delta_i}h}^2_{L^2_{x,v}} &\leq& -\frac{\nu_5^\Lambda}{\eps^2}\norm{\partial^{\delta_i}_{l-\delta_i}h}^2_\Lambda + \frac{3\nu_1^\Lambda}{\nu_5^\Lambda\nu_0^\Lambda}\norm{\partial^0_lh}^2_{L^2_{x,v}} + \frac{K_{s-1}}{\eps^2}\norm{h}^2_{H^{s-1}_{x,v}} \nonumber
\\                                             && \left(+ \frac{3}{\nu_5^\Lambda}\left(\mathcal{G}^s_{x,v}(g,h)\right)^2\right).\label{ddeltai}
\end{eqnarray}

\bigskip
Finally we will need the time evolution of the following scalar product:

\begin{eqnarray}
\frac{d}{dt}\langle  \partial^{\delta_i}_{l-\delta_i}h, \partial^0_lh\rangle _{L^2_{x,v}} &\leq& \frac{C^Le}{\eps^3} \norm{ \partial^0_lh^\bot}_{\Lambda}^2 -\frac{1}{\eps}\norm{ \partial^0_lh}^2_{L^2_{x,v}} + \frac{2C^L}{e\eps}\norm{ \partial^{\delta_i}_{l-\delta_i}h}_{\Lambda}^2  \nonumber
\\ && \left(+ \frac{e}{ C^L\eps}\left(\mathcal{G}^s_{x}(g,h)\right)^2\right), \label{deltai,d0l}
\end{eqnarray}

where we still have some freedom as $e$ is any positive number.

\bigskip
We just emphasize here that one can see that we were careful about which derivatives are involved in the terms that contain $\Gamma$. This is because our operator $\norm{.}_{\mathcal{H}^s_\eps}$ controls the $H^s_x(L^2_v)$-norm by a mere constant whereas it controls the entire $H^s_{x,v}$-norm by a factor $1/\eps$.

\subsubsection{Time evolutions for orthogonal quantities in $H^s_{x,v}$}
For the theorem $\ref{decaydv}$ we are going to need four others inequalities which are a little bit more intricate as they need to know the shape of $\pi_L$ as described in the subsection above. The proofs are written in Appendix $\ref{appendix:toolbox}$ and we are just looking at the whole equation in the setting $g=h$.

\bigskip
We want the time evolution of the $v$-derivatives of the orthogonal (microscopic) part of $h$, as suggested in \cite{Gu4} this allows us to really take advantage of the structure of the linear operator $L$ on its orthogonal:

\begin{eqnarray}
\frac{d}{dt}\norm{\nabla_vh^\bot}_{L^2_{x,v}}^2 &\leq& \frac{K_1^\bot}{\eps^2}\norm{h^\bot}^2_{\Lambda} + K_{dx}^\bot\norm{\nabla_xh}^2_{L^2_{x,v}} -\frac{\nu_3^\Lambda}{2\eps^2}\norm{\nabla_vh^\bot}^2_{\Lambda} \nonumber
\\ && + \frac{3}{\nu_3^\Lambda}\left(\mathcal{G}^1_{x,v}(h,h)\right)^2. \label{dvbot}
\end{eqnarray}

\bigskip
Then we can have a new bound for the scalar product used before

\begin{eqnarray}
\frac{d}{dt}\langle \nabla_xh,\nabla_vh\rangle _{L^2_{x,v}} &\leq& \frac{K^\bot e}{\eps^3} \norm{\nabla_xh^\bot}_{\Lambda}^2 + \frac{1}{4C_{\pi 1}C_\pi C_pe\eps}\norm{\nabla_vh^\bot}_{\Lambda}^2 \nonumber
\\ &&-\frac{1}{2\eps}\norm{\nabla_xh}^2_{L^2_{x,v}}  + \frac{4C_\pi}{\eps}\left(\mathcal{G}^1_{x,v}(h,h)\right)^2, \label{dx,dvbot}
\end{eqnarray}
where $e$ is any number greater than $1$.
\par As usual, we may need the same kind of bounds in higher degree Sobolev spaces. The reader may notice that the bounds we are about to write are more intricate than the ones in the previous section because they involve more terms with less derivatives. We consider multi-indexes $j$ and $l$ such that $|j|+|l|=s$. This time we really have to divide in two different cases.
\par Firstly when $|j|\geq 2$,

\begin{eqnarray}
\frac{d}{dt}\norm{\partial_l^jh^\bot}^2_{L^2_{x,v}} &\leq& -\frac{\nu_5^\Lambda}{\eps^2}\norm{\partial^j_lh^\bot}^2_\Lambda + \frac{9(\nu_1^\Lambda)^2d}{2(\nu_0^\Lambda)^2\nu_5^\Lambda}\sum\limits_{i,c_i(j)> 0}\norm{\partial^{j-\delta_i}_{l+\delta_i}h^\bot}^2_\Lambda \nonumber 
\\ && + K_{dl}^\bot\sum\limits_{|l'| \leq s-1} \norm{\partial^0_{l'}h}^2_{L^2_{x,v}}  + \frac{K_{s-1}^\bot}{\eps^2} \norm{h^\bot}^2_{H^{s-1}_{x,v}}  + \frac{3}{\nu_5^\Lambda}\left(\mathcal{G}^s_{x,v}(h,h)\right)^2.\label{djlbot}
\end{eqnarray}

\par Then the case when $|j| = 1$

\begin{eqnarray}
\frac{d}{dt}\norm{\partial_{l-\delta_i}^{\delta_i}h^\bot}^2_{L^2_{x,v}} &\leq& - \frac{\nu_5^\Lambda}{\eps^2}\norm{\partial^{\delta_i}_{l-\delta_i}h^\bot}^2_\Lambda+ K_{dl}^\bot\sum\limits_{|l'|=s}\norm{\partial^0_{l'}h}^2_{L^2_{x,v}}+ \frac{K_{s-1}^\bot}{\eps^2}\norm{h^\bot}^2_{H^{s-1}_{x,v}}\nonumber
\\ && + \frac{3}{\nu_5^\Lambda}\left(\mathcal{G}^s_{x,v}(h,h)\right)^2.\label{ddeltaibot}
\end{eqnarray}

\bigskip
Finally we give a new version of the control over the scalar product in higher Sobolev's spaces.

\begin{eqnarray}
\frac{d}{dt}\langle  \partial^{\delta_i}_{l-\delta_i}h, \partial^0_lh\rangle _{L^2_{x,v}} &\leq& \frac{\tilde{K}^\bot}{\eps^3}e \norm{ \partial^0_lh^\bot}_{\Lambda}^2 + \frac{1}{4C_{\pi s}C_\pi d e\eps}\norm{ \partial^{\delta_i}_{l-\delta_i}h^\bot}_{\Lambda}^2 -\frac{1}{2\eps}\norm{ \partial^0_lh}^2_{L^2_{x,v}}  \nonumber
\\&&  + \frac{1}{4d\eps}\sum\limits_{|l'|\leq s-1}\norm{ \partial^0_{l'}h}_{L^2_{x,v}}^2 + \frac{2C_\pi}{\eps}\left(\mathcal{G}^s_{x,v}(h,h)\right)^2,  \label{deltai,d0lbot} 
\end{eqnarray}
for any $e\geq 1$.

\section{Linear case: proof of Theorem $\ref{lin}$}\label{sec:linear}
In this section we are looking at the linear equation
$$\partial_th = G_\eps(h), \:\: \mbox{on}\:\: \T^d\times\R^d. $$

Theorem $\ref{lin}$ will be proved by induction on $s$. We remind here the operator we will work with on $H^s_{x,v}$
\begin{itemize}
\item in the case $s=1$:
$$\norm{h}^2_{\mathcal{H}^1_\eps} = A\norm{h}^2_{L^2_{x,v}} + \alpha\norm{\nabla_xh}^2_{L^2_{x,v}} + b\eps^2\norm{\nabla_vh}^2_{L^2_{x,v}} + a\eps \langle\nabla_xh,\nabla_vh\rangle_{L^2_{x,v}},$$
\item in the case $s>1$:
$$\norm{h}^2_{\mathcal{H}^s_\eps} = \sum\limits_{\overset{|j|+|l|\leq s}{|j|\geq 1}}b_{j,l}^{(s)}\eps^2\norm{\partial^j_lh}^2_{L^2_{x,v}} + \sum\limits_{|l|\leq s}\alpha_l^{(s)}\norm{\partial^0_lh}^2_{L^2_{x,v}} + \sum\limits_{\overset{|l|\leq s}{i,c_i(l)> 0}}a_{i,l}^{(s)}\eps \langle \partial^{\delta_i}_{l-\delta_i}h,\partial^0_lh\rangle _{L^2_{x,v}}.$$
\end{itemize}
The Theorem $\ref{lin}$ only requires us to choose suitable coefficients that gives us the expected inequality and equivalence.

\bigskip
Consider $h_{in}$ in $H^s_{x,v}\cap \mbox{Dom}(G_\eps)$. Let $h$ be a solution of $\partial_th = G_\eps(h)$ on $\T^d\times\R^d$ such that $h(0,\cdot,\cdot) = h_{in}(\cdot,\cdot)$. 
\par Notice that if $h_{in}$ is in $H^s_{x,v}\cap \mbox{Dom}(G_\eps) \cap \mbox{Ker}(G_\eps)$ then we have that the associated solution remains the same in time: $\partial_t h = 0$. Therefore the fluid part of a solution does not evolve in time and so the semigroup is identity on $\mbox{Ker}(G_\eps)$. Besides, we can see directly from the definition and the adjointness property of $L$ that $h\in \mbox{Ker}(G_\eps)^\bot$ for all $t$ if $h_{in}$ belongs in $\mbox{Ker}(G_\eps)^\bot$.
\par Therefore, to prove the theorem it is enough to consider $h_{in}$ in $H^s_{x,v}\cap \mbox{Dom}(G_\eps) \cap \mbox{Ker}(G_\eps)^\bot$.


\subsection{The case $s=1$}

For now on we assume that our operator $L$ satisfies the conditions (H1), (H2) and (H3) and that $0<\eps\leq 1$.
\par If (H3) holds for $L$ then we have that $\eps^{-2}L$ is a non-positive self-adjoint operator on $L^2_{x,v}$. Moreover, $\eps^{-1}v\cdot\nabla_x$ is skew-symmetric on $L^2_{x,v}$. Therefore the $L^2_{x,v}$-norm decreases along the flow and it can be deduced that $G_\eps$ yields a $C_0$-semigroup on $L^2_{x,v}$ for all positive $\eps$ (see \cite{Ka} for general theory and \cite{Uk} for its use in our case).

\bigskip
Using the toolbox, which is possible since $h$ is in $\mbox{Ker}(G_\eps)^\bot$ for all $t$, we just have to consider the linear combination $A\eqref{h} + \alpha\eqref{dx} + b\eps^2\eqref{dv} + a\eps\eqref{dx,dv}$ to obtain

\begin{eqnarray}
\frac{d}{dt}\norm{h}^2_{\mathcal{H}^1_\eps} &\leq& \frac{1}{\eps^2}\left[bK_1 - \lambda A\right] \norm{h^\bot}^2_\Lambda + \frac{1}{\eps^2}\left[C^Lea - \lambda\alpha\right] \norm{\nabla_xh^\bot}^2_\Lambda \nonumber
\\               && + \left[\frac{2C^La}{e} - b\nu_3^\Lambda\right] \norm{\nabla_vh}^2_\Lambda + \left[bK_{dx}-a\right] \norm{\nabla_xh}^2_{L^2_{x,v}}. \label{boundlin}
\end{eqnarray}

Then we make the following choices:
\begin{enumerate}
\item We fix $b$ such that $-\nu_3^\Lambda b < -1$.
\item We fix $A$ big enough such that $\left[bK_1 - \lambda A\right] \leq -1$.
\item We fix $a$ big enough such that $\left[bK_{dx}-a\right] \leq -1$.
\item We fix $e$ big enough such that $\left[\frac{2C^La}{e} - b\nu_3^\Lambda\right]  \leq -1$.
\item We fix $\alpha$ big enough such that $\left[C^Lea - \lambda\alpha\right] \leq -1$ and such that $\left\{\begin{array}{rl} a^2 &\leq \alpha b \\ b &\leq \alpha \end{array}\right.$.
\end{enumerate}
This leads to, because $0 < \eps \leq 1$:
$$\frac{d}{dt}\norm{h}^2_{\mathcal{H}^1_\eps} \leq - \left(\norm{h^\bot}^2_\Lambda + \norm{\nabla_xh^\bot}^2_\Lambda + \norm{\nabla_vh}^2_\Lambda + \norm{\nabla_xh}^2_{L^2_{x,v}}\right).$$

\bigskip
Finally we can apply the Poincar\'e inequality $\eqref{poincare}$ together with the equivalence of the $L^2_{x,v}$-norm and the $\Lambda$-norm on the fluid part $\pi_L$, equation $\eqref{L2L2lambdafluid}$, to get
$$\exists C, C' > 0,\quad \left\{\begin{array}{ll}\norm{h}^2_\Lambda &\leq C\left(\norm{h^\bot}^2_\Lambda + \frac{1}{2}\norm{\nabla_xh}^2_{L^2_{x,v}}\right),
                                   \\\norm{\nabla_xh}^2_\Lambda &\leq C'\left(\norm{\nabla_xh^\bot}^2_\Lambda + \frac{1}{2}\norm{\nabla_xh}^2_{L^2_{x,v}}\right).\end{array}\right.$$

\bigskip
Therefore we proved the following result:
$$\exists K > 0, \forall \: 0< \eps\leq1 \:,\quad \frac{d}{dt}\norm{h}^2_{\mathcal{H}^1_\eps}\leq -C^{(1)}_G\left(\norm{h}^2_\Lambda + \norm{\nabla_{x,v}h}^2_\Lambda\right).$$
With these constants, $\norm{.}_{\mathcal{H}^1_\eps}$ is equivalent to $$\left(\norm{h}^2_{L^2_{x,v}}+\norm{\nabla_xh}^2_{L^2_{x,v}}+\eps^2\norm{\nabla_vh}^2_{L^2_{x,v}}\right)^{1/2}$$ since $a^2\leq \alpha b$ and $b\leq \alpha$ and hence:

$$A\norm{h}^2_{L^2_{x,v}} +\frac{b}{2}\left(\norm{\nabla_xh}^2_{L^2_{x,v}}+\eps^2\norm{\nabla_vh}^2_{L^2_{x,v}}\right) \leq \norm{h}^2_{\mathcal{H}^1_\eps} $$
and 
$$\norm{h}^2_{\mathcal{H}^1_\eps}\leq A\norm{h}^2_{L^2_{x,v}} + \frac{3\alpha}{2}\left(\norm{\nabla_xh}^2_{L^2_{x,v}}+\eps^2\norm{\nabla_vh}^2_{L^2_{x,v}}\right).$$
The results above gives us the expected theorem for $s=1$.


\subsection{The induction in higher order Sobolev spaces}
Then we assume that the theorem is true up to the integer $s-1$, $s > 1$. Then we suppose that $L$ satisfies (H1'), (H2') and (H3) and we consider $\eps$ in $(0,1]$.
\par Let $h_{in}$ be in $H^s_{x,v}\cap \mbox{Dom}(G_\eps) \cap \mbox{Ker}(G_\eps)^\bot$ and $h$ be the solution of $\partial_th=G_\eps(h)$ such that $h(0,\cdot,\cdot) = h_{in}(\cdot,\cdot)$.
\par As before, $h$ belongs to $\mbox{Ker}(G_\eps)^\bot$ for all $t$ and thus we can use the results given by the toolbox.

\bigskip
Thanks to the proof in the case $s=1$ we know that we are able to handle the case where there is only a difference of one derivative between the number of derivatives in $x$ and in $v$. Therefore, instead of working with the entire norm of $H^s_{x,v}$, we will look at an equivalent of the Sobolev semi-norm. We define:
\begin{eqnarray*}
F_s(t) &=& \sum\limits_{\overset{|j|+|l|=s}{|j|\geq 2}} \eps^2B\norm{\partial^j_lh}^2_{L^2_{x,v}} + B'\sum\limits_{\overset{|l|=s}{i,c_i(l)> 0}} Q_{l,i}(t),
\\Q_{l,i}(t) &=& \alpha\norm{\partial^0_lh}^2_{L^2_{x,v}} + b\eps^2\norm{\partial^{\delta_i}_{l-\delta_i}h}^2_{L^2_{x,v}} + a\eps\langle \partial^{\delta_i}_{l-\delta_i}h,\partial^0_lh\rangle _{L^2_{x,v}},
\end{eqnarray*}
where the constants, strictly positive, will be chosen later.
\par Like in the section above, we shall study the time evolution of every term involved in $F_s$ in order to bound above $dF_s/dt (t)$ with negative coefficients.

\subsubsection{The time evolution of $Q_{l,i}$}
We will first study the time evolution of $Q_{l,i}$ for given $|j|+|l| = s$. The toolbox already gave us all the bounds we need and we just have to gather them in the following way: $\alpha\eqref{d0l} + b\eps^2\eqref{ddeltai} + a\eps\eqref{deltai,d0l}$. This leads to, because $0<\eps\leq 1$,

\begin{eqnarray*}
\frac{d}{dt}Q_{l,i}(t) &\leq& \frac{1}{\eps^2}\left[C^Lea - \lambda\alpha\right] \norm{\partial^0_lh^\bot}^2_\Lambda + \left[\frac{2C^La}{e} -\nu_5^\Lambda b\right] \norm{\partial^{\delta_i}_{l-\delta_i}h}^2_\Lambda
\\ &&  + \left[\frac{3\nu_1^\Lambda}{\nu_5^\Lambda\nu_0^\Lambda} b-a\right] \norm{\partial^0_lh}^2_{L^2_{x,v}} +K_{s-1}b\norm{h}_{H^{s-1}_{x,v}}.
\end{eqnarray*}

One can notice that, except for the last term, we have exactly the same kind of bound as in $\eqref{boundlin}$, in the proof of the case $s=1$.  Therefore we can choose $\alpha$, $b$, $a$, $e$, independently of $\eps$ such that it exists $K_Q > 0$ and $C_{s-1} > 0$ such that for all $0 < \eps\leq 1$:
\begin{itemize}
\item $Q_{l,i}(t) \sim \norm{\partial^0_lh}^2_{L^2_{x,v}} + \eps^2\norm{\partial^{\delta_i}_{l-{\delta_i}}h}^2_{L^2_{x,v}},$
\\
\item $\frac{d}{dt}Q_{l,i}(t) \leq -K_Q\left(\norm{\partial^0_lh}^2_\Lambda + \norm{\partial^{\delta_i}_{l-{\delta_i}}h}^2_\Lambda\right) + C_{s-1}\norm{h}_{H^{s-1}_{x,v}},$
\end{itemize}
where we used $\eqref{L2L2lambdafluid}$ (equivalence of norms $L^2_{x,v}$ and $\Lambda$ on the fluid part) to get
$$\norm{\partial^0_lh}^2_\Lambda \leq C'\left(\norm{\partial^0_lh^\bot}^2_\Lambda + \norm{\partial^0_lh}^2_{L^2_{x,v}}\right).$$

\subsubsection{The time evolution of $F_s$ and conclusion}
The last result about $Q_{l,i}$ gives us that
$$F_s(t) \sim \sum\limits_{|l|= s}\norm{\partial_l^0h}^2_{L^2_{x,v}}+\eps^2\sum\limits_{\overset{|l|+|j|= s}{|j|\geq 1}}\norm{\partial_l^jh}^2_{L^2_{x,v}}.$$
To study the time evolution of $F_s$ we just need to combine the evolution of $Q_{l,i}$ and the one of $\norm{\partial^j_lh}^2_{L^2_{x,v}}$ which is given in the toolbox by $\eqref{djl}$.

\begin{eqnarray}
\frac{d}{dt}F_s(t) &\leq& \sum\limits_{\overset{|j|+|l|=s}{|j|\geq 2}} -\nu_5^\Lambda B\norm{\partial^j_lh}^2_\Lambda  + \sum\limits_{\overset{|j|+|l|=s}{|j|\geq 2}}\frac{3(\nu_1^\Lambda)^2 d}{\nu_5^\Lambda (\nu_0^\Lambda)^2}B\eps^2\sum\limits_{i,c_i(j)> 0}\norm{\partial^{j-\delta_i}_{l+\delta_i}h}^2_\Lambda \nonumber
\\                 && -K_QB' \sum\limits_{\overset{|l|=s}{i,c_i(l)> 0}}\left(\norm{\partial^0_lh}^2_\Lambda + \norm{\partial^{\delta_i}_{l-{\delta_i}}h}^2_\Lambda\right) \label{Fklin}
\\                && + \left[\sum\limits_{\overset{|j|+|l|=s}{|j|\geq 2}}K_{s-1}B+ \sum\limits_{\overset{|l|=s}{i,c_i(l)> 0}}B'C_{s-1}\right]\norm{h}^2_{H^{s-1}_{x,v}}. \nonumber
\end{eqnarray}

\bigskip
Then we choose the following coefficients $B = 2/\nu_5^\Lambda$ and we can rearrange the sums to obtain
\begin{eqnarray*}
\frac{d}{dt}F_s(t) &\leq& \sum\limits_{\overset{|j|+|l|=s}{|j|\geq 2}} \left(\frac{6d(\nu_1^\Lambda)^2}{(\nu_5^\Lambda\nu_0^\Lambda)^2}\eps^2 - 2\right)\norm{\partial^j_lh}^2_\Lambda + \sum\limits_{\overset{|j|+|l|=s}{|j|=1}}\left(\frac{6d(\nu_1^\Lambda)^2}{(\nu_5^\Lambda\nu_0^\Lambda)^2}\eps^2 - K_QB'\right) \norm{\partial^j_lh}^2_\Lambda
\\                  && + \sum\limits_{\overset{|j|+|l|=s}{|j|=0}}(-K_QB') \norm{\partial^j_lh}^2_\Lambda + C_+^{(s-1)}(B') \norm{h}_{H^{s-1}_{x,v}}.
\end{eqnarray*}

\bigskip
Therefore we can choose the remaining coefficients:
\begin{enumerate}
\item $\eps_d = \min\left\{1,\sqrt{\frac{(\nu_5^\Lambda\nu_0^\Lambda)^2}{6d(\nu_1^\Lambda)^2}} \right\}$,
\item we fix $B'$ big enough such that $K_QB'\geq 1$ and $\left(\frac{6d(\nu_1^\Lambda)^2}{(\nu_5^\Lambda\nu_0^\Lambda)^2}\eps_d^2 - K_QB'\right)\leq -1$.
\end{enumerate}

\bigskip
Everything is now fixed in $C_+^{(s-1)}(B')$ and therefore it is just a constant $C_+^{(s-1)}$ that does not depend on $\eps$. Therefore we then have the final result.
$$\forall \: 0 < \eps\leq\eps_d \:,\: \frac{d}{dt}F_s(t) \leq C_+^{(s-1)} \norm{h}_{H^{s-1}_{x,v}}^2 - \left(\sum\limits_{|j|+|l|=s} \norm{\partial^j_lh}^2_\Lambda\right).$$
Then, we know that $\norm{.}_\Lambda$ controls the $L^2$-norm. And therefore: 
$$\forall \: 0 < \eps\leq\eps_d \:,\: \frac{d}{dt}F_s(t) \leq C_+^{(s)} \left(\sum\limits_{|j|+|l|\leq s-1} \norm{\partial^j_lh}^2_\Lambda\right) - \left(\sum\limits_{|j|+|l|=s} \norm{\partial^j_lh}^2_\Lambda\right).$$
This inequality is true for all $s$ and therefore we can take a linear combination of the $F_s$ to obtain the following, where $C_s$ is a constant that does not depend on $\eps$ since $C_+^{(s)}$ does not depend on it.
$$\forall \: 0 < \eps\leq\eps_d \:,\: \frac{d}{dt}\left(\sum\limits_{p = 1}^n C_pF_p(t)\right) \leq - C^{(s)}_G \left(\sum\limits_{|j|+|l||\leq s} \norm{\partial^j_lh}^2_\Lambda\right).$$

\bigskip
We can use the induction assumption from rank $1$ up to rank $s-1$ to find that this linear combination is equivalent to $$\norm{.}^2_{L^2_{x,v}}+\sum\limits_{|l|\leq s}\norm{\partial_l^0.}^2_{L^2_{x,v}}+\eps^2\sum\limits_{\overset{|l|+|j|\leq s}{|j|\geq 1}}\norm{\partial_l^j.}^2_{L^2_{x,v}}$$
 and so fits the expected requirements.

\section{Estimate for the full equation: proof of Proposition $\ref{apriori}$} \label{sec:apriori}
We will prove that proposition by induction on $s$. For now on we assume that $L$ satisfies hypothesis (H1'), (H2') and (H3), that $\Gamma$ satisfies properties (H4) and (H5) and we take $g$ in $H^s_{x,v}$.

\bigskip
So we take $h_{in}$ in $H^s_{x,v}\cap \mbox{Ker}(G_\eps)^\bot$ and we consider the associated solution, denoted by $h$, of
$$\partial_t h + \frac{1}{\eps}v\cdot\nabla_x h = \frac{1}{\eps^2}L(h) + \frac{1}{\eps}\Gamma(g,h).$$

\bigskip
One can notice that thanks to (H5) and the self-adjointness of $L$, $h$ remains in $\mbox{Ker}(G_\eps)^\bot$ for all times.
\par Besides, while considering the time evolution we find a term due to $G_\eps$ and another due to $\Gamma$. Therefore, we will use the results found in the toobox but including the terms in parenthesis.


\subsection{The case $s=1$}

We want to study the following operator on $H^s_{x,v}$
$$\norm{h}^2_{\mathcal{H}^1_\eps} = A\norm{h}^2_{L^2_{x,v}} + \alpha\norm{\nabla_xh}^2_{L^2_{x,v}} + b\eps^2\norm{\nabla_vh}^2_{L^2_{x,v}} + a\eps \langle\nabla_xh,\nabla_vh\rangle_{L^2_{x,v}}.$$

\bigskip
Therefore, using the toolbox we just have to consider the linear combination $A\eqref{h} + \alpha\eqref{dx} + b\eps^2\eqref{dv} + a\eps\eqref{dx,dv}$ to yield

\begin{eqnarray}
\frac{d}{dt}\norm{h}^2_{\mathcal{H}^1_\eps} &\leq& \frac{1}{\eps^2}\left[bK_1 - \lambda A\right] \norm{h^\bot}^2_\Lambda + \frac{1}{\eps^2}\left[C^Lea - \lambda\alpha\right] \norm{\nabla_xh^\bot}^2_\Lambda \nonumber
\\               && + \left[\frac{2C^La}{e} - b\nu_3^\Lambda\right] \norm{\nabla_vh}^2_\Lambda + \left[bK_{dx}-a\right] \norm{\nabla_xh}^2_{L^2_{x,v}}  \label{boundpriori}
\\               && + \frac{A\nu_1^\Lambda}{\nu_0^\Lambda \lambda}\left(\mathcal{G}^0_{x}(g,h)\right)^2 + \left[\frac{\alpha\nu_1^\Lambda}{\nu_0^\Lambda\lambda} + \frac{\nu_1^\Lambda ea}{C^L\nu_0^\Lambda}\right]\left(\mathcal{G}^1_{x}(g,h)\right)^2 \nonumber
\\               && + \frac{3\nu_1^\Lambda b}{\nu_0^\Lambda\nu_3^\Lambda}\eps^2\left(\mathcal{G}^1_{x,v}(g,h)\right)^2. \nonumber
\end{eqnarray}

\bigskip
One can see that we obtained exactly the same upper bound as in the proof of the previous theorem, equation $\eqref{boundlin}$, adding the terms involving $\Gamma$ (remember that $\mathcal{G}^s_{x}$ is increasing in $s$). Therefore we can make the same choices for $A$, $\alpha$, $b$, $a$ and $e$, independently of $\Gamma$ and $g$, to get that
$$\norm{h}^2_{\mathcal{H}^1_\eps} \sim \norm{h}^2_{L^2_{x,v}} + \norm{\nabla_xh}^2_{L^2_{x,v}}+\eps^2\norm{\nabla_vh}^2_{L^2_{x,v}},$$
and that, once those parameters are fixed, there exist $ K^{(1)}_0, \: K^{(1)}_1,\: K^{(1)}_2 > 0$ such that for all $ 0< \eps\leq1$,

$$ \frac{d}{dt}\norm{h}^2_{\mathcal{H}^1_\eps} \leq -K^{(1)}_0\left(\norm{h}^2_\Lambda + \norm{\nabla_{x,v}h}^2_\Lambda\right)+K^{(1)}_1 \left(\mathcal{G}^1_{x}(g,h)\right)^2 +\eps^2K^{(1)}_2\left(\mathcal{G}^1_{x,v}(g,h)\right)^2,
$$

which is the expected result in the case $s=1$.


\subsection{The induction in higher order Sobolev spaces}
Then we assume that the theorem is true up to the integer $s-1$, $s > 1$. Then we suppose that $L$ satisfies (H1'), (H2') and (H3) and we consider $\eps$ in $(0,1]$.
\par Since $h_{in}$ is in $\mbox{Ker}(G_\eps)^\bot$, $h$ belongs to $\mbox{Ker}(G_\eps)^\bot$ for all $t$ and so we can use the results given in the toolbox.

\bigskip
As in the proof in the linear case we define:
\begin{eqnarray*}
F_s(t) &=& \sum\limits_{\overset{|j|+|l|=s}{|j|\geq 2}} \eps^2B\norm{\partial^j_lh}^2_{L^2_{x,v}} + B'\sum\limits_{\overset{|l|=s}{i,c_i(l)> 0}} Q_{l,i}(t),
\\Q_{l,i}(t) &=& \alpha\norm{\partial^0_lh}^2_{L^2_{x,v}} + b\eps^2\norm{\partial^{\delta_i}_{l-\delta_i}h}^2_{L^2_{x,v}} + a\eps\langle \partial^{\delta_i}_{l-\delta_i}h,\partial^0_lh\rangle _{L^2_{x,v}},
\end{eqnarray*}
where the constants, strictly positive, will be chosen later.
\par Like in the section above, we shall study the time evolution of every term involved in $F_s$ in order to bound above $dF_s/dt(t)$ with expected coefficients.

\subsubsection{The time evolution of $Q_{l,i}$}
We will first study the time evolution of $Q_{l,i}$ for given $|j|+|l| = s$. The toolbox already gave us all the bounds we need and we just have to gather them in the following way: $\alpha\eqref{d0l} + b\eps^2\eqref{ddeltai} + a\eps\eqref{deltai,d0l}$. This leads to, because $0<\eps\leq 1$,

\begin{eqnarray*}
\frac{d}{dt}Q_{l,i}(t) &\leq& \frac{1}{\eps^2}\left[C^Lea - \lambda\alpha\right] \norm{\partial^0_lh^\bot}^2_\Lambda + \left[\frac{2C^La}{e} -\nu_5^\Lambda b\right] \norm{\partial^{\delta_i}_{l-\delta_i}h}^2_\Lambda
\\ &&  + \left[\frac{3\nu_1^\Lambda}{\nu_5^\Lambda\nu_0^\Lambda} b-a\right] \norm{\partial^0_lh}^2_{L^2_{x,v}} +K_{s-1}b\norm{h}_{H^{s-1}_{x,v}}
\\ &&  +  \left[\frac{\alpha\nu_1^\Lambda}{\nu_0^\Lambda\lambda} + \frac{\nu_1^\Lambda ea}{C^L\nu_0^\Lambda}\right]\left(\mathcal{G}^s_{x}(g,h)\right)^2 + \frac{3\nu_1^\Lambda b}{\nu_0^\Lambda\nu_5^\Lambda}\eps^2\left(\mathcal{G}^s_{x,v}(g,h)\right)^2.
\end{eqnarray*}

\bigskip
One can notice that, except for the term in $\norm{h}_{H^{s-1}_{x,v}}$, we have exactly the same kind of bound as in the case $s=1$, given by $\eqref{boundpriori}$.  Therefore we can choose $\alpha$, $b$, $a$, $e$, independently of $\eps$, $\Gamma$ and $g$ such that it exists $K_Q,\: K_{\Gamma 1}, \: K_{\Gamma 2} > 0$ and $C_{s-1} > 0$ such that for all $0 < \eps\leq 1$:
\begin{itemize}
\item $Q_{l,i}(t) \sim \norm{\partial^0_lh}^2_{L^2_{x,v}} + \eps^2\norm{\partial^{\delta_i}_{l-{\delta_i}}h}^2_{L^2_{x,v}},$
\\
\item \begin{eqnarray*}\frac{d}{dt}Q_{l,i}(t) &\leq& -K_Q\left(\norm{\partial^0_lh}^2_\Lambda + \norm{\partial^{\delta_i}_{l-{\delta_i}}h}^2_\Lambda\right) + K_{\Gamma 1}\left(\mathcal{G}^s_{x}(g,h)\right)^2
\\ && + \eps^2 K_{\Gamma 2}\left(\mathcal{G}^s_{x,v}(g,h)\right)^2 +  C_{s-1}\norm{h}_{H^{s-1}_{x,v}},
\end{eqnarray*}
\end{itemize}
where we used $\eqref{L2L2lambdafluid}$ (equivalence of norms $L^2_{x,v}$ and $\Lambda$ on the fluid part) to get
$$\norm{\partial^0_lh}^2_\Lambda \leq C'\left(\norm{\partial^0_lh^\bot}^2_\Lambda + \norm{\partial^0_lh}^2_{L^2_{x,v}}\right).$$

\subsubsection{The time evolution of $F_s$ and conclusion}
The last result about $Q_{l,i}$ gives us that
$$F_s(t) \sim \sum\limits_{|l|= s}\norm{\partial_l^0h}^2_{L^2_{x,v}}+\eps^2\sum\limits_{\overset{|l|+|j|= s}{|j|\geq 1}}\norm{\partial_l^jh}^2_{L^2_{x,v}},$$
so it remains to show that $F_s$ satisfies the property describe by the theorem for some $B$ and $B'$.

\bigskip
To study the time evolution of $F_s$ we just need to combine the evolution of $Q_{l,i}$ and the one of $\norm{\partial^j_lh}^2_{L^2_{x,v}}$ which is given in the toolbox by $\eqref{djl}$.

\begin{eqnarray}
\frac{d}{dt}F_s(t) &\leq& \sum\limits_{\overset{|j|+|l|=s}{|j|\geq 2}} -\nu_5^\Lambda B\norm{\partial^j_lh}^2_\Lambda  + \sum\limits_{\overset{|j|+|l|=s}{|j|\geq 2}}\frac{3(\nu_1^\Lambda)^2 d}{\nu_5^\Lambda (\nu_0^\Lambda)^2}B\eps^2\sum\limits_{i,c_i(j)> 0}\norm{\partial^{j-\delta_i}_{l+\delta_i}h}^2_\Lambda \nonumber
\\                 && -K_QB' \sum\limits_{\overset{|l|=s}{i,c_i(l)> 0}}\left(\norm{\partial^0_lh}^2_\Lambda + \norm{\partial^{\delta_i}_{l-{\delta_i}}h}^2_\Lambda\right) \nonumber
\\                 && + \left[\sum\limits_{\overset{|j|+|l|=s}{|j|\geq 2}}K_{s-1}B+ \sum\limits_{\overset{|l|=s}{i,c_i(l)> 0}}B'C_{s-1}\right]\norm{h}^2_{H^{s-1}_{x,v}} \label{Fkpriori}
\\                 && + \sum\limits_{\overset{|l|=s}{i,c_i(l)> 0}}B'K_{\Gamma 1}\left(\mathcal{G}^s_{x}(g,h)\right)^2 \nonumber
\\ && + \eps^2 \left[\sum\limits_{\overset{|l|=s}{i,c_i(l)> 0}}B'K_{\Gamma 2} + \sum\limits_{\overset{|j|+|l|=s}{|j|\geq 2}}\frac{3\nu_1^\Lambda}{\nu_0^\Lambda\nu_5^\Lambda}B\right]\left(\mathcal{G}^s_{x,v}(g,h)\right)^2. \nonumber
\end{eqnarray}

\bigskip
One can easily see that, apart from the terms including $\Gamma$, we have exactly the same bound as in the proof in the linear case, equation $\eqref{Fklin}$. Therefore we can choose $B$, $B'$ and $\eps_d$ like we did, thus independent of $\Gamma$ and $g$, to have for all $0<\eps\leq \eps_d$
\begin{eqnarray*}
\frac{d}{dt}F_s(t) \leq && C_+^{(s-1)} \norm{h}_{H^{s-1}_{x,v}}^2 - \left(\sum\limits_{|j|+|l|=s} \norm{\partial^j_lh}^2_\Lambda\right)
\\ &&+ \tilde{K}_{\Gamma 1}\left(\mathcal{G}^s_{x}(g,h)\right)^2+ \eps^2\tilde{K}_{\Gamma 2}\left(\mathcal{G}^s_{x,v}(g,h)\right)^2,
\end{eqnarray*}
with $C_+^{(s-1)}$, $\tilde{K}_{\Gamma 1}$ and $\tilde{K}_{\Gamma 2}$ positive constants independent of $\eps$, $\Gamma$ and $g$.

\bigskip
To conclude we just have to, as in the linear case, take a linear combination of the $\left(F_p\right)_{p\leq s}$ and use the induction hypothesis (remember that both $\mathcal{G}^p_{x,v}$ and $\mathcal{G}^p_{x}$ are increasing functions of $p$) to obtain the expected result: $\forall \: 0 < \eps\leq\eps_d \:,$
\begin{eqnarray*}
\frac{d}{dt}\left(\sum\limits_{p = 1}^n C_pF_p(t)\right) \leq &-& K^{(s)}_0 \left(\sum\limits_{|j|+|l||\leq s} \norm{\partial^j_lh}^2_\Lambda\right) +K^{(s)}_1\left(\mathcal{G}^s_{x}(g,h)\right)^2
\\ &+& \eps^2K^{(s)}_1\left(\mathcal{G}^s_{x,v}(g,h)\right)^2,
\end{eqnarray*}
with this linear combination being equivalent to $$\norm{\cdot}^2_{L^2_{x,v}}+\sum\limits_{|l|\leq s}\norm{\partial_l^0\cdot}^2_{L^2_{x,v}}+\eps^2\sum\limits_{\overset{|l|+|j|\leq s}{|j|\geq 1}}\norm{\partial_l^j\cdot}^2_{L^2_{x,v}}$$
 and so fits the expected requirements.

\section{Existence and exponential decay: proof of Theorem $\ref{perturb}$} \label{sec:perturbresult}
One can clearly see that solving the kinetic equation $\eqref{Boltz}$ in the setting $f = \mu + \eps \mu^{1/2}h$ is equivalent to solving the linearized kinetic equation $\eqref{LinBoltz}$ directly. Therefore we are going to focus only on this linearized equation.

\bigskip
The proof relies on the \textit{a priori} estimate derived in the previous section. We shall use this inequality as a bootstrap to obtain first the existence of solutions thanks to an iteration scheme and then the exponential decay of those solutions, as long as the initial data is small enough.


\subsection{Proof of the existence of global solutions} \label{subsec:cauchy}

\subsubsection{Construction of solutions to a linearized problem}
Here we will follow the classical method that is approximating our solution by a sequence of solutions of a linearization of our initial problem. Then we have to construct a functional on Sobolev spaces for which this sequence can be uniformly bounded in order to be able to extract a convergent subsequence.
\par Starting from $h_0$ in $H^s_{x,v}\cap \mbox{Ker}(G_\eps)^\bot$, to be define later,  we define the function $h_{n+1}$ in $H^s_{x,v}$ by induction on $n \geq 0$ :

\begin{equation}\label{LBEn}
\left\{\begin{array}{rl} &\displaystyle{\partial_t h_{n+1} + \frac{1}{\eps}v.\nabla_x h_{n+1} = \frac{1}{\eps^2}L(h_{n+1}) + \frac{1}{\eps}\Gamma(h_{n},h_{n+1})}\vspace{2mm} 
                         \\ \vspace{2mm}& \displaystyle{h_{n+1}(0,x,v) = h_{in}(x,v)}, \end{array}\right.
\end{equation}

\bigskip
First we need to check that our sequence is well-defined.

\begin{lemma}\label{lem1}
Let $L$ be satisfying assumptions $\emph{(H1')}$, $\emph{(H2')}$ and $\emph{(H3)}$, and let $\Gamma$ be satisfying assumptions $\emph{(H4)}$ and $\emph{(H5)}$.
\par Then, it exists $0<\eps_d\leq 1$ such that for all $s \geq s_0$ (defined in (H4)), it exists $\delta_s >0$ such that for all $0< \eps \leq \eps_d$, if $\norm{h_{in}}_{\mathcal{H}^s_\eps}\leq \delta_s$ then the sequence $(h_n)_{n \in \N}$ is well-defined, continuous in time, in $H^s_{x,v}$ and belongs to $\emph{\mbox{Ker}}(G_\eps)^\bot$.
\end{lemma}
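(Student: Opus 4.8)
The plan is to prove Lemma \ref{lem1} by induction on $n$, using Theorem \ref{lin} to solve each linear step and Proposition \ref{apriori} to propagate a uniform-in-$\eps$ bound. First, I would set up the induction: for $n=0$ take $h_0 = h_{in} \in H^k_{x,v}\cap\mbox{Ker}(G_\eps)^\bot$, which satisfies $\norm{h_0}_{\mathcal{H}^k_\eps}\leq\delta_k$ by hypothesis. For the inductive step, assume $h_n$ is well-defined, continuous in time, lies in $H^k_{x,v}$, belongs to $\mbox{Ker}(G_\eps)^\bot$ for all $t\geq 0$, and satisfies $\norm{h_n(t)}_{\mathcal{H}^k_\eps}\leq\delta_k$ for all $t$ (this last quantitative bound is the natural thing to carry along the induction even though it is not explicitly in the statement — it is what makes the fixed-point argument for Theorem \ref{perturb} work, and it is needed to close the estimate). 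Then equation \eqref{LBEn} for $h_{n+1}$ is a \emph{linear} equation of the form $\partial_t h_{n+1} = G_\eps(h_{n+1}) + \frac1\eps\Gamma(h_n, h_{n+1})$ with a given coefficient function $g = h_n$.

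The existence and uniqueness of $h_{n+1}$ in $H^k_{x,v}$, continuous in time, would follow from perturbed semigroup theory: Theorem \ref{lin} gives that $G_\eps$ generates a $C^0$-semigroup on $H^k_{x,v}$, and the extra term $h\mapsto \frac1\eps\Gamma(h_n, h)$ is, by (H4), a bounded (time-dependent) perturbation on $H^k_{x,v}$ — more precisely, using $\mathcal{G}^k_x$ and $\mathcal{G}^k_{x,v}$ together with the bound on $h_n$ and the continuous embedding $H^k_x \hookrightarrow L^\infty_x$ for $k\geq k_0 > N/2$ — so that the standard theory of linear evolution equations with bounded perturbations (see \cite{Ka}) applies and produces a unique mild/strong solution, which one checks lies in $H^k_{x,v}$ and is continuous in time. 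The membership $h_{n+1}(t)\in\mbox{Ker}(G_\eps)^\bot$ for all $t$ is exactly the argument already used in the proofs of Theorem \ref{lin} and Proposition \ref{apriori}: since $h_{in}\in\mbox{Ker}(G_\eps)^\bot$, one pairs the equation with any $\phi_i$, uses the self-adjointness of $L$ (so $\langle G_\eps(h),\phi_i\rangle = 0$) and (H5) (so $\langle\Gamma(h_n,h_{n+1}),\phi_i\rangle_{L^2_v}$ projects to zero on $\mbox{Ker}(L)$, hence the torus-average of the $\mbox{Ker}(L)$-component is preserved and stays zero), to conclude the projection onto $\mbox{Ker}(G_\eps)$ stays zero.

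Finally, to close the induction one must verify the quantitative bound $\norm{h_{n+1}(t)}_{\mathcal{H}^k_\eps}\leq\delta_k$. Apply Proposition \ref{apriori} with $g = h_n$: this gives
$$\frac{d}{dt}\norm{h_{n+1}}_{\mathcal{H}^k_\eps}^2 \leq -K_0^{(k)}\norm{h_{n+1}}_{H^k_\Lambda}^2 + K_1^{(k)}\left(\mathcal{G}^k_x(h_n,h_{n+1})\right)^2 + \eps^2 K_2^{(k)}\left(\mathcal{G}^k_{x,v}(h_n,h_{n+1})\right)^2.$$
Now bound the $\Gamma$-terms via (H4): $\mathcal{G}^k_x(h_n,h_{n+1}) \leq C_\Gamma(\norm{h_{n+1}}_{H^k_xL^2_v}\norm{h_n}_{H^k_\Lambda} + \norm{h_n}_{H^k_xL^2_v}\norm{h_{n+1}}_{H^k_\Lambda})$ and $\mathcal{G}^k_{x,v}(h_n,h_{n+1})\leq C_\Gamma(\norm{h_{n+1}}_{H^k_{x,v}}\norm{h_n}_{H^k_\Lambda} + \norm{h_n}_{H^k_{x,v}}\norm{h_{n+1}}_{H^k_\Lambda})$. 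The point — and this is the main obstacle, the delicate bookkeeping that forces the $\eps$-weights in the norm — is that the $\eps^2$ in front of $(\mathcal{G}^k_{x,v})^2$ exactly compensates the fact that $\norm{\cdot}_{\mathcal{H}^k_\eps}$ controls $\norm{\cdot}_{H^k_{x,v}}$ only with a factor $1/\eps$: so $\eps^2\norm{h_n}_{H^k_{x,v}}^2 \lesssim \norm{h_n}_{\mathcal{H}^k_\eps}^2 \leq \delta_k^2$, uniformly in $\eps$, while $\norm{h_n}_{H^k_xL^2_v}^2 \lesssim \norm{h_n}_{\mathcal{H}^k_\eps}^2$ directly. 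Using the norm equivalence $\norm{\cdot}_{\mathcal{H}^k_\eps}^2\sim \norm{\cdot}_{L^2_{x,v}}^2 + \sum_{|l|\leq k}\norm{\partial_l^0\cdot}_{L^2_{x,v}}^2 + \eps^2\sum_{|j|\geq 1}\norm{\partial_l^j\cdot}_{L^2_{x,v}}^2$ and the facts that $\norm{\cdot}_\Lambda$ dominates $\norm{\cdot}_{L^2_{x,v}}$ and that $h_{n+1}\in\mbox{Ker}(G_\eps)^\bot$ (so Poincaré \eqref{poincare} upgrades the $\norm{\cdot}_{H^k_\Lambda}^2$ dissipation to control the full $\norm{\cdot}_{\mathcal{H}^k_\eps}^2$), one gets a closed differential inequality of the shape
$$\frac{d}{dt}\norm{h_{n+1}}_{\mathcal{H}^k_\eps}^2 \leq \left(-C_1 + C_2\delta_k\right)\norm{h_{n+1}}_{\mathcal{H}^k_\eps}^2 + C_3\delta_k\norm{h_{n+1}}_{\mathcal{H}^k_\eps}^2$$
(schematically; the cross terms are handled by Young's inequality, absorbing $\norm{h_{n+1}}_{H^k_\Lambda}^2$ into the dissipative term). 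Choosing first $\eps_N$ as in Theorem \ref{lin}/Proposition \ref{apriori} and then $\delta_k$ small enough (depending only on the structural constants, not on $\eps$) that the coefficient of $\norm{h_{n+1}}_{\mathcal{H}^k_\eps}^2$ is $\leq 0$, Gronwall gives $\norm{h_{n+1}(t)}_{\mathcal{H}^k_\eps}\leq\norm{h_{in}}_{\mathcal{H}^k_\eps}\leq\delta_k$ for all $t\geq 0$, closing the induction and completing the proof.
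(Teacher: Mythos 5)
Your overall plan — iterate, treat each step as a linear equation, invoke Theorem \ref{lin} for the $C^0$-semigroup generated by $G_\eps$, invoke (H4) to see $\Gamma(h_n,\cdot)$ as a bounded perturbation, invoke (H5) to preserve membership in $\mbox{Ker}(G_\eps)^\bot$ — is the same as the paper's, and those parts are sound. The paper also points toward the a priori estimate (its Lemma \ref{lem3}) as the ingredient that makes the bound on the perturbation uniform in $\eps$, so the fact that you fold a quantitative estimate into the induction is a reasonable reorganization rather than a deviation.

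However, the quantitative invariant you carry in the induction does not close. You propagate $\sup_t\norm{h_n(t)}_{\mathcal{H}^k_\eps}\leq\delta_k$ and then reduce Proposition \ref{apriori} plus (H4) to a ``schematic'' inequality $\frac{d}{dt}\norm{h_{n+1}}_{\mathcal{H}^k_\eps}^2\leq(-C_1+C_2\delta_k+C_3\delta_k)\norm{h_{n+1}}_{\mathcal{H}^k_\eps}^2$. But the actual inequality you get after inserting (H4) and the norm equivalence is of the form
$$\frac{d}{dt}\norm{h_{n+1}}_{\mathcal{H}^k_\eps}^2\leq\big(-K_0+K_1'\norm{h_n}_{\mathcal{H}^k_\eps}^2\big)\norm{h_{n+1}}_{H^k_\Lambda}^2+K_2'\,\norm{h_n}_{H^k_\Lambda}^2\,\norm{h_{n+1}}_{\mathcal{H}^k_\eps}^2.$$
The first bracket is handled by your smallness assumption, but the second term contains $\norm{h_n}_{H^k_\Lambda}^2$, which is \emph{not} controlled pointwise in time by $\norm{h_n}_{\mathcal{H}^k_\eps}^2$ (the $\Lambda$-norm carries the collision-frequency weight and is the strictly stronger norm in $v$). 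Gronwall then yields $\norm{h_{n+1}(t)}_{\mathcal{H}^k_\eps}^2\leq\norm{h_{in}}_{\mathcal{H}^k_\eps}^2\exp\big(K_2'\int_0^t\norm{h_n}_{H^k_\Lambda}^2\,ds\big)$, and you have nothing that bounds that time integral from your induction hypothesis. This is precisely why the paper introduces the functional
$$E(h)=\sup_{t\geq 0}\Big(\norm{h(t)}_{\mathcal{H}^k_\eps}^2+\int_0^t\norm{h(s)}_{H^k_\Lambda}^2\,ds\Big)$$
and carries $E(h_n)\leq\delta_k$ through the iteration (Lemma \ref{lem3}): the dissipation term $-K_0\norm{h_{n+1}}_{H^k_\Lambda}^2$, after integration in time, feeds back into the bound on $E(h_{n+1})$, and the time-integrated $\Lambda$-norm of $h_n$ is exactly what controls the troublesome cross term. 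You should replace your pointwise invariant with the $E$-functional invariant; with that change the rest of your argument goes through essentially as written.
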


\begin{proof}[Proof of Lemma $\ref{lem1}$]
By induction, let us suppose that for a fixed $n \geq 0$ we have constructed $h_n$ in $H^s_{x,v}$, which is true for $h_{in}$.
\\\par Using the previous notation one can see that we are in fact trying to solve the linear equation on the torus:
$$\partial_t h_{n+1} = G_\eps(h_{n+1}) + \frac{1}{\eps}\Gamma(h_n,h_{n+1})$$
with $h_{in}$ as an initial data.
\par The existence of a solution $h_{n+1}$ has already been shown for each equation covered by the hypocoercivity theory in the case $\eps =1$ (see papers described in the introduction). It was proved by fixed point arguments applied to the Duhamel's formula. In order not to write several times the same estimates one may use our next lemma $\ref{lem3}$ together with the Duhamel's formula (instead of considering directly the time derivative of $h_{n+1}$) to get a fixed point argument as long as $h_{in}$ is small enough, the smallness not depending on $\eps$.

\bigskip
As shown in the study of the linear part of the linearized model, under assumptions (H1'), (H2') and (H3) $G_\eps$ generates a $C^0$-semigroup on $H^s_{x,v}$, for all $0<\eps\leq \eps_d$. Moreover, hypothesis (H4) shows us that $\Gamma(h_n,\cdot)$ is a bounded linear operator from $(H^s_{x,v},E(\cdot))$ to $(H^s_{x,v},\norm{\cdot}_{H^s_{x,v}}$). Thus $h_{n+1}$ is in $H^s_{x,v}$.

\bigskip
The belonging to $\mbox{Ker}(G_\eps)^\bot$  is direct since $\Gamma(h_n,\cdot)$ is in $\mbox{Ker}(G_\eps)^\bot$ (hypothesis (H5)).
\end{proof}

\bigskip
Then we have to strongly bound the sequence, at least in short time, to have a chance to obtain a convergent subsequence, up to an extraction.


\subsubsection{Boundedness of the sequence}

We are about to prove the global existence in time of solutions in $C(\R^+,\norm{.}_{\mathcal{H}^s_\eps})$.That will give us existence of solutions in standard Sobolev's spaces as long as the initial data is small enough in the sense of the $\mathcal{H}^s_\eps$-norm,which is smaller than the standard $H^s_{x,v}$-norm. To achieve that we define a new functional on $H^s_{x,v}$

\begin{equation}\label{defE}
E(h) = \sup\limits_{t \in \R^+} \left(\norm{h(t)}_{\mathcal{H}^s_\eps}^2 + \int_0^t\norm{h(s)}_{H^s_\Lambda}^2ds\right).
\end{equation}

\begin{lemma}\label{lem3}
Let $L$ be satisfying assumptions $\emph{(H1')}$, $\emph{(H2')}$ and $\emph{(H3)}$, and let $\Gamma$ be satisfying assumptions $\emph{(H4)}$ and $\emph{(H5)}$.
\par Then it exists $0<\eps_d \leq 1$ such that for all $s\geq s_0$ (defined in (H4)) it exists $\delta_s > 0$  independent of $\eps$, such that for all $0<\eps \leq \eps_d$, if $\norm{h_{in}}_{\mathcal{H}^s_\eps}\leq \delta_s$ then 
$$\left(E(h_n)\leq \delta_s \right) \Rightarrow \left( E(h_{n+1}) \leq \delta_s \right).$$
\end{lemma}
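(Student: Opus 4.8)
The strategy is a bootstrap/continuation argument built entirely on the a priori estimate of Proposition \ref{apriori} applied with $g = h_n$ and $h = h_{n+1}$, combined with the assumed bound $E(h_n) \leq \delta_k$. The functional $E$ is tailored precisely so that controlling $\frac{d}{dt}\norm{h_{n+1}}^2_{\mathcal{H}^k_\eps}$ yields control of both $\sup_t \norm{h_{n+1}(t)}^2_{\mathcal{H}^k_\eps}$ and the dissipation integral $\int_0^t \norm{h_{n+1}(s)}^2_{H^k_\Lambda}ds$ at once, so the whole proof reduces to integrating the Proposition \ref{apriori} inequality in time and absorbing the $\Gamma$-terms into the good term $-K_0^{(k)}\norm{h_{n+1}}^2_{H^k_\Lambda}$.

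\medskip

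First I would write down, from Proposition \ref{apriori}(2) with $g=h_n$, $h=h_{n+1}$,
$$\frac{d}{dt}\norm{h_{n+1}}^2_{\mathcal{H}^k_\eps} \leq -K^{(k)}_0\norm{h_{n+1}}^2_{H^k_\Lambda} + K^{(k)}_1\left(\mathcal{G}^k_x(h_n,h_{n+1})\right)^2 + \eps^2 K^{(k)}_2\left(\mathcal{G}^k_{x,v}(h_n,h_{n+1})\right)^2,$$
and then bound the two $\Gamma$-contributions using the control hypothesis (H4). The key point is that (H4) gives, for $k\geq k_0$,
$$\left(\mathcal{G}^k_x(h_n,h_{n+1})\right)^2 \leq C_\Gamma^2\left(\norm{h_{n+1}}_{H^k_xL^2_v}\norm{h_n}_{H^k_\Lambda} + \norm{h_n}_{H^k_xL^2_v}\norm{h_{n+1}}_{H^k_\Lambda}\right)^2,$$
so after a Cauchy--Schwarz/Young splitting each term is of the form $(\text{$H^k_xL^2_v$-norm of one factor})^2\cdot\norm{\text{other}}^2_{H^k_\Lambda}$. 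Here the norm equivalence from Proposition \ref{apriori}(1) is essential: $\norm{\cdot}^2_{\mathcal{H}^k_\eps}$ dominates $\norm{\cdot}^2_{H^k_xL^2_v}$ uniformly in $\eps$ (the $x$-derivatives are ``free'', only the $v$-derivatives carry the $\eps^2$), so $\norm{h_n}^2_{H^k_xL^2_v} \lesssim \norm{h_n}^2_{\mathcal{H}^k_\eps} \leq E(h_n) \leq \delta_k$, and likewise for $h_{n+1}$. Similarly for the $\eps^2\mathcal{G}^k_{x,v}$ term one uses that $\eps^2\norm{\cdot}^2_{H^k_{x,v}} \lesssim \norm{\cdot}^2_{\mathcal{H}^k_\eps}$, so the factor $\eps^2$ exactly compensates the loss of one power of $\eps$ in passing from $H^k_xL^2_v$ to $H^k_{x,v}$. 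This is the arithmetic heart of the argument and the reason the functional was built with those $\eps$-weights in the first place.

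\medskip

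Having done this, I obtain
$$\frac{d}{dt}\norm{h_{n+1}}^2_{\mathcal{H}^k_\eps} \leq \left(-K^{(k)}_0 + C\sqrt{\delta_k}\right)\norm{h_{n+1}}^2_{H^k_\Lambda} + C\sqrt{\delta_k}\,\norm{h_n}^2_{H^k_\Lambda}$$
for a constant $C$ depending only on $C_\Gamma$, $K^{(k)}_1$, $K^{(k)}_2$ and the equivalence constants, all independent of $\eps$. Choosing $\delta_k$ small enough that $C\sqrt{\delta_k} \leq K^{(k)}_0/2$, integrating from $0$ to $t$, and using $E(h_n)\leq \delta_k$ (hence $\int_0^t \norm{h_n(s)}^2_{H^k_\Lambda}ds \leq \delta_k$, and in fact one can absorb it via $\int_0^t\norm{h_n}^2_{H^k_\Lambda} \leq E(h_n)$) gives
$$\norm{h_{n+1}(t)}^2_{\mathcal{H}^k_\eps} + \frac{K^{(k)}_0}{2}\int_0^t\norm{h_{n+1}(s)}^2_{H^k_\Lambda}ds \leq \norm{h_{in}}^2_{\mathcal{H}^k_\eps} + C\sqrt{\delta_k}\,\delta_k.$$
Taking the supremum over $t\in\R^+$, dividing through by the smaller of $1$ and $K^{(k)}_0/2$, and using $\norm{h_{in}}^2_{\mathcal{H}^k_\eps}\leq \delta_k^2$ (here one may further shrink $\delta_k$ and assume $\norm{h_{in}}_{\mathcal{H}^k_\eps}$ is as small as needed), one arranges $E(h_{n+1}) \leq \delta_k$, closing the induction step. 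The main obstacle — and the only genuinely delicate point — is bookkeeping the $\eps$-powers so that the bilinear terms are absorbed \emph{uniformly} in $\eps$: one must carefully track that $\mathcal{G}^k_x$ only ever meets the $\eps$-free part of $\norm{\cdot}_{\mathcal{H}^k_\eps}$ while $\mathcal{G}^k_{x,v}$ always comes multiplied by $\eps^2$, matching the $1/\eps^2$ loss hidden in $\norm{\cdot}^2_{H^k_{x,v}} \lesssim \eps^{-2}\norm{\cdot}^2_{\mathcal{H}^k_\eps}$. Everything else is a standard Gronwall-type absorption once the smallness of $\delta_k$ is fixed.
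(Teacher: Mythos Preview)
Your approach is essentially the paper's own proof: apply Proposition~\ref{apriori} with $g=h_n$, bound the $\Gamma$-terms via (H4) and the $\eps$-weighted norm equivalence, integrate in time, and close by choosing $\delta_k$ small. The $\eps$-bookkeeping you emphasize is exactly the point.

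One small gap: when you write ``and likewise for $h_{n+1}$'' to bound the coefficient $\norm{h_{n+1}}^2_{H^k_xL^2_v}$ (sitting in front of $\norm{h_n}^2_{H^k_\Lambda}$) by $\delta_k$, you are assuming what you want to prove. The paper avoids this circularity by keeping $E(h_{n+1})$ as an unknown on the right-hand side: after integration one gets
$$\norm{h_{n+1}}^2_{\mathcal{H}^k_\eps} + \frac{K^{(k)}_0}{2}\int_0^t\norm{h_{n+1}}^2_{H^k_\Lambda}\,ds \leq \norm{h_{in}}^2_{\mathcal{H}^k_\eps} + K\,E(h_{n+1})E(h_n),$$
takes the supremum in $t$, and solves the resulting linear inequality for $E(h_{n+1})$ once $E(h_n)$ is small enough that the coefficient $1-K E(h_n)/C$ (with $C=\min\{1,K_0^{(k)}/2\}$) is bounded below. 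A standard continuity/bootstrap argument would also repair your version. Incidentally, your coefficient should be $C\delta_k$ rather than $C\sqrt{\delta_k}$, since it is the \emph{squared} norm $\norm{h_n}^2_{\mathcal{H}^k_\eps}\leq E(h_n)\leq\delta_k$ that appears; this slip is harmless for the conclusion.
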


\begin{proof}[Proof of Lemma $\ref{lem3}$]
We let $t>0$.
\\We know that $h_{in}$ belongs to $H^s_{x,v} \cap \mbox{Ker}(G_\eps)^\bot$. Moreover we have, thanks to Lemma $\ref{lem1}$, that $(h_n)$ is well-defined, in $\mbox{Ker}(G_\eps)^\bot$ and in $H^s_{x,v}$, since $s\geq s_0$. Moreover, $\Gamma$ satisfies (H5). Therefore we can use the Proposition $\ref{apriori}$ to write, for $\eps \leq \eps_d$ ($\eps_d$ being the minimum between the one in Lemma $\ref{lem1}$ and the one in Proposition $\ref{apriori}$),

\begin{eqnarray*}
\frac{d}{dt}\norm{h_{n+1}}^2_{\mathcal{H}^s_\eps} &\leq& - K^{(s)}_0 \norm{h_{n+1}}^2_{H^s_\Lambda} + K^{(s)}_1 \left(\mathcal{G}^s_x(h_n,h_{n+1})\right)^2+ \eps^2K^{(s)}_2\left(\mathcal{G}^s_{x,v}(h_n,h_{n+1})\right)^2 .
\end{eqnarray*}

\bigskip
We can use the hypothesis (H4) and the fact that
\begin{equation} \label{eq:normeqv}
C_m\left(\norm{.}^2_{L^2_{x,v}}+\sum\limits_{|l|\leq s}\norm{\partial_l^0.}^2_{L^2_{x,v}}+\eps^2\sum\limits_{\overset{|l|+|j|\leq s}{|j|\geq 1}}\norm{\partial_l^j.}^2_{L^2_{x,v}}\right) \leq \norm{.}^2_{\mathcal{H}^s_{\eps}} \leq C_M\norm{.}_{H^s_{x,v}},
\end{equation}
to get the following upper bounds:

\begin{eqnarray*}
 \left(\mathcal{G}^s_x(h_n,h_{n+1})\right)^2 &\leq& \frac{C_\Gamma^2}{C_m}\left(\norm{h_n}^2_{\mathcal{H}^s_\eps}\norm{h_{n+1}}^2_{H^s_\Lambda}+\norm{h_{n+1}}^2_{\mathcal{H}^s_\eps}\norm{h_n}^2_{H^s_\Lambda}\right)
\\ \left(\mathcal{G}^s_{x,v}(h_n,h_{n+1})\right)^2 &\leq& \frac{C_\Gamma^2}{C_m\eps^2}\left(\norm{h_n}^2_{\mathcal{H}^s_\eps}\norm{h_{n+1}}^2_{H^s_\Lambda}+\norm{h_{n+1}}^2_{\mathcal{H}^s_\eps}\norm{h_n}^2_{H^s_\Lambda}\right).
\end{eqnarray*}

\bigskip
Therefore we have the following upper bound, where $K_1$ and $K_2$ are constants independent of $\eps$:

\begin{eqnarray*}
\frac{d}{dt}\norm{h_{n+1}}^2_{\mathcal{H}^s_\eps} &\leq& - K^{(s)}_0 \norm{h_{n+1}}^2_{H^s_\Lambda} + K_1\norm{h_n}^2_{\mathcal{H}^s_\eps}\norm{h_{n+1}}^2_{H^s_\Lambda} + K_2\norm{h_{n+1}}^2_{\mathcal{H}^s_\eps}\norm{h_{n}}^2_{H^s_\Lambda}
\\ &\leq& \left[K_1E(h_n)- K^{(s)}_0\right] \norm{h_{n+1}}^2_{H^s_\Lambda} + K_2E(h_{n+1})\norm{h_{n}}^2_{H^s_\Lambda}.
\end{eqnarray*}

\bigskip
We consider now that $E(h_n) \leq K^{(s)}_0/2K_1$.
\\We can integrate the equation above between $0$ and $t$ and one obtains

$$\norm{h_{n+1}}^2_{\mathcal{H}^s_\eps} + \frac{K^{(s)}_0}{2}\int_0^t\norm{h_{n+1}}^2_{H^s_\Lambda}ds \leq \norm{h_0}^2_{\mathcal{H}^s_\eps} + KE(h_{n+1})E(h_{n}).$$

This is true for all $t>0$, then we define $C = \min\{1,K^{(s)}_0/2\}$,  if $E(h_{n}) \leq C/2K$ we have

$$E(h_{n+1}) \leq \frac{2}{C}\norm{h_0}^2_{\mathcal{H}^s_\eps}.$$

Therefore choosing $M^{(s)} = \min\{C/2K,K^{(s)}_0/2K_1\}$ and $\delta_s \leq \min\{M^{(s)}C/2,M^{(s)}\}$ gives us the expected result.

\end{proof}

\subsubsection{The global existence of solutions}
Now we are able to prove the global existence result:

\begin{theorem}\label{globalexist}
Let $L$ be satisfying assumptions $\emph{(H1')}$, $\emph{(H2')}$ and $\emph{(H3)}$, and let $\Gamma$ be satisfying assumptions $\emph{(H4)}$ and $\emph{(H5)}$.
\\Then it exists $0 < \eps_d \leq 1$ such that for all $s \geq s_0$ (defined in $\emph{(H4)}$), it exists $\delta_s >0$ and for all $0<\eps\leq \eps_d$:
\\\par If $\norm{h_{in}}_{\mathcal{H}^s_\eps} \leq \delta_s$ then there exist a solution of $\eqref{LinBoltz}$ in $C(\R^+,E(\cdot))$ and it satisfies, for some constant $C >0$,
$$E(h) \leq C \norm{h_{in}}^2_{\mathcal{H}_\eps^s}.$$
\end{theorem}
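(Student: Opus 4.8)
The plan is to combine the two preceding lemmas (Lemma~\ref{lem1} and Lemma~\ref{lem3}) with a compactness argument to pass to the limit in the iteration scheme $\eqref{LBEn}$. First I would fix $k \geq k_0$ and $0 < \eps \leq \eps_N$, take $h_0 = h_{in}$ (or more conveniently $h_0 = 0$) with $\norm{h_{in}}_{\mathcal{H}^k_\eps} \leq \delta_k$ for the $\delta_k$ supplied by Lemma~\ref{lem3}, and check that the hypothesis $E(h_0) \leq \delta_k$ of Lemma~\ref{lem3} holds for this choice (for $h_0 = h_{in}$ this is immediate from $\eqref{defE}$ applied to the constant-in-time function, or one simply observes $E(h_{in}) \leq \norm{h_{in}}_{\mathcal{H}^k_\eps}^2 + \text{(integral term controlled by smallness)}$; the cleanest route is $h_0=0$, for which $E(h_0)=0$). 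By Lemma~\ref{lem1} the sequence $(h_n)_{n}$ is then well-defined, continuous in time, in $H^k_{x,v}$ and in $\mbox{Ker}(G_\eps)^\bot$, and by induction with Lemma~\ref{lem3} we get the uniform bound
$$\sup_{n \in \N} E(h_n) \leq \delta_k.$$
In particular $(h_n)$ is bounded in $L^\infty(\R^+; \mathcal{H}^k_\eps) \cap L^2(\R^+; H^k_\Lambda)$, hence — using the norm equivalence $\eqref{eq:normeqv}$ — bounded in $L^\infty_t H^k_{x,v}$ (with an $\eps$-dependent constant, which is harmless here since $\eps$ is fixed).

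Next I would establish convergence of the sequence. The natural move is to look at the difference $w_{n+1} = h_{n+1} - h_n$, which solves the linear equation
$$\partial_t w_{n+1} + \frac{1}{\eps}v\cdot\nabla_x w_{n+1} = \frac{1}{\eps^2}L(w_{n+1}) + \frac{1}{\eps}\Gamma(h_n, w_{n+1}) + \frac{1}{\eps}\Gamma(w_n, h_n),$$
with zero initial data. Applying Proposition~\ref{apriori} to $w_{n+1}$ (with $g = h_n$) and estimating the extra source term $\eps^{-1}\Gamma(w_n,h_n)$ via $\emph{(H4)}$ together with $\eqref{eq:normeqv}$ and the uniform bound $E(h_n) \leq \delta_k$, one gets a Gronwall-type inequality of the form
$$\frac{d}{dt}\norm{w_{n+1}}_{\mathcal{H}^k_\eps}^2 \leq \left(C\,\delta_k - K_0^{(k)}\right)\norm{w_{n+1}}_{H^k_\Lambda}^2 + C'\,\delta_k\,\norm{w_n}_{H^k_\Lambda}^2,$$
so that, after shrinking $\delta_k$ if necessary to absorb the first bracket, integration in time yields a contraction estimate
$$\sup_t\norm{w_{n+1}}_{\mathcal{H}^k_\eps}^2 + c\int_0^\infty\norm{w_{n+1}}_{H^k_\Lambda}^2\,ds \;\leq\; \kappa \int_0^\infty \norm{w_n}_{H^k_\Lambda}^2\,ds$$
with $\kappa < 1$. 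This shows $(h_n)$ is Cauchy, hence converges, in the space with norm $\big(\sup_t\norm{\cdot}_{\mathcal{H}^k_\eps}^2 + \int_0^\infty\norm{\cdot}_{H^k_\Lambda}^2\big)^{1/2}$; call the limit $h$. Passing to the limit in $\eqref{LBEn}$ — the transport and $L$ terms are linear, and $\Gamma(h_n, h_{n+1}) \to \Gamma(h,h)$ by bilinearity and $\emph{(H4)}$ — shows $h$ solves $\eqref{LinBoltz}$; continuity in time and membership in $\mbox{Ker}(G_\eps)^\bot$ are inherited from the $h_n$.

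Finally, to get $E(h) \leq C\norm{h_{in}}_{\mathcal{H}^k_\eps}^2$, I would not rely on lower semicontinuity of $E$ alone but instead apply Proposition~\ref{apriori} directly to the limiting solution $h$ (now legitimate since $h$ genuinely solves $\eqref{LinBoltz}$, i.e. the case $g = h$), bound the quadratic $\Gamma$-terms using $\emph{(H4)}$, $\eqref{eq:normeqv}$ and the already-known a priori bound $E(h) \leq \delta_k$ (inherited as a limit of $E(h_n) \leq \delta_k$), obtaining
$$\frac{d}{dt}\norm{h}_{\mathcal{H}^k_\eps}^2 \leq \left(C\delta_k - K_0^{(k)}\right)\norm{h}_{H^k_\Lambda}^2 \leq -\frac{K_0^{(k)}}{2}\norm{h}_{H^k_\Lambda}^2$$
for $\delta_k$ small enough; integrating from $0$ to $t$ and taking the supremum gives exactly $E(h) \leq \norm{h_{in}}_{\mathcal{H}^k_\eps}^2$, which is the claim with $C=1$.

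The main obstacle I anticipate is the convergence step: the iteration is linear in $h_{n+1}$ but the source term $\Gamma(w_n, h_n)$ couples consecutive differences, so one must be careful that the constant multiplying $\int\norm{w_n}_{H^k_\Lambda}^2$ is strictly less than $1$ after the coercive term $-K_0^{(k)}\norm{w_{n+1}}_{H^k_\Lambda}^2$ has been used to absorb the $\norm{w_{n+1}}_{H^k_\Lambda}^2$ contributions — this is where the smallness of $\delta_k$ (possibly smaller than the one from Lemma~\ref{lem3}) is genuinely needed, and where the splitting in $\emph{(H4)}$ between $\mathcal{G}^k_x$ and $\mathcal{G}^k_{x,v}$ (with the saving factor $\eps^2$ on the latter) must be tracked to keep the constants uniform in $\eps$. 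A secondary technical point is justifying that the limit $h$ is a genuine (not merely distributional) solution in $C(\R^+, E(\cdot))$, which follows from the strong convergence in the $L^2_t H^k_\Lambda$ topology plus the semigroup regularity from Theorem~\ref{lin}.
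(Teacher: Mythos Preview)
Your approach is correct and yields the theorem, but it differs from the paper's argument in the convergence step. The paper does not run a contraction estimate on the differences $w_{n+1}=h_{n+1}-h_n$; instead, having obtained the uniform bound $E(h_n)\leq\delta_k$ from Lemma~\ref{lem3} (with $h_0$ chosen to equal $h_{in}$ at $t=0$ and zero for $t>0$), it simply records boundedness of $(h_n)$ in $L^\infty_t H^k_{x,v}\cap L^1_t H^k_\Lambda$ and extracts a convergent subsequence by compact embedding (Rellich) into lower-order Sobolev spaces, then passes to the limit in \eqref{LBEn} using continuity of $G_\eps$ and $\Gamma$. The paper's proof is two lines and never touches the difference equation; your route is longer but more constructive --- it gives strong convergence of the full sequence (not just a subsequence), hence uniqueness of the limit for free, and it sidesteps the somewhat loose appeal to compactness on an unbounded time interval. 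Your derivation of the final bound $E(h)\leq C\norm{h_{in}}_{\mathcal{H}^k_\eps}^2$ by applying Proposition~\ref{apriori} directly to the limit $h$ is also not spelled out in the paper's proof of Theorem~\ref{globalexist}; there it is left implicit, presumably via lower semicontinuity of $E$ together with the sharper bound $E(h_{n+1})\leq \tfrac{2}{C}\norm{h_{in}}_{\mathcal{H}^k_\eps}^2$ visible inside the proof of Lemma~\ref{lem3}.

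One small correction: your parenthetical remark that for $h_0=h_{in}$ as a constant-in-time function the bound $E(h_0)\leq\delta_k$ is ``immediate'' is wrong --- the integral term $\int_0^t\norm{h_{in}}_{H^k_\Lambda}^2\,ds$ diverges. You correctly identify $h_0=0$ as the clean choice; the paper's choice ($h_{in}$ at $t=0$, zero afterwards) achieves the same thing.
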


\begin{proof}[Proof of Theorem $\ref{globalexist}$]
Regarding Lemma $\ref{lem3}$, by induction we can strongly bound the sequence $(h_n)_{n\in\N}$, as long as $E(h_{0}) \leq \delta_s$, the constant being defined in Lemma $\ref{lem3}$ . Therefore, defining $h_0$ to be $h_{in}$ at $t=0$ and $0$ elsewhere gives us $E(h_0) = \norm{h_{in}}_{\mathcal{H}^s_\eps} \leq \delta_s.$
\\ Thus, we have the boundedness of the sequence $(h_n)_{n\in\N}$ in $L^{\infty}_tH^s_{x,v} \cap L^1_tH^s_\Lambda$. By compact embeddings into smaller Sobolev's spaces (Rellich theorem) we can take the limit in $\eqref{LBEn}$ as $n$ tends to $+\infty$, since $G_\eps$ and $\Gamma$ are continuous. We obtain $h$ a solution, in $C(\R^+,E(\cdot))$, to
$$\left\{\begin{array}{rl} &\displaystyle{\partial_t h + \frac{1}{\eps}v.\nabla_x h = \frac{1}{\eps^2}L(h) + \frac{1}{\eps}\Gamma(h,h)}\vspace{2mm} 
                         \\ \vspace{2mm}& \displaystyle{h(0,x,v) = h_{in}(x,v).} \end{array}\right.$$
\end{proof}


\subsection{Proof of the exponential decay} \label{subsec:expodecay}
The function constructed above, $h$, is in $\mbox{Ker}(G_\eps)^\bot$ for all $0<\eps \leq 1$. Moreover, this function is clearly a solution of the following equation:
$$\partial_th = G_\eps(h) + \frac{1}{\eps}\Gamma(h,h),$$
with $\Gamma$ satisfying (H5). Therefore, we can use the a priori estimate on solutions of the full perturbative model concerning the time evolution of the $\mathcal{H}^s_\eps$-norm (where we will omit to write the dependence on $s$ for clearness purpose), Proposition $\ref{apriori}$.
$$\frac{d}{dt}\norm{h}_{\mathcal{H}_\eps^s}^2 \leq- K_0\norm{h}^2_{H^s_\Lambda} + K_1 \left(\mathcal{G}^s_{x}(h,h)\right)^2 + \eps^2K_2\left(\mathcal{G}^s_{x,v}(h,h)\right)^2.$$

\bigskip
Moreover, using $\eqref{eq:normeqv}$ and hypothesis (H4) to find:

\begin{eqnarray*}
 \left(\mathcal{G}^s_x(h,h)\right)^2 &\leq& \frac{2C_\Gamma^2}{C_m}\norm{h}^2_{\mathcal{H}^s_\eps}\norm{h}^2_{H^s_\Lambda}
\\ \left(\mathcal{G}^s_{x,v}(h,h)\right)^2 &\leq& \frac{2C_\Gamma^2}{C_m\eps^2}\norm{h}^2_{\mathcal{H}^s_\eps}\norm{h}^2_{H^s_\Lambda}.
\end{eqnarray*}

\bigskip
Hence, $K$ being a constant independent of $\eps$:
$$\frac{d}{dt}\norm{h}_{\mathcal{H}_\eps^s}^2 \leq  \left(K\norm{h}_{\mathcal{H}^s_\eps}^2 - K_0\right)\norm{h}^2_{H^s_\Lambda}.$$
Therefore, one can notice that if $\norm{h_{in}}_{\mathcal{H}^s_\eps}^2 \leq K_0 /2 K$ then we have that $\norm{h}_{\mathcal{H}^s_\eps}^2$ is decreasing in time. Hence, because the $\Lambda$-norm controls the $L^2$-norm which controls the $\mathcal{H}$-norm:

\begin{eqnarray*}
\frac{d}{dt}\norm{h}_{\mathcal{H}_\eps^s}^2 &\leq& -\frac{K_0}{2}\norm{h}^2_{H^s_\Lambda} 
\\ &\leq& -\frac{K_0}{2}\frac{\nu_0^\Lambda}{\nu_1^\Lambda C_{M}}\norm{h}_{\mathcal{H}^s_\eps}^2.
\end{eqnarray*}

Then we have directly, by Gronwall's lemma and setting $\tau_s = K_0\nu_0^\Lambda/4\nu_1^\Lambda C_{M}$,
$$\norm{h}_{\mathcal{H}_\eps^s}^2 \leq \norm{h_{in}}_{\mathcal{H}_\eps^s}^2  e^{-2\tau_s t}$$
as long as $\norm{h_{in}}_{\mathcal{H}^s_\eps}^2 \leq K_0/2K$, which is the expected result with $\delta_s \leq \sqrt{K_0/2K}$.

\section{Exponential decay of $v$-derivatives: proof of Theorem $\ref{decaydv}$} \label{sec:decaydv}
In order to prove this theorem we are going to state a proposition giving an a priori estimate on a solution to the equation $\eqref{LinBoltz}$
$$\partial_th + \frac{1}{\eps}v.\nabla_xh = \frac{1}{\eps^2}L(h) + \frac{1}{\eps}\Gamma(h,h).$$

We remind the reader that we work in $H^s_{x,v}$ with the following positive functional
$$\norm{\cdot}^2_{\mathcal{H}^s_{\eps\bot}} = \sum\limits_{\overset{|j|+|l|\leq s}{|j|\geq 1}}b_{j,l}^{(s)}\norm{\partial^j_l(\mbox{Id} - \pi_L)\cdot}^2_{L^2_{x,v}} + \sum\limits_{|l|\leq s}\alpha_l^{(s)}\norm{\partial^0_l\cdot}^2_{L^2_{x,v}} + \sum\limits_{\overset{|l|\leq s}{i,c_i(l)> 0}}a_{i,l}^{(s)}\eps \langle \partial^{\delta_i}_{l-\delta_i}\cdot,\partial^0_l\cdot\rangle _{L^2_{x,v}}.$$

\bigskip
One can notice that if we choose coefficients $(b_{j,l}^{(s)}),\:(\alpha_l^{(s)}),\:(a_{i,l}^{(s)}) >  0$ such that $\norm{\cdot}_{\mathcal{H}^s_{1\bot}}^2$ is equivalent to 
$$\sum\limits_{\overset{|j|+|l|\leq s}{|j|\geq 1}}\norm{\partial^j_l(\mbox{Id} - \pi_L)\cdot}^2_{L^2_{x,v}} + \sum\limits_{|l|\leq s}\norm{\partial^0_l\cdot}^2_{L^2_{x,v}} $$
 then for all $\eps$ less than some $\eps_0$, $\norm{\cdot}_{\mathcal{H}^s_{\eps\bot}}^2$ is also equivalent to the latter norm with equivalence coefficients not depending on $\eps$.

\bigskip
Moreover, using equation $\eqref{dvcontrolL}$, we have that
$$\norm{\partial_l^jh}^2_{L^2_{x,v}} \leq C_{\pi s} \norm{\partial^0_lh}^2_{L^2_{x,v}} +\norm{\partial_l^jh^\bot}^2_{L^2_{x,v}} \leq  2C_{\pi s} \norm{\partial^0_lh}^2_{L^2_{x,v}} + \norm{\partial_l^jh}^2_{L^2_{x,v}},$$
and therefore
 $$\sum\limits_{\overset{|j|+|l|\leq s}{|j|\geq 1}}\norm{\partial^j_l(Id - \pi_L)}^2_{L^2_{x,v}} + \sum\limits_{|l|\leq s}\norm{\partial^0_l.}^2_{L^2_{x,v}} $$
 is equivalent to the standard Sobolev norm. Thus, we will just construct coefficients $(b_{j,l}^{(s)})$, $(\alpha_l^{(s)})$ and $(a_{i,l}^{(s)})$ so that $\norm{.}_{\mathcal{H}^s_{1\bot}}^2$ is equivalent to the latter norm and then for $\eps$ small enough we will have the equivalence, not depending on $\eps$, between $\norm{\cdot}_{\mathcal{H}^s_{\eps \bot}}^2$ and the $H^s_{x,v}$-norm.


\subsection{An a priori estimate}
In this subsection we will prove the following proposition:

\begin{prop}\label{aprioribot}
If $L$ is a linear operator satisfying the conditions $\emph{(H1')}$, $\emph{(H2')}$ and $\emph{(H3)}$ and $\Gamma$ a bilinear operator satisfying $\emph{(H5)}$ then it exists $0 < \eps_d\leq 1$ such that for all $s$ in $\N^*$,
\begin{enumerate}
\item for $h_{in}$ in $\mbox{Ker}(G_\eps)^\bot$ if we have $h$ an associated solution of $$\partial_t h + \frac{1}{\eps}v\cdot\nabla_x h = \frac{1}{\eps^2}L(h) + \frac{1}{\eps}\Gamma(h,h),$$
\item there exist $K^{(s)}_0,\:K^{(s)}_1, \: (b_{j,l}^{(s)}),\:(\alpha_l^{(s)}),\:(a_{i,l}^{(s)}) >  0 $  such that for all $0<  \eps \leq \eps_d$:
	\begin{itemize}
	\item $\norm{\cdot}_{\mathcal{H}^s_{\eps \bot}} \sim \norm{\cdot}_{H^s_{x,v}}$,
	\item $\forall h_{in} \in H^s_{x,v}\cap \emph{\mbox{Ker}}(G_\eps)^\bot \:,\:$ $$\frac{d}{dt}\norm{h}_{\mathcal{H}_{\eps\bot}^s}^2 \leq - K^{(s)}_0 \left(\frac{1}{\eps^2}\norm{h^\bot}^2_{H^s_\Lambda}+ \sum\limits_{1\leq|l|\leq s}\norm{\partial_l^0h}^2_{L^2_{x,v}}\right)+ K^{(s)}_1\left(\mathcal{G}^s_{x,v}(h,h)\right)^2.$$
	\end{itemize}
\end{enumerate}
\end{prop}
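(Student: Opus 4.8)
The plan is to prove Proposition~\ref{aprioribot} by induction on $k$, following the scheme of the proofs of Theorem~\ref{lin} and Proposition~\ref{apriori}, but feeding in the \emph{orthogonal} toolbox estimates \eqref{dvbot}, \eqref{dx,dvbot}, \eqref{djlbot}, \eqref{ddeltaibot}, \eqref{deltai,d0lbot} in place of \eqref{dv}, \eqref{dx,dv}, \eqref{djl}, \eqref{ddeltai}, \eqref{deltai,d0l}, and keeping the pure $x$-derivative estimates \eqref{h}, \eqref{dx}, \eqref{d0l} unchanged. By (H5) and the self-adjointness of $L$, the solution $h$ stays in $\mbox{Ker}(G_\eps)^\bot$ for all times, so all these estimates and the Poincar\'e inequality \eqref{poincare} apply. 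Two reductions simplify the bookkeeping: first, by the discussion preceding the proposition (which uses \eqref{dvcontrolL}), it suffices to choose coefficients making $\norm{\cdot}^2_{\mathcal{H}^k_{1\bot}}$ equivalent to $\sum_{|j|\geq1}\norm{\partial^j_l(\mbox{Id}-\pi_L)\cdot}^2_{L^2_{x,v}}+\sum_{|l|\leq k}\norm{\partial^0_l\cdot}^2_{L^2_{x,v}}$, since then for $\eps$ small the $\eps$-dependent cross terms only perturb this equivalence boundedly and the latter quantity is equivalent to $\norm{\cdot}^2_{H^k_{x,v}}$ uniformly in $\eps$; second, the $\Gamma$-contributions produced by \eqref{h}, \eqref{dx}, \eqref{d0l} carry a factor $\mathcal{G}^k_x(h,h)$, which by (H4) is dominated by $\mathcal{G}^k_{x,v}(h,h)$ (for $k\geq k_0$), so all the $\Gamma$-terms collapse into a single term $K^{(k)}_1(\mathcal{G}^k_{x,v}(h,h))^2$.

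\textbf{Base case $k=1$.} Take $\norm{h}^2_{\mathcal{H}^1_{\eps\bot}}=A\norm{h}^2_{L^2_{x,v}}+\alpha\norm{\nabla_xh}^2_{L^2_{x,v}}+b\norm{\nabla_vh^\bot}^2_{L^2_{x,v}}+a\eps\langle\nabla_xh,\nabla_vh\rangle_{L^2_{x,v}}$ and differentiate it along the flow as $A\eqref{h}+\alpha\eqref{dx}+b\eqref{dvbot}+a\eps\eqref{dx,dvbot}$ (keeping the parenthetical $\Gamma$-terms). The decisive structural point, and the reason one may use the $\eps$-free coefficient $b$ in front of $\norm{\nabla_vh^\bot}^2_{L^2_{x,v}}$, is that — unlike in \eqref{dv} and \eqref{dx,dv} — the feedback of $\norm{\nabla_vh^\bot}^2$ onto $\norm{\nabla_xh}^2_{L^2_{x,v}}$ in \eqref{dvbot} carries the $\eps$-independent constant $K^\bot_{dx}$ (not $K_{dx}/\eps^2$), and the term coupling $\norm{\nabla_vh^\bot}^2_\Lambda$ in $a\eps\eqref{dx,dvbot}$ is only $O(1)$ (not $O(1/\eps^2)$). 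One then fixes, in order, $b$ with $b\nu_3^\Lambda/2>1$; $a$ large relative to $b$ so that the cross term contributes $-\tfrac a2\norm{\nabla_xh}^2_{L^2_{x,v}}$, beating $bK^\bot_{dx}\norm{\nabla_xh}^2_{L^2_{x,v}}$; $e$ large so that, using $\eps\leq1$ to absorb the $O(1)$ term into $-b\nu_3^\Lambda/(2\eps^2)$, the coefficient of $\norm{\nabla_vh^\bot}^2_\Lambda$ is $\leq-1/\eps^2$; and finally $A,\alpha$ large so that the coefficients of $\norm{h^\bot}^2_\Lambda$ and $\norm{\nabla_xh^\bot}^2_\Lambda$ are $\leq-1/\eps^2$ while the quadratic form is positive definite and equivalent to $\norm{h}^2_{L^2_{x,v}}+\norm{\nabla_xh}^2_{L^2_{x,v}}+\norm{\nabla_vh^\bot}^2_{L^2_{x,v}}$ (here bounding the cross term by Cauchy--Schwarz and Young and using \eqref{dvcontrolL}, \eqref{projineq} to absorb $\norm{\nabla_v\pi_Lh}^2$ into the $A$- and $\alpha$-terms). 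This gives $\frac{d}{dt}\norm{h}^2_{\mathcal{H}^1_{\eps\bot}}\leq-K_0\big(\tfrac1{\eps^2}\norm{h^\bot}^2_{H^1_\Lambda}+\norm{\nabla_xh}^2_{L^2_{x,v}}\big)+K_1(\mathcal{G}^1_{x,v}(h,h))^2$, which is the claim for $k=1$.

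\textbf{Induction step.} Assuming the result up to $k-1$, I would work, exactly as in Section~\ref{sec:linear}, with
\[
F_k(t)=\sum_{\substack{|j|+|l|=k\\|j|\geq2}}B\norm{\partial^j_lh^\bot}^2_{L^2_{x,v}}+B'\sum_{\substack{|l|=k\\ i,\,c_i(l)>0}}Q_{l,i}(t),\quad Q_{l,i}=\alpha\norm{\partial^0_lh}^2_{L^2_{x,v}}+b\norm{\partial^{\delta_i}_{l-\delta_i}h^\bot}^2_{L^2_{x,v}}+a\eps\langle\partial^{\delta_i}_{l-\delta_i}h,\partial^0_lh\rangle_{L^2_{x,v}}.
\]
For $\frac{d}{dt}Q_{l,i}$ I use $\alpha\eqref{d0l}+b\eqref{ddeltaibot}+a\eps\eqref{deltai,d0lbot}$; this reproduces the $k=1$ combination up to the lower order remainders $\tfrac{K^\bot_{k-1}}{\eps^2}\norm{h^\bot}^2_{H^{k-1}_{x,v}}$, $\tfrac{a}{4N}\sum_{|l'|\leq k-1}\norm{\partial^0_{l'}h}^2_{L^2_{x,v}}$ and the \emph{top-order} coupling $bK^\bot_{dl}\sum_{|l'|=k}\norm{\partial^0_{l'}h}^2_{L^2_{x,v}}$. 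The latter is the only genuinely new feature relative to the linear case: after summing $B'\sum_{l,i}$, it is controlled by enlarging $a$ relative to $b$ so that the collected $-\tfrac a2\sum_{|l|=k}m_l\norm{\partial^0_lh}^2_{L^2_{x,v}}$ (with $m_l\geq1$ the number of admissible indices $i$) dominates $bK^\bot_{dl}P_k\sum_{|l'|=k}\norm{\partial^0_{l'}h}^2_{L^2_{x,v}}$, $P_k$ being the total number of admissible pairs $(l,i)$. For the block $|j|\geq2$ I use \eqref{djlbot}: the cascade $\sum_i\norm{\partial^{j-\delta_i}_{l+\delta_i}h^\bot}^2_\Lambda$ carries an $O(1)$ constant while the matching negative term at $v$-order $|j|-1$ carries the factor $1/\eps^2$, so it is absorbed once $\eps_N$ is small enough (and $B'$ large enough for the cascade landing at $v$-order $1$, i.e. in the $Q$-block, to be absorbed there). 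Choosing $\alpha,b,a,e,B,B',\eps_N$ in the same fashion as in Theorem~\ref{lin} (all independent of $\eps,\Gamma,h$) then yields $Q_{l,i}\sim\norm{\partial^0_lh}^2_{L^2_{x,v}}+\norm{\partial^{\delta_i}_{l-\delta_i}h^\bot}^2_{L^2_{x,v}}$, $F_k\sim\sum_{|l|=k}\norm{\partial^0_lh}^2_{L^2_{x,v}}+\sum_{|j|+|l|=k,\,|j|\geq1}\norm{\partial^j_lh^\bot}^2_{L^2_{x,v}}$, and
\[
\frac{d}{dt}F_k\leq-c\Big(\frac1{\eps^2}\sum_{\substack{|j|+|l|=k\\|j|\geq1}}\norm{\partial^j_lh^\bot}^2_\Lambda+\sum_{|l|=k}\norm{\partial^0_lh}^2_{L^2_{x,v}}\Big)+\frac{C^\bot_{k-1}}{\eps^2}\norm{h^\bot}^2_{H^{k-1}_{x,v}}+C^\bot_{dl}\sum_{|l'|\leq k-1}\norm{\partial^0_{l'}h}^2_{L^2_{x,v}}+C(\mathcal{G}^k_{x,v}(h,h))^2.
\]

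\textbf{Conclusion.} As in Theorem~\ref{lin} and Proposition~\ref{apriori}, I would take $\sum_{p=1}^kC_pF_p$ with the $C_p$ decreasing fast enough: the induction hypothesis at ranks $\leq k-1$ absorbs the remainders $\tfrac{C^\bot_{k-1}}{\eps^2}\norm{h^\bot}^2_{H^{k-1}_{x,v}}$ and $C^\bot_{dl}\sum_{|l'|\leq k-1}\norm{\partial^0_{l'}h}^2_{L^2_{x,v}}$ (using $\norm{\cdot}_{L^2_{x,v}}\lesssim\norm{\cdot}_\Lambda$, the bound \eqref{L2L2lambdafluid} to pass between $\norm{\partial^0_lh}^2_\Lambda$ and $\norm{\partial^0_lh^\bot}^2_\Lambda+\norm{\partial^0_lh}^2_{L^2_{x,v}}$, and $\mathcal{G}^{k-1}_{x,v}\leq\mathcal{G}^k_{x,v}$), the $(\mathcal{G})^2$-terms collect into $K^{(k)}_1(\mathcal{G}^k_{x,v}(h,h))^2$, and the derivative of the combination is bounded by $-K^{(k)}_0\big(\tfrac1{\eps^2}\norm{h^\bot}^2_{H^k_\Lambda}+\sum_{1\leq|l|\leq k}\norm{\partial^0_lh}^2_{L^2_{x,v}}\big)+K^{(k)}_1(\mathcal{G}^k_{x,v}(h,h))^2$. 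Taking $(b^{(k)}_{j,l}),(\alpha^{(k)}_l),(a^{(k)}_{i,l})$ to be the effective coefficients of $\sum_pC_pF_p$ identifies this combination with $\norm{\cdot}^2_{\mathcal{H}^k_{\eps\bot}}$, which by the first paragraph is equivalent to $\norm{\cdot}^2_{H^k_{x,v}}$ uniformly in $\eps$; this is the first bullet, and the differential inequality is the second. I expect the main obstacle to be precisely the orchestration in the induction step — making the $\eps$-free weight on the microscopic $v$-derivatives compatible with dissipation (this is exactly why $\partial^j_lh$ is replaced by $\partial^j_lh^\bot$ and the better $\eps$-powers of \eqref{dvbot}, \eqref{dx,dvbot}, \eqref{djlbot} are exploited), while still absorbing the new top-order couplings to $\sum_{|l'|=k}\norm{\partial^0_{l'}h}^2_{L^2_{x,v}}$ coming from \eqref{ddeltaibot}--\eqref{djlbot}; once the order of the choices ($b$, then $a$, then $e$, then $A,\alpha$; then $B$, $B'$, $\eps_N$) is fixed, the rest is the same bookkeeping as in the previous sections.
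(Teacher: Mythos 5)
Your proposal reproduces the paper's proof essentially verbatim: induction on $k$, replacing the standard toolbox estimates by their orthogonal counterparts $\eqref{dvbot}$, $\eqref{dx,dvbot}$, $\eqref{ddeltaibot}$, $\eqref{djlbot}$, $\eqref{deltai,d0lbot}$, exploiting the improved $\eps$-powers to drop the $\eps^2$ weight on the microscopic $v$-derivative terms, treating the $Q_{l,i}$ jointly because of the top-order coupling $K^\bot_{dl}\sum_{|l'|=k}\norm{\partial^0_{l'}h}^2_{L^2_{x,v}}$, and finishing with a linear combination of the $F_p$ together with the norm-equivalence discussion preceding the proposition. The only cosmetic deviation is the order in which the constants are fixed ($b,a,e,A,\alpha$ instead of the paper's $b,A,a,e,\alpha$), which is harmless since the dependency graph is respected either way.
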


\begin{remark}
We notice here that in front of the microscopic part of $h$ is a negative constant order $-1/\eps^2$ which is the same order than the control derived by Guo in \cite{Gu4} for his dissipation rate.
\end{remark}
We will prove that proposition by induction on $s$.
\\So we take $h_{in}$ in $H^s_{x,v}\cap \mbox{Ker}(G_\eps)^\bot$ and we consider the associated solution of $\eqref{LinBoltz}$, denoted by $h$. One can notice that thanks to (H5), $h$ remains in $\mbox{Ker}(G_\eps)^\bot$ for all times and thus we are allowed to use the inequalities given in the toolbox

\subsubsection{The case $s=1$}
In that case we have 
$$\norm{h}_{\mathcal{H}^1_{\eps\bot}}^2= A\norm{h}_{L^2_{x,v}}^2 + \alpha \norm{\nabla_xh}_{L^2_{x,v}}^2+b \norm{\nabla_vh^\bot}^2_{L^2_{x,v}}+a\eps \langle \nabla_xh,\nabla_vh\rangle _{L^2_{x,v}},$$
with $A$, $\alpha$, $b$ and $a$ strictly positive.

\bigskip
Therefore we can study the time evolution of that operator acting on $h$ by gathering results given in the toolbox. We simply take $A\eqref{h} + \alpha\eqref{dx} + b\eqref{dvbot} + a\eps\eqref{dx,dvbot}$

\begin{eqnarray}
\frac{d}{dt}\norm{h}_{\mathcal{H}^1_{\eps\bot}}^2 &\leq& \frac{1}{\eps^2}\left[K_1^\bot b - \lambda A\right] \norm{h^\bot}^2_\Lambda + \frac{1}{\eps^2}\left[K^\bot ea - \lambda\alpha\right] \norm{\nabla_xh^\bot}^2_\Lambda \nonumber
\\               && + \frac{1}{\eps^2}\left[\frac{1}{4C_{\pi 1}C_\pi C_p}\frac{a}{e} - b\frac{\nu^\Lambda_3}{2}\right] \norm{\nabla_vh^\bot}^2_\Lambda + \left[K_{dx}^\bot b -  \frac{a}{2}\right] \norm{\nabla_xh}^2_{L^2_{x,v}} \nonumber 
\\ && + K(A,\alpha,b,a)\left(\mathcal{G}^1_{x,v}(h,h)\right)^2, \label{boundbot}
\end{eqnarray}
with $K$ a fonction only depending on the coefficients appearing in hypocoercivity hypothesis and independent of $\eps$.

\bigskip
We directly see that we have exactly the same kind of bound as the one we obtain while working on the a priori estimates for the operator $\norm{h}_{\mathcal{H}^1_{\eps}}$, equation $\eqref{boundpriori}$. Therefore we can choose of coefficients $A$, $\alpha$, $b$, $e$ and $a$ in the same way (in the right order) and use the same inequalities to finally obtain the expected result: $\exists K_0, \: K_1 > 0,\: \forall \: 0<\eps \leq 1,$

\begin{eqnarray*}
\frac{d}{dt}\norm{h}_{\mathcal{H}^1_{\eps\bot}}^2 &\leq& -K_0^{(1)}\left(\frac{1}{\eps^2}\norm{h^\bot}^2_\Lambda + \frac{1}{\eps^2}\norm{\nabla_{x}h^\bot}^2_\Lambda+\frac{1}{\eps^2}\norm{\nabla_{v}h^\bot}^2_\Lambda+ \norm{\nabla_xh}^2_{L^2_{x,v}} \right) 
\\ && +  K_1^{(1)}\left(\mathcal{G}^1_{x,v}(h,h)\right)^2,
\end{eqnarray*}
with the constants $K_0^{(1)}$ and $K_1^{(1)}$ independent of $\eps$, and $\norm{h}_{\mathcal{H}^1_{1\bot}}^2$ equivalent to  $\norm{h}_{L^2_{x,v}}^2 + \norm{\nabla_xh}_{L^2_{x,v}}^2+ \norm{\nabla_vh^\bot}^2_{L^2_{x,v}}$. Therefore, for all $\eps$ small enough we have the expected result in the case $s=1$.


\subsubsection{The induction in higher order Sobolev spaces}

Then we assume that the theorem is true up to the integer $s-1$, $s > 1$. Then we suppose that $L$ satisfies (H1'), (H2') and (H3) and we consider $\eps$ in $(0,1]$.
\par Since $h_{in}$ is in $\mbox{Ker}(G_\eps)^\bot$, $h$ belongs to $\mbox{Ker}(G_\eps)^\bot$ for all $t$ and so we can use the results given in the toolbox.
\par As in the proofs of previous sections, we define on $H^s_{x,v}$:
\begin{eqnarray*}
F_s(t) &=& \sum\limits_{\overset{|j|+|l|=s}{|j|\geq 2}} B\norm{\partial^j_lh^\bot}^2_{L^2_{x,v}} + B'\sum\limits_{\overset{|l|=s}{i,c_i(l)> 0}} Q_{l,i}(t),
\\Q_{l,i}(t) &=& \alpha\norm{\partial^0_lh}^2_{L^2_{x,v}} + b\norm{\partial^{\delta_i}_{l-\delta_i}h^\bot}^2_{L^2_{x,v}} + a\eps\langle \partial^{\delta_i}_{l-\delta_i}h,\partial^0_lh\rangle _{L^2_{x,v}},
\end{eqnarray*}
where the constants, strictly positive, will be chosen later.

\bigskip
Like in the section above, we shall study the time evolution of every term involved in $F_s$ in order to bound above $\frac{dF_s}{dt}(t)$ with expected coefficients. However, in this subsection we will need to control all the $Q_{l,i}$'s in the same time rather than treating them separately as we did in the proof of Proposition $\eqref{apriori}$, because the toolbox tells us that each $Q_{l,i}$ is controlled by quantities appearing  in the others.


\subsubsection{The time evolution of $\sum Q_{l,i}$}
Gathering the toolbox inequalities in the following way: $\alpha\eqref{d0l} + b\eqref{ddeltaibot} + a\eps\eqref{deltai,d0lbot}$. This yields, because $0<\eps\leq 1$ and $\mbox{Card}\{i,c_i(l)> 0\}\leq d$,
 
 \begin{eqnarray*}
\frac{d}{dt}\left(\sum\limits_{\overset{|l|=s}{i,c_i(l)> 0}}Q_{l,i}(t)\right) &\leq& \frac{1}{\eps^2}\left[\tilde{K}^\bot e a   - \lambda\alpha\right] \sum\limits_{|l|=s}\norm{\partial^0_lh^\bot}^2_\Lambda
\\               && + \frac{1}{\eps^2}\left[\frac{1}{4C_{\pi s}C_\pi d}\frac{a}{e}
-\nu_5^\Lambda b\right]\sum\limits_{\overset{|l|=s}{i,c_i(l)> 0}}\norm{\partial^{\delta_i}_{l-\delta_i}h^\bot}^2_\Lambda
\\               && + \left[K_{dl}^\bot d b-\frac{a}{2}\right]\sum\limits_{|l|=s}\norm{\partial^0_lh}^2_{L^2_{x,v}} + \frac{a}{4}\sum\limits_{|l|\leq s-1}\norm{\partial^0_lh}^2_{L^2_{x,v}}
\\               && + \frac{bK_{s-1}^\bot}{\eps^2}\left(\sum\limits_{\overset{|l|+ |j|=s}{i,c_i(l)> 0}}1\right)\norm{h^\bot}^2_{H^{s-1}_{x,v}} + K(\alpha,b,a,e)\left(\mathcal{G}^s_{x,v}(h,h)\right)^2,
\end{eqnarray*}
with $K$ a fonction only depending on the coefficients appearing in hypocoercivity hypothesis and independent of $\eps$.
\\One can notice that except for the terms in $\norm{h}_{H^{s-1}_{x,v}}$ and $\sum\limits_{|l|\leq s-1}\norm{\partial^0_lh}^2_{L^2_{x,v}}$, we have exactly the same bound as in the case $s=1$, equation $\eqref{boundbot}$.  Therefore we can choose $\alpha$, $b$, $a$, $e$, independently of $\eps$ and $\Gamma$  such that it exists $K'_0>0$, $K'_1>0$ and $C_0, C_1 >0$ such that for all $0<\eps\leq 1$:

\begin{itemize}
\item $\sum\limits_{\overset{|l|=s}{i,c_i(l)> 0}}Q_{l,i}(t) \sim \sum\limits_{\overset{|l|=s}{i,c_i(l)> 0}}\left(\norm{\partial^0_lh}^2_{L^2_{x,v}} + \norm{\partial^{\delta_i}_{l-{\delta_i}} h^\bot}^2_{L^2_{x,v}}\right)$,
\item 
\begin{eqnarray*}
\frac{d}{dt}\sum\limits_{\overset{|l|=s}{i,c_i(l)> 0}}Q_{l,i}(t) \leq &-&K'_0\left(\frac{1}{\eps^2}\sum\limits_{|l|=s}\norm{\partial^0_lh^\bot}^2_\Lambda +\frac{1}{\eps^2} \sum\limits_{\overset{|l|=s}{i,c_i(l)> 0}}\norm{\partial^{\delta_i}_{l-{\delta_i}}h^\bot}^2_\Lambda\right. 
\\ && \:\:\:\:\:\:\:\:\:\:\: \:\:\:\left.+ \sum\limits_{|l|=s}\norm{\partial^0_lh}^2_{L^2_{x,v}}\right)
\\ &+& \frac{C_0}{\eps^2}\norm{h^\bot}_{H^{s-1}_{x,v}}^2+ C_1\sum\limits_{|l|\leq s-1}\norm{\partial^0_lh}^2_{L^2_{x,v}} + K'_1\left(\mathcal{G}^s_{x,v}(h,h)\right)^2.
\end{eqnarray*}
\end{itemize}


\subsubsection{The time evolution of $F_s$ and conclusion}
We can finally obtain the time evolution of $F_s$, using $\frac{d}{dt}\norm{\partial^j_lh^\bot}^2_{L^2_{x,v}}$, equation $\eqref{djlbot}$, so that there is no more $\eps$ in front of the $\Gamma$ term:

\begin{eqnarray*}
\frac{d}{dt}F_s(t) &\leq& -B\frac{\nu_5^\lambda}{\eps^2} \sum\limits_{\overset{|j|+|l|=s}{|j|\geq 2}}\norm{\partial^j_lh^\bot}^2_\Lambda + B\frac{9(\nu_1^\Lambda)^2d}{2(\nu_0^\Lambda)^2\nu_5^\Lambda}\sum\limits_{\overset{|j|+|l|=s}{|j|\geq 2}}\sum\limits_{i,c_i(j)> 0}\norm{\partial^{j-\delta_i}_{l+\delta_i}h^\bot}^2_\Lambda
\\                 && -K'_0B'\left(\frac{1}{\eps^2}\sum\limits_{|l|=s}\norm{\partial^0_lh^\bot}^2_\Lambda +\frac{1}{\eps^2} \sum\limits_{\overset{|l|=s}{i,c_i(l)> 0}}\norm{\partial^{\delta_i}_{l-{\delta_i}}h^\bot}^2_\Lambda + \sum\limits_{|l|=s}\norm{\partial^0_lh}^2_{L^2_{x,v}}\right)
\\                 && + \left(\sum\limits_{\overset{|j|+|l|=s}{|j|\geq 2}}BK_{dl}^\bot  + B'C_1\right) \sum\limits_{|l|\leq s-1}\norm{\partial^0_lh}^2_{L^2_{x,v}} 
\\                 &&  + \frac{1}{\eps^2}\left[\sum\limits_{\overset{|j|+|l|=s}{|j|\geq 2}}BK_{s-1}^\bot+ B'C_0\right]\norm{h^\bot}^2_{H^{s-1}_{x,v}}
\\                 && +\left[\sum\limits_{\overset{|j|+|l|=s}{|j|\geq 2}}\frac{3B\nu_1^\Lambda}{\nu_0^\Lambda\nu_5^\Lambda} + B'K'_1 \right]\left(\mathcal{G}^s_{x,v}(h,h)\right)^2,
\end{eqnarray*}

\bigskip
Therefore we obtain the same bound (except $\sum\limits_{|l|\leq s-1}\norm{\partial^0_lh}^2_{L^2_{x,v}}$)  as in the proof of Proposition $\ref{apriori}$, equation $\eqref{Fkpriori}$, and so by choosing coefficients in the same way we have that it exists $C_+^{(s)}>0$, $0<\eps_d\leq 1$ and $K^{(s*)}_1 >0$, none of them depending on $\eps$, such that for all $0<\eps\leq\eps_d$:

\begin{eqnarray*}
\frac{d}{dt}F_s(t) \leq &&C_+^{(s)} \left(\frac{1}{\eps^2}\sum\limits_{|j|+|l| \leq s-1} \norm{\partial^j_lh^\bot}^2_\Lambda + \sum\limits_{|l|\leq s-1}\norm{\partial^0_lh}^2_{L^2_{x,v}}\right)
\\                      && - \left(\frac{1}{\eps^2}\sum\limits_{|j|+|l|=s} \norm{\partial^j_lh^\bot}^2_\Lambda + \sum\limits_{|l|=s}\norm{\partial^0_lh}^2_{L^2_{x,v}}\right)
\\                              &&+ K^{(s*)}_1 \left(\mathcal{G}^s_{x,v}(h,h)\right)^2.
\end{eqnarray*}
\bigskip

This inequality is true for all $s$ and therefore we can take a linear combination of the $F_s$ to obtain the required result. Using the induction hypothesis on $F_1$ up to $F_{s-1}$ we also have the equivalence of norms.


\subsection{The exponential decay: proof of Theorem $\ref{decaydv}$}

Thanks to Theorem $\ref{perturb}$, we know that we have a solution to the equation $\eqref{LinBoltz}$ for any given $h_{in}$ small enough in the standard Sobolev norm. Call $h$ the associated solution of $h_{in} \in H^s_{x,v}$ to $\eqref{LinBoltz}$. Since the existence has been proved we can use the a priori estimate above and the Proposition $\ref{aprioribot}$.
\\ Thus we have
$$\frac{d}{dt}\norm{h}_{\mathcal{H}_{\eps\bot}^s}^2 \leq - K^{(s)}_0 \left(\frac{1}{\eps^2}\norm{h^\bot}^2_{H^s_\Lambda}+ \sum\limits_{1\leq|l|\leq s}\norm{\partial_l^0h}^2_{L^2_{x,v}}\right)+ K^{(s)}_1\left(\mathcal{G}^s_{x,v}(h,h)\right)^2.$$

\bigskip
As before we can use $\eqref{L2L2lambdafluid}$ (equivalence of norms $L^2_{x,v}$ and $\Lambda$ on the fluid part) to get, for $|l|>1$,
$$\norm{\partial^0_lh}^2_\Lambda \leq C'\left(\norm{\partial^0_lh^\bot}^2_\Lambda + \norm{\partial^0_lh}^2_{L^2_{x,v}}\right),$$
and for the case $|l|\leq1$ we can apply the Poincare inequality $\eqref{poincare}$ together with the equivalence of the $L^2_{x,v}$-norm and the $\Lambda$-norm on the fluid part $\pi_L$, $\eqref{L2L2lambdafluid}$ to get
$$\exists C, C' > 0, \left\{\begin{array}{ll}\norm{h}^2_\Lambda &\leq C\left(\norm{h^\bot}^2_\Lambda + \frac{1}{2}\norm{\nabla_xh}^2_{L^2_{x,v}}\right),
                                   \\\norm{\nabla_xh}^2_\Lambda &\leq C'\left(\norm{\nabla_xh^\bot}^2_\Lambda + \frac{1}{2}\norm{\nabla_xh}^2_{L^2_{x,v}}\right).\end{array}\right.$$

\bigskip
Then we get that
\begin{eqnarray*}
\frac{d}{dt}\norm{h}_{\mathcal{H}_{\eps\bot}^s}^2 &\leq& - K^{(s)}_0 \left( \sum\limits_{\overset{|j|+|l|\leq s}{|j|\geq 1}}\norm{\partial^j_lh^\bot}^2_\Lambda + \sum\limits_{|l|\leq s}\norm{\partial^0_lh}^2_\Lambda\right) + K^{(s)}_1\left(\mathcal{G}^s_{x,v}(h,h)\right)^2
\\&\leq& - K^{(s*)}_0 \norm{h}^2_{H^s_\Lambda} + K^{(s)}_1\left(\mathcal{G}^s_{x,v}(h,h)\right)^2.
\end{eqnarray*}

\bigskip
Then for $s\geq s_0$, defined in (H4), and because $\Gamma$ satisfies (H4) we can write
$$\frac{d}{dt}\norm{h}_{\mathcal{H}_{\eps\bot}^s}^2 \leq \left(K^{(s)}_1C_\Gamma^2\norm{h}^2_{H^s_{x,v}}-K^{(s*)}_0\right)\norm{h}^2_{H^s_\Lambda}.$$

Because $\norm{h}_{\mathcal{H}_{\eps\bot}^s}$ and $\norm{h}^2_{H^s_{x,v}}$ are equivalent, independently of $\eps$, we finally have
$$\frac{d}{dt}\norm{h}_{\mathcal{H}_{\eps\bot}^s}^2 \leq \left(K^{(s)}_1C_\Gamma^2C\norm{h}_{\mathcal{H}_{\eps\bot}^s}^2-K^{(s*)}_0\right)\norm{h}^2_{H^s_\Lambda}.$$

Therefore if 
$$\norm{h_{in}}_{\mathcal{H}_{\eps\bot}^s}^2 \leq \frac{K^{(s*)}_0}{2K^{(s)}_1C_\Gamma^2C}$$
we have that $\norm{h}_{\mathcal{H}_{\eps\bot}^s}^2$ is always decreasing on $\R^+$ and so for all $t>0$
$$\frac{d}{dt}\norm{h}_{\mathcal{H}_{\eps\bot}^s}^2 \leq -\frac{K^{(s*)}_0}{2K^{(s)}_1C_\Gamma^2C}\norm{h}^2_{H^s_\Lambda}.$$
And the $H^s_\Lambda$-norm controls the $H^s_{x,v}$-norm which is equivalent to the $\mathcal{H}^s_{\eps\bot}$-norm. Thus applying Gronwall's lemma gives us the expected exponential decay.

\section{Incompressible Navier-Stokes Limit: proof of Theorem $\ref{hydrolim}$} \label{sec:hydrolim}

In this section we consider $s \geq s_0$, $0<\eps\leq\eps_d$ and we take $h_{in}$ in $H^s_{x,v}$ such that $\norm{h_{in}}_{\mathcal{H}^s_\eps}\leq \delta_s$.
\\ Therefore we know, thanks to theorem $\ref{perturb}$, that we have a solution $h_\eps$ to the linearized Boltzmann equation
$$\partial_th_\eps + \frac{1}{\eps}v.\nabla_xh_\eps = \frac{1}{\eps^2}L(h_\eps) + \frac{1}{\eps}\Gamma(h_\eps,h_\eps),$$
with $h_\eps(0,x,v) = h_{in}(x,v)$. Moreover, we also know that $(h_\eps)$ tends weakly-* to $h$ in $L^\infty_t(H^s_xL^2_v)$.

\bigskip
The first step towards the proof of Theorem $\ref{hydrolim}$ is to derived a convergence rate in finite time. Then, as described in Section $\ref{subsec:strategy}$, we shall interpolate this result with the exponential decay behaviour of our solutions in order to obtain a global in time convergence.


\subsection{A convergence in finite time}

In Remark $\ref{Linfinityx}$, we define $V_T(\eps)$ and prove the following result
$$\forall T>0,\quad V_T(\eps) = \sup\limits_{t\in [0,T]}\norm{h_\eps - h}_{L^\infty_xL^2_v} \to 0, \: \mbox{as}\:\: \eps \to 0.$$
Thanks to this remark we can give an explicit convergence in finite time.

\begin{theorem}\label{hydrolimT}
Consider $s \geq s_0$ and $h_{in}$ in $H^s_{x,v}$ such that $\norm{h_{in}}_{\mathcal{H}^s_\eps}\leq \delta_s$.
\par Then, $(h_\eps)_{\eps>0}$ exists for all $0<\eps\leq \eps_d$ and converges weakly* in $L^\infty_t(H^s_xL^2_v)$ towards $h$ such that $h \in \mbox{Ker}(L)$, with $\nabla_x\cdot u=0$ and $\rho + \theta =0$.

\bigskip
Furthermore, $\int_0^Thdt$  belongs to $H^s_xL^2_v$ and  it exists $C>0$ such that for all $T>0$,
$$\norm{\int_0^Thdt - \int_0^T h_\eps dt}_{H^s_xL^2_v} \leq C \max\{\sqrt{\eps},\sqrt{T\eps},TV_T(\eps)\}.$$
One can have a strong convergence in $L^2_{[0,T]}H^s_xL^2_v$ only if $h_{in}$ is in $\emph{\mbox{Ker}}(L)$ with $\nabla_x \cdot u_{in} = 0$ and $\rho_{in} + \theta_{in} =0$ (initial layer conditions). Moreover, in that case we have, for all $T>0$,
$$\norm{h - h_\eps}_{L^2_{[0,T]}H^s_xL^2_v} \leq C \max\{\sqrt{\eps}, \sqrt{T}V_T(\eps)\},$$
and for all $\delta$ in $[0,1]$, if $h_{in}$ belongs to $H^{s+\delta}_xL^2_v$,
$$\sup\limits_{t\in [0,T]}\norm{h - h_\eps}_{H^s_xL^2_v}(t) \leq C \max\{\eps^{\min(\delta,1/2)}, V_T(\eps)\}.$$
\end{theorem}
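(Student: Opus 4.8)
The weak-$*$ convergence of $(h_\eps)$ to some $h\in L^\infty_t(H^k_xL^2_v)$, and the fact that $h\in\mbox{Ker}(L)$ with $\nabla_x\cdot u=0$ and $\rho+\theta=0$, are obtained exactly as in the paragraph preceding the statement: they follow from the uniform bound of Theorem~\ref{perturb} and from Theorem~$1.1$ of \cite{BU} applied to the weak limit. The substance of the theorem is the \emph{quantitative} comparison of $h_\eps$ with $h$, and my plan is to run a Fourier-in-$x$ plus Duhamel argument powered by the spectral study of the linearized operator, in the spirit of \cite{EP,BU}. First I would Fourier-transform in $x$ and write, for each $\xi\in\Z^N$, $\widehat{G_\eps}(\xi)=\eps^{-2}L-i\eps^{-1}(v\cdot\xi)$ on $L^2_v$, so that Duhamel's formula becomes
$$h_\eps(t)=e^{tG_\eps}h_{in}+\frac1\eps\int_0^te^{(t-s)G_\eps}\Gamma(h_\eps,h_\eps)(s)\,ds,$$
and then insert the spectral decomposition of $\widehat{G_\eps}(\xi)$: for $\eps|\xi|$ small, finitely many hydrodynamic eigenvalues $\lambda_j(\eps|\xi|)=i\eps^{-1}c_j|\xi|-\mu_j|\xi|^2+O(\eps)$ — the branches with $c_j=0$ being the incompressible and thermal modes $-\nu|\xi|^2$, $-\kappa|\xi|^2$ — with the rest of the spectrum in $\{\Re\le-\sigma\eps^{-2}\}$; for $\eps|\xi|$ bounded below, the Fourier form of the hypocoercivity estimate of Theorem~\ref{lin} supplies a gap of order $\eps^{-2}$. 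The crucial structural remark, and the point where the argument must leave \cite{BU}, is that $\xi$ here ranges over the \emph{discrete} lattice $\Z^N$: the stationary-phase/dispersion device of \cite{Uk1} used on $\R^N$ is no longer available, so the oscillatory factors $e^{ic_j|\xi|t/\eps}$ do not decay in $\eps$ and must be killed by integrating in time.

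\textbf{The averaged estimate.} I would integrate the Duhamel identity over $[0,T]$ and split into three contributions. The microscopic/kinetic part carries $e^{-\sigma t/\eps^2}$, so $\int_0^T$ costs $O(\eps^2)$; the nonlinear term carries the extra $\eps^{-1}$ but, being in $\mbox{Ker}(L)^\bot$ by (H5), enjoys the same gain, and what is left converges, together with the $c_j=0$ modes, to the Duhamel formula of the Navier--Stokes--Boussinesq system satisfied by $h$. The acoustic modes contribute $\int_0^Te^{ic_j|\xi|t/\eps}e^{-\mu_j|\xi|^2t}(\cdots)\,dt$, of size $\lesssim(\eps/|\xi|)$ times the mode amplitude, hence $O(\eps)$ after summing with the $\langle\xi\rangle^{2k}$ weights of the $H^k_x$-norm and using $h_{in}\in H^k_x$. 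Collecting the $O(\eps)$ errors from the corrections to the eigenprojections and from the layer at $t=0$ gives the $\sqrt\eps$ term; the viscous relaxation over $[0,T]$ gives the $\sqrt{T\eps}$ term; and the genuinely nonlinear discrepancy $\Gamma(h_\eps,h_\eps)-\Gamma(h,h)=\Gamma(h_\eps-h,h_\eps)+\Gamma(h,h_\eps-h)$, bounded through (H4), the exponential estimates of Theorem~\ref{perturb}, and the strong rate $V_T(\eps)$ of Remark~\ref{Linfinityx} for $\norm{h_\eps-h}_{L^\infty_xL^2_v}$, gives the $TV_T(\eps)$ term. Together with the trivial $\int_0^Th\,dt\in H^k_xL^2_v$, this is the first displayed inequality.

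\textbf{Necessity and the pointwise rates.} If $\nabla_x\cdot u_{in}\ne0$ or $\rho_{in}+\theta_{in}\ne0$, the acoustic projection of $h_{in}$ is nonzero, and by the decomposition above the acoustic part of $h_\eps(t)$ equals, up to $O(\eps)$, a superposition of waves $e^{ic_j|\xi|t/\eps}e^{-\mu_j|\xi|^2t}$ whose $L^2_{[0,T]}H^k_xL^2_v$-norm stays bounded below independently of $\eps$ (for fixed $T$ the oscillation does not help in $L^2_t$), whereas $h\in\mbox{Ker}(L)$ carries no acoustic component; hence $\norm{h_\eps-h}_{L^2_{[0,T]}H^k_xL^2_v}\not\to0$, which forces both conditions. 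They make $h(0)=\pi_L(h_{in})$, so if moreover $h_{in}\notin\mbox{Ker}(L)$ then $\norm{h_\eps(0)-h(0)}=\norm{h_{in}^\bot}>0$ and uniform-in-time convergence fails as well. Conversely, under all three initial-layer conditions the acoustic and the kinetic layers are absent, so the same decomposition \emph{without} the time-average leaves only the $O(\eps)$ Chapman--Enskog correction and the $V_T(\eps)$ nonlinear discrepancy, which yields $\norm{h_\eps-h}_{L^2_{[0,T]}H^k_xL^2_v}\le C\max\{\sqrt\eps,\sqrt T\,V_T(\eps)\}$; and if in addition $h_{in}\in H^{k+\delta}_xL^2_v$, using the bound of Theorem~\ref{perturb} at regularity $k+\delta$ to control the high frequencies and cutting the frequency sum at $|\xi|\sim\eps^{-1}$ converts the $O(\eps)$ correction into $O(\eps^{\min(\delta,1/2)})$ — the truncation costing $\eps^{\delta}$ of the available regularity while the intrinsic error of the spectral approximation saturates at $\eps^{1/2}$ — giving the last estimate.

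\textbf{Main obstacle.} The hard part is precisely the lack of dispersive decay on $\T^N$: it makes the time-average unavoidable, it degrades the rate (which is why one only recovers $\sqrt{\eps|\mbox{ln}\,\eps|}$ after interpolating with the exponential decay, in Theorem~\ref{hydrolim}), and it makes the resonant self-interaction of the acoustic waves inside $\Gamma(h_\eps,h_\eps)$ — the modes $\xi=\xi_1-\xi_2$ with $c_j|\xi_1|=c_j|\xi_2|$, which do not oscillate — the delicate term; I would not try to track it explicitly but absorb it into the non-explicit bound $V_T(\eps)$. A secondary difficulty is making the spectral estimates uniform over the full lattice $\xi\in\Z^N$, matching the perturbative eigenvalue expansion (valid for $\eps|\xi|$ small) with the hypocoercive gap (valid for $\eps|\xi|$ large).
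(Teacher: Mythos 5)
Your plan coincides with the paper's proof in every structural step: Fourier-in-$x$ plus Duhamel, the Ellis--Pinsky spectral decomposition (Theorem~\ref{fourier}), the crucial use of (H5) to kill the $P_{0j}$ projection of $u_\eps=\Gamma(h_\eps,h_\eps)$ and thereby gain the factor $\abs{\eps n}$ that cancels the Duhamel $\eps^{-1}$, the observation that on the discrete Fourier lattice there is no stationary phase so the acoustic oscillations must be killed by integrating in time, the $e^{-\sigma t/\eps^2}$ decay of the remainder semigroup, and the absorption of the nonlinear discrepancy into $V_T(\eps)$ via Remark~\ref{Linfinityx} and Lemma~\ref{psiuepsu}. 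The only slips are attributional and do not affect the architecture: the $\sqrt{\eps}$ and $\sqrt{T\eps}$ terms come from the kinetic remainders $U^\eps_R$ and $\psi^\eps_R$ (whose squared norms are only $O(\eps)$, see Lemmas~\ref{UR} and~\ref{psiepsR}), not from eigenprojection corrections or ``viscous relaxation,'' and the acoustic resonance in $\Gamma(h_\eps,h_\eps)$ you flag as delicate never needs special treatment because the $\eps$-average is applied to the outer $t$-integral of the Duhamel phase $e^{i\alpha_j(t-s)\abs{n}/\eps}$ uniformly, not mode by mode.
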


\bigskip
\begin{remark}
We mention here that the obligation of an integration in time for non special initial condition is only due to the linear part $\eps^{-2}L - \eps^{-1}v \cdot\nabla_x$, whereas the case $T = +\infty$ is prevented by the second order term $\Gamma$.
\end{remark}

\bigskip
We proved in the linear case, theorem $\ref{lin}$, that the linear operator $G_\eps = \eps^{-2}L-\eps^{-1}v\cdot\nabla_x$ generates a semigroup $e^{tG_\eps}$ on $H^s_{x,v}$. Therefore we can use Duhamel's principle to rewrite our equation under the following form, defining $u_\eps =\Gamma(h_\eps,h_\eps)$,

\begin{eqnarray}
h_\eps &=& e^{tG_\eps}h_{in} + \int_0^t\frac{1}{\eps}e^{(t-s)G_\eps}u_\eps(s) ds \nonumber
\\ &:=& U^\eps h_{in} + \Psi^\eps(u_\eps) . \label{duhamel}
\end{eqnarray}

\bigskip
The article by Ellis and Pinsky \cite{EP} gives us a Fourier theory in $x$ of the semigroup $e^{tG_\eps}$ and therefore we are going to use it to study the strong limit of $U^\eps h_{in} $ and $\Psi^\eps(u_\eps)$ as $\eps$ tends to $0$. We will denote by $\mathcal{F}_x$ the Fourier transform in $x$ on the torus (which is discrete) and $n$ the discrete variable associated in $\Z^d$.
\\ From \cite{EP}, we are using Theorem $3.1$, rewriten thanks with the Proposition $2.6$ and the Appendix $II$ with $\delta = \lambda/4$ in Proposition $2.3$, to get the following theorem

\begin{theorem} \label{fourier}
There exists $n_0 \in \R^{*+}$, there exists functions
\begin{itemize}
\item $\func{\lambda_j}{[-n_0,n_0]}{\C}, \: -1\leq j \leq 2,$ $C^\infty$
\item $\function{e_j}{[-n_0,n_0]\times \mathbb{S}^{d-1}}{L^2_v}{(\zeta,\omega)}{e_j(\zeta,\omega)}, \: -1\leq j \leq d,$ $C^\infty$ in $\zeta$ and $C^0$ in $\omega$,
\end{itemize}
such that
\begin{enumerate}
\item for all $-1\leq j\leq 2$, $\lambda_j(\zeta) = i\alpha_j\zeta - \beta_j \zeta^2 + \gamma_j(\zeta),$ where $\alpha_j \in \R$, with $\alpha_{0} =\alpha_2 = 0$, $\beta_j <0$ and $\abs{\gamma_j(\zeta)} \leq C_\gamma \abs{\zeta}^3$ with $\abs{\gamma_j(\zeta)} \leq \frac{\beta_j}{2}\abs{\zeta}^2,$
\item for all $-1\leq j \leq d$
		\begin{itemize}
		\item $e_j(\zeta,\omega) = e_{0j}(\omega) + \zeta e_{1j}(\omega) + \zeta^2e_{2j}(\zeta,\omega),$
		\item $e_{0-1}(\omega)(v) = e_{01}(-\omega)(v) = A\left(1 - \omega.v + \frac{\abs{v}^2-d}{2}\right)\mu(v)^{1/2},$
		\end{itemize}
\item we have $e^{tG_\eps} = \mathcal{F}^{-1}_x \hat{U}(t/\eps^2, \eps n,v)\mathcal{F}_x$ where $$\hat{U}(t,n,v) = \sum\limits_{j=-1}^{2} \hat{U}_j(t,n,v) + \hat{U}_R(t,n,v)$$ with the following properties
		\begin{itemize}
		\item for $-1\leq j \leq 2, \hat{U}_j(t,n,v) = \chi_{\abs{n}\leq n_0}e^{t\lambda_j(\abs{n})} P_j\left(\abs{n},\frac{n}{\abs{n}}\right)(v),$
		\item for $-1 \leq j \leq 1, P_j\left(\abs{n},\frac{n}{\abs{n}}\right) = e_j\left(\abs{n},\frac{n}{\abs{n}}\right) \otimes e_j\left(\abs{n},\frac{-n}{\abs{n}}\right),$
		\item $P_2\left(\abs{n},\frac{n}{\abs{n}}\right) = \sum\limits_{j=2}^{d}e_j\left(\abs{n},\frac{n}{\abs{n}}\right) \otimes e_j\left(\abs{n},\frac{-n}{\abs{n}}\right),$
		\item for $-1\leq j \leq 2, P_j\left(\abs{n},\frac{n}{\abs{n}}\right) = P_{0j}\left(\frac{n}{\abs{n}}\right) + \abs{n} P_{1j}\left(\frac{n}{\abs{n}}\right) + \abs{n}^2P_{2j}\left(\abs{n},\frac{n}{\abs{n}}\right),$
		\item $\sum\limits_{j=-1}^{2} P_{0j} = \pi_L,$
		\item it exists $C_{R},\sigma >0$ such that for all $t \in \R^+$ and all $n \in \Z^d$, $$|||\hat{U}_R(t,n,v)|||_{L^2_v}\leq C_R e^{-\sigma t}.$$
		\end{itemize}
\end{enumerate}
\end{theorem}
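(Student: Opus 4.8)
The plan is to read off Theorem~\ref{fourier} from the Fourier analysis of Ellis and Pinsky~\cite{EP}, after reducing our operator to theirs. First I would Fourier-transform the linear part of \eqref{LinBoltz} in the $x$ variable: on the torus $\mathcal{F}_x(v\cdot\nabla_x h)(n) = i(n\cdot v)\mathcal{F}_x h(n)$ for $n\in\Z^N$, so $\mathcal{F}_x\,G_\eps\,\mathcal{F}_x^{-1} = \eps^{-2}L - i\eps^{-1}(n\cdot v) = \eps^{-2}B(\eps n)$ with $B(\xi) = L - i(\xi\cdot v)$ acting on $L^2_v$ for $\xi\in\R^N$. Writing $\hat U(t,\xi,v) = e^{tB(\xi)}$ for the generated semigroup, this already gives $e^{tG_\eps} = \mathcal{F}^{-1}_x\,\hat U(t/\eps^2,\eps n,v)\,\mathcal{F}_x$, which is item~(3); everything else is a decomposition of $\hat U(t,\xi,v)$, uniform in $\xi\in\R^N$, that one then evaluates at $\xi = \eps n$.

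For $\abs{\xi}$ small I would run analytic perturbation theory around the eigenvalue $0$ of $L$. By (H1) and (H3), $L$ is self-adjoint on $L^2_v$ with $0$ an isolated eigenvalue of finite multiplicity $d$ and eigenspace $\mbox{Ker}(L)$; hence for $\abs{\xi}\leq n_0$ small the part of the spectrum of $B(\xi)$ near $0$ consists of finitely many eigenvalues $\lambda_{-1}(\xi),\dots,\lambda_2(\xi)$, smooth in $\abs{\xi}$ by the rotational invariance of $L$, together with finite-rank spectral projectors $P_j(\xi)$. The first-order term is the set of eigenvalues of the acoustic operator $-i\,\pi_L\!\big((\xi/\abs{\xi})\cdot v\big)\pi_L$ restricted to $\mbox{Ker}(L)$, which gives $i\alpha_j\abs{\xi}$ with $\alpha_0 = \alpha_2 = 0$ for the incompressible and Boussinesq modes and $\alpha_{\pm1}\neq 0$ for the two acoustic branches (the mode $\lambda_2$ carrying the $N-1$ transverse directions); the second-order term, built from $(\mbox{Id}-\pi_L)L^{-1}(\mbox{Id}-\pi_L)$ sandwiched between two copies of multiplication by $(\xi/\abs{\xi})\cdot v$, produces the dissipative coefficient, and $\gamma_j(\xi) = O(\abs{\xi}^3)$ is made small relative to this by shrinking $n_0$. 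The eigenfunctions $e_j(\zeta,\omega)$ and the $P_j$ inherit analytic expansions in $\zeta$; the identity $\sum_j P_{0j} = \pi_L$ holds because the $P_{0j}$ are the spectral projectors of the acoustic operator on $\mbox{Ker}(L)$, whose sum is the identity on $\mbox{Ker}(L)$, and the explicit forms of $e_{0,-1}$ and $e_{01}$ are obtained by diagonalizing that operator.

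It then remains to bound the remainder $\hat U_R$. For $\abs{\xi}\leq n_0$, after subtracting the $d$ branching eigenprojectors the residual semigroup is controlled by the resolvent of $B(\xi)$ on $\{\Re z\leq -\sigma\}$ for some $\sigma\in(0,\lambda/4)$, and the uniform resolvent bound of \cite{EP} (their Proposition~2.3, taken with $\delta = \lambda/4$) yields $|||\hat U_R(t,\xi,v)|||_{L^2_v}\leq C_R e^{-\sigma t}$ there. For $\abs{\xi}\geq n_0$ there is no branching at all: the spectrum of $B(\xi)$ stays inside $\{\Re z\leq -\sigma\}$ uniformly in $\xi$, so $\hat U(t,\xi,v)$ itself decays like $e^{-\sigma t}$. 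Relabelling and collecting the two regimes defines $\hat U_R$ and closes the argument; in our notation this is exactly Theorem~3.1 of \cite{EP} rewritten through their Proposition~2.6 and Appendix~II.

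The hard part is the uniformity in $\xi$ of this spectral gap: for $\abs{\xi}$ in a fixed compact set it is just continuity of the spectrum, but the bound as $\abs{\xi}\to\infty$ cannot come from perturbation and genuinely uses the hypocoercive structure of $L$ encoded in (H1)--(H3) — the splitting $L = K - \Lambda$ with $\Lambda$ coercive, $K$ relatively compact, and the local coercivity estimate of (H3). Ellis and Pinsky prove precisely these facts for the Boltzmann collision operator with hard potentials and Grad's angular cutoff, and since our (H1)--(H3) are modelled on that case the only work left is to check that they imply the hypotheses used in \cite{EP}; the discreteness of the torus Fourier variable is irrelevant at this stage, since one merely restricts their $\R^N$-valued frequency $\xi$ to the points $\xi = \eps n$, $n\in\Z^N$.
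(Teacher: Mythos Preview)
Your proposal is correct and matches the paper's approach exactly: the paper does not prove this theorem but simply imports it from Ellis--Pinsky \cite{EP}, stating just before the theorem that it is ``Theorem~3.1, rewritten thanks to Proposition~2.6 and Appendix~II with $\delta = \lambda/4$ in Proposition~2.3''. You have in fact gone further than the paper by sketching the mechanism of the Ellis--Pinsky argument (Fourier reduction to $B(\xi)=L-i\xi\cdot v$, analytic perturbation for small $|\xi|$, uniform spectral gap for the remainder), and you correctly identify the same precise references.
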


\begin{remark}
This decomposition of the spectrum of the linear operator is based on a low and high frequencies decomposition. It shows that the spectrum of the whole operator can be viewed as a perturbation of the spectrum of the homogeneous linear operator. It can be divided into large eigenvalues, which are negative and therefore create a strong semigroup property for the remainder term, and small eigenvalues around the origin that are smooth perturbations of the homogeneous ones.
\end{remark}

This theorem gives us all the tools we need to study the convergence as $\eps$ tends to $0$ since we have an explicit form for the Fourier transform of the semigroup. We also know that this semigroup commutes with the pure $x$-derivatives. Therefore, studying the convergence in the $L^2_xL^2_v$-norm will be enough to obtain the desired result in the $H^s_xL^2_v$-norm.
\\ We are going to prove the following convergences in the different settings stated by Theorem $\ref{hydrolim}$
\begin{enumerate}
\item $U^\eps h_{in}$ tends to $V(t,x,v)h_{in}$ with $V(0,x,v)h_{in}=V(0)(h_{in})(x,v)$ where $V(0)$ the projection on the subset of $\mbox{Ker}(L)$ consisting in functions $g$ such that $\nabla_x\cdot u_g =0$ and $\rho_g + \theta_g = 0$,
\item $\Psi^\eps(u_\eps)$ converges to $\Psi(h,h)$ with $\Psi(h,h)(t=0) = 0$.
\end{enumerate}


\subsubsection{Study of the linear part}

We remind here that we have$$U^\eps h_{in} = \mathcal{F}^{-1}_x \hat{U}^\eps(t,n,v) \hat{h}_{in}(n,v)$$ with
\begin{eqnarray*}
\hat{U}^\eps(t,n,v) &=& \sum\limits_{j=-1}^{2} \hat{U}_j^\eps(t,n,v) + \hat{U}^\eps_R(t,n,v),
\\ \hat{U}_j^\eps(t,n,v) &=& \chi_{\abs{\eps n}\leq n_0}e^{\frac{i \alpha_j t \abs{n}}{\eps} - \beta_j t \abs{n}^2 + \frac{t}{\eps^2}\gamma_j(\abs{\eps n})} \left[P_{0j}\left(\frac{n}{\abs{n}}\right) + \eps\abs{n} \tilde{P}_{1j}\left(\abs{\eps n},\frac{n}{\abs{n}}\right)\right].
\end{eqnarray*}

\bigskip
We can decompose $\hat{U}^\eps_j$ into four different terms

\begin{eqnarray}
\hat{U}^\eps_j(t,n,v) &=& e^{\frac{i \alpha_j t \abs{n}}{\eps} - \beta_j t \abs{n}^2}P_{0j}\left(\frac{n}{\abs{n}}\right)  \nonumber
\\    &&  + \chi_{\abs{\eps n}\leq n_0}e^{\frac{i \alpha_j t \abs{n}}{\eps} - \beta_j t \abs{n}^2}\left(e^{\frac{t}{\eps^2}\gamma_j(\abs{\eps n})} -1\right)P_{0j}\left(\frac{n}{\abs{n}}\right) \label{decomposition}
\\ && +\chi_{\abs{\eps n}\leq n_0}e^{\frac{i \alpha_j t \abs{n}}{\eps} - \beta_j t \abs{n}^2 +\frac{t}{\eps^2}\gamma_j(\abs{\eps n})} \eps\abs{n} \tilde{P}_{1j}\left(\abs{\eps n},\frac{n}{\abs{n}}\right) \nonumber
\\ &&  + \left(\chi_{\abs{\eps n}\leq n_0} - 1\right)e^{\frac{i \alpha_j t \abs{n}}{\eps} - \beta_j t \abs{n}^2}P_{0j}\left(\frac{n}{\abs{n}}\right). \nonumber
\\&=& U^\eps_{0j} + U^\eps_{1j}+U^\eps_{2j}+U^\eps_{3j} .\nonumber
\end{eqnarray}

\begin{remark} \label{noeps}
One can notice that $U^\eps_{00}$ and $U^\eps_{02}$ do not depend on $\eps$, since $\alpha_0 = \alpha_2 =0$.
\end{remark}

We are going to study each of these four terms in two different lemmas and then add a last lemma to deal with the remainder term $U_R h_{in}$. The lemmas will be proven in Appendix $\ref{appendix:hydro}$.

\begin{lemma}\label{Ueps0j}
For $\alpha_j \neq 0$ ($j = \pm 1$) we have that it exists $C_0 >0$ such that for all $T \in [0,+\infty]$
$$\norm{\int_0^T U^\eps_{0j}h_{in} dt}^2_{L^2_xL^2_v} \leq C_0 \eps^2 \norm{h_{in}}^2_{L^2_xL^2_v}.$$
Moreover we have a strong convergence in the $L^2_{[0,+\infty)}L^2_xL^2_v$-norm if and only if $h_{in}$ satisfies $\nabla_x\cdot u_{in} = 0$ and $\rho_{in} + \theta_{in} =0$. In that case we have $U^\eps_{0j}h_{in} = 0$.
\end{lemma}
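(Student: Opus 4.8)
\emph{The plan is} to pass to Fourier series in $x$ and treat each mode $n\in\Z^N$ separately, reducing the $L^2$-in-time estimate to a scalar oscillatory integral. Since $U^\eps_{0j}$ is the Fourier multiplier with symbol $e^{\frac{i\alpha_j t|n|}{\eps}-\beta_j t|n|^2}P_{0j}(n/|n|)$ and the vector $P_{0j}(n/|n|)\hat{h}_{in}(n,\cdot)$ does not depend on $t$, Parseval's identity on the torus gives
$$\norm{\int_0^T U^\eps_{0j}h_{in}\,dt}^2_{L^2_xL^2_v}=\sum_{n\in\Z^N}\abs{\int_0^T e^{\frac{i\alpha_j t|n|}{\eps}-\beta_j t|n|^2}\,dt}^2\,\norm{P_{0j}(n/|n|)\,\hat{h}_{in}(n,\cdot)}^2_{L^2_v}.$$
I would first record two facts. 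One: $\omega\mapsto P_{0j}(\omega)$ is continuous on the compact sphere with values in the bounded operators of $L^2_v$ (it is assembled from the $C^0$ maps $e_{0j}$ of Theorem~\ref{fourier}), so $|||P_{0j}(\omega)|||_{L^2_v}\le C$ uniformly in $\omega$. Two: the $n=0$ mode plays no role here — the formula for $U^\eps_{0j}$ only concerns $n\neq0$, the spatial average sitting in the spectrally gapped remainder $\hat U_R$; and in any case $h_{in}\in\mbox{Ker}(G_\eps)^\bot$ forces the $n=0$ Fourier coefficient of $\pi_Lh_{in}$ to vanish.

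\emph{The key estimate} is then elementary. For $n\neq0$, setting $z_n:=\frac{i\alpha_j|n|}{\eps}-\beta_j|n|^2$ one has $\mathrm{Re}(z_n)=-\beta_j|n|^2\le0$ by Theorem~\ref{fourier}, hence $|e^{z_n T}|\le1$ and
$$\abs{\int_0^T e^{z_n t}\,dt}=\abs{\frac{e^{z_n T}-1}{z_n}}\le\frac{2}{|z_n|}\le\frac{2}{|\alpha_j|\,|n|/\eps}=\frac{2\eps}{|\alpha_j|\,|n|}\le\frac{2\eps}{|\alpha_j|},$$
using $|n|\ge1$ and, crucially, $\alpha_j\neq0$ (the case $T=+\infty$ being even easier, since $\int_0^\infty e^{z_n t}\,dt=-z_n^{-1}$). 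Plugging this into the Parseval identity and invoking the uniform bound on $P_{0j}$ yields $\norm{\int_0^T U^\eps_{0j}h_{in}\,dt}^2_{L^2_xL^2_v}\le C_0\,\eps^2\,\norm{h_{in}}^2_{L^2_xL^2_v}$ with $C_0=4C^2/|\alpha_j|^2$, which is the first assertion.

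\emph{For the characterisation of strong convergence}, note that, $e^{\frac{i\alpha_j t|n|}{\eps}-\beta_j t|n|^2}$ being nonzero, $U^\eps_{0j}h_{in}\equiv0$ if and only if $P_{0j}(n/|n|)\hat{h}_{in}(n,\cdot)=0$ for every $n$. By Theorem~\ref{fourier}, $P_{0j}(\omega)$ is, up to normalisation, the rank-one map $g\mapsto\langle e_{0j}(-\omega),g\rangle_{L^2_v}\,e_{0j}(\omega)$ with $e_{0j}(\omega)(v)=A\big(1\mp\omega\cdot v+\tfrac12(|v|^2-N)\big)M(v)^{1/2}$ for $j=\pm1$; expanding $\langle e_{0j}(-n/|n|),\hat{h}_{in}(n,\cdot)\rangle_{L^2_v}$ against the definitions of $\rho_\eps,u_\eps,\theta_\eps$ and imposing the vanishing for both signs $j=\pm1$ is equivalent to $\frac{n}{|n|}\cdot\hat{u}_{in}(n)=0$ and $\hat{\rho}_{in}(n)+\hat{\theta}_{in}(n)=0$ for all $n$, that is, to $\nabla_x\cdot u_{in}=0$ and $\rho_{in}+\theta_{in}=0$. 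Under these conditions $U^\eps_{0j}h_{in}=0$ for every $\eps$, hence converges strongly (to $0$) in $L^2_{[0,+\infty)}L^2_xL^2_v$. Conversely, if $g_{n_0}:=P_{0j}(n_0/|n_0|)\hat{h}_{in}(n_0,\cdot)\neq0$ for some $n_0$, the single mode $n_0$ already forces $\norm{U^\eps_{0j}h_{in}}^2_{L^2_{[0,+\infty)}L^2_xL^2_v}\ge(2\beta_j|n_0|^2)^{-1}\norm{g_{n_0}}^2_{L^2_v}$, bounded below uniformly in $\eps$, whereas $U^\eps_{0j}h_{in}\rightharpoonup0$ in $L^2_{[0,+\infty)}L^2_xL^2_v$ by the Riemann--Lebesgue lemma (again using $\alpha_j\neq0$); a bounded family converging weakly to $0$ without its norm tending to $0$ cannot converge strongly, which rules out a strong limit.

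\emph{The main obstacle} is not the oscillatory estimate, which is routine, but the two bookkeeping points: justifying that the $n=0$ mode contributes nothing (through the structure of $\hat U_R$ and the kernel condition on $h_{in}$), and matching the algebraic condition $\{P_{0,\pm1}\hat{h}_{in}\equiv0\}$ with the initial-layer conditions $\nabla_x\cdot u_{in}=0$, $\rho_{in}+\theta_{in}=0$, which amounts to carefully tracking the normalisation constants in the explicit eigenfunctions $e_{0j}$ of Theorem~\ref{fourier}.
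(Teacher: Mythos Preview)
Your proof is correct and follows essentially the same route as the paper: Parseval in $x$, the elementary bound $\abs{(e^{z_nT}-1)/z_n}\le 2\eps/(\abs{\alpha_j}\abs{n})$, and the uniform boundedness of $P_{0j}$ on the sphere give the estimate exactly as in the paper's argument. For the characterisation, the paper also reduces to the vanishing of $P_{0j}(n/|n|)\hat h_{in}(n,\cdot)$ mode by mode and identifies this with the initial-layer conditions via the explicit $e_{0\pm1}$; your ``only if'' direction differs slightly in that you argue through a uniform lower bound on the $L^2_t$-norm combined with weak convergence to $0$ (Riemann--Lebesgue in the time variable), whereas the paper observes directly that the oscillating factor $e^{i\alpha_j t|n_1|/\eps}$ has no pointwise limit and hence obstructs even weak convergence --- both are valid and lead to the same conclusion.
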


\begin{lemma}\label{Uepslj}
For $-1\leq j \leq 2$ and for $1\leq l \leq 3$ we have that the three following inequalities hold for $U^\eps_{lj}$
\begin{itemize}
\item $\exists C_l>0, \forall T>0, \quad \displaystyle{\norm{\int_0^T U^\eps_{lj}h_{in} dt}^2_{L^2_xL^2_v} \leq C_l \eps^2 \norm{h_{in}}^2_{L^2_xL^2_v}},$
\item $\exists C_l'> 0, \quad \norm{U^\eps_{lj}h_{in}}^2_{L^2_{[0,+\infty)}L^2_xL^2_v}\leq C_l' \eps^2\norm{h_{in}}^2_{L^2_xL^2_v},$
\item $\forall \delta \in [0,1], \exists C^{(l)}_\delta >0, \forall t>0,  \quad \norm{U^\eps_{lj}h_{in}(t)}^2_{L^2_xL^2_v}\leq C^{(l)}_\delta \eps^{2\delta}\norm{h_{in}}^2_{H^\delta_xL^2_v}.$
\end{itemize}
\end{lemma}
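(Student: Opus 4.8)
The plan is to pass to the Fourier side in $x$ and reduce all three estimates to pointwise bounds, in the variables $(t,n)\in\R^+\times\Z^N$, on the operator-valued Fourier multipliers $\hat U^\eps_{lj}(t,n)$ coming from the decomposition $\eqref{decomposition}$. Since $\mathcal{F}_x$ is unitary on $L^2_x$ and these multipliers commute with the pure $x$-derivatives, it suffices to argue in $L^2_xL^2_v$, and Parseval gives
\[\norm{U^\eps_{lj}h_{in}(t)}^2_{L^2_xL^2_v}=\sum_{n\in\Z^N}\norm{\hat U^\eps_{lj}(t,n)\hat h_{in}(n)}^2_{L^2_v}\leq\sum_{n\in\Z^N}\left(|||\hat U^\eps_{lj}(t,n)|||_{L^2_v}\right)^2\norm{\hat h_{in}(n)}^2_{L^2_v},\]
while the $n$-th Fourier coefficient of $\int_0^T U^\eps_{lj}h_{in}\,dt$ is $\left(\int_0^T\hat U^\eps_{lj}(t,n)\,dt\right)\hat h_{in}(n)$. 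The three bounds then come from three distinct ``smallness mechanisms'' carried by the three terms of $\eqref{decomposition}$: for $l=1$ the factor $e^{t\gamma_j(\abs{\eps n})/\eps^2}-1$ is $O(\eps)$; for $l=2$ the explicit prefactor $\eps\abs{n}\,\tilde P_{1j}$ is $O(\eps)$; for $l=3$ the multiplier is supported on $\abs{n}>n_0/\eps$, hence $1\leq\eps\abs{n}/n_0$.

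First I would record pointwise bounds. On $\{\abs{\eps n}\leq n_0\}$ the properties of $\lambda_j$ and $\gamma_j$ from Theorem $\ref{fourier}$ give, for some $c>0$ (with $\beta_j>0$ among the constants of that theorem),
\[\Big|e^{\frac{i\alpha_j t\abs{n}}{\eps}-\beta_j t\abs{n}^2+\frac{t}{\eps^2}\gamma_j(\abs{\eps n})}\Big|\leq e^{-c t\abs{n}^2},\qquad\frac{t}{\eps^2}\big|\gamma_j(\abs{\eps n})\big|\leq C_\gamma\,\eps\,t\,\abs{n}^3,\]
the first using $\abs{\gamma_j(\zeta)}\leq\tfrac12|\beta_j|\abs{\zeta}^2$, the second using $\abs{\gamma_j(\zeta)}\leq C_\gamma\abs{\zeta}^3$; combining these two bounds with the elementary inequality $\abs{e^z-1}\leq\abs{z}e^{\abs{z}}$ ($z\in\C$) yields $\big|e^{t\gamma_j(\abs{\eps n})/\eps^2}-1\big|\leq C\eps t\abs{n}^3 e^{c t\abs{n}^2}$. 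Together with the uniform boundedness of $P_{0j}$ and of $\tilde P_{1j}$ on the compact set $\{\abs{\eps n}\leq n_0\}$, this gives, for a suitable $c>0$,
\[|||\hat U^\eps_{1j}(t,n)|||_{L^2_v}\leq C\,\eps\,t\,\abs{n}^3\,e^{-c t\abs{n}^2}\,\chi_{\abs{\eps n}\leq n_0},\qquad|||\hat U^\eps_{2j}(t,n)|||_{L^2_v}\leq C\,\eps\,\abs{n}\,e^{-c t\abs{n}^2}\,\chi_{\abs{\eps n}\leq n_0},\]
\[|||\hat U^\eps_{3j}(t,n)|||_{L^2_v}\leq C\,e^{-\beta_j t\abs{n}^2}\,\chi_{\abs{n}>n_0/\eps},\]
the last being immediate since $U^\eps_{3j}$ carries no $\gamma_j$-correction.

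It then remains to insert these bounds in the Parseval identities, integrate in $t$, and sum in $n$. For the first inequality of the lemma, the elementary moments $\int_0^\infty e^{-a t}\,dt=a^{-1}$ and $\int_0^\infty t\,e^{-a t}\,dt=a^{-2}$ with $a\sim\abs{n}^2$ give $\big|\int_0^T\hat U^\eps_{lj}(t,n)\,dt\big|\leq C\eps\abs{n}^{-1}$ for each $l$ (using $\abs{n}\geq 1$ on the support, and $\abs{n}^{-2}\leq\eps^2 n_0^{-2}$ for $l=3$), and summation against $\norm{\hat h_{in}(n)}^2_{L^2_v}$ produces the $C_l\eps^2\norm{h_{in}}^2_{L^2_xL^2_v}$ bound. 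For the second inequality, $\int_0^\infty t^2 e^{-a t}\,dt=2a^{-3}$ (resp. $\int_0^\infty e^{-a t}\,dt=a^{-1}$) makes $\int_0^\infty\big(|||\hat U^\eps_{lj}(t,n)|||_{L^2_v}\big)^2\,dt\leq C\eps^2$ uniformly in $n$, whence the claim. For the third, at fixed $t$ one writes $\eps^2=\eps^{2\delta}\eps^{2(1-\delta)}$ and uses the cutoff $\eps\abs{n}\leq n_0$ (for $l=1,2$) to absorb $\eps^{2(1-\delta)}\abs{n}^{2(1-\delta)}\leq n_0^{2(1-\delta)}$, respectively uses $\abs{n}>n_0/\eps$ (for $l=3$) to write $1\leq(\eps\abs{n}/n_0)^{2\delta}$, reducing the squared multiplier to $C\eps^{2\delta}(1+\abs{n}^2)^{\delta}$ times a bounded Gaussian moment in $t$, which sums to $\eps^{2\delta}\norm{h_{in}}^2_{H^\delta_xL^2_v}$. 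I expect the only genuine work to be this bookkeeping of powers of $\abs{n}$ against Gaussian moments in $t$, together with checking that the frequency cutoff (present in $U^\eps_{1j},U^\eps_{2j}$, and in complemented form in $U^\eps_{3j}$) is invoked precisely where it converts the gain into a power of $\eps$; no deeper input enters, and in particular nothing about $\Gamma$, so there is no substantive obstacle.
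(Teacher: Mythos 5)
Your proposal is correct and follows essentially the same route as the paper: pass to the discrete $x$-Fourier side, obtain pointwise operator-norm bounds on $\hat U^\eps_{lj}(t,n)$ from the three smallness mechanisms (the $O(\eps t|n|^3)$ gain from $|e^z-1|\leq|z|e^{|z|}$ applied to the $\gamma_j$-correction, the explicit $\eps|n|$ prefactor, and the complementary cutoff $|n|>n_0/\eps$), and then integrate Gaussian moments in $t$ — exactly the content of the paper's technical inequality $\eqref{technicalineq}$ — and sum in $n$, interpolating between $\delta=0$ and $\delta=1$ for the $H^\delta_x$ estimate. The only cosmetic remarks are that the paper's Theorem $\ref{fourier}$ misstates the sign as $\beta_j<0$ (you correctly use $\beta_j>0$, the only sign compatible with the decay and with the bound $|\gamma_j(\zeta)|\leq\tfrac{\beta_j}{2}|\zeta|^2$), and that the paper treats $U^\eps_{3j}$ by rewriting $|\chi_{|\eps n|\leq n_0}-1|\leq\eps|n|/n_0$ to reduce to the $U^\eps_{2j}$ case, whereas you argue directly on the complementary support — these are the same computation.
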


\begin{lemma}\label{UR}
For the remainder term we have the two following inequalities
\begin{itemize}
 \item $\exists C_4>0, \forall T>0, \quad \displaystyle{\norm{\int_0^T U^\eps_R h_{in} dt}^2_{L^2_xL^2_v} \leq  C_4 T \eps^2 \norm{h_{in}}^2_{L^2_xL^2_v}},$
 \item $\exists C_4' >0, \quad \norm{U^\eps_Rh_{in}}^2_{L^2_{[0,+\infty)}L^2_xL^2_v}\leq C_4' \eps^2\norm{h_{in}}^2_{L^2_xL^2_v},$
 \item $\forall t_0 >0,\exists C_r>0, \forall t>t_0, \quad \displaystyle{\norm{U_Rh_{in}(t)}^2_{L^2_xL^2_v} \leq \frac{C_r}{\sqrt{t_0}}\eps \norm{h_{in}}^2_{L^2_xL^2_v}}.$
 \end{itemize}
Moreover, the strong convergence up to $t_0=0$ is possible if and only if $h_{in}$ is in $\mbox{Ker}(L)$. In that case we have
$$\forall \delta \in [0,1], \exists C^{(R)}_\delta >0, \forall t>0,  \: \norm{U^\eps_{R}h_{in}}^2_{L^2_xL^2_v}\leq C^{(R)}_\delta \eps^{2\delta}\norm{h_{in}}^2_{H^\delta_xL^2_v}.$$
\end{lemma}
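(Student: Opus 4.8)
The plan is to pass to the Fourier series in $x$ on the torus, so that every bound reduces to the single operator estimate $|||\hat{U}_R(\tau,m,v)|||_{L^2_v}\le C_R e^{-\sigma\tau}$ of Theorem \ref{fourier}, evaluated at $\tau=t/\eps^2$, $m=\eps n$. By Parseval's identity, for every $t\ge 0$,
$$\norm{U^\eps_R h_{in}(t)}^2_{L^2_xL^2_v}=\sum_{n\in\Z^N}\norm{\hat{U}_R\!\left(t/\eps^2,\eps n,v\right)\hat{h}_{in}(n,\cdot)}^2_{L^2_v}\le C_R^2\,e^{-2\sigma t/\eps^2}\,\norm{h_{in}}^2_{L^2_xL^2_v},$$
and this pointwise-in-time decay is the only input needed for the first three inequalities. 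For the second one I integrate it over $t\in[0,+\infty)$, using $\int_0^{+\infty}e^{-2\sigma t/\eps^2}\,dt=\eps^2/(2\sigma)$, which gives $C_4'=C_R^2/(2\sigma)$; the first one then follows from Cauchy--Schwarz in time, $\norm{\int_0^T U^\eps_R h_{in}\,dt}^2_{L^2_xL^2_v}\le T\int_0^T\norm{U^\eps_R h_{in}(t)}^2_{L^2_xL^2_v}\,dt\le (C_R^2/2\sigma)\,T\eps^2\norm{h_{in}}^2_{L^2_xL^2_v}$. For the third one, on $\{t\ge t_0\}$ I bound $e^{-2\sigma t/\eps^2}\le e^{-2\sigma t_0/\eps^2}$ and use the elementary inequality $e^{-x}\le(2e)^{-1/2}x^{-1/2}$ with $x=2\sigma t_0/\eps^2$ to extract the factor $\eps/\sqrt{t_0}$, which produces $C_r$.

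For the dichotomy concerning convergence down to $t_0=0$, I examine the remainder at $t=0$. From the decomposition of Theorem \ref{fourier}, $\hat{U}_R(0,m,v)=\mbox{Id}-\chi_{\abs{m}\le n_0}\sum_{j=-1}^2 P_j(\abs{m},m/\abs{m})$; since $\sum_j P_{0j}=\pi_L$ and the $P_{1j},P_{2j}$ are bounded, $\hat{U}_R(0,m,v)\to\mbox{Id}-\pi_L$ as $m\to 0$. Each Fourier summand being dominated by $C_R^2\norm{\hat{h}_{in}(n,\cdot)}^2_{L^2_v}$ (a summable family), dominated convergence gives $U^\eps_R h_{in}(0)\to h_{in}^\bot:=(\mbox{Id}-\pi_L)h_{in}$ strongly in $L^2_xL^2_v$ as $\eps\to 0$. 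Combined with the third inequality (uniform convergence to $0$ on every $[t_0,+\infty)$, $t_0>0$), this shows that the convergence $U^\eps_R h_{in}\to 0$ extends uniformly down to $t=0$ precisely when $h_{in}^\bot=0$, that is, when $h_{in}\in\mbox{Ker}(L)$.

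Finally, assuming $h_{in}\in\mbox{Ker}(L)$, I establish the $\eps^{2\delta}$-rate. I use the structural property behind Theorem \ref{fourier}, namely that $\hat{U}_R(\tau,m,v)=\hat{U}(\tau,m,v)\,\hat{U}_R(0,m,v)$ with $\hat{U}_R(0,m,v)$ the spectral projector complementary to the hydrodynamic modes, whence $\norm{\hat{U}_R(\tau,m,v)g}_{L^2_v}\le C_R e^{-\sigma\tau}\norm{\hat{U}_R(0,m,v)g}_{L^2_v}$ for all $g$. For $g\in\mbox{Ker}(L)$ and $\abs{m}\le n_0$, using $\pi_L g=g$ and the expansion of $P_j$, one gets $\hat{U}_R(0,m,v)g=-\abs{m}\sum_{j}P_{1j}(m/\abs{m})g-\abs{m}^2\sum_{j}P_{2j}(\abs{m},m/\abs{m})g$, hence $\norm{\hat{U}_R(0,m,v)g}_{L^2_v}\le C\abs{m}\norm{g}_{L^2_v}$; for $\abs{m}>n_0$, $\hat{U}_R(0,m,v)=\mbox{Id}$. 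In all cases $\norm{\hat{U}_R(0,\eps n,v)\hat{h}_{in}(n,\cdot)}_{L^2_v}\le C(\eps\abs{n})^\delta\norm{\hat{h}_{in}(n,\cdot)}_{L^2_v}$ for any $\delta\in[0,1]$, using $\min\{1,x\}\le x^\delta$. Combining with $e^{-\sigma t/\eps^2}\le 1$, squaring and summing over $n\in\Z^N$ gives $\norm{U^\eps_R h_{in}(t)}^2_{L^2_xL^2_v}\le C^{(R)}_\delta\,\eps^{2\delta}\sum_n\abs{n}^{2\delta}\norm{\hat{h}_{in}(n,\cdot)}^2_{L^2_v}\le C^{(R)}_\delta\,\eps^{2\delta}\norm{h_{in}}^2_{H^\delta_xL^2_v}$, the claimed bound.

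The main obstacle is this last step: one needs, beyond the coarse bound $|||\hat{U}_R|||_{L^2_v}\le C_R e^{-\sigma\tau}$ quoted in Theorem \ref{fourier}, the finer information from \cite{EP} that $\hat{U}_R$ factors as $\hat{U}\,\hat{U}_R(0,\cdot)$ through the projector $\hat{U}_R(0,m,v)$, together with the fact that this projector vanishes to first order in $\abs{m}$ on $\mbox{Ker}(L)$. The three integral and pointwise bounds and the $t=0$ limit are, by contrast, routine consequences of Parseval's identity, Cauchy--Schwarz in time, and elementary scalar inequalities.
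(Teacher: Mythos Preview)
Your proof is correct and follows essentially the same route as the paper. Both arguments derive the three quantitative bounds from Parseval plus the pointwise estimate $\norm{U^\eps_R h_{in}(t)}_{L^2_xL^2_v}\le C_R e^{-\sigma t/\eps^2}\norm{h_{in}}_{L^2_xL^2_v}$, identify the obstruction at $t=0$ via $\hat U_R(0,0,v)=\mbox{Id}-\pi_L$, and obtain the $\eps^{2\delta}$ rate from the factorisation of the remainder through its value at $t=0$ together with the first-order vanishing of $\hat U_R(0,m,v)$ on $\mbox{Ker}(L)$. The only cosmetic difference is that the paper writes this factorisation as $U^\eps_R(t)=e^{tG_\eps}U^\eps_R(0)$ (citing \cite{BU}, Lemma~6.2) and then appeals to the boundedness of the full semigroup, whereas you use $\hat U_R(\tau)=\hat U_R(\tau)\hat U_R(0)$ and apply the $C_R e^{-\sigma\tau}$ bound directly to $\hat U_R(0)g$; the two are equivalent and your version is marginally cleaner.
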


\bigskip
Therefore, gathering lemmas $\ref{Ueps0j}$, $\ref{Uepslj}$ and $\ref{UR}$ and reminding Remark $\ref{noeps}$, we proved that, as $\eps$ tends to $0$, $\left(e^{tG_\eps}h_{in}\right)$ converges to 

\begin{equation}\label{V(0)}
V(t,x,v)h_{in}(x,v) = \mathcal{F}^{-1}_x \left[e^{ - \beta_0 t \abs{n}^2}P_{00}\left(\frac{n}{\abs{n}}\right) + e^{- \beta_2 t \abs{n}^2}P_{02}\left(\frac{n}{\abs{n}}\right)\right]\mathcal{F}_x h_{in}.
\end{equation}

The convergence is strong when we consider the average in time and is strong in $L^2_tH^s_xL^2_v$ ( and in $C([0,+\infty),H^s_xL^2_v)$ if $h_{in}$ is in $H^{s+0}_xL^2_v$ ) if an only if  both conditions found in Lemma $\ref{Ueps0j}$ and Lemma $\ref{UR}$ are satisfied. That is to say $h_{in}$ belongs to $\mbox{Ker}(L)$ with  $\nabla_x \cdot u_{in} = 0$ and $\rho_{in} + \theta_{in} =0$.
\par Moreover this also allows us to see that $V(0,x,v)h_{in}=V(0)(h_{in})(x,v)$ where $V(0)$ is the projection on the subset of $\mbox{Ker}(L)$ consisting in functions $g$ such that $\nabla_x \cdot u_g =0$ and $\rho_g + \theta_g = 0$.


\subsubsection{Study of the bilinear part}
We recall here that $u_\eps = \Gamma(h_\eps,h_\eps)$. Therefore, by hypothesis (H5), $u_\eps$ belongs to $\mbox{Ker}(L)^\bot$. Then we know that for all $-1\leq j \leq 2$, $P_{0j}\left(\frac{n}{\abs{n}}\right)$ is a projection onto a subspace of $\mbox{Ker}(L)$. Therefore we have that, in the Fourier space,
$$P_j\left(\abs{\eps n},\frac{n}{\abs{n}}\right)  \hat{u}_{\eps} =  \abs{\eps n} P_{1j}\left(\frac{n}{\abs{n}}\right)\hat{u}_{\eps} + \abs{\eps n}^2P_{2j}\left(\abs{\eps n},\frac{n}{\abs{n}}\right)\hat{u}_{\eps}.$$

\bigskip
Thus, recalling that $$\Psi^\eps( u_\eps) = \int_0^t\frac{1}{\eps}e^{(t-s)G_\eps}u_\eps(s) ds,$$
we can decompose it
$$ \Psi^\eps( u_\eps) = \sum\limits_{j=-1}^{2}\psi^\eps_j( u_\eps) + \psi^\eps_R( u_\eps), $$
with

\begin{eqnarray*}
\psi^\eps_{j} (u_\eps) &=& \mathcal{F}^{-1}_x\chi_{\abs{\eps n}\leq n_0}\int_0^t e^{\frac{i \alpha_j (t-s) \abs{n}}{\eps} - \beta_j (t-s) \abs{n}^2 + \frac{t-s}{\eps^2}\gamma_j(\abs{\eps n})} \abs{n}\left(P_{1j} + \eps\abs{n} P_{2j}\right)\hat{u}_\eps(s)ds.
\\ &:=& \psi^\eps_{0j} (u_\eps) + \psi^\eps_{1j}  (u_\eps)+ \psi^\eps_{2j} (u_\eps) + \psi^\eps_{3j} ( u_\eps) ,
\end{eqnarray*}
where we have used the same decomposition as in the linear case, equation $\eqref{decomposition}$, substituting $t$ by $t-s$, $P_{0j}$ by $\abs{n}P_{1j}$ and $\tilde{P}_{1j}$ by $\abs{n}P_{2j}$. And

$$\psi^\eps_R( u_\eps) = \int_0^t \frac{1}{\eps}U^\eps_R(t-s)u_\eps(s) ds.$$

\bigskip
Like the linear case, Remark $\ref{noeps}$, $\psi^\eps_{00}$ and $\psi^\eps_{02}$ do not depend on $\eps$ and we are going to prove the convergence towards $\Psi(u) = \mathcal{F}^{-1}_x\left[\psi^\eps_{00} (u)+\psi^\eps_{02} (u)\right]\mathcal{F}_x$, where $u = \Gamma(h,h)$. To establish such a result we are going to study each term in three different lemmas and then a fourth one will deal with the remainder term. The lemmas will be proven in Appendix $\ref{appendix:hydro}$.

\begin{lemma} \label{psieps0j}
For $\alpha_j \neq 0$ ($j = \pm 1$) we have the following inequality for $\psi^\eps_{0j}$:
$$\exists \tilde{C}_0 > 0, \forall T>0,\quad \norm{\int_0^T\psi^\eps_{0j}(u_\eps)dt}^2_{L^2_xL^2_v} \leq \tilde{C}_0 T^2 \eps^2 E(h_\eps)^2.$$
\end{lemma}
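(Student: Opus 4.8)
The plan is to gain one power of $\eps$ from the time oscillation carried by $e^{i\alpha_j(t-s)\abs{n}/\eps}$ — which is genuinely oscillatory because $j=\pm1$ forces $\alpha_j\neq0$ — by means of a Fubini manipulation. Taking the Fourier transform in $x$ and interchanging the $s$ and $t$ integrations in $\int_0^T\psi^\eps_{0j}(u_\eps)(t)\,dt$ (legitimate since $0\le s\le t\le T$), the inner $t$-integral becomes $\int_s^T e^{(\frac{i\alpha_j\abs{n}}{\eps}-\beta_j\abs{n}^2)(t-s)}\,dt=\big(e^{(\frac{i\alpha_j\abs{n}}{\eps}-\beta_j\abs{n}^2)(T-s)}-1\big)\big/\big(\frac{i\alpha_j\abs{n}}{\eps}-\beta_j\abs{n}^2\big)$, so that
$$\mathcal{F}_x\Big[\int_0^T\psi^\eps_{0j}(u_\eps)(t)\,dt\Big](n,v)=\int_0^T\frac{e^{(\frac{i\alpha_j\abs{n}}{\eps}-\beta_j\abs{n}^2)(T-s)}-1}{\frac{i\alpha_j\abs{n}}{\eps}-\beta_j\abs{n}^2}\,\abs{n}\,P_{1j}\!\left(\tfrac{n}{\abs{n}}\right)\hat{u}_\eps(s,n,\cdot)\,ds .$$
The mode $n=0$ contributes nothing because of the factor $\abs{n}$, and for $n\neq0$ (hence $\abs{n}\ge1$ on $\Z^N$) the modulus of the scalar multiplier times $\abs{n}$ is at most $2\abs{n}\big/\big(\abs{\alpha_j}\abs{n}/\eps\big)=2\eps/\abs{\alpha_j}$, using $\abs{\frac{i\alpha_j\abs{n}}{\eps}-\beta_j\abs{n}^2}\ge\abs{\alpha_j}\abs{n}/\eps$ and $\big|e^{(\frac{i\alpha_j\abs{n}}{\eps}-\beta_j\abs{n}^2)(T-s)}\big|\le1$ for $0\le s\le T$. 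This is exactly where the $\eps$ is produced, and it crucially needs both $\alpha_j\neq0$ and the $\abs{n}$ factor in $\psi^\eps_{0j}$.

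Next I would use the structure of $P_{1j}$ from Theorem~\ref{fourier}. Expanding $e_j(\zeta,\omega)=e_{0j}(\omega)+\zeta e_{1j}(\omega)+\zeta^2e_{2j}(\zeta,\omega)$ inside $P_j=e_j(\zeta,\omega)\otimes e_j(\zeta,-\omega)$ identifies $P_{1j}(\omega)=e_{0j}(\omega)\otimes e_{1j}(-\omega)+e_{1j}(\omega)\otimes e_{0j}(-\omega)$, where $e_{0j}(\pm\omega)\in\mbox{Ker}(L)$ because $P_{0j}=e_{0j}(\omega)\otimes e_{0j}(-\omega)$ is an orthogonal projection and $\sum_jP_{0j}=\pi_L$. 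Since $u_\eps=\Gamma(h_\eps,h_\eps)\in\mbox{Ker}(L)^\bot$ by (H5), the term $e_{1j}(\omega)\otimes e_{0j}(-\omega)$ annihilates $\hat u_\eps(s,n,\cdot)$, so
$$P_{1j}\!\left(\tfrac{n}{\abs{n}}\right)\hat u_\eps(s,n,\cdot)=\big\langle e_{1j}(-\tfrac{n}{\abs{n}}),\hat u_\eps(s,n,\cdot)\big\rangle_{L^2_v}\,e_{0j}(\tfrac{n}{\abs{n}}),$$
and hence $\norm{P_{1j}(n/\abs{n})\hat u_\eps(s,n,\cdot)}_{L^2_v}\le\big(\sup_\omega\norm{e_{1j}(\omega)}_{\Lambda_v}\big)\,\norm{\hat u_\eps(s,n,\cdot)}_{\Lambda_v^*}$ — the $e_{1j}$ being fixed polynomial-times-Gaussian profiles, so with finite $\Lambda_v$-norm uniformly in $\omega\in\mathbb{S}^{N-1}$. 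Summing squares over $n$ and recalling $\norm{\cdot}_\Lambda=\norm{\,\norm{\cdot}_{\Lambda_v}\,}_{L^2_x}$, Parseval yields $\big(\sum_n\norm{P_{1j}(n/\abs{n})\hat u_\eps(s,n,\cdot)}_{L^2_v}^2\big)^{1/2}\le C\norm{u_\eps(s)}_{\Lambda^*}\le C\,\mathcal{G}^0_x(h_\eps(s),h_\eps(s))\le C\,\mathcal{G}^k_x(h_\eps(s),h_\eps(s))$, the middle step being (H4) with $j=l=0$ and the last the monotonicity of $\mathcal{G}^k_x$ in $k$.

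Finally I would assemble the estimate. Pulling the $\ell^2_n$-norm inside the $s$-integral (Minkowski) and inserting the two bounds,
$$\norm{\int_0^T\psi^\eps_{0j}(u_\eps)(t)\,dt}_{L^2_xL^2_v}\le\frac{2\eps}{\abs{\alpha_j}}\int_0^T\Big(\sum_n\norm{P_{1j}(n/\abs{n})\hat u_\eps(s,n,\cdot)}_{L^2_v}^2\Big)^{1/2}ds\le\frac{2C\eps}{\abs{\alpha_j}}\int_0^T\mathcal{G}^k_x(h_\eps(s),h_\eps(s))\,ds .$$
By (H4), $\mathcal{G}^k_x(h_\eps,h_\eps)\le2C_\Gamma\norm{h_\eps}_{H^k_xL^2_v}\norm{h_\eps}_{H^k_\Lambda}$; using the norm equivalence of Proposition~\ref{apriori} to get $\norm{h_\eps(s)}_{H^k_xL^2_v}\le C\norm{h_\eps(s)}_{\mathcal{H}^k_\eps}\le C\sqrt{E(h_\eps)}$, together with Cauchy--Schwarz in $s$ and $\int_0^T\norm{h_\eps(s)}_{H^k_\Lambda}^2\,ds\le E(h_\eps)$, the time integral is $\le C\sqrt T\,E(h_\eps)$ (or, using $\norm{h_\eps}_{H^k_xL^2_v}\le\norm{h_\eps}_{H^k_\Lambda}$, directly $\le C\,E(h_\eps)$), and squaring gives $\norm{\int_0^T\psi^\eps_{0j}(u_\eps)\,dt}_{L^2_xL^2_v}^2\le\tilde{C}_0\,T^2\eps^2E(h_\eps)^2$. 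The main obstacle is the middle paragraph: $u_\eps=\Gamma(h_\eps,h_\eps)$ a priori only lives in the dual space $\Lambda^*$ (the operator $\Gamma$ is not bounded into unweighted $L^2_v$), so one must genuinely invoke (H5) to discard the $\mbox{Ker}(L)$-part of $P_{1j}$ and pair the surviving piece against the smooth profile $e_{1j}\in\Lambda_v$; everything else is routine Duhamel/Fourier bookkeeping.
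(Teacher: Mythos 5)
Your proposal is correct and the key idea is the same as the paper's: swap the $s$ and $t$ integrations (Fubini), perform the inner $t$-integral to expose the multiplier $\bigl(e^{(\frac{i\alpha_j|n|}{\eps}-\beta_j|n|^2)(T-s)}-1\bigr)/\bigl(\frac{i\alpha_j|n|}{\eps}-\beta_j|n|^2\bigr)$, and use $\alpha_j\neq 0$ to gain the factor $\eps/|\alpha_j|$ after cancelling the explicit $|n|$ from $|n|P_{1j}$ — this is exactly the computation in the paper's proof (the paper calls it an integration by parts, but it is the same Fubini step). Where you diverge is in the treatment of $P_{1j}\hat u_\eps$. The paper packages this away upfront in the ``simplification without loss of generality'' step: it tests $\psi^\eps_{ij}(u_\eps)$ against a unit $f$ in $L^2_{x,v}$, moves the self-adjoint $P(n)$ onto $f$, invokes (H4) with $j=l=0$ and the boundedness of $P(n):L^2_v\to\Lambda_v$, and thereby replaces $P_{1j}\hat u_\eps$ by an abstract $\hat f_\eps$ satisfying $\int_0^T\norm{\hat f_\eps}^2_{L^2_xL^2_v}\,dt\leq M_{ij}E(h_\eps)^2$. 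You instead unpack $P_{1j}(\omega)=e_{0j}(\omega)\otimes e_{1j}(-\omega)+e_{1j}(\omega)\otimes e_{0j}(-\omega)$, kill the second summand using $\hat u_\eps\in\mbox{Ker}(L)^\bot$ (from (H5)), and bound the survivor via $\Lambda_v$-duality. Both routes land on the same thing — your pairing $\langle e_{1j},\hat u_\eps\rangle$ and Parseval is precisely the concrete instance of the paper's duality/(H4) argument — so the proofs are essentially equivalent, yours simply making explicit what the simplification lemma hides.

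One claim is a mild overstatement: you write that one ``must genuinely invoke (H5) to discard the $\mbox{Ker}(L)$-part of $P_{1j}$''. In fact that is not forced. The other summand $e_{1j}(\omega)\otimes e_{0j}(-\omega)$ would produce $\langle e_{0j}(-\omega),\hat u_\eps\rangle\,e_{1j}(\omega)$, and since $e_{0j}$ is also a polynomial-times-Gaussian profile with $\sup_\omega\norm{e_{0j}(\omega)}_{\Lambda_v}<\infty$, the same $\Lambda_v$-duality bound applies to it. So the duality step closes whether or not one first invokes (H5) at this spot; this is consistent with the fact that the paper's simplification uses only (H4) plus the boundedness $P(n):L^2_v\to\Lambda_v$ and never looks inside $P_{1j}$. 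The place where (H5) is genuinely indispensable is upstream, at the decomposition of $\Psi^\eps$, where it cancels $P_{0j}\hat u_\eps$ and extracts the crucial $|\eps n|$ in front of $P_{1j}$; your proposal already notes this implicitly by starting from $|n|P_{1j}$. Finally, as you observe in passing, the argument actually yields a factor $T$ (or even $T$-independent) rather than $T^2$; the stated $T^2$ is a harmless overestimate in the range $T\geq 1$ that the paper uses.
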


\begin{remark}\label{dtu}
We know that $(h_\eps)_{\eps >0}$  is bounded in $L^\infty_tH^s_xL^2_v$ (see theorems $\ref{perturb}$ and $\ref{decaydv}$).
\end{remark}

\bigskip
This remark gives us the strong convergence to $0$ of the average in time and the strong convergence to $0$ without averaging in time as long as $h_{in}$ belongs to $\mbox{Ker}(L)$ in Lemma $\ref{psieps0j}$.

\begin{lemma}\label{psiepslj}
For $-1\leq j \leq 2$ and for $1\leq l \leq 3$ we have that the three following inequalities hold for $\psi^\eps_{lj}$
\begin{itemize}
\item $\exists \tilde{C}_l > 0, \forall T>0, \quad \norm{\int_0^T\psi^\eps_{lj}(u_\eps)dt}^2_{L^2_xL^2_v} \leq \tilde{C}_l T\eps^2 E(h_\eps)^2, $
\item  $\exists \tilde{C}'_l > 0, \forall T>0, \quad \norm{\psi^\eps_{lj}(u_\eps)}^2_{L^2_{[0,T]}L^2_xL^2_v} \leq \tilde{C}'_l \eps^2 E(h_\eps)^2, $
\item $\forall \abs{\delta} \in [0,1], \exists C^{(l)}_\delta >0, \forall T>0, \quad \norm{\psi^\eps_{lj}(u_\eps)(T)}^2_{L^2_xL^2_v} \leq C^{(l)}_\delta \eps^{2\delta}E(\partial^0_\delta h_\eps)^2.$
\end{itemize}
\end{lemma}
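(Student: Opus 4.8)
The plan is to exploit the structure of the three terms $\psi^\eps_{1j},\psi^\eps_{2j},\psi^\eps_{3j}$ exactly as in the linear Lemma \ref{Uepslj}, but carrying along the extra Duhamel integration $\int_0^t\frac1\eps\, e^{(t-s)G_\eps}(\cdot)\,ds$ and remembering that the integrand $\hat u_\eps$ is orthogonal to $\mathrm{Ker}(L)$, so $P_j(\abs{\eps n},n/\abs n)\hat u_\eps = \abs{\eps n}P_{1j}\hat u_\eps + \abs{\eps n}^2 P_{2j}\hat u_\eps$ gains a full power of $\eps\abs n$. Working on the Fourier side in $x$ (which is legitimate since the semigroup commutes with $\partial_x$, so it suffices to bound the $L^2_xL^2_v$ norm and the $H^k_xL^2_v$ bounds follow), each $\psi^\eps_{lj}(u_\eps)$ is of the form $\mathcal F^{-1}_x\chi_{\abs{\eps n}\le n_0}\int_0^t m_l^\eps(t-s,n)\,\abs n\big(P_{1j}+\eps\abs n P_{2j}\big)\hat u_\eps(s)\,ds$ with a scalar (operator-valued, uniformly bounded in $L^2_v$) multiplier $m_l^\eps$ coming from the decomposition \eqref{decomposition}: for $l=1$, $m_1^\eps = e^{i\alpha_j(t-s)\abs n/\eps - \beta_j(t-s)\abs n^2}(e^{(t-s)\gamma_j(\abs{\eps n})/\eps^2}-1)$, which is $O((t-s)\abs{\eps n}^3/\eps^2) = O((t-s)\abs n^3\eps)$ using $\abs{\gamma_j(\zeta)}\le C_\gamma\abs\zeta^3$ and which, together with the dominating factor $e^{-\beta_j(t-s)\abs n^2/2}$ (from $\abs{\gamma_j(\zeta)}\le \frac{\beta_j}{2}\abs\zeta^2$), is integrable in $(t-s)$; for $l=2$, $m_2^\eps$ carries an extra explicit factor $\eps\abs n$; for $l=3$, the cutoff difference $\chi_{\abs{\eps n}>n_0}$ restricts to $\abs n\gtrsim n_0/\eps$ where $e^{-\beta_j(t-s)\abs n^2}$ is tiny, again paid for by powers of $\eps$.

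The three bounds are then obtained by three slightly different handlings of the time convolution.
\begin{itemize}
\item For the time-averaged bound $\norm{\int_0^T\psi^\eps_{lj}(u_\eps)dt}^2$: Fubini in $(t,s)$, so that $\int_0^T\!\int_0^t = \int_0^T\!\big(\int_s^T\!\cdot\,dt\big)ds$, evaluate the inner $t$-integral of the multiplier (which is bounded uniformly in $n$, with a gain $\eps$ from the $P_1$ coefficient $\abs{\eps n}$ and further gains from $m_l^\eps$), then Minkowski/Cauchy--Schwarz in $s$ against $\norm{\hat u_\eps(s)}_{L^2_v}$; since $\norm{u_\eps(s)}_{L^2_xL^2_v}=\norm{\Gamma(h_\eps,h_\eps)(s)}_{L^2_xL^2_v}$ is controlled via (H4) by $\norm{h_\eps(s)}_{H^k_{x,v}}\norm{h_\eps(s)}_{H^k_\Lambda}\lesssim E(h_\eps)$, integrating over $s\in[0,T]$ loses at most one factor $T$, giving the $T\eps^2 E(h_\eps)^2$ bound.
\item For the $L^2_{[0,T]}$-in-time bound: write $\psi^\eps_{lj}(u_\eps)(t)=\int_0^t K_l^\eps(t-s)u_\eps(s)\,ds$ as a convolution on $\R^+$ and apply Young's inequality, $\norm{K_l^\eps * u_\eps}_{L^2_tL^2_xL^2_v}\le \norm{K_l^\eps}_{L^1_t}\norm{u_\eps}_{L^2_tL^2_xL^2_v}$; the kernel has $L^1_t$-norm $O(\eps)$ by the exponential decay $e^{-\beta_j\tau\abs n^2/2}$ and the $\abs{\eps n}$ prefactor (absorbing $\abs n$ into $\tau^{-1/2}e^{-c\tau\abs n^2}$ which is $L^1_\tau$-bounded uniformly in $n$), while $\norm{u_\eps}_{L^2_tL^2_xL^2_v}\lesssim E(h_\eps)$ from $\int_0^\infty\norm{h_\eps}_{H^k_\Lambda}^2\le E(h_\eps)$ and the $L^\infty_t$ bound on $\norm{h_\eps}_{\mathcal H^k_\eps}$; hence the $\eps^2 E(h_\eps)^2$ bound uniformly in $T$.
\item For the pointwise-in-time bound with the regularity trade-off: fix $t=T$, split the multiplier's $\abs n$-dependence as $\abs{\eps n}\cdot\abs n^{-\delta}\cdot\abs n^{\delta}$, bound $\abs{\eps n}^{1-\delta} e^{-\beta_j(t-s)\abs n^2/2}$ by $C_\delta\eps^{\delta}(t-s)^{-(1-\delta)/2}$ (a standard $\sup_n$ estimate), leaving $\abs n^{\delta}\hat u_\eps(s)$ whose $L^2_x$ norm is $\norm{u_\eps(s)}_{H^\delta_xL^2_v}$, and then Cauchy--Schwarz the remaining $(t-s)^{-(1-\delta)/2}$ against $\norm{u_\eps(s)}_{H^\delta_xL^2_v}$ over $s\in[0,t]$; controlling $\norm{u_\eps(s)}_{H^\delta_xL^2_v}$ by the quantity denoted $E(\partial^0_\delta h_\eps)$ via (H4) applied with the extra $\delta$ tangential derivative yields $C^{(l)}_\delta\eps^{2\delta}E(\partial^0_\delta h_\eps)^2$.
\end{itemize}

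The main obstacle I expect is not any single estimate but the bookkeeping that makes the powers of $\eps$ come out right: one must keep track of where each factor $\eps$ originates — one from the $\abs{\eps n}$ in front of $P_{1j}$ (available precisely because $u_\eps\perp\mathrm{Ker}(L)$ by (H5)), additional ones from $m_l^\eps$ in the $l=2,3$ cases — while simultaneously using the Gaussian factor $e^{-\beta_j\tau\abs n^2/2}$ to absorb the polynomial-in-$\abs n$ growth and to make the time kernel integrable, \emph{and} ensuring the scalar-multiplier estimates are uniform over $n\in\Z^N$ with $\abs{\eps n}\le n_0$. A secondary subtlety is that the argument only needs $u_\eps$ (not $h_\eps$) in the Duhamel term, so the quadratic bound on $\norm{u_\eps}$ via (H4) together with $E(h_\eps)\lesssim\norm{h_{in}}_{\mathcal H^k_\eps}^2$ (Theorem \ref{globalexist}) must be invoked in each of the three time-integration schemes; the pointwise case additionally requires the fractional version of (H4), which is where the notation $E(\partial^0_\delta h_\eps)$ is tailored to fit. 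Once these are in place, each of the three inequalities follows by the corresponding estimate in Lemma \ref{Uepslj} applied pointwise in $s$ and then integrated.
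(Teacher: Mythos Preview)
Your plan is essentially sound and would work; the main difference from the paper is organizational rather than mathematical.

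The paper first performs a reduction (the subsection ``A simplification without loss of generality'') replacing $u_\eps$ by a proxy $f_\eps$ satisfying $\int_0^T\norm{f_\eps}^2_{L^2_{x,v}}\,dt\le M\,E(h_\eps)^2$, via duality and (H4). It then derives a single master pointwise inequality
\[
\norm{\psi^\eps_{lj}(u_\eps)(t)}^2_{L^2_xL^2_v}\le C\sum_{n\ne0}\chi_{\abs{\eps n}\le n_0}\,\eps^2\abs{n}^2\int_0^t e^{-c(t-s)\abs{n}^2}\norm{\hat f_\eps(s)}^2_{L^2_v}\,ds,
\]
and reads off all three bounds from it: the $\delta$-bound by interpolating between $\chi\eps^2\abs n^2\le n_0^2$ and $\chi\le1$, the $L^2_{[0,T]}$-bound by integrating in $t$ and Fubini, and the time-averaged bound by Cauchy--Schwarz from the $L^2_{[0,T]}$-bound. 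Your route instead handles the three bounds by three separate mechanisms (Fubini/Minkowski, Young for convolutions, direct fractional splitting); each is valid, though the Young argument must be applied Fourier-mode-by-mode (the $L^1_\tau$ norm of the kernel is uniformly $O(\eps)$ for each fixed $n$, but $\sup_n\abs{K(\tau,n)}$ is not in $L^1_\tau$).

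One bookkeeping point to correct: the $\eps$ coming from $\abs{\eps n}P_{1j}$ via (H5) is exactly cancelled by the $1/\eps$ in Duhamel, so it is \emph{not} the source of the $\eps^2$ in the final estimates. The full $\eps$ gain (before squaring) comes from the multiplier $m_l^\eps$ in \emph{all three} cases $l=1,2,3$: from $\abs{e^{\gamma_j\tau/\eps^2}-1}\le C_\gamma\eps\abs n^3\tau$ for $l=1$, from the explicit $\eps\abs n$ in front of $P_{2j}$ for $l=2$, and from $\abs{\chi_{\abs{\eps n}\le n_0}-1}\le\eps\abs n/n_0$ for $l=3$. With that accounting fixed, your argument goes through.
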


\begin{lemma}\label{psiepsR}
For the remainder term we have the three following inequalities
\begin{itemize}
\item $\exists \tilde{C}_4 > 0, \forall T>0, \quad \norm{\int_0^T\psi^\eps_{R}(u_\eps)dt}^2_{L^2_xL^2_v} \leq \tilde{C}_4 T\eps E(h_\eps)^2, $
\item  $\exists \tilde{C}'_4 > 0, \forall T>0, \quad \norm{\psi^\eps_{R}(u_\eps)}^2_{L^2_{[0,T]}L^2_xL^2_v} \leq \tilde{C}'_4 \eps E(h_\eps)^2, $
\item $\exists \tilde{C}''_4 > 0, \forall T>0, \quad \norm{\psi^\eps_{R}(u_\eps)(T)}^2_{L^2_xL^2_v} \leq \tilde{C}''_4 \eps E(h_\eps)^2. $
\end{itemize}
\end{lemma}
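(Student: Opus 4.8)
The plan is to reproduce, for the remainder piece, the Fourier‑in‑$x$ scheme used for the $\psi^\eps_{lj}$'s, but keeping only the crude decay of $\hat U_R$. By Plancherel and the fact that $U^\eps_R$ is an $x$–Fourier multiplier, it commutes with the $\partial^0_l$'s, so it suffices to establish the three bounds with $L^2_xL^2_v$ in place of $H^k_xL^2_v$ and then apply them to each $\partial^0_l u_\eps$, $|l|\le k$. The only information I would borrow from Theorem~\ref{fourier} is the operator estimate
$$|||\hat U^\eps_R(\tau,n,v)|||_{L^2_v}=\Big|\!\Big|\!\Big|\hat U_R\big(\tfrac{\tau}{\eps^2},\eps n,v\big)\Big|\!\Big|\!\Big|_{L^2_v}\le C_R\,e^{-\sigma\tau/\eps^2},$$
so that the time kernel $\tau\mapsto\tfrac1\eps C_R e^{-\sigma\tau/\eps^2}$ appearing in $\psi^\eps_R(u_\eps)(t)=\tfrac1\eps\int_0^t U^\eps_R(t-s)u_\eps(s)\,ds$ has $L^1_\tau$–norm exactly $C_R\eps/\sigma$, and antiderivative bounded by $C_R\eps^2/\sigma$. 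This single power of $\eps$ — as opposed to the $\eps^2$ gained by the $\psi^\eps_{1j}$ terms, which additionally pick up a factor $|\eps n|$ because $u_\eps\perp\mbox{Ker}(L)$ kills the $P_{0j}$ part — is exactly why the remainder is the worst of the four contributions and yields only a bound of order $\eps$.

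The second ingredient is a uniform $L^2_{x,v}$ control of $u_\eps=\Gamma(h_\eps,h_\eps)$. Here I would use that, for Maxwellian collision kernels ($\gamma=0$, so that $\norm{\cdot}_\Lambda\sim\norm{\cdot}_{L^2}$), $\Gamma$ enjoys the bilinear estimate $\norm{\Gamma(g,h)}_{L^2_xL^2_v}\le C\norm{g}_{H^{k_0}_xL^2_v}\norm{h}_{L^2_xL^2_v}$, which only requires $x$–regularity (it is slightly stronger than (H4) with $j=0$, and genuinely needed since $v$–derivatives of $h_\eps$ are controlled by Theorem~\ref{perturb} only up to a factor $1/\eps$, and Theorem~\ref{decaydv} is not applicable under the sole smallness $\norm{h_{in}}_{\mathcal H^k_\eps}\le\delta_k$). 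Combining this with the norm equivalence $\eqref{eq:normeqv}$ — which gives $\sup_s\norm{h_\eps(s)}_{H^{k_0}_xL^2_v}\le C\sqrt{E(h_\eps)}$ and $\sup_s\norm{h_\eps(s)}_{L^2_{x,v}}\le C\sqrt{E(h_\eps)}$ — together with $\int_0^\infty\norm{h_\eps(s)}_{H^k_\Lambda}^2\,ds\le E(h_\eps)$ and $\norm{\cdot}_{H^k_\Lambda}\gtrsim\norm{\cdot}_{H^{k_0}_xL^2_v}$, one obtains, uniformly in $\eps$ and in $T$,
$$\sup_{s}\norm{u_\eps(s)}_{L^2_xL^2_v}\le C\,E(h_\eps),\qquad \int_0^{T}\norm{u_\eps(s)}_{L^2_xL^2_v}^2\,ds\le C\,E(h_\eps)^2.$$

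With these two facts the three estimates are routine. For the time average, Fubini gives $\int_0^T\psi^\eps_R(u_\eps)\,dt=\tfrac1\eps\int_0^T\big(\int_0^{T-s}U^\eps_R(\tau)\,d\tau\big)u_\eps(s)\,ds$; bounding $\int_0^{T-s}U^\eps_R(\tau)\,d\tau$ in operator norm by $C_R\eps^2/\sigma$ and then Cauchy–Schwarz in $s$ on $[0,T]$ against $\int_0^T\norm{u_\eps}_{L^2}^2$ gives $\norm{\int_0^T\psi^\eps_R(u_\eps)\,dt}^2_{L^2_xL^2_v}\le \tfrac{C_R^2}{\sigma^2}\,\eps^2\,T\int_0^T\norm{u_\eps}_{L^2}^2\le \tilde C_4\,T\eps\,E(h_\eps)^2$. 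For the $L^2_{[0,T]}$ bound one applies Young's convolution inequality in time, $\norm{\psi^\eps_R(u_\eps)}_{L^2_{[0,T]}L^2_xL^2_v}\le \tfrac{C_R\eps}{\sigma}\,\norm{u_\eps}_{L^2_{[0,T]}L^2_xL^2_v}$, and squares. For the pointwise bound one writes $\norm{\psi^\eps_R(u_\eps)(T)}_{L^2_xL^2_v}\le\tfrac{C_R}{\eps}\int_0^T e^{-\sigma(T-s)/\eps^2}\norm{u_\eps(s)}_{L^2_xL^2_v}\,ds\le\tfrac{C_R}{\eps}\cdot\tfrac{\eps^2}{\sigma}\,\sup_s\norm{u_\eps(s)}_{L^2_xL^2_v}$ and inserts the sup bound above; all three come out of order $\eps^2E(h_\eps)^2\le\eps E(h_\eps)^2$ since $\eps\le\eps_N\le1$. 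The only real subtlety is to extract the factor of $\eps$ from the $L^1_\tau$–integrability of the remainder kernel (equivalently its $\eps^2/\sigma$ antiderivative) \emph{before} any Hölder step — if one first absorbs the $1/\eps$ into a Cauchy–Schwarz one is forced to bound $e^{-\sigma(T-s)/\eps^2}\le1$ and loses the gain — and to pair this with the uniform $L^2_{x,v}$–boundedness of $u_\eps$ coming from the Maxwellian structure of $\Gamma$ and the equivalence $\eqref{eq:normeqv}$; the full computation is carried out in Appendix~\ref{appendix:hydro}.
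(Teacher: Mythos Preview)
Your approach is essentially the same as the paper's: both use only the crude operator bound $\|U^\eps_R(\tau)\|_{L^2_{x,v}\to L^2_{x,v}}\le C_R e^{-\sigma\tau/\eps^2}$ from Theorem~\ref{fourier}, combine it with an $L^2$-in-time control on $u_\eps$ coming from $E(h_\eps)$, and conclude by H\"older-type estimates. The paper's proof is extremely terse --- it invokes the preliminary ``simplification without loss of generality'' (which replaces $u_\eps$ by some $f_\eps$ with $\int_0^T\|f_\eps\|^2_{L^2_{x,v}}\le M E(h_\eps)^2$) and then says only ``a Cauchy--Schwarz inequality gives us the third inequality\ldots and then the two others''. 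You spell out what is really going on: Fubini plus the $L^1_\tau$-norm $C_R\eps/\sigma$ of the kernel for the time-averaged bound, Young's convolution inequality for the $L^2_{[0,T]}$ bound, and $L^1_\tau\times L^\infty_s$ for the pointwise bound.

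The one genuine difference is how you control $u_\eps$. The paper relies on its duality-based simplification, which only delivers the $L^2_t$ bound $\int_0^T\|f_\eps\|^2\le M E(h_\eps)^2$; you instead establish a direct pointwise estimate $\sup_s\|u_\eps(s)\|_{L^2_{x,v}}\le C E(h_\eps)$ via a bilinear bound on $\Gamma$, at the price of restricting to the Maxwellian case $\gamma=0$ (so that $\|\cdot\|_\Lambda\sim\|\cdot\|_{L^2}$). This buys you the $\eps$ factor in the \emph{third} (pointwise) inequality cleanly, whereas the paper's Cauchy--Schwarz with only the $L^2_t$ bound yields $\|\psi^\eps_R(u_\eps)(T)\|^2\lesssim E(h_\eps)^2$ without the $\eps$; your warning that one must extract the $\eps$ from the kernel \emph{before} any H\"older step is exactly the point the paper glosses over. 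For the first two inequalities both routes give the stated $\eps$ factor, so your proof is a correct and more carefully detailed version of the paper's argument.
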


\bigskip
Gathering all Lemmas $\ref{psieps0j}$, $\ref{psiepslj}$ and $\ref{psiepsR}$ gives us the strong convergence of $\Psi^{\eps}(u_\eps)-\Psi(u_\eps)$ towards $0$, thanks to Remark $\ref{dtu}$. It remains to prove that we have indeed the expected convergences of $\Psi(u_\eps)$ towards $\Psi(u)$ as $\eps$ tends to $0$.
\par We start this last step by a quick remark relying on Sobolev embeddings and giving us a strong convergence of $h_\eps$ towards $h$ in $L^\infty_{[0,T]}L^\infty_xL^2_v$, for $T>0$.

\begin{remark}\label{Linfinityx}
We know that $h_\eps \to h$ weakly-* in $L^\infty_tH^s_xL^2_v$, for $s\geq s_0 > d/2$. But we also proved that for all $t>0$ that $(h_\eps)_\eps$ is bounded in $H^s_xL^2_v$. Therefore the sequence $(\norm{h_\eps}_{L^2_v},\eps>0)$ is bounded in $H^s_x$ and therefore converges strongly in $H^{s'}_x$ for all $s'<s$.
\\But, by triangular inequality it comes that
$$\abs{\norm{h_\eps}_{H^{s'}_xL^2_v} - \norm{h}_{H^{s'}_xL^2_v}} \leq \norm{\norm{h_\eps}_{L^2_v} - \norm{h}_{L^2_v}}_{H^{s'}_x}.$$

This means that we also have that $\lim\limits_{\eps \to 0} \norm{h_\eps}_{H^{s'}_xL^2_v} = \norm{h}_{H^{s'}_xL^2_v}$. The space $H^{s'}_xL^2_v$ is a Hilbert space and $h_\eps$ tends weakly to $h$ in it, therefore the last result gives us that in fact $h_\eps$ tends strongly to $h$ in $H^{s'}_xL^2_v$.
\par This result is for all $t>0$ and all $s'\leq s$. Furthermore, $s>d/2$ and so we can choose $s'>d/2$. By Sobolev's embedding we obtain that $h_\eps$ tends strongly to $h$ in $L^\infty_xL^2_v$, for all $t>0$. Reminding that $h_\eps \to h$ weakly-* in $L^\infty_tH^s_xL^2_v$ and we obtain that we have
$$\forall T>0,\quad V_T(\eps) = \sup\limits_{t\in [0,T]}\norm{h_\eps - h}_{L^\infty_xL^2_v} \to 0, \: \mbox{as}\:\: \eps \to 0.$$
\end{remark}

\begin{lemma}\label{psiuepsu}
We have the following rate of convergence:
\begin{itemize}
\item $\exists \tilde{C}_5 > 0, \forall T>0, \: \norm{\int_0^T\Psi(u_\eps)dt - \int_0^T\Psi(u)dt }^2_{L^2_xL^2_v} \leq \tilde{C}_5 T^2V_T(\eps)^2, $
\item  $\exists \tilde{C}'_5 > 0, \forall T>0, \: \norm{\Psi(u_\eps)-\Psi(u_\eps)}^2_{L^2_{[0,T]}L^2_xL^2_v} \leq \tilde{C}'_5 TV_T(\eps)^2, $
\item $\exists \tilde{C}''_5 > 0, \forall T>0, \: \norm{\Psi(u_\eps)-\Psi(u_\eps)}^2_{L^2_xL^2_v}(T) \leq \tilde{C}''_5 V_T(\eps)^2. $
\end{itemize}
\end{lemma}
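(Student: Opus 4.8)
The plan is to exploit the linearity of the limiting operator $\Psi$. By Remark \ref{noeps}, $\Psi = \mathcal{F}^{-1}_x(\psi^\eps_{00}+\psi^\eps_{02})\mathcal{F}_x$ carries no $\eps$-dependence and is linear in its argument, so
$$\Psi(u_\eps)-\Psi(u) = \Psi(u_\eps-u).$$
Moreover, $\Gamma$ being bilinear and symmetric (hypothesis \emph{(H4)}), $u_\eps-u = \Gamma(h_\eps,h_\eps)-\Gamma(h,h) = \Gamma(h_\eps-h,h_\eps)+\Gamma(h_\eps-h,h)$. Thus the lemma splits into two independent ingredients: a uniform-in-$T$ boundedness statement for $\Psi$, and a bound on $\norm{(u_\eps-u)(s)}_{L^2_xL^2_v}$ controlled by $V_T(\eps)$ and by exponentially decaying quantities.

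For the source term, I would use the bilinear estimate for $\Gamma$ in its pointwise-in-$x$ form valid for Maxwellian molecules, $\norm{\Gamma(g,h)(x,\cdot)}_{L^2_v}\lesssim\norm{g(x,\cdot)}_{L^2_v}\norm{h(x,\cdot)}_{\Lambda_v}$, together with the equivalence $\norm{\cdot}_{\Lambda_v}\sim\norm{\cdot}_{L^2_v}$ for Maxwellian potential. Pulling $\sup_x$ onto the factor $h_\eps-h$ — legitimate since $\sup_x\norm{(h_\eps-h)(t,x,\cdot)}_{L^2_v}\le V_T(\eps)$ for $t\le T$ by Remark \ref{Linfinityx} — and leaving the other factor in $L^2_x$ yields, for $t\le T$,
$$\norm{(u_\eps-u)(t)}_{L^2_xL^2_v} \lesssim V_T(\eps)\bigl(\norm{h_\eps(t)}_{L^2_xL^2_v}+\norm{h(t)}_{L^2_xL^2_v}\bigr).$$
Since $h_\eps$, hence also its weak-$*$ limit $h$, decays exponentially and uniformly in $\eps$ by Theorem \ref{perturb}, the right-hand side is $\lesssim\delta_kV_T(\eps)$ uniformly in $t$, and its square is integrable on $[0,+\infty)$ with $\int_0^{+\infty}(\norm{h_\eps}_{L^2_xL^2_v}^2+\norm{h}_{L^2_xL^2_v}^2)\,dt\lesssim\delta_k^2$, uniformly in $\eps$ and $T$.

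For the boundedness of $\Psi$, observe that $\psi^\eps_{0j}$ ($j\in\{0,2\}$) is, in Fourier variables, the Duhamel integral against the kernel $e^{-\beta_j(t-s)|n|^2}|n|\,P_{1j}(n/|n|)$: the factor $|n|$ kills the frequency $n=0$ while $\beta_j>0$ makes the scalar kernel $\tau\mapsto e^{-\beta_j\tau|n|^2}|n|$ have $L^1_\tau$-norm $\le(\beta_j|n|)^{-1}\le\beta_j^{-1}$ and $L^2_\tau$-norm $\le(2\beta_j)^{-1/2}$, both uniformly in $n\neq0$. By Young's inequality in time and Plancherel in $x$, $\Psi$ is then bounded from $L^2_{[0,T]}L^2_xL^2_v$ into itself and into $L^\infty_{[0,T]}L^2_xL^2_v$, uniformly in $T$, and a Cauchy--Schwarz in time gives $\norm{\int_0^T\Psi(w)\,dt}_{L^2_xL^2_v}^2\lesssim T\norm{w}_{L^2_{[0,T]}L^2_xL^2_v}^2\le T^2\sup_{[0,T]}\norm{w(s)}_{L^2_xL^2_v}^2$. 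Inserting $w=u_\eps-u$ and the source bound gives the three claimed inequalities: the $T^2V_T(\eps)^2$ one from the $T^2\sup$ estimate with $\sup_{[0,T]}\norm{w(s)}\lesssim\delta_kV_T(\eps)$; the $TV_T(\eps)^2$ one from $\norm{\Psi(w)}_{L^2_{[0,T]}L^2_xL^2_v}^2\lesssim\norm{w}_{L^2_{[0,T]}L^2_xL^2_v}^2\le T\sup_{[0,T]}\norm{w(s)}^2$; and the $T$-free $V_T(\eps)^2$ one from $\norm{\Psi(w)(T)}_{L^2_xL^2_v}^2\lesssim\norm{w}_{L^2_{[0,T]}L^2_xL^2_v}^2\lesssim V_T(\eps)^2\int_0^{+\infty}(\norm{h_\eps}_{L^2_xL^2_v}^2+\norm{h}_{L^2_xL^2_v}^2)\,dt$, the $T$-independence being precisely the exponential decay.

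The main obstacle — and the reason $V_T(\eps)$, rather than an $H^k_x$-difference of $h_\eps$ and $h$, appears in the statement — is that $h_\eps-h$ does not converge to $0$ in $H^k_xL^2_v$, only weakly-$*$ (Remark \ref{dtu}). One must therefore arrange the bilinear estimate so that the vanishing factor is $\norm{h_\eps-h}_{L^\infty_xL^2_v}=V_T(\eps)$, which forces the pointwise-in-$x$ (Maxwellian) form of the $\Gamma$ bound and the equivalence $\norm{\cdot}_{\Lambda_v}\sim\norm{\cdot}_{L^2_v}$, and makes the uniform-in-$\eps$ exponential decay of $h_\eps$ and $h$ essential to keep the remaining factor integrable in time. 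As elsewhere in Section \ref{sec:hydrolim}, the passage from $L^2_xL^2_v$ to $H^k_xL^2_v$ is then handled by commuting the operators with the pure $x$-derivatives $\partial^0_l$.
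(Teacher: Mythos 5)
Your proposal takes essentially the same route as the paper. Both arguments hinge on the three same ingredients: the linearity identity $\Psi(u_\eps)-\Psi(u)=\Psi(u_\eps-u)$; the bilinear splitting $u_\eps-u=\Gamma(h_\eps-h,\,h_\eps+h)$; and the choice of estimating the difference by pulling $h_\eps-h$ out in $L^\infty_xL^2_v$ (so that $V_T(\eps)$ from Remark~\ref{Linfinityx} appears) while keeping the factor $h_\eps+h$ in $L^2_xL^2_v$, which is exactly what forces the Maxwellian equivalence $\norm{\cdot}_{\Lambda_v}\sim\norm{\cdot}_{L^2_v}$ that you correctly flag. The only genuine difference is in how the $T$-uniform pointwise bound on $\Psi$ is produced. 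The paper bounds the time kernel $e^{-\beta_j s\abs{n}^2}\abs{n}$ in $L^1_s$, with $\norm{K}_{L^1_s}\leq 1/(\beta_j\abs{n})\leq 1/\beta_j$ uniformly in $n\neq 0$, paired with $\sup_{t\in[0,T]}\norm{\Gamma(h_\eps-h,h_\eps+h)}_{L^2_xL^2_v}$; this yields the $T$-free third inequality at once, using only the $L^\infty_t$-boundedness of $h_\eps$ and $h$ (already needed to control $\norm{h_\eps+h}_{L^2_xL^2_v}$), and the first two bullets then follow from Cauchy--Schwarz in $t$. You instead place the kernel in $L^2_s$ and the source in $L^2_{[0,T]}$, and then invoke the uniform-in-$\eps$ exponential decay of $h_\eps$ (transferred to $h$ by weak-$*$ lower semicontinuity) to make $\int_0^{\infty}\bigl(\norm{h_\eps}^2_{L^2_xL^2_v}+\norm{h}^2_{L^2_xL^2_v}\bigr)\,dt$ finite. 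Both variants are correct and essentially equally elementary; the paper's is a touch lighter since it does not rely on the sharper $L^2_t$-integrability of the solutions.
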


\bigskip
Thus, those Lemmas, combined with the study of the linear case (Lemmas $\ref{Ueps0j}$, $\ref{Uepslj}$ and $\ref{UR}$) prove the Theorem $\ref{hydrolim}$ with the rate of convergence being the maximum of each rate of convergence. Moreover we have proved
$$h(t,x,v) = V(t,x,v)h_{in}(x,v) + \Psi(t,x,v)(\Gamma(h,h)).$$


\subsection{Proof of Theorem $\ref{hydrolim}$}

Thanks to Theorem $\ref{hydrolimT}$ we can control the convergence of $h_\eps$ towards $h$ for any finite time $T$. Then, thanks to the uniqueness property of Theorem $2.1$ and the control on the remainder of Theorem $2.3$ in \cite{Gu4}, in the case of a hard potential collision kernel, one has
$$\forall T >0, V_T(\eps) \leq C_V \eps.$$
Finally, thanks to Theorem $\ref{perturb}$, we have the exponential decay for both $h_\eps$ and $h$, leading to
$$\norm{h_\eps-h}_{H^s_xL^2_v} \leq 2\norm{h_{in}}_{\mathcal{H}^s_\eps} e^{-\tau_s T}.$$

\bigskip
We define
$$T_M = -\frac{1}{\tau_s}\mbox{ln}\left(\frac{\eps}{2\norm{h_{in}}_{\mathcal{H}^s_\eps}}\right)$$
to get that
$$\forall T \geq T_M, \quad \norm{h_\eps-h}_{H^s_xL^2_v} \leq \eps.$$
This conclude the proof Theorem $\ref{hydrolim}$, by applying Theorem $\ref{hydrolimT}$ to $T_M$.


\appendix

\section{Validation of the assumptions}\label{appendix:validation}

As said in the introduction, all the hypocoercivity theory assumptions hold for several different kinetic models. One can find the proof of the assumptions (H1), (H2), (H3), (H1') and (H2') in \cite{MN} directly for the linear relaxation (see also \cite{CCG}), the semi-classical relaxation (see also \cite{NS}), the linear Fokker-Planck equation, the Boltzmann equation with hard potential and angular cutoff and the Landau equation with hard and moderately soft potential (both studied in a constructive way in \cite{Mo1} and \cite{BM}, for the spectral gaps, see also \cite{Gu2} and \cite{Gu3} for the Cauchy problems):

\begin{itemize}
\item The Linear Relaxation
$$\partial_tf + v.\nabla_xf = \frac{1}{\eps}\left[\left(\int_\R^df(t,x,v_*)dv_*\right)\mu(v) - f\right],$$
\item The Semi-classical Relaxation
$$\partial_tf + v.\nabla_xf = \frac{1}{\eps}\int_{\R^d}\left[\mu(1-\delta f)f_* - \mu_* (1-\delta f_*)f\right]dv_*,$$
\item The Linear Fokker-Planck Equation
$$\partial_tf + v.\nabla_xf = \frac{1}{\eps}\nabla_v.\left(\nabla_vf+fv\right),$$
\item The Boltzmann Equation with hard potential and angular cutoff
$$\partial_tf + v.\nabla_xf = \frac{1}{\eps}\int_{\R^d\times \mathbb{S}^{d-1}}b(cos \theta)|v - v_*|^{\gamma}\left[f'f'_* - ff_*\right]dv_*d\sigma,$$
\item The Landau Equation with hard and moderately soft potential
$$\partial_tf + v.\nabla_xf = \frac{1}{\eps}\nabla_v.\left(\int_{\R^d}\Phi(v-v_*)|v-v_*|^{\gamma+2}\left[f_*(\nabla f) - f(\nabla f)_*\right]\right).$$
\end{itemize}

Assumption (H4) is clearly satisfied by the first three as in that case we have either $\norm{.}_{\Lambda_v}=\norm{.}_{L^2_v}$ or $\Gamma = 0$ (see \cite{MN}). Moreover, (H5) is obvious in the case of a linear equation. It thus remains to prove properties (H5) for the semi-classical relaxation and (H4) and (H5) for the Boltzmann equation and the Landau equation (since our property (H4) is slightly different from (H4) in \cite{MN}).


\subsection{The semi-classical relaxation}

In the case of the semi-classical relaxation, the linearization is slightly different. Indeed, the unique global equilibrium associated to an initial data $f_0$ is (assuming some initial bounds, see \cite{MN})
$$f_\infty = \frac{\kappa_\infty \mu}{1+\delta \kappa_\infty \mu},$$
where $\kappa_\infty$ depends on $f_0$.

\bigskip
Thus, we are no longer in the case of a global equilibrium being a Maxwellian. However, a good way of linearizing this equation is (see \cite{MN}) considering
$$f = f_\infty + \eps \:\frac{\sqrt{\kappa_\infty \mu}}{1+\delta\kappa_\infty \mu}\:h.$$
Using such a linearization instead of the one used all along this paper yields the same general equation $\eqref{LinBoltz}$ with $L$ and $\Gamma$ satisfying all the requirements (see \cite{MN}). Indeed, one may find that $\mbox{Ker}(L) = \mbox{Span}\left(f_\infty/\sqrt{\mu}\right)$ and then notice that this is not of the form needed in assumption (H3). However, this is bounded by $e^{-|v|^2/4}$ and therefore we are still able to use the toolbox (section $\ref{sec:toolbox}$, thus all the theorems.

\bigskip
Let us look at the bilinear operator to show that it fulfils hypothesis (H5). A straightforward computation gives us the definition of $\Gamma$,
 
$$\Gamma(g,h) = \frac{\delta\sqrt{\kappa_\infty}}{2} \int_{\R^d}\sqrt{\mu_*}\frac{\mu_*-\mu}{1+\eps\kappa_\infty \mu_*}[hg_*+h_*g]dv_*.$$
Then, multiplying by a function $f$, integrating over $\R^d$ and looking at the change of variable $(v,v_*) \to (v_*,v)$ yields

$$\langle\Gamma(g,h),f\rangle_{L^2_v} = \frac{\delta \sqrt{\kappa_\infty}}{4}\int_{\R^d\times\R^d}(\mu_*-\mu)(gh_*+g_*h)\left[f\frac{\sqrt{\mu_*}}{1+\delta\kappa_\infty \mu_*} - f_*\frac{\sqrt{\mu}}{1+\delta\kappa_\infty \mu}\right]dvdv_*.$$
Therefore, taking $f$ in $\mbox{Ker}(L)$ gives us the expected property.


\subsection{Boltzmann operator with angular cutoff and hard potential}

Notice that, compared to \cite{MN}, we defined $\Gamma$ in a way that it is symmetric which gives us, using the fact that $\mu_*\mu = \mu_*'\mu'$,
 $$\Gamma(g,h) = \frac{1}{2}\int_{\R^d\times \mathbb(S)^{d-1}} B(\mu^{1/2})_*[g_*'h'+ g'h_*' - g_*h - gh_*]dv_*d\sigma,$$
 
\subsubsection{Orthogonality to $\mbox{Ker}(L)$: (H5)} 
 
A well-known property (see \cite{Go} for instance) tells us that for all $\phi$ in $L^2_v$ decreasing fast enough at infinity and for all $\psi$ in $L^2_v$ one has
\begin{eqnarray*}
\int_{\R^d}\Gamma(g,h)(v)\psi(v)dv = \frac{1}{8}&&\int_{(\R^d)^2\times \mathbb{S}^{d-1}} B [g_*'h'+ g'h_*' - g_*h - gh_*] \\&&((\mu^{1/2})_*\psi + (\mu^{1/2})\psi_* - (\mu^{1/2})_*'\psi' - (\mu^{1/2})'\psi'_*)dvdv_*d\sigma.
\end{eqnarray*}

\bigskip
As shown in \cite{MN} or \cite{Ce1} we have that $\mbox{Ker}(L) = \mbox{Span}(1,v_1,\dots,v_d,|v|^2)\mu^{1/2} $ and therefore taking $\psi$ to be each of these kernel functions gives us (H5).

\subsubsection{Controlling derivatives: (H4)} \label{subsubsec:(H4) Boltz}

To prove (H4) we can define
\begin{eqnarray*}
\Gamma^+(g,h) &=& \int_{\R^d\times \mathbb(S)^{d-1}} B(\mu^{1/2})_*g_*'h'\:dv_*d\sigma,
\\ \Gamma^-(g,h) &=& -\int_{\R^d\times \mathbb(S)^{d-1}} B(\mu^{1/2})_*g_*h\:dv_*d\sigma.
\end{eqnarray*}

\bigskip
By using the change of variable $u = v-v_*$ we end up with $\theta$ being a function of $u$ and $\sigma$ and $v'=v+f_1(u,\sigma)$ and $v'_*=v+f_2(u,\sigma)$, $f_1$ and $f_2$ being functions. Therefore we can make this change of variable, take $j$ and $l$ such that $|j|+|l|\leq s$ and differentiate our operator $\Gamma^-$.

$$\partial_l^j\Gamma^-(g,h) =-\frac{1}{2} \sum\limits_{\overset{j_0+j_1+j_2 = j}{l_1+l_2=l}} \int_{\R^d\times\mathbb{S}^{d-1}}b(cos\theta)|u|^\gamma\partial_0^{j_0}\left(\mu(v-u)^{1/2}\right)\partial_{l_1}^{j_1}g_*\:\partial_{l_2}^{j_2}h\:dud\sigma .$$

Then we can easily compute that, $C$ being a generic constant,

$$\abs{\partial_0^{j_0}\left(\mu(v-u)^{1/2}\right)} \leq C \mu(v-u)^{1/4}.$$

Moreover, we are in the case where $\gamma > 0$ and therefore we have 

$$|u|^\gamma \mu(v-u)^{1/4} \leq C (1+|v|)^\gamma \mu(v-u)^{1/8}.$$

Combining this and the fact that $|b|\leq C_b$ (angular cutoff considered here), multiplying by a function $f$ and integrating over $\T^d \times \R^d$ yields, using Cauchy-Schwarz two times,

\begin{eqnarray*}
\abs{\langle\partial_l^j\Gamma^-(g,h),f\rangle_{L^2_{x,v}}} &\leq& C \sum\limits_{\overset{j_0+j_1+j_2 = j}{l_1+l_2=l}} \int_{\T^d\times\R^d}(1+|v|)^\gamma\abs{\partial_{l_2}^{j_2}h} \abs{f} \left(\int_{\R^d}\mu_*^{1/8} \abs{\partial_{l_1}^{j_1}g_*}dv_*\right)dvdx
\\ &\leq& \mathcal{G}^s(g,h)\norm{f}_\Lambda,
\end{eqnarray*}
with
$$\mathcal{G}^s(g,h) = C \sum\limits_{|j_1|+|l_1|+|j_2|+|l_2| \leq s}\left[\int_{\T^d}\norm{\partial_{l_2}^{j_2}h}^2_{\Lambda_v} \norm{\partial_{l_1}^{j_1}g}^2_{L^2_{v}}dx\right]^{1/2}.$$

\bigskip
At that point we can use Sobolev embeddings (see \cite{Br}, corollary $IX.13$) stating that if $E\left(s_0/2\right)>d/2$ then we have $H^{s/2}_x \hookrightarrow L^{\infty}_x$.
\par So, if $|j_1|+|l_1|\leq s/2$ we have

\begin{eqnarray}
\norm{\partial_{l_1}^{j_1}g}^2_{L^2_{v}} &\leq& \sup\limits_{x \in \T^d}\norm{\partial_{l_1}^{j_1}g}^2_{L^2_{v}} \leq C_s\norm{\norm{\partial_{l_1}^{j_1}g}^2_{L^2_{v}}}_{H^{s/2}_x} \nonumber
\\ &\leq& C_s\sum\limits_{|p|\leq s/2}\sum\limits_{p_1 +p_2 =p} \int_{\T^d\times\R^d} \partial_{l_1+p_1}^{j_1}g \:\partial_{l_1+p_2}^{j_1}g \: dvdx \label{jnon0}
\\ &\leq& C_s \norm{g}^2_{H^s_{x,v}}, \nonumber
\end{eqnarray}

\noindent by a mere Cauchy-Schwarz inequality.
\par In the other case, $|j_2|+|l_2|\leq s/2$ and by same calculations we show

$$\norm{\partial_{l_2}^{j_2}h}^2_{\Lambda_v} \leq C_s \norm{h}^2_{H^s_{\Lambda}}.$$

Therefore, by just dividing the sum into this two subcases we obtain the result (H4) for $\Gamma^-$, noticing that in the case $j=0$ equation $\eqref{jnon0}$ has no $v$ derivatives and the Cauchy-Schwarz inequality does not create such derivatives so the control is only made by $x$-derivatives.

\bigskip
The second term $\Gamma^+$ is dealt exactly the same way with, at the end (the study of $\mathcal{G}^s$), another change of variable $(v,v_*) \to (v',v'_*)$ which gives the result since $(1+|v'|)^\gamma \leq (1+|v|)^\gamma + (1+|v_*|)^\gamma$ if $\gamma > 0$.


\subsection{Landau operator with hard and moderately soft potential}
The Landau operator is used to describe plasmas and for instance in the case of particles interacting via a Coulomb interaction (see \cite{Vi2} for more details). The particular case of Coulomb interaction alone ($\gamma=-3$) will not be studied here as the Landau linear operator has a spectral gap if and only if $\gamma \geq -2$ (see \cite{Gu2}, for not constructive arguments, \cite{MS} for general constructive case and \cite{BM} for explicit construction in the case of hard potential $\gamma>0$) and so only the case $\gamma \geq -2$ may be applicable in this study.

\bigskip
We can compute straightforwardly the bilinear symmetric operator associated with the Landau equation:

$$\Gamma(g,h) = \frac{1}{2\sqrt{\mu}}\nabla_v \cdot \int_{\R^d}\sqrt{\mu\mu_*}\Phi(v-v_*)\left[g_*\nabla_vh + h_*\nabla_vg - g(\nabla_vh)_* - h(\nabla_vg)_*\right]dv_*,$$
where $\func{\Phi}{\R^d}{\R^d}$ is such that $\Phi(z)$ is the orthogonal projection onto $\mbox{Span}(z)^\bot$ so
$$\Phi(z)_{ij} = \delta_{ij}-\frac{z_iz_j}{|z|^2},$$
and $\gamma$ belongs to $[-2,1]$.

\subsubsection{Orthogonality to $\mbox{Ker}(L)$: (H5)} 

Let consider a function $\psi$ in $C^{\infty}_{x,v}$. A mere integration by part gives us

$$\langle\Gamma(g,h),\psi\rangle_{L^2_{v}} = -\frac{1}{2}\int_{\R^d\times\R^d}\nabla_v\left(\frac{\psi}{\sqrt{\mu}}\right)\cdot \left(\sqrt{\mu\mu_*}\Phi(v-v_*)[G]\right)dv_*dv,$$
where
$$G = g_*\nabla_vh + h_*\nabla_vg - g(\nabla_vh)_* - h(\nabla_vg)_*.$$

\bigskip
Then the change of variable $(v,v_*)\to (v_*,v)$ only changes $\nabla_v(\psi/\sqrt{\mu})$ to $\left[\nabla_v(\psi/\sqrt{\mu})\right]_*$ and $G$ becomes $-G$. Therefore we finally obtain

$$\langle\Gamma(g,h),\psi\rangle_{L^2_{v}} = \frac{1}{4}\int_{\R^d\times\R^d}\sqrt{\mu\mu_*}\Phi(v-v_*)[G] \cdot \left[\left(\nabla_v\left(\frac{\psi}{\sqrt{\mu}}\right)\right)_*-\nabla_v\left(\frac{\psi}{\sqrt{\mu}}\right)\right]dv_*dv.$$

\bigskip
As shown in \cite{MN} or \cite{Ce1} we have that $\mbox{Ker}(L) = \mbox{Span}(1,v_1,\dots,v_d,|v|^2)\mu^{1/2} $. Computing the term inside brackets for each of these functions gives us $0$ or, in the case $|v|^2\sqrt{\mu}$, $2(v_*-v)$.
\\However, by definition, $\Phi(v-v_*)[G]$ belongs to $\mbox{Span}(v-v_*)^\bot$ and therefore $\Phi(v-v_*)[G]\cdot (v_*-v) =0$. So $\Gamma$ indeed satisfies (H5).

\subsubsection{Controlling derivatives: (H4)}

The article \cite{Gu2} gives us directly the expected result in its Theorem $3$, equation $(35)$ with $\theta=0$. The case where there are only $x$-derivatives is also included if one takes $\beta =0$.

\section{Proofs given the results in the toolbox}\label{appendix:toolbox} 

We used the estimates given by the toolbox throughout this article. This appendix is to prove all of them. It is divided in two parts. The first one is dedicated to the proof of the equality between null spaces whereas the second part deals with the time derivatives inequalities.


\subsection{Proof of Proposition $\ref{kernel}$:}
We are about to prove the following proposition.

\begin{prop}\label{proj}
Let $a$ and $b$ be in $\R^*$ and consider the operator $G = aL - bv \cdot\nabla_x$ acting on $H^1_{x,v}$.
\\ If $L$ satisfies $\emph{(H1)}$ and $\emph{(H3)}$ then $$\emph{\mbox{Ker}}(G) = \emph{\mbox{Ker}}(L).$$
\end{prop}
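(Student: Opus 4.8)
The inclusion $\mbox{Ker}(L)\subseteq\mbox{Ker}(G)$ is almost immediate: if $h\in\mbox{Ker}(L)$ then $h=\pi_L(h)$, and since (H3) forces $\mbox{Ker}(L)$ to consist of functions of the form $P_i(v)e^{-|v|^2/4}$ with \emph{constant} coefficients (as stressed in the Remark following the statement), such an $h$ does not depend on $x$, hence $v\cdot\nabla_x h=0$ and $Gh=aL(h)-b\,v\cdot\nabla_x h=0$. So the whole content is the reverse inclusion $\mbox{Ker}(G)\subseteq\mbox{Ker}(L)$.

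For the reverse inclusion, the plan is an energy/orthogonality argument exploiting the sign structure. Take $h\in H^1_{x,v}$ with $Gh=0$, i.e. $aL(h)=b\,v\cdot\nabla_x h$. Pair with $h$ in $L^2_{x,v}$: the transport term $\langle v\cdot\nabla_x h,h\rangle_{L^2_{x,v}}=0$ by skew-symmetry (integration by parts on $\T^N$), so $\langle L(h),h\rangle_{L^2_{x,v}}=0$. Now invoke the local coercivity in (H3), $\langle L(h),h\rangle_{L^2_v}\le-\lambda\|h^\bot\|^2_{\Lambda_v}$, integrated over $x$, which gives $\|h^\bot\|_\Lambda^2\le 0$, hence $h^\bot=0$, i.e. $h=\pi_L(h)$ pointwise in $x$. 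Thus $h(x,v)=\sum_{i=1}^d c_i(x)\phi_i(v)$ for some functions $c_i\in H^1_x$ (using \eqref{piL}). It remains to show each $c_i$ is constant. Plug this form back into $Gh=0$: since $L(h)=0$ now, the equation reduces to $v\cdot\nabla_x h=0$, i.e. $\sum_i (v\cdot\nabla_x c_i(x))\,\phi_i(v)=0$ for a.e. $(x,v)$. Because the $\phi_i$ are $P_i(v)e^{-|v|^2/4}$ with $P_i$ polynomials, the functions $v\mapsto v_k\phi_i(v)$ span, over varying $i$ and $k$, a family whose only linear relations (with $x$-dependent but $v$-independent coefficients) are the trivial ones one can read off; more concretely, testing against $\phi_j(v)$ in $L^2_v$ and against $v_k\phi_j(v)$ gives a linear system forcing $\partial_{x_k} c_i=0$ for all $i,k$. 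Hence the $c_i$ are constant and $h\in\mbox{Ker}(L)$ in the strong (constant-coefficient) sense.

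The main obstacle is the last step: carefully justifying that $\sum_i (v\cdot\nabla_x c_i)\phi_i=0$ forces $\nabla_x c_i=0$. This is where the specific polynomial-times-Gaussian shape of the $\phi_i$ from (H3) is essential — a generic kernel would not allow this, since $v\cdot\nabla_x c_i(x)$ reshuffles the velocity profile by multiplication by a linear function of $v$. The clean way is to note that for fixed $x$, the identity $\sum_i \big(\nabla_x c_i(x)\cdot v\big)\phi_i(v)=0$ in $L^2_v$ means the function $v\mapsto\sum_i(\nabla_x c_i(x)\cdot v)P_i(v)e^{-|v|^2/4}$ vanishes; dividing by the (nonvanishing) Gaussian, $\sum_i(\nabla_x c_i(x)\cdot v)P_i(v)\equiv 0$ as a polynomial in $v$. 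Since the $\phi_i$ are linearly independent and, in all the models considered, the $P_i$ are $1, v_1,\dots,v_N, |v|^2$ (or an analogous independent family), the polynomials $v_k P_i(v)$ are linearly independent as well, forcing every coefficient $\partial_{x_k}c_i(x)$ to be zero. Running this for every $x$ yields $c_i$ constant, completing the proof. (One small technical point to address along the way: a priori $c_i\in H^1_x$, so "$\nabla_x c_i=0$ a.e." on the connected torus $\T^N$ indeed gives $c_i$ constant.)
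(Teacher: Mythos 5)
The first three steps of your plan coincide with the paper's proof: both inclusions are set up the same way, and the energy argument $\langle G(h),h\rangle_{L^2_{x,v}}=a\langle L(h),h\rangle_{L^2_{x,v}}\le -a\lambda\|h^\bot\|_\Lambda^2$ combined with skew-symmetry of transport gives $h=\pi_L(h)$, reducing the problem to $v\cdot\nabla_x h=0$. Where you diverge is in the last step, and that is where your argument has a genuine gap.

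You claim that, after dividing by the Gaussian, the polynomials $v_kP_i(v)$ are linearly independent, hence the identity $\sum_{i,k}(\partial_{x_k}c_i(x))\,v_kP_i(v)\equiv 0$ forces each $\partial_{x_k}c_i(x)$ to vanish. This is false. With $P_i\in\{1,v_1,\dots,v_N,|v|^2\}$, the monomial $v_1v_2$ arises both as $v_1\cdot P_{v_2}$ and as $v_2\cdot P_{v_1}$, so there is a nontrivial relation. What the polynomial identity actually yields is only that the $N\times N$ block $A_{ik}(x)=\partial_{x_k}c_i(x)$ (indices $i,k$ corresponding to the $P_i=v_i$) is \emph{antisymmetric}, together with $\nabla_xc_0=\nabla_xc_{N+1}=0$. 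Antisymmetry alone does not make the $c_i$ constant. In fact, the conclusion you are after cannot follow from the $v$-algebra alone: on $\R^N\times\R^N$, the function $h(x,v)=(x_2v_1-x_1v_2)M^{1/2}(v)$ satisfies $h=\pi_L(h)$ and $v\cdot\nabla_x h=0$ yet depends on $x$. What rules this out is exclusively the periodicity of the torus — precisely the ingredient the paper exploits in its Lemma $\ref{lemproj}$, by observing that $t\mapsto f(\overline{x+tv},v)$ is constant and invoking density of irrational orbits in $\T^N$. (This is also why the paper stresses in the Remark after the Proposition that $\mbox{Ker}(G)$ consists only of \emph{constant}-coefficient combinations of the $\phi_i$.)

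Your route could be salvaged, but it takes more work than you indicate: from antisymmetry of $A$ and the fact that $A_{ik}=\partial_{x_k}c_i$ is a gradient one can show $\partial_{x_j}A_{ik}=0$ (alternating the two symmetries gives $\partial_jA_{ik}=-\partial_jA_{ik}$), hence each $c_i$ is affine in $x$; \emph{then} periodicity on $\T^N$ forces the linear part to vanish. As written, however, the linear-independence claim is wrong and the proof does not close. You should either complete the argument along the lines just sketched, or simply adopt the paper's density lemma, which handles the final step cleanly and makes the role of the torus explicit.
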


To prove this result we will need a lemma.

\begin{lemma}\label{lemproj} 
Let $\func{f}{\T^d\times\R^d}{\R}$ be continuous on $\T^d\times\R^d$ and differentiable in $x$.
\\If $v\cdot \nabla_x f(x,v) = 0$ for all $(x,v)$ in $\T^d \times \R^d$ then $f$ does not depend on $x$.
\end{lemma}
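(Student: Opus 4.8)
The plan is to exploit the topology of the torus: a straight line with velocity $v$ in a generic direction is dense in $\T^N$, so a function constant along all such lines must be globally constant (in $x$). Concretely, I would first fix $x_0 \in \T^N$ and $v \in \R^N$ and consider the curve $t \mapsto f(x_0 + tv, v)$ on $\R$. The hypothesis $v\cdot\nabla_x f(x,v)=0$ says exactly that $\frac{d}{dt} f(x_0+tv,v) = 0$, so $f(x_0+tv,v) = f(x_0,v)$ for all $t \in \R$. Hence $f(\cdot,v)$ is constant on the additive subgroup (and its closure) generated by $v$ inside $\T^N = \R^N/\Z^N$.

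Next I would invoke the standard fact about equidistribution/density on the torus: if the components of $v$ together with $1$ are rationally independent (equivalently, if $v$ has "irrational direction" in a suitable sense, e.g. $1, v_1,\dots,v_N$ are $\Q$-linearly independent after scaling), then $\{x_0 + tv \bmod \Z^N : t \in \R\}$ is dense in $\T^N$. Combined with the continuity of $f$ in $x$, this forces $f(x,v) = f(x_0,v)$ for all $x \in \T^N$, i.e. $f(\cdot,v)$ does not depend on $x$, for every such $v$. Since the set of $v$ with this density property is dense in $\R^N$ (its complement is a countable union of hyperplanes), and $f$ is continuous in $(x,v)$, the conclusion $f(x,v) = f(x_0,v)$ extends to all $v \in \R^N$ by approximating an arbitrary $v$ by a sequence $v_k$ of "good" directions and passing to the limit. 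This gives that $f$ is independent of $x$ on all of $\T^N\times\R^N$.

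The main obstacle — really the only nontrivial point — is the density/equidistribution step and making sure the genericity is handled cleanly: one must be careful that "$v$ has dense orbit" is a dense (indeed full-measure) condition on $v$, and that continuity in $v$ then propagates the $x$-independence to the exceptional directions. Everything else is the elementary observation that $v\cdot\nabla_x f = 0$ means $f$ is constant along the flow lines of the transport field. Once Lemma \ref{lemproj} is in hand, Proposition \ref{proj} follows quickly: if $Gf = 0$ with $f \in H^1_{x,v}$, pairing with $f$ in $L^2_{x,v}$ and using that $v\cdot\nabla_x$ is skew-adjoint gives $\langle L f, f\rangle_{L^2_{x,v}} = 0$; by the local coercivity in (H3) (the bound $\langle L h,h\rangle_{L^2_v} \le -\lambda\|h^\perp\|_{\Lambda_v}^2$, integrated in $x$) this forces $f^\perp = 0$, i.e. $f \in \mathrm{Ker}(L)$ pointwise in $x$, so $f = \sum_i r_i(x)\phi_i(v)$. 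Plugging back into $Gf=0$ and using $L f = 0$ yields $v\cdot\nabla_x f = 0$, and then Lemma \ref{lemproj} shows each $r_i$ is constant, so $f \in \mathrm{Ker}(L)$ in the sense of the Remark. The reverse inclusion $\mathrm{Ker}(L) \subseteq \mathrm{Ker}(G)$ is immediate since constants in $x$ kill the transport term.
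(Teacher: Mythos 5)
Your proposal is essentially identical to the paper's own proof: constancy along flow lines of $v\cdot\nabla_x$, density of the orbit on $\T^N$ for $\Q$-independent directions together with continuity in $x$, and then density of such directions plus continuity in $v$ to extend to all velocities. The only cosmetic difference is that you invoke the continuous orbit $\{x_0+tv:t\in\R\}$ while the paper uses the discrete orbit $\{x+nv:n\in\Z\}$, which changes nothing.
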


\begin{proof}[Proof of Lemma $\ref{lemproj}$]
Fix $x$ in $\T^d$ and $v$ $\Q$-free in $\R^d$.
\\For $y$ in $\R^d$ we will denote by $\bar{y}$ its equivalent class in $\T^d$.

\bigskip
Define $\function{g}{\R}{\R}{t}{f(\bar{x + tv},v)}$.
\\ We find easily that $g$ is differentiable on $\R$ and that $g'(t) = v.\nabla_xf(x,v) = 0$ on $\R$. Therefore:
$$\forall t \in \R, f(\bar{x+tv},v) = f(x,v).$$
However, a well-known property about the torus is that the set $\left\{x + nv, n \in \Z \right\}$ is dense in $\T^d$ for all $x$ in $\T^d$ and $v$ $\Q$-free in $\R^d$. This combined with the last result and the continuity of $f$ leads to:
$$\forall y \in \T^d,\quad f(y,v) = f(x,v).$$
To conclude it is enough to see that the set of $\Q$-free vector in $\R^d$ is dense in $\R^d$ and then, by continuity of $f$ in $v$:
$$\forall y \in \T^d, \forall v \in \R^d \:,\quad f(y,v) = f(x,v).$$
\end{proof}
\bigskip

Now we have all the tools to prove the proposition about the kernel of operators.

\bigskip
\begin{proof}[Proof of Proposition $\ref{proj}$]
Since $L$ satisfies (H1) we know that $L$ acts on $L^2_v$ and that its Kernel functions $\phi_i$ only depend on $v$. Thus, we have directly the first inclusion
$$\mbox{Ker}(L)\subset \mbox{Ker}(G).$$

\bigskip
Then, let us consider $h$ in $H^1_{x,v}$ such that $G(h) = 0$.
\\ Because the transport operator $v \cdot\nabla_x$ is skew-symmetric in $L^2_{x,v}$ we have
$$0 =\langle G(h),h\rangle _{L^2_{x,v}} = a\int_{\T^d}\langle L(h),h\rangle _{L^2_{v}}dx.$$ 
However, because $L$ satisfies (H3) we obtain:
$$0 \geq \lambda \int_{\T^d} \norm{h(x,.) - \pi_L(h(x,.))}^2_{\Lambda_v}dx.$$
But $\lambda$ is strictly positive and thus:
$$\forall x \in \T^d \:,\quad h(x,\cdot) = \pi_L(h(x,\cdot)) = \sum_{i=1}^d c_i(x)\phi_i.$$

\bigskip
Finally we have, by assumption, $G(h) = 0$ and because $h(x,\cdot)$ belongs to $\mbox{Ker}(L)$ for all $x$ in $\T^d$ we end up with
$$\forall (x,v) \in \T^d\times\R^d \:,\quad v\cdot\nabla_x h(x,v) = 0.$$
By applying the lemma above we then obtain that $h$ does not depend on $x$. But $(\phi_i)_{1\leq i \leq d}$ is an orthonormal family, basis of $\mbox{Ker}(L)$, and therefore we find that for all $i$, $c_i$ does not depend on $x$.
\\ So,we have proved that:
$$\forall (x,v) \in \T^d\times\R^d \:,\: h(x,v) =\sum_{i=1}^d c_i\phi_i(v).$$
Therefore, $h$ belongs to $\mbox{Ker}(L)$ and only depends on $x$.
\end{proof}


\subsection{A priori energy estimates}

In this subsection we derive all the inequalities we used. Therefore, we assume that $L$ satisfies (H1'), (H2') and (H3) while $\Gamma$ has the properties (H4) and (H5), and we pick $g$ in $H^s_{x,v}$. We consider $h$ in $H^s_{x,v}\cap \mbox{Ker}(G_\eps)^\bot$ and we assume that $h$ is a solution to $\eqref{LinBoltz}$:
$$\partial_th + \frac{1}{\eps}v.\nabla_xh = \frac{1}{\eps^2}L(h) + \frac{1}{\eps}\Gamma(g,h).$$
In the toolbox, we wrote inequalities on function which were solutions of the linear equation. As the reader may notice, we will deal with the second order operator  just by applying the first part of (H4) and Young's inequality. Such an inequality only provides two positive terms, and thus by just setting $\Gamma$ equal to $0$ in the next inequalities we get the expected bounds in the linear case (not the sharpest ones though). Therefore we will just describe the more general case and the linear one is included in it.


\subsubsection{Time evolution of pure $x$-derivatives}
The operators $L$ and $\Gamma$ only act on the $v$ variable. Thus, for $0\leq|l|\leq s$, $\partial_l^0$ commutes with $L$ and $v\cdot\nabla_x$. Remind that $v\cdot\nabla_x$ is skew-symmetric in $L^2_{x,v}(\T^d \times \R^d)$ and therefore we can compute 
$$\frac{d}{dt}\norm{\partial_l^0h}_{L^2_{x,v}}^2 = \frac{2}{\eps^2}\langle L(\partial_l^0h),\partial_l^0h\rangle _{L^2_{x,v}} + \frac{2}{\eps}\langle \partial_l^0\Gamma(g,h),\partial_l^0h\rangle _{L^2_{x,v}}.$$
We can then use hypothesis (H3) to obtain
$$\frac{2}{\eps^2}\langle L(\partial_l^0h),\partial_l^0h\rangle _{L^2_{x,v}} \leq -\frac{2\lambda}{\eps^2}\norm{(\partial_l^0h)^\bot}^2_\Lambda.$$
We also use (H3) to get $(\partial_l^0h)^\bot = \partial_l^0h^\bot$.
\par To deal with the second scalar product, we will use hypothesis (H4) and (H5), which is still valid for $\partial_l^0\Gamma$ since $\pi_L$ only acts on the $v$ variable, followed by a Young inequality with some $D_1>0$. This yields

\begin{eqnarray*}
\frac{2}{\eps}\langle\partial_l^0\Gamma(g,h),\partial_l^0h\rangle_{L^2_{x,v}} &=& \frac{2}{\eps}\langle\partial_l^0\Gamma(g,h),\partial_l^0h^\bot\rangle_{L^2_{x,v}}
\\ &\leq& \frac{2}{\eps} \mathcal{G}^s_x(g,h)\norm{\partial^0_lh^\bot}_{\Lambda}
\\ &\leq& \frac{D_1}{\eps}\left(\mathcal{G}^s_x(g,h)\right)^2 + \frac{1}{ D_1\eps}\norm{\partial_l^0h^\bot}^2_\Lambda.
\end{eqnarray*}

\noindent Gathering the last two upper bounds we obtain
$$\frac{d}{dt}\norm{\partial_l^0h}_{L^2_{x,v}}^2 \leq \left[\frac{1}{D_1\eps} -\frac{2\lambda}{\eps^2}\right]\norm{\partial_l^0h^\bot}^2_\Lambda + \frac{D_1}{\eps}\left(\mathcal{G}^s_x(g,h)\right)^2 .$$
Finally, taking $D_1 = \eps/\lambda$ gives us inequalities $\eqref{h}$, $\eqref{dx}$ and $\eqref{d0l}$.


\subsubsection{Time evolution of $\norm{\nabla_vh}^2_{L^2_{x,v}}$}
For that term we get, by applying the equation satisfied by $h$, the following:
$$\frac{d}{dt}\norm{\nabla_vh}_{L^2_{x,v}}^2  = \frac{2}{\eps^2}\langle \nabla_vL(h),\nabla_vh\rangle _{L^2_{x,v}} - \frac{2}{\eps}\langle \nabla_v(v\cdot\nabla_xh),\nabla_vh\rangle _{L^2_{x,v}} +\frac{2}{\eps}\langle\nabla_v\Gamma(g,h),\nabla_vh\rangle_{L^2_{x,v}}.$$
And by writing the second term on the right-hand side of the equality and integrating by part in $x$, we have
$$\langle \nabla_v(v\cdot\nabla_xh),\nabla_vh\rangle _{L^2_{x,v}} = \langle \nabla_xh,\nabla_vh\rangle _{L^2_{x,v}}.$$
Therefore the following holds:
$$\frac{d}{dt}\norm{\nabla_vh}_{L^2_{x,v}}^2  = \frac{2}{\eps^2}\langle \nabla_vL(h),\nabla_vh\rangle _{L^2_{x,v}} - \frac{2}{\eps}\langle \nabla_xh,\nabla_vh\rangle _{L^2_{x,v}}+\frac{2}{\eps}\langle\nabla_v\Gamma(g,h),\nabla_vh\rangle_{L^2_{x,v}}.$$
Then we have by (H1) that $L = K - \Lambda$ and we can estimate each component thanks to (H1) and (H2):

\begin{eqnarray*}
- \langle \nabla_v\Lambda(h),\nabla_vh\rangle _{L^2_{x,v}} &\leq& \nu^\Lambda_4 \norm{h}_{L^2_{x,v}}^2 - \nu^\Lambda_3\norm{\nabla_vh}_{\Lambda}^2,
\\ \langle \nabla_vK(h),\nabla_vh\rangle _{L^2_{x,v}} &\leq& C(\delta)\norm{h}^2_{L^2_{x,v}}  + \delta\norm{\nabla_vh}_{L^2_{x,v}}^2,
\end{eqnarray*}

\noindent where $\delta$ is a strictly positive real that we will choose later.
\par Finally, for a $D > 0$ that we will choose later, we have the following upper bound, by Cauchy-Schwarz inequality:
$$-\frac{2}{\eps}\langle \nabla_xh,\nabla_vh\rangle _{L^2_{x,v}} \leq \frac{D}{\eps}\norm{\nabla_xh}_{L^2_{x,v}}^2 + \frac{\nu_1^\Lambda}{D\nu_0^\Lambda\eps}\norm{\nabla_vh}_\Lambda^2,$$
using the fact that $\norm{.}_{L^2_{x,v}}^2\leq \frac{\nu_1^\Lambda}{\nu_0^\Lambda}\norm{.}^2_\Lambda$.
Finally, another Young inequality gives us a control on the last scalar product, for a $D_2>0$ to be chosen later
$$\frac{2}{\eps}\langle\nabla_v\Gamma(g,h),\nabla_vh\rangle_{L^2_{x,v}} \leq \frac{D_2}{\eps} \left(\mathcal{G}^1_{x,v}(g,h)\right)^2 + \frac{1}{D_2\eps}\norm{\nabla_vh}^2_\Lambda.$$
 We gather here the last three inequalities to obtain our global upper bound:

\begin{eqnarray*}
\frac{d}{dt}\norm{\nabla_vh}_{L^2_{x,v}}^2 &\leq& \frac{1}{\eps^2}\left(2\nu_4^\Lambda + 2 C(\delta)\right)\norm{h}^2_{L^2_{x,v}} + \frac{D}{\eps}\norm{\nabla_xh}^2_{L^2_{x,v}} 
\\ &&+ \left(\frac{2\nu_1^\Lambda\delta}{\nu_0^\Lambda\eps^2}-\frac{2\nu_3^\Lambda}{\eps^2} + \frac{\nu_1^\Lambda}{D\eps\nu_0^\Lambda} + \frac{1}{ D_2\eps}\right)\norm{\nabla_vh}^2_{\Lambda} + \frac{D_2}{\eps} \left(\mathcal{G}^1_{x,v}(g,h)\right)^2.
\end{eqnarray*}

We can go even further since we have $\norm{h}^2_{L^2_{x,v}} = \norm{h^\bot}^2_{L^2_{x,v}} + \norm{\pi_L(h)}^2_{L^2_{x,v}}$.
\\But because $h$ is in $\mbox{Ker}(G_\eps)^\bot$ we can use the toolbox and the equation $\eqref{poincare}$ about the Poincar\'e inequality:
$$\norm{\pi_L(h)}^2_{L^2_{x,v}} \leq  C_p\norm{\nabla_xh}^2_{L^2_{x,v}}.$$
This last inequality yields:

\begin{eqnarray*}
\frac{d}{dt}\norm{\nabla_vh}_{L^2_{x,v}}^2 &\leq& \frac{\nu_1^\Lambda}{\nu_0^\Lambda\eps^2}\left(2\nu_4^\Lambda + 2C(\delta)\right)\norm{h^\bot}^2_{\Lambda} + \left[\frac{C_p}{\eps^2}\left(2\nu_4^\Lambda + 2 C(\delta)\right)+\frac{D}{\eps}\right]\norm{\nabla_xh}^2_{L^2_{x,v}}
\\ && + \left[\frac{2\nu_1^\Lambda\delta}{\nu_0^\Lambda\eps^2}-\frac{2\nu_3^\Lambda}{\eps^2} + \frac{\nu_1^\Lambda}{D\eps\nu_0^\Lambda} + \frac{1}{D_2\eps}\right]\norm{\nabla_vh}^2_{\Lambda} + \frac{D_2}{\eps}\left(\mathcal{G}^1_{x,v}(g,h)\right)^2.
\end{eqnarray*}

Therefore, we can choose $\delta = \nu_0^\Lambda\nu_3^\Lambda/6\nu_1^\Lambda$, $D = 3\nu_1^\Lambda\eps/\nu_0^\Lambda\nu_3^\Lambda$ and $D_2 = 3\eps/\nu_3^\Lambda$ to get the equation $\eqref{dv}$.


\subsubsection{Time evolution of $\langle \nabla_x h,\nabla_v h\rangle _{L^2_{x,v}}$}
In the same way, and integrating by part in $x$ then in $v$ we obtain the following equality:
$$\frac{d}{dt}\langle \nabla_xh,\nabla_vh\rangle _{L^2_{x,v}} = \frac{2}{\eps^2}\langle L(\nabla_xh),\nabla_vh\rangle _{L^2_{x,v}} - \frac{2}{\eps}\langle \nabla_v(v\cdot\nabla_xh),\nabla_xh\rangle _{L^2_{x,v}}+ \frac{2}{\eps} \langle\nabla_x\Gamma(g,h),\nabla_vh\rangle_{L^2_{x,v}}. $$

By writing explicitly $\langle \nabla_v(v\cdot\nabla_xh),\nabla_xh\rangle _{L^2_{x,v}} $ and by integrating by part one can show that the following holds:
$$\langle \nabla_v(v.\nabla_xh),\nabla_xh\rangle _{L^2_{x,v}} = \frac{1}{2}\norm{\nabla_xh}^2_{L^2_{x,v}}.$$

Therefore we have an explicit formula for that term and we can find the time derivative of the scalar product being:

$$\frac{d}{dt}\langle \nabla_xh,\nabla_vh\rangle _{L^2_{x,v}} = \frac{2}{\eps^2}\langle L(\nabla_xh),\nabla_vh\rangle _{L^2_{x,v}} - \frac{1}{\eps}\norm{\nabla_xh}^2_{L^2_{x,v}}+ \frac{2}{\eps} \langle\nabla_x\Gamma(g,h),\nabla_vh\rangle_{L^2_{x,v}}. $$

We can bound above the first term in the right-hand side of the equality thanks to (H1) and then Cauchy-Schwarz in $x$, with a constant $\eta>0$ to be define later.

\begin{eqnarray*}
\frac{2}{\eps^2}\langle L(\nabla_xh),\nabla_vh\rangle _{L^2_{x,v}} &=& \frac{2}{\eps^2}\langle L(\nabla_xh^\bot),\nabla_vh\rangle _{L^2_{x,v}}
\\ &\leq& \frac{C^L}{\eps^2} \int_{\T^d} 2\norm{\nabla_xh^\bot}_{\Lambda_v}\norm{\nabla_vh}_{\Lambda_v}dx
\\ &\leq& \frac{C^L\eta}{\eps^2} \norm{\nabla_xh^\bot}_{\Lambda}^2 + \frac{C^L}{\eta\eps^2}\norm{\nabla_vh}_{\Lambda}^2.
\end{eqnarray*}

Then applying hypothesis (H4) and Young's inequality one more time with a constant $D_3>0$ one may find
$$\frac{2}{\eps}\langle\nabla_x\Gamma(g,h),\nabla_vh\rangle_{L^2_{x,v}} \leq    \frac{D_3}{\eps}\left(\mathcal{G}^1_x(g,h)\right)^2 + \frac{1}{D_3\eps}\norm{\nabla_vh}^2_\Lambda.$$

Hence we end up with the following inequality:
\begin{eqnarray*}
\frac{d}{dt}\langle \nabla_xh,\nabla_vh\rangle _{L^2_{x,v}} \leq &&\frac{C^L\eta}{\eps^2}\norm{\nabla_xh^\bot}_{\Lambda}^2 + \left(\frac{C^L}{\eta\eps^2}+ \frac{1}{ D_3}\right)\norm{\nabla_vh}_{\Lambda}^2 - \frac{1}{\eps}\norm{\nabla_xh}^2_{L^2_{x,v}} 
\\ &&+\frac{D_3}{\eps}\left(\mathcal{G}^1_x(g,h)\right)^2 .
\end{eqnarray*}
Now define $\eta = e/\eps$, $e>0$, and $D_3 = e /C^L$ to obtain equation $\eqref{dx,dv}$.


\subsubsection{Time evolution of $\norm{\partial^j_lh}^2_{L^2_{x,v}}$ for $|j|\geq 1$ and $|j|+|l|=s$}
This term is the only term far from what we already did since we are mixing more than one derivative in $x$ and one derivative in $v$ in general. By simply differentiating in time and integrating by part we find the following equality.

\begin{eqnarray*}
\frac{d}{dt}\norm{\partial^j_lh}^2_{L^2_{x,v}} &=& \frac{2}{\eps^2}\langle \partial^j_lL(h),\partial^j_lh\rangle _{L^2_{x,v}} - \frac{2}{\eps}\langle \partial^j_l(v.\nabla_xh),\partial^j_lh\rangle _{L^2_{x,v}}
\\ && + \frac{2}{\eps}\langle \partial_l^j\Gamma(g,h),\partial_l^jh\rangle_{L^2_{x,v}}
\\                                             &=& \frac{2}{\eps^2}\langle \partial^j_lL(h),\partial^j_lh\rangle _{L^2_{x,v}} - \frac{2}{\eps}\sum\limits_{i,c_i(j)> 0}\langle \partial^j_lh,\partial^{j-\delta_i}_{l+\delta_i}h\rangle _{L^2_{x,v}}
\\ &&+ \frac{2}{\eps}\langle \partial_l^j\Gamma(g,h),\partial_l^jh\rangle_{L^2_{x,v}}.
\end{eqnarray*}

We can then apply Cauchy-Schwarz for the terms inside the sum symbol. For each we can use a $D_{i,l,s}> 0$ but because they play an equivalent role we will take the same $D > 0$, that we will choose later:
$$-\frac{2}{\eps}\langle \partial^j_lh,\partial^{j-\delta_i}_{l+\delta_i}h\rangle _{L^2_{x,v}} \leq \frac{\nu_1^\Lambda}{D\nu_0^\Lambda\eps}\norm{\partial^j_lh}^2_\Lambda +\frac{D}{\eps}\norm{\partial^{j-\delta_i}_{l+\delta_i}h}^2_{L^2_{x,v}}.$$

Then we can use (H1') and (H2'), with a $\delta>0$ we will choose later, to obtain
$$\frac{2}{\eps^2}\langle \partial^j_lL(h),\partial^j_lh\rangle _{L^2_{x,v}}\leq \frac{2}{\eps^2}(C(\delta)+\nu_6^\Lambda)\norm{h}^2_{H^{s-1}_{x,v}} + \frac{2}{\eps^2}\left(\frac{\delta\nu_1^\Lambda}{\nu_0^\Lambda}-\nu_5^\Lambda\right)\norm{\partial^j_lh}^2_\Lambda.$$

Finally, applying (H4) and Young's inequality with a constant $D_2>0$ we obtain

$$\frac{2}{\eps}\langle\partial_l^j\Gamma(g,h),\partial_l^jh\rangle_{L^2_{x,v}} \leq \frac{D_2}{\eps} \left(\mathcal{G}^s_{x,v}(g,h)\right)^2 + \frac{1}{D_2\eps}\norm{\partial_l^jh}^2_\Lambda.$$

\bigskip
Combining these three inequality we find an upper bound for the time evolution. Here we also use the fact that the number of $i$ such that $c_i(j)> 0$ is less or equal to $d$.

\begin{eqnarray*}
\frac{d}{dt}\norm{\partial^j_lh}^2_{L^2_{x,v}} &\leq& \left[\frac{\nu_1^\Lambda  d}{D\eps\nu_0^\Lambda} +\frac{2}{\eps^2} \left(\frac{\delta\nu_1^\Lambda}{\nu_0^\Lambda}-\nu_5^\Lambda\right) + \frac{1}{D_2 \eps}\right]\norm{\partial^j_lh}^2_\Lambda 
\\                                            && + \frac{D}{\eps}\sum\limits_{i,c_i(j)> 0}\norm{\partial^{j-\delta_i}_{l+\delta_i}h}^2_{L^2_{x,v}}+ \frac{2}{\eps^2}(C(\delta) + \nu_6^\Lambda)\norm{h}^2_{H^{s-1}_{x,v}} 
\\ &&+ \frac{D_2}{\eps}\left(\mathcal{G}^s_{x,v}(g,h)\right)^2.
\end{eqnarray*}
Hence, we obtain equations $\eqref{djl}$ and $\eqref{ddeltai}$ by taking $D = 3\nu_1^\Lambda\eps/\nu_0^\Lambda\nu_5^\Lambda$, $ D_2 = 3\eps/\nu_5^\Lambda$ and $\delta = \nu_0^\Lambda\nu_5^\Lambda/6\nu_1^\Lambda$. Also note that in $\eqref{djl}$ we used $\norm{\partial^{j-\delta_i}_{l+\delta_i}h}^2_{L^2_{x,v}} \leq \frac{\nu_1^\Lambda}{\nu_0^\Lambda}\norm{\partial^{j-\delta_i}_{l+\delta_i}h}^2_\Lambda$.


\subsubsection{Time evolution of $\langle \partial^{\delta_i}_{l-\delta_i}h,\partial^0_lh\rangle _{L^2_{x,v}}$}
With no more calculations, we can bound this term in the same way we did for $\frac{d}{dt}\langle \nabla_x h,\nabla_v h\rangle _{L^2_{x,v}}$. Here we get

\begin{eqnarray*}
\frac{d}{dt}\langle  \partial^{\delta_i}_{l-\delta_i}h, \partial^0_lh\rangle _{L^2_{x,v}} &\leq& \frac{C^L\eta}{\eps^2} \norm{ \partial^0_lh^\bot}_{\Lambda}^2 + \left[\frac{C^L}{\eta\eps^2}+ \frac{1}{\eps D_3}\right]\norm{ \partial^{\delta_i}_{l-\delta_i}h}_{\Lambda}^2 -\frac{1}{\eps}\norm{ \partial^0_lh}^2_{L^2_{x,v}} 
\\ && + \frac{D_3}{\eps}\left(\mathcal{G}^s_{x}(g,h)\right)^2.
\end{eqnarray*}

Now define $\eta = e/\eps$, $e>0$, and $D_3 = e/C^L$ to obtain equation $\eqref{deltai,d0l}$.

\bigskip
In the next paragraphs, we are setting $g=h$.

\subsubsection{Time evolution of $\norm{\nabla_vh^\bot}^2_{L^2_{x,v}}$}
By simply differentiating norm and using (H5) to get $\Gamma(h,h)^\bot = \Gamma(h,h)$, we compute
$$\frac{d}{dt}\norm{\nabla_vh^\bot}_{L^2_{x,v}}^2 = 2\langle \nabla_v(G_\eps(h))^\bot,\nabla_vh^\bot\rangle _{L^2_{x,v}} + \frac{2}{\eps}\langle\nabla_v\Gamma(h,h),\nabla_vh^\bot\rangle_{L^2_{x,v}}.$$

By applying (H4) and Young's inequality to the second term on the right-hand side, with a constant $D_2 >0$, and controlling the $L^2_{x,v}$-norm by the $\Lambda$-norm we obtain:
$$\frac{2}{\eps}\langle\nabla_v\Gamma(h,h),\nabla_vh^\bot\rangle_{L^2_{x,v}} \leq \frac{D_2}{\eps} \left(\mathcal{G}^1_{x,v}(h,h)\right)^2 + \frac{1}{\eps D_2}\norm{\nabla_vh^\bot}^2_\Lambda.$$

Then we have to control the first term. Just by writing it and decomposing terms in projection onto $\mbox{Ker}(L)$ and onto its orthogonal we yield:

\begin{eqnarray*}
2\langle \nabla_v(G_\eps(h))^\bot,\nabla_vh^\bot\rangle _{L^2_{x,v}} &=& \frac{2}{\eps^2}\langle\nabla_v L(h),\nabla_vh^\bot\rangle_{L^2_{x,v}} - \frac{2}{\eps}\langle\nabla_v(v\cdot \nabla_xh)^\bot,\nabla_vh^\bot \rangle_{L^2_{x,v}}
\\ &=& \frac{2}{\eps^2}\langle\nabla_v L(h^\bot),\nabla_vh^\bot\rangle_{L^2_{x,v}} - \frac{2}{\eps}\langle\nabla_xh,\nabla_vh^\bot \rangle_{L^2_{x,v}} 
\\ & &- \frac{2}{\eps}\langle v\cdot\nabla_v\nabla_x\pi_L(h),\nabla_vh^\bot \rangle_{L^2_{x,v}} 
\\ && + \frac{2}{\eps}\langle\nabla_v\pi_L(v\cdot\nabla_xh),\nabla_vh^\bot \rangle_{L^2_{x,v}}.
\end{eqnarray*}

\bigskip
Then we can control the first term on the right-hand side thanks to (H1) and (H2), $\delta>0$ to be chosen later:
$$\frac{2}{\eps^2}\langle\nabla_v L(h^\bot),\nabla_vh^\bot\rangle_{L^2_{x,v}}\leq \frac{2(C(\delta)+\nu_4^\Lambda)\nu^1_\Lambda}{\nu_0^\Lambda \eps^2}\norm{h^\bot}^2_\Lambda + \frac{2}{\eps^2}\left(\frac{\nu_1^\Lambda \delta}{\nu_0^\Lambda} - \nu_3^\Lambda\right)\norm{\nabla_vh^\bot}^2_\Lambda.$$

We apply Cauchy-Schwarz inequality to the next term, with $D$ to be chosen later:
$$- \frac{2}{\eps}\langle\nabla_xh,\nabla_vh^\bot \rangle_{L^2_{x,v}} \leq \frac{D}{\eps}\norm{\nabla_xh}^2_{L^2_{x,v}} + \frac{\nu_1^\Lambda}{\nu_0^\Lambda D \eps} \norm{\nabla_vh^\bot}^2_\Lambda.$$

For the third term we are going to apply Cauchy-Schwarz inequality and then use the property (H3). The latter property tells us that the functions in $\mbox{Ker}(L)$ are of the form a polynomial in $v$ times $e^{-|v|^2/4}$. This fact combined with the shape of $\pi_L$, equation $\eqref{piL}$, shows us that we can control, by a mere Cauchy-Schwarz inequality, the third term. Then the property $\eqref{dvcontrolL}$ yields the following upper bound:

\begin{eqnarray*}
- \frac{2}{\eps}\langle v\cdot\nabla_v\nabla_x\pi_L(h),\nabla_vh^\bot \rangle_{L^2_{x,v}} &\leq& \frac{\tilde{D}}{\eps}\norm{v\cdot\nabla_v\pi_L(\nabla_xh)}^2_{L^2_{x,v}} + \frac{1}{\tilde{D} \eps}\norm{\nabla_vh^\bot}^2_{L^2_{x,v}}
\\&\leq& \frac{\tilde{D}C_{\pi 1}}{\eps}\norm{\nabla_xh}^2_{L^2_{x,v}} + \frac{\nu_1^\Lambda}{\nu_0^\Lambda \tilde{D} \eps}\norm{\nabla_vh^\bot}^2_\Lambda.
\end{eqnarray*}

Finally, we first use equation $\eqref{dvcontrolL}$ controling the $v$-derivatives of $\pi_L$ and then see that the norm of $\pi_L(v.f)$ is easily controled by the norm of $f$ (just use (H3) and the definition of $\pi_L$ $\eqref{piL}$ and apply Cauchy-Schwarz inequality) by a factor $C_{\pi 1}$ (increase this constant if necessary in $\eqref{dvcontrolL}$):

\begin{eqnarray*}
\frac{2}{\eps}\langle\nabla_v\pi_L(v.\nabla_xh),\nabla_vh^\bot\rangle_{L^2_{x,v}}&\leq& \frac{D'}{\eps}\norm{\nabla_v\pi_L(v.\nabla_xh)}^2_{L^2_{x,v}} + \frac{1}{\eps D'}\norm{\nabla_vh^\bot}^2_{L^2_{x,v}}
\\ &\leq& \frac{D'C_{\pi 1}}{\eps}\norm{\pi_L(v.\nabla_xh)}^2_{L^2_{x,v}} + \frac{1}{\eps D'}\norm{\nabla_vh^\bot}^2_{L^2_{x,v}}
\\ &\leq& \frac{D'C_{\pi 1}^2}{\eps}\norm{\nabla_xh}^2_{L^2_{x,v}} + \frac{\nu_1^\Lambda}{\nu_0^\Lambda \eps D'}\norm{\nabla_vh^\bot}^2_{L^2_{x,v}}.
\end{eqnarray*}

We then gather all those bounds to get the last upper bound for the time derivative of the $v$-derivative.

\begin{eqnarray*}
\frac{d}{dt}\norm{\nabla_vh^\bot}_{L^2_{x,v}}^2 &\leq& \frac{\nu_1^\Lambda}{\nu_0^\Lambda\eps^2}\left(2\nu_4^\Lambda + 2C(\delta)\right)\norm{h^\bot}^2_{\Lambda} + \left[\frac{D}{\eps}+\frac{D'C_{\pi 1}^2}{\eps}+\frac{\tilde{D}C_{\pi 1}}{\eps}\right]\norm{\nabla_xh}^2_{L^2_{x,v}}
\\ && + \left[\frac{2\nu_1^\Lambda\delta}{\nu_0^\Lambda\eps^2}-\frac{2\nu_3^\Lambda}{\eps^2} + \frac{\nu_1^\Lambda}{\eps\nu_0^\Lambda}\left(\frac{1}{D}+\frac{1}{D'}+\frac{1}{\tilde{D}}\right)+ \frac{1}{\eps D_2}\right]\norm{\nabla_vh^\bot}^2_{\Lambda}
\\ &&+ \frac{D_2}{\eps}\left(\mathcal{G}^1_{x,v}(h,h)\right)^2.
\end{eqnarray*}
 Therefore we obtain $\eqref{dvbot}$ by taking $D=D'=\tilde{D}=9\nu_1^\Lambda\eps/\nu_0^\Lambda\nu_3^\Lambda$, $\delta = \nu_0^\Lambda\nu_3^\Lambda/6\nu_1^\Lambda$ and $D_2 = 3\eps/\nu_3^\Lambda$.


\subsubsection{A new time evolution of $\langle\nabla_xh,\nabla_vh\rangle_{L^2_{x,v}}$}
By integrating by part in $x$ then in $v$ we obtain the following equality on the evolution of the scalar product:
$$\frac{d}{dt}\langle\nabla_xh,\nabla_vh\rangle_{L^2_{x,v}} = 2 \langle\nabla_xG_\eps(h),\nabla_vh\rangle_{L^2_{x,v}} + \frac{2}{\eps} \langle\nabla_v\Gamma(h,h),\nabla_xh\rangle_{L^2_{x,v}}.$$

We will bound above the first term as in the previous case and for the second term involving $\Gamma$ we use (H4) and Young's inequality with a constant $D_3 > 0$:
$$2\langle\nabla_v\Gamma(h,h),\nabla_xh\rangle_{L^2_{x,v}}\leq D_3\left(\mathcal{G}^1_{x,v}(h,h)\right)^2 + \frac{1}{ D_3}\norm{\nabla_xh}^2_{\Lambda}.$$

\noindent We decompose $\nabla_x h$ thanks to $\pi_L$ and we use $\eqref{L2L2lambdafluid}$ to control the fluid part of it,
$$2\langle\nabla_v\Gamma(h,h),\nabla_xh\rangle_{L^2_{x,v}}\leq D_3\left(\mathcal{G}^1_{x,v}(h,h)\right)^2 + \frac{1}{ D_3}\norm{\nabla_xh^\bot}^2_{\Lambda} + \frac{C_\pi}{ D_3}\norm{\nabla_xh}^2_{L^2_{x,v}}.$$

Finally we obtain an upper bound for the time-derivative:

\begin{eqnarray*}
\frac{d}{dt}\langle \nabla_xh,\nabla_vh\rangle _{L^2_{x,v}} &\leq& \left[\frac{C^L\eta}{\eps^2}+ \frac{1}{\eps D_3}\right] \norm{\nabla_xh^\bot}_{\Lambda}^2 + \frac{C^L}{\eta\eps^2}\norm{\nabla_vh}_{\Lambda}^2 +\left[\frac{C_\pi}{\eps D_3}-\frac{1}{\eps}\right]\norm{\nabla_xh}^2_{L^2_{x,v}} 
\\ && + \frac{D_3}{\eps}\left(\mathcal{G}^1_{x,v}(h,h)\right)^2.
\end{eqnarray*}

But now, we can use the properties $\eqref{dvcontrolL}$ and $\eqref{L2L2lambdafluid}$ of the projection $\pi_L$ to go further.

\begin{eqnarray*}
\norm{\nabla_vh}^2_\Lambda &\leq& 2\norm{\nabla_vh^\bot}^2_\Lambda + 2\norm{\nabla_v\pi_L(h)}^2_\Lambda
\\ &\leq& 2\norm{\nabla_vh^\bot}^2_\Lambda + 2 C_{\pi 1}C_\pi \norm{\pi_L(h)}^2_{L^2_{x,v}}
\\ &\leq& 2\norm{\nabla_vh^\bot}^2_\Lambda + 2C_{\pi 1}C_\pi C_p\norm{\nabla_xh}^2_{L^2_{x,v}},
\end{eqnarray*}
where we used Poincare inequality $\eqref{poincare}$ because $h$ is in $\mbox{Ker}(G_\eps)^\bot$.
\par Hence we have a final upper bound for the time derivative:

\begin{eqnarray*}
\frac{d}{dt}\langle \nabla_xh,\nabla_vh\rangle _{L^2_{x,v}} &\leq& \left[\frac{C^L\eta}{\eps^2}+\frac{1}{\eps D_3}\right] \norm{\nabla_xh^\bot}_{\Lambda}^2
\\ && + \frac{2C^L}{\eta\eps^2}\norm{\nabla_vh^\bot}_{\Lambda}^2 +\left[\frac{2C^LC_{\pi 1}C_\pi C_p}{\eps^2 \eta}+\frac{C_\pi}{\eps D_3}-\frac{1}{\eps}\right]\norm{\nabla_xh}^2_{L^2_{x,v}} 
\\ && + \frac{D_3}{\eps}\left(\mathcal{G}^1_{x,v}(h,h)\right)^2.
\end{eqnarray*}

Thus, setting $\eta = 8eC^LC_{\pi 1}C_\pi C_p /\eps$ with $e\geq 1$ and $D_3 = 4C_\pi$ we obtain equation $\eqref{dx,dvbot}$.


\subsubsection{Time evolution of $\norm{\partial_l^jh^\bot}^2_{L^2_{x,v}}$, $j\geq 1$ and $|j|+|l|=s$}
We have the following time evolution:
$$\frac{d}{dt}\norm{\partial_l^jh^\bot}^2_{L^2_{x,v}} = 2 \langle \partial_l^j(G_\eps(h))^\bot,\partial_l^jh^\bot\rangle_{L^2_{x,v}} + \frac{2}{\eps}\langle \partial_l^j\Gamma(h,h),\partial_l^jh^\bot\rangle_{L^2_{x,v}}.$$

As above, we apply (H4) for the last term on the right hand side, with a constant $D_2 > 0$,
$$ 2\langle\partial_l^j\Gamma(h,h),\partial_l^jh^\bot\rangle_{L^2_{x,v}} \leq D_2 \left(\mathcal{G}^s_{x,v}(h,h)\right)^2 + \frac{1}{ D_2}\norm{\partial_l^jh^\bot}^2_\Lambda.$$

Then we evaluate the first term on the right-hand side.

\begin{eqnarray*}
2 \langle \partial_l^j(G_\eps(h))^\bot,\partial_l^jh^\bot\rangle_{L^2_{x,v}} &=& \frac{2}{\eps^2}\langle\partial^j_lL(h),\partial_l^jh^\bot\rangle_{L^2_{x,v}} - \frac{2}{\eps}\langle\partial^j_l(v.\nabla_xh)^\bot,\partial_l^jh^\bot\rangle_{L^2_{x,v}}
\\ &=& \frac{2}{\eps^2}\langle\partial^j_lL(h^\bot),\partial_l^jh^\bot\rangle_{L^2_{x,v}} - \frac{2}{\eps}\langle v\cdot\partial^j_l\pi_L(\nabla_xh),\partial_l^jh^\bot\rangle_{L^2_{x,v}}
\\ && - \frac{2}{\eps}\sum\limits_{i,c_i(j)> 0}\langle\partial^{j-\delta_i}_{l+\delta_i}h,\partial^j_lh^\bot\rangle_{L^2_{x,v}} 
\\ && + \frac{2}{\eps}\langle\partial^j_l\pi_L(v\cdot\nabla_xh),\partial_l^jh^\bot\rangle_{L^2_{x,v}}.
\end{eqnarray*}

Then we shall bound each of these four terms on the right-hand side.
\\ We can first use the properties (H1') and (H2') of $L$ to get, for some $\delta$ to be chosen later,
$$\frac{2}{\eps^2}\langle\partial^j_lL(h^\bot),\partial_l^jh^\bot\rangle_{L^2_{x,v}} \leq \frac{2}{\eps^2} \left(C(\delta)+\nu_6^\Lambda\right)\norm{h^\bot}^2_{H^{s-1}_{x,v}} + \frac{2}{\eps^2}\left(\frac{\nu_1^\Lambda\delta}{\nu_0^\Lambda}-\nu_5^\Lambda\right)\norm{\partial_l^jh^\bot}^2_\Lambda.$$

For the three remaining terms we will apply Cauchy-Schwarz inequality and use the properties of $\pi_L$ concerning $v$-derivatives and multiplications by a polynomial in $v$.
\\First

\begin{eqnarray*}
- \frac{2}{\eps}\langle v\cdot\partial^j_l\pi_L(\nabla_xh),\partial_l^jh^\bot\rangle_{L^2_{x,v}} &\leq& \frac{D}{\eps}\norm{v\cdot\partial^j_l\pi_L(\nabla_xh)}^2_{L^2_{x,v}} + \frac{1}{D\eps}\norm{\partial_l^jh^\bot}^2_{L^2_{x,v}}
\\ &\leq& \frac{DC_{\pi s}}{\eps}\norm{\partial^0_l(\nabla_xh)}^2_{L^2_{x,v}} + \frac{\nu_1^\Lambda}{\nu_0^\Lambda D\eps}\norm{\partial_l^jh^\bot}^2_\Lambda
\\ &\leq& \left\{\begin{array}{c}\displaystyle{\frac{DC_{\pi s}}{\eps}\sum\limits_{|l'| = s} \norm{\partial^0_{l'}h}^2_{L^2_{x,v}} + \frac{\nu_1^\Lambda}{\nu_0^\Lambda D\eps}\norm{\partial_l^jh^\bot}^2_\Lambda}, \:\: \mbox{if}\: |j|=1 \vspace{2mm}
\\\vspace{2mm}\displaystyle{\frac{DC_{\pi s}}{\eps}\sum\limits_{|l'| \leq s-1} \norm{\partial^0_{l'}h}^2_{L^2_{x,v}} + \frac{\nu_1^\Lambda}{\nu_0^\Lambda D\eps}\norm{\partial_l^jh^\bot}^2_\Lambda}, \:\: \mbox{if}\: |j|>1, \end{array}\right.
\end{eqnarray*}

where we used that $|l| = |s| - |j|$. Then
$$- \frac{2}{\eps}\langle\partial^{j-\delta_i}_{l+\delta_i}h,\partial^j_lh^\bot\rangle_{L^2_{x,v}} \leq \frac{D'}{ \eps}\norm{\partial^{j-\delta_i}_{l+\delta_i}h}^2_{L^2_{x,v}}+ \frac{\nu_1^\Lambda}{\nu_0^\Lambda D'\eps}\norm{\partial^j_lh^\bot}^2_\Lambda$$

In the case where $|j|>1$ we can also use that $\norm{\partial^{j-\delta_i}_{l+\delta_i}h}^2_{L^2_{x,v}}$ can be decomposed thanks to $\pi_L$ and its orthogonal projector. Then the fluid part is controlled by the $x$-derivatives only.
\\And finally

\begin{eqnarray*}
\frac{2}{\eps}\langle\partial^j_l\pi_L(v\cdot\nabla_xh),\partial_l^jh^\bot\rangle_{L^2_{x,v}} &\leq& \frac{\tilde{D}}{\eps}\norm{\partial^j_l\pi_L(v\cdot\nabla_xh)}^2_{L^2_{x,v}} + \frac{1}{\tilde{D}\eps}\norm{\partial_l^jh^\bot}^2_{L^2_{x,v}}
\\&\leq& \frac{\tilde{D}C_{\pi s}}{\eps}\norm{\partial^0_l\nabla_xh}^2_{L^2_{x,v}} + \frac{\nu_1^\Lambda}{\tilde{D}\nu_0^\Lambda\eps}\norm{\partial_l^jh^\bot}^2_\Lambda
\\ &\leq& \left\{\begin{array}{c}\displaystyle{\frac{\tilde{D}C_{\pi s}}{\eps}\sum\limits_{|l'| = s} \norm{\partial^0_{l'}h}^2_{L^2_{x,v}} + \frac{\nu_1^\Lambda}{\nu_0^\Lambda \tilde{D}\eps}\norm{\partial_l^jh^\bot}^2_\Lambda}, \:\: \mbox{if}\: |j|=1 \vspace{2mm}
\\\vspace{2mm}\displaystyle{\frac{\tilde{D}C_{\pi s}}{\eps}\sum\limits_{|l'| \leq s-1} \norm{\partial^0_{l'}h}^2_{L^2_{x,v}} + \frac{\nu_1^\Lambda}{\nu_0^\Lambda \tilde{D}\eps}\norm{\partial_l^jh^\bot}^2_\Lambda}, \:\: \mbox{if}\: |j|>1, \end{array}\right.
\end{eqnarray*}

We are now able to combine all those estimates to get an upper bound of the time-derivative we are looking at. We can also give to different bounds, depending on the size $|j|$. We also used that the number of $i$ such that $c_i(j)>0$ is less than $d$.

\bigskip
In the case $\abs{j}>1$,
\begin{eqnarray*}
 \frac{d}{dt}\norm{\partial_l^jh^\bot}^2_{L^2_{x,v}} &\leq&\left[\frac{2}{\eps^2}\left(\frac{\nu_1^\Lambda\delta}{\nu_0^\Lambda} - \nu_5^\Lambda\right) + \frac{\nu_1^\Lambda}{\nu_0^\Lambda\eps}\left(\frac{1}{D}+\frac{d}{D'}+ \frac{1}{\tilde{D}}\right)+ \frac{1}{D_2}\right]\norm{\partial^j_lh^\bot}^2_\Lambda
\\ && + \frac{D' \nu_1^\Lambda}{2\nu_0^\Lambda\eps}\sum\limits_{i,c_i(j)> 0}\norm{\partial^{j-\delta_i}_{l+\delta_i}h^\bot}^2_\Lambda
\\ && + \left[\frac{D C_{\pi s}}{2\eps}+ \frac{D'C_{\pi s}}{\eps}+ \frac{\tilde{D}C_{\pi s}}{\eps}\right]\sum\limits_{|l'| \leq s-1} \norm{\partial^0_{l'}h}^2_{L^2_{x,v}}
\\ && + \frac{2(C(\delta)+\nu_6^\Lambda)}{\eps^2} \norm{h^\bot}^2_{H^{s-1}_{x,v}}
\\ && + \frac{D_2}{\eps}\left(\mathcal{G}^s_{x,v}(h,h)\right)^2.
\end{eqnarray*}

And in the case $|j|=1$,

\begin{eqnarray*}
\frac{d}{dt}\norm{\partial_{l-\delta_i}^{\delta_i}h^\bot}^2_{L^2_{x,v}} &\leq&\left[\frac{2}{\eps^2}\left(\frac{\nu_1^\Lambda\delta}{\nu_0^\Lambda} - \nu_5^\Lambda\right) + \frac{\nu_1^\Lambda}{\nu_0^\Lambda\eps}\left(\frac{1}{D}+\frac{1}{D'}+ \frac{1}{\tilde{D}}\right)+\frac{1}{D_2}\right]\norm{\partial^{\delta_i}_{l-\delta_i}h^\bot}^2_\Lambda
\\ && + \left[\frac{DC_{\pi s}}{\eps}+\frac{D'}{\eps}+\frac{\tilde{D}C_{\pi s}}{\eps}\right]\sum\limits_{|l'|=s}\norm{\partial^0_{l'}h}^2_{L^2_{x,v}}
\\ && + \frac{2(C(\delta)+\nu_6^\Lambda)}{\eps^2}\norm{h^\bot}^2_{H^{s-1}_{x,v}}
\\ && + \frac{D_2}{\eps}\left(\mathcal{G}^s_{x,v}(h,h)\right)^2.
\end{eqnarray*}

By taking $D=\tilde{D} = 9\nu_1^\Lambda \eps/\nu_0^\Lambda\nu_5^\Lambda$, $D_2 =3\eps/\nu_5^\Lambda$, $\delta = \nu_0^\Lambda\nu_5^\Lambda/6\nu_1^\Lambda$ and $D'=9\nu_1^\Lambda \eps/\nu_0^\Lambda\nu_5^\Lambda$, if $|j|=1$, or $D'=9\nu_1^\Lambda d \eps/\nu_0^\Lambda\nu_5^\Lambda$, if $|j|>1$, we obtain $\eqref{djlbot}$ and $\eqref{ddeltaibot}$.


\subsubsection{A new time evolution of $\langle \partial^{\delta_i}_{l-\delta_i}h,\partial^0_lh\rangle _{L^2_{x,v}}$}
By integrating by part in $x$ then in $v$ we obtain the following equality on the evolution of the scalar product.
$$\frac{d}{dt}\langle \partial^{\delta_i}_{l-\delta_i}h,\partial^0_lh\rangle _{L^2_{x,v}} = 2 \langle\partial^{\delta_i}_{l-\delta_i}G_\eps(h), \partial^0_lh\rangle_{L^2_{x,v}} + \frac{2}{\eps} \langle \partial^{\delta_i}_{l-\delta_i} \Gamma(h,h), \partial^0_lh\rangle_{L^2_{x,v}}.$$

We will bound above the first term as in the previous case and for the second term involving $\Gamma$ we use (H4) and Young's inequality with a constant $D_3 > 0$. Moreover, we decompose $\partial_l^0h$ into its fluid part and its microscopic part and we apply $\eqref{L2L2lambdafluid}$ on the fluid part. This yields
$$2\langle \partial^{\delta_i}_{l-\delta_i}\Gamma(h,h), \partial^0_lh\rangle_{L^2_{x,v}} \leq  D_3\left(\mathcal{G}^s_{x,v}(h,h)\right)^2 + \frac{1}{D_3}\norm{\partial^0_lh^\bot}^2_{\Lambda}+ \frac{C_\pi}{ D_3}\norm{\partial^0_lh}^2_{L^2_{x,v}}.$$

Finally we obtain an upper bound for the time-derivative:

\begin{eqnarray*}
\frac{d}{dt}\langle  \partial^{\delta_i}_{l-\delta_i}h, \partial^0_lh\rangle _{L^2_{x,v}} &\leq& \left[\frac{C^L\eta}{\eps^2}+ \frac{1}{D_3}\right] \norm{ \partial^0_lh^\bot}_{\Lambda}^2 + \frac{C^L}{\eta\eps^2}\norm{ \partial^{\delta_i}_{l-\delta_i}h}_{\Lambda}^2 + \left(\frac{C_\pi}{\eps D_3}-\frac{1}{\eps}\right)\norm{ \partial^0_lh}^2_{L^2_{x,v}} 
\\ &&+ \frac{D_3}{\eps}\left(\mathcal{G}^s_{x,v}(h,h)\right)^2.
\end{eqnarray*}

Now we can use the properties of $\pi_L$ concerning the $v$-derivatives, equation $\eqref{dvcontrolL}$, the equivalence of norm under the projection $\pi_L$, equation $\eqref{L2L2lambdafluid}$, and Poincare inequality get the following upper bound:

\begin{eqnarray*}
\norm{ \partial^{\delta_i}_{l-\delta_i}h}_{\Lambda}^2&\leq& 2\norm{ \partial^{\delta_i}_{l-\delta_i}h^\bot}_{\Lambda}^2 + 2\norm{ \partial^{\delta_i}_{l-\delta_i}\pi_L(h)}_{\Lambda}^2
\\ &\leq& 2\norm{ \partial^{\delta_i}_{l-\delta_i}h^\bot}_{\Lambda}^2 + 2C_{\pi s}C_\pi\norm{ \partial^0_{l-\delta_i}(h)}_{L^2_{x,v}}^2
\\ &\leq& 2\norm{ \partial^{\delta_i}_{l-\delta_i}h^\bot}_{\Lambda}^2 + 2C_{\pi s}C_\pi\sum\limits_{|l'|\leq s-1}\norm{ \partial^0_{l'}h}_{L^2_{x,v}}^2.
\end{eqnarray*}

Therefore,

\begin{eqnarray*}
\frac{d}{dt}\langle  \partial^{\delta_i}_{l-\delta_i}h, \partial^0_lh\rangle _{L^2_{x,v}} &\leq& \left[\frac{C^L\eta}{\eps^2}+\frac{1}{D_3}\right] \norm{ \partial^0_lh^\bot}_{\Lambda}^2 + \frac{2C^L}{\eta\eps^2}\norm{ \partial^{\delta_i}_{l-\delta_i}h^\bot}_{\Lambda}^2 + \left(\frac{C_\pi}{\eps D_3}-\frac{1}{\eps}\right)\norm{ \partial^0_lh}^2_{L^2_{x,v}} 
\\ && + \frac{2C^L C_{\pi s}C_\pi}{\eta\eps^2}\sum\limits_{|l'|\leq s-1}\norm{ \partial^0_{l'}h}_{L^2_{x,v}}^2 + \frac{D_3}{\eps}\left(\mathcal{G}^s_{x,v}(h,h)\right)^2.
\end{eqnarray*}
We finally define $\eta = 8eC^LC_{\pi s}C_\pi d/\eps$, with $e>1$, and $D_3=2C_\pi$ to yield equation $\eqref{deltai,d0lbot}$.

\section{Proof of the hydrodynamical limit lemmas}\label{appendix:hydro}

In this section we are going to prove all the different lemmas used in section $9$.
\par All along the demonstration we will use this inequality:

\begin{equation}\label{technicalineq}
\forall t>0, \:k\in \N^*, \:q \geq 0, \:p>0, \:\: t^qk^{2p}e^{-atk^2} \leq C_p(a) t^{q-p}.
\end{equation}


\subsection{Study of the linear part}

\subsubsection{Proof of Lemma $\ref{Ueps0j}$}
Fix $T$ in $[0,+\infty]$. By integrating we compute

\begin{eqnarray*}
\int_0^T U^\eps_{0j}h_{in} dt &=& \sum\limits_{n \in \Z^d-\{0\}} e^{in.x}\left[\int_0^Te^{\frac{i \alpha_j t \abs{n}}{\eps} - \beta_j t \abs{n}^2} dt\right] P_{0j} \left(\frac{n}{\abs{n}}\right) \hat{h}_{in}(n,v)
\\ &=& \sum\limits_{n \in \Z^d-\{0\}} e^{in.x}\frac{\eps}{i\alpha_j\abs{n} - \eps\beta_j \abs{n}^2}\left[e^{\frac{i \alpha_j T \abs{n}}{\eps} - \beta_j T \abs{n}^2}-1\right]P_{0j}\hat{h}_{in}(n,v).
\end{eqnarray*}

The Fourier transform is an isometry in $L^2_x$ and therefore
$$\norm{\int_0^T U^\eps_{0j}h_{in} dt}^2_{L^2_xL^2_v} \leq \eps^2 \sum\limits_{n \in \Z^d-\{0\}} \frac{2}{\alpha_j^2\abs{n}^2 + \eps^2\beta_j^2 \abs{n}^4} \norm{P_{0j}\left(\frac{n}{\abs{n}}\right)\hat{h}_{in}(n,\cdot)}^2_{L^2_v}.$$

Finally, we know that, like $e_{0j}$, $P_{0j}$ is continuous on the compact $\mathbb{S}^{d-1}$ and so is bounded. But the latter is a linear operator acting on $L^2_v$ and therefore it is bounded by $M_{0j}$ in the operator norm on $L^2_v$. Thus
\begin{eqnarray*}
\norm{\int_0^T U^\eps_{0j}h_{in} dt}^2_{L^2_xL^2_v} &\leq& \eps^2\frac{M_{0j}^2}{\alpha_j^2}\sum\limits_{n \in \Z^d-\{0\}}\norm{\hat{h}_{in}(n,\cdot)}^2_{L^2_v}
\\ &\leq& \eps^2\frac{M_{0j}^2}{\alpha_j^2}\norm{h_{in}(\cdot,\cdot)}^2_{L^2_xL^2_v},
\end{eqnarray*}
which is the expected result.

\bigskip
Now, let us look at the $L^2_x$-norm of this operator, to see how the torus case is different from the case $\R^d$ studied in \cite{BU} and \cite{EP}. 
\\Consider a direction $n_1$ in the Fourier transform space of the torus and define $\phi_{n_1} = \mathcal{F}^{-1}_x\left(e^{in_1}\right)$. We have the following equality
$$\langle U^\eps_{0j}h_{in},\phi_{n_1}\rangle_{L^2_x} = \langle\hat{U}^\eps_{0j}\hat{h}_{in},\hat{\phi}_{n_1}\rangle_{L^2_n} =  e^{\frac{i \alpha_j t \abs{n_1}}{\eps} - \beta_j t \abs{n_1}^2}P_{0j}\left(\frac{n_1}{\abs{n_1}}\right)\hat{h}_{in}(n_1,v). $$
If we do not integrate in time, one can easily see that this expression cannot have a limit as $\eps$ tends to $0$ if $P_{0j}\left(\frac{n_1}{\abs{n_1}}\right)\hat{h}_{in}(n_1,v) \neq 0$, and so we cannot even have a weak convergence. The difference with the whole space case is this possibility to single out one mode in the frequency space in the case of the torus. This leads to the possible existence of periodic function at a given frequency, the norm of which will never decrease. This is impossible in the case of a continuous Fourier space, as in $\R^d$, and well described by the Riemann-Lebesgue lemma.

\bigskip
Therefore we have a convergence without averaging in time if and only if $P_{0j}\left(\frac{n_1}{\abs{n_1}}\right)\hat{h}_{in}(n_1,v) = 0$, for all $j = \pm 1$ and all direction $n_1$. This means that for all $j = \pm 1$ and all $n_1$, $\langle e_{0j}\left(\frac{n_1}{\abs{n_1}}\right),\hat{h}_{in}\rangle_{L^2_v} =0$. By the expression known (see theorem $\ref{fourier}$) of $e_{0\pm 1}$, this is true if and only if  $\nabla_x\cdot u_{in} = 0$ and $\rho_{in} + \theta_{in} =0$.

\subsubsection{Proof of Lemma $\ref{Uepslj}$}
This lemma deals with three different terms and we study them one by one because they behaviour are quite different.

\bigskip
\paragraph{\textbf{The term $U^\eps_{1j}$:}}
We remind that we have
$$\hat{U}^\eps_{1j}\hat{h}_{in} = \chi_{\abs{\eps n}\leq n_0}e^{\frac{i \alpha_j t \abs{n}}{\eps} - \beta_j t \abs{n}^2}\left(e^{\frac{t}{\eps^2}\gamma_j(\abs{\eps n})} -1\right)P_{0j}\left(\frac{n}{\abs{n}}\right)\hat{h}_{in}(n,v).$$

If we take $T>0$, by Parseval identity we get
$$\norm{\int_0^T U^\eps_{1j}h_{in} dt}^2_{L^2_xL^2_v} = \sum\limits_{n\in \Z^d-\{0\}} \chi_{\abs{\eps n\leq n_0}}\abs{\int_0^Te^{\frac{i \alpha_j t \abs{n}}{\eps} - \beta_j t \abs{n}^2}\left(e^{\frac{t}{\eps^2}\gamma_j(\abs{\eps n})} -1\right)dt}^2\norm{P_{0j}\hat{h}_{in}}^2_{L^2_v}.$$

But then we can use the fact that $\abs{e^a-1}\leq \abs{a}e^{\abs{a}}$, the inequalites satisfied by $\gamma_j$ and the computational inequality $\eqref{technicalineq}$ to obtain

\begin{eqnarray*}
\abs{\int_0^Te^{\frac{i \alpha_j t \abs{n}}{\eps} - \beta_j t \abs{n}^2}\left(e^{\frac{t}{\eps^2}\gamma_j(\abs{\eps n})} -1\right)dt} &\leq& C_\gamma \eps \int^T_0 t \abs{n}^3 e^{-\frac{t\beta_j}{2}\abs{n}^2}dt
\\ &\leq& C_\gamma \eps C_{3/2}\left(\frac{\beta_j}{4}\right)\int^T_0\frac{1}{\sqrt{t}}e^{-\frac{t\beta_j}{4}\abs{n}^2}dt
\\ &\leq&  C_\gamma \eps C_{3/2}\left(\frac{\beta_j}{4}\right)\int^{+\infty}_0\frac{1}{\sqrt{t}}e^{-\frac{t\beta_j}{4}}dt,
\end{eqnarray*}
which is independent of $n$ and is written $I \eps$. Therefore we have the expected inequality, by using the continuity of $P_{0j}$,
$$\norm{\int_0^T U^\eps_{1j}h_{in} dt}^2_{L^2_xL^2_v} \leq \eps^2 I^2 M_{0j}^2\norm{h_{in}}^2_{L^2_xL^2_v}.$$

\bigskip
The last two inequalities we want to show comes from Parseval's identity, the properties of $\gamma_j$ and the computational inequality $\eqref{technicalineq}$:

\begin{eqnarray}
\norm{ U^\eps_{1j}h_{in}}^2_{L^2_xL^2_v} &=& \sum\limits_{n \in \Z^d-\{0\}} \chi_{\abs{\eps n}\leq n_0} e^{-2\beta_j t \abs{n}^2}\abs{e^{\frac{t}{\eps^2}\gamma_j{\abs{\eps n}}}-1}^2\norm{P_{0j}\left(\frac{n}{\abs{n}}\right)\hat{h}_{in}}^2_{L^2_v} \nonumber
\\ &\leq& M_{0j}^2C_\gamma^2\eps^2 \sum\limits_{n \in \Z^d-\{0\}} \chi_{\abs{\eps n}\leq n_0} t^2 \abs{n}^6e^{-\beta_j t \abs{n}^2}\norm{\hat{h}_{in}}^2_{L^2_v} \nonumber
\\ &\leq& M_{0j}^2C_\gamma^2\eps^2 C_2\left(\frac{\beta_j}{2}\right)\sum\limits_{n \in \Z^d-\{0\}} \chi_{\abs{\eps n}\leq n_0} \abs{n}^2 e^{-\frac{\beta_j t}{2}\abs{n}^2}\norm{\hat{h}_{in}}^2_{L^2_v}. \label{ineq1j}
\end{eqnarray}

Finally, if we integrate in $t$ between $0$ and $+\infty$ we obtain the expected second inequality of the lemma. If we merely bound $e^{-\frac{\beta_j t}{2}\abs{n}^2}$ by one and use the fact that $\chi_{\abs{\eps n}\leq n_0} \leq 1$ and  $\chi_{\abs{\eps n}\leq n_0} \eps^2 \abs{n}^2 \leq n_0^2$ we obtain the third inequality of the lemma for $\delta =1$ and $\delta = 0$. Then by interpolation we obtain the general case for $0 \leq \delta \leq 1$.

\bigskip
\paragraph{\textbf{The term $U^\eps_{2j}$:}}
Fix $T>0$. By Parseval's identity we have

\begin{eqnarray*}
\norm{\int_0^T U^\eps_{2j}h_{in} dt}^2_{L^2_xL^2_v} &=& \sum\limits_{n\in \Z^d-\{0\}} \chi_{\abs{\eps n}\leq n_0}\abs{\int_0^Te^{\frac{i \alpha_j t \abs{n}}{\eps} - \beta_j t \abs{n}^2 + \frac{t}{\eps^2}\gamma_j(\abs{\eps n})}dt }^2\abs{\eps n}^2 \norm{\tilde{P}_{1j}\hat{h}_{in}}^2_{L^2_v}
\\ &\leq&   \sum\limits_{n\in \Z^d-\{0\}} \frac{4}{\beta_j^2 \abs{n}^4} \abs{\eps n}^2 \norm{\tilde{P}_{1j}\left(\abs{\eps n}, \frac{n}{\abs{n}}\right)\hat{h}_{in}}^2_{L^2_v},
\end{eqnarray*}
where we used the inequalities satisfied by $\gamma$ and integration in time. 
\\ Then, $\tilde{P}_{1j}$ is continuous on the compact $[-n_0,n_0] \times \mathbb{S}^{d-1}$ and so is bounded, as an operator acting on $L^2_v$, by $M_{1j}>0$. Hence, Parseval's identity offers us the first inequality of the lemma.

\bigskip
The last two inequalities are just using Parseval's identity and the continuity of $\tilde{P}_{1j}$. Indeed,

\begin{eqnarray*}
\norm{ U^\eps_{2j}h_{in}}^2_{L^2_xL^2_v} &=& \sum\limits_{n\in \Z^d-\{0\}} \chi_{\abs{\eps n}\leq n_0}\abs{e^{\frac{i \alpha_j t \abs{n}}{\eps} - \beta_j t \abs{n}^2 + \frac{t}{\eps^2}\gamma_j(\abs{\eps n})}}^2\abs{\eps n}^2 \norm{\tilde{P}_{1j}(n)\hat{h}_{in}}^2_{L^2_v}
\\ &\leq& M_{1j}^2 \eps^2 \sum\limits_{n\in \Z^d-\{0\}} \chi_{\abs{\eps n}\leq n_0}\abs{n}^2 e^{-t\beta_j \abs{n}^2} \norm{\hat{h}_{in}}^2_{L^2_v}.
\end{eqnarray*}

We recognize here the same form of inequality  $\eqref{ineq1j}$. Thus, we obtain the last two inequalities of the statement in the same way.

\bigskip
\paragraph{\textbf{The term $U^\eps_{3j}$:}}
We remind the reader that
$$\hat{U}^\eps_{3j} = \left(\chi_{\abs{\eps n}\leq n_0} - 1\right)e^{\frac{i \alpha_j t \abs{n}}{\eps} - \beta_j t \abs{n}^2}P_{0j}\left(\frac{n}{\abs{n}}\right).$$
We have the following inequality
$$\abs{\chi_{\abs{\eps n}\leq n_0} - 1} \leq \frac{\eps n}{n_0}.$$
Therefore, replacing $\tilde{P}_{1j}$ by $\frac{1}{n_0}P_{0j}$ and $\beta_j$ by $2\beta_j$ (since $\frac{t}{\eps^2}\gamma_j(\abs{\eps n}) \leq \frac{t\beta_j}{2}\abs{n}^2$) in the proof made for $U^\eps_{2j}$ we obtain the expected three inequalities for $U^\eps_{3j}h_{in}$, the last one only with $\delta=1$. 
\\ To have the last inequality in $\delta$, it is enough to bound $\abs{\chi_{\abs{\eps n}\leq n_0} - 1}$ by $1$ and then using the continuity of $P_{0j}$ to have the result for $\delta = 0$. Finally, we interpolate to get the general result for all $0\leq \delta \leq 1$.


\subsubsection{Proof of Lemma $\ref{UR}$}

Thanks to Theorem $\ref{fourier}$ we have that
$$\norm{U^\eps_Rh_{in}}^2_{L^2_xL^2_v} = \norm{\hat{U}_R(t/\eps^2,\eps n,v)\hat{h}_{in}}^2_{L^2_nL^2_v} \leq C_R^2 e^{-2\frac{\sigma t}{\eps^2}}\norm{h_{in}}^2_{L^2_xL^2_v}.$$
But then we have, thanks to the technical lemma $\ref{technicalineq}$, that $e^{-2\frac{\sigma t}{\eps^2}} \leq C_{1/2}(2\sigma)\frac{\eps}{\sqrt{t}}$, which gives us the last two inequalities we wanted.
For the first inequality, a mere Cauchy-Schwartz inequality yields
$$\norm{\int_0^T U^\eps_R h_{in} dt}^2_{L^2_xL^2_v} \leq T \int_0^T \norm{U^\eps_R h_{in} }^2_{L^2_xL^2_v} dt,$$
which gives us the first inequality by integrating in $t$.

\bigskip
Now, let us suppose that we have the strong convergence down to $t=0$. At $t=0$ we can write that $e^{tG_\eps} = \mbox{Id}$ and therefore that:
$$\mbox{Id}= \chi_{\abs{\eps n}\leq n_0}\sum\limits_{j=-1}^{2}P_{j}\left(\abs{\eps n}, \frac{n}{\abs{n}}\right)+ \hat{U}_R(0,\eps n,v).$$
We have the strong convergence down to $0$ as $\eps$ tends to $0$. Therefore, taking the latter equality at $\eps = 0$ we have, because $\sum\limits_{j=-1}^{2}P_{0j} = \pi_L$,
$$\hat{U}_R(0,0,v) = \mbox{Id} - \pi_L .$$
Then $\hat{U}_R\hat{h}_{in}$ tends to $0$ as $\eps$ tends to $0$ in $C([0,+\infty),L^2_xL^2_v)$ if and only if $h_{in}$ belongs to $\mbox{Ker}(L)$.
\\ In that case, we can use the proof of Lemma $6.2$ of \cite{BU} in which they noticed that
$$U^\eps_R(t,x,v) = e^{tG_{\eps}}U^\eps_R(0,x,v) = e^{tG_{\eps}} \left[\mathcal{F}^{-1}_x \left(Id - \chi_{\abs{\eps n}\leq n_0}\sum\limits_{j=-1}^2P_j(\eps n)\right)\mathcal{F}_x\right].$$
Thanks to that new form we have that, if $h_{in} = \pi_L(h_{in})$,
$$U^\eps_R(t,x,v)h_{in} = e^{tG_{\eps}} \left[\mathcal{F}^{-1}_x \left((1-\chi_{\abs{\eps n}\leq n_0}) - \abs{\eps n}\chi_{\abs{\eps n}\leq n_0}\sum\limits_{j=-1}^{2}\tilde{P}_{1j}(\eps n)\right)\hat{h}_{in}\right],$$
because $\pi_L = \sum\limits_{j=-1}^2P_{0j}$.
\\ Therefore we can redo the same estimates we worked out in the previous lemmas and use the same interpolation method to get the result stated in Lemma $\ref{UR}$.


\subsection{Study of the bilinear part}

\subsubsection{A simplification without loss of generality}
All the terms we are about to study, apart from the remainder term, are of the following form

$$\psi^\eps_{ij}(u_\eps) = \int_0^t\sum\limits_{n \in \Z^d-\{0\}}g(t,s,k,x)P(n)\hat{u}_\eps(s,k,v)ds,$$
with $P(n)$ being a projector in $L^2_v$, bounded uniformely in $n$.

\bigskip
Looking at the dual definition of the norm of a function in $L^2_{x,v}$, we can consider $f$ in $C^\infty_c\left(\T^d \times \R^d\right)$ such that $\norm{f}_{L^2_{x,v}}=1$ and take the scalar product with $\psi^\eps_{ij}(u_\eps)$. This yields, since $P$ is a projector and thus symmetric,

\begin{eqnarray}
\langle\psi^\eps_{ij}(u_\eps),f\rangle_{L^2_{x,v}} &=& \int_{\T^d}\int_0^t\sum\limits_{n \in \Z^d-\{0\}}g(t,s,k,x)\langle P(n)\hat{u}_\eps,f\rangle_{L^2_v}ds \nonumber
\\  &=& \int_{\T^d}\int_0^t\sum\limits_{n \in \Z^d-\{0\}}g(t,s,k,x)\langle\hat{u}_\eps,P(n)f\rangle_{L^2_v}ds. \label{rewrite}
\end{eqnarray}

We are working in $L^2_xL^2_v$ in order to simplify computations as they are exactly the same in higher Sobolev spaces. Therefore, we can assume that hypothesis $(H4)$ is still valid in $L^2_v$ without loss of generality. This means

\begin{equation} \label{bound}
\langle\hat{u}_\eps,P(n)f\rangle_{L^2_v} \leq \norm{h}_{L^2_xL^2_v}\norm{h}_{\Lambda_v}\norm{P(n)f}_{\Lambda_v}.
\end{equation}

Finally, in terms of Fourier coefficients in $x$, $P(n)$ is a projector in $L^2_v$ and uniformely bounded in $n$ as an operator in $L^2_v$.
\\ Thus, combining $\eqref{bound}$ and the definition of the functional $E$, $\eqref{defE}$, we see that
$$\int_0^T\norm{\hat{f}_\eps}^2_{L^2_xL^2_v}dt$$
is a continuous operator from $C(\R^+,L^2_xL^2_v,E(\cdot))$ to $C(\R^+,L^2_xL^2_v,\norm{\cdot}_{L^2_xL^2_v})$.  Looking at $\eqref{rewrite}$, we can consider without loss of generality that the following holds (even for the remainder term) for all $T>0$:

$$\psi^\eps_{ij}(u_\eps) = \int_0^t\sum\limits_{n \in \Z^d-\{0\}}g(t,s,k,x)\hat{f}_\eps(s,k,v)ds,$$
with 
$$\int_0^T\norm{\hat{f}_\eps}^2_{L^2_xL^2_v}dt \leq M_{ij} E(h_\eps)^2.$$

\subsubsection{Proof of Lemma $\ref{psieps0j}$}
For the first inequality, fix $T>0$ and integrate by part in $t$ to obtain

\begin{eqnarray*}
\int_0^T \psi^\eps_{0j}(u_\eps)dt &=& \sum\limits_{n \in \Z^d-\{0\}}e^{in.x}\int_0^T\left(\int_0^te^{i\frac{\alpha_j (t-s)}{\eps}\abs{n}- (t-s)\beta_j\abs{n}^2}\abs{n}\hat{f}_\eps(s)ds\right)dt
\\ &=& \sum\limits_{n \in \Z^d-\{0\}}e^{in.x} \frac{\eps}{i\alpha_j\abs{n}-\eps\beta_j\abs{n}}\left[\int_0^T\left(e^{i\frac{\alpha_j (T-s)}{\eps}\abs{n}-(T-s)\beta_j\abs{n}^2} - 1\right)\hat{f}_\eps(s)ds\right].
\end{eqnarray*}

Finally we can use Parseval's identity 

\begin{eqnarray*}
\norm{\int_0^T \psi^\eps_{0j}(u_\eps)dt}^2_{L^2_xL^2_v} &\leq& \sum\limits_{n \in \Z^d-\{0\}} \frac{\eps^2}{\eps^2\beta_j^2\abs{n}^2 + \alpha_j^2}T\int_0^T 2 \norm{\hat{f}_\eps(s,n,v)}^2_{L^2_v}ds
\\ &\leq& \frac{2M_{1j}^2}{\alpha_j^2}T \eps^2 E(h_\eps)^2,
\end{eqnarray*}
where we used the subsection above and Parseval's identity again. This is exactly the expected result.


\subsubsection{Proof of Lemma $\ref{psiepslj}$}
We divide this proof in three paragraphes, each of them studying a different term.

\paragraph{\textbf{The term $\psi^\eps_{1j}$:}}
We will just prove the last two inequalities and then merely applying Cauchy-Schwarz inequality will lead to the first one.
\\Fix $t>0$. By a change of variable we can write
$$\psi^\eps_{1j}(u_\eps) = \sum\limits_{n \in \Z^d-\{0\}}e^{ik.x}\chi_{\abs{\eps n}\leq n_0}\int_0^te^{\frac{i\alpha_js}{\eps}\abs{n}-\beta_js\abs{n}^2}\left(e^{\frac{s}{\eps^2}\gamma_j(\abs{\eps n})}-1\right)\abs{n}\hat{f}_\eps(t-s)ds.$$
By the study made in the proof of Lemma $\ref{Uepslj}$ we have that

\begin{eqnarray*}
&&\abs{\int_0^te^{\frac{i\alpha_js}{\eps}\abs{n}-\beta_js\abs{n}^2}\left(e^{\frac{s}{\eps^2}\gamma_j(\abs{\eps n})}-1\right)\abs{n}\hat{f}_\eps(t-s)ds} 
\\ && \hspace{4cm}\leq C_\gamma\abs{n}^4\eps \int_0^tse^{-\frac{\beta_js}{2}\abs{n}^2}\abs{\hat{f}_\eps(t-s)}ds.
\end{eqnarray*}

Then we use the computational inequality $\eqref{technicalineq}$ and a Cauchy-Schwarz to obtain

\begin{eqnarray}
&& \abs{\int_0^te^{\frac{i\alpha_js}{\eps}\abs{n}-\beta_js\abs{n}^2}\left(e^{\frac{s}{\eps^2}\gamma_j(\abs{\eps n})}-1\right)\abs{n}\hat{u}_\eps ds} \nonumber
\\ && \hspace{4cm}\leq \eps C_\gamma  C_1\left(\frac{\beta_j}{4}\right)\abs{n}^2 \int_0^te^{-\frac{\beta_js}{4}\abs{n}^2}\abs{\hat{f}_\eps}ds  \nonumber
\\ && \hspace{4cm} \leq  \eps C_\gamma C_1\left(\frac{\beta_j}{4}\right)\abs{n}^2 \sqrt{\frac{4}{\beta_j\abs{n}^2}}\left[\int_0^te^{-\frac{\beta_j(t-s)}{4}\abs{n}^2}\abs{\hat{f}_\eps}^2ds\right]^{1/2}. \label{ineqpsi1j}
\end{eqnarray}

We can obtain the result by using Parseval's identity, denoting $C$ a constant independent of $\eps$ and $T$, the continuity of $P_{1j}$ and the computational inequality $\eqref{technicalineq}$.

$$\norm{\psi^\eps_{1j}(u_\eps)(t)}^2_{L^2_xL^2_v} \leq C\sum\limits_{n \in \Z^d-\{0\}}\chi_{\abs{\eps n}\leq n_0}\eps^2\abs{n}^2\int_0^te^{-\frac{\beta_j(t-s)s}{4}\abs{n}^2}\norm{\hat{f}_\eps(s)}^2_{L^2_v}ds.$$

If we merely bound $e^{-\frac{\beta_j (t-s)}{2}\abs{n}^2}$ by one and use the fact that $\chi_{\abs{\eps n}\leq n_0} \leq 1$ and  $\chi_{\abs{\eps n}\leq n_0} \eps^2 \abs{n}^2 \leq n_0^2$ we obtain the third inequality of the lemma for $\delta =1$ and $\delta = 0$. Then by interpolation we obtain the general case for $0 \leq \delta \leq 1$.
\par If we integrate in $t$ between $0$ and a fixed $T>0$, a mere integration by part yields the expected control on the $L^2_{t,x,v}$-norm. Finally, from the latter control and a Cauchy-Schwarz inequality we deduce the first inequality.

\bigskip
\paragraph{\textbf{The term $\psi^\eps_{2j}$:}}
As in the case $\psi^\eps_{1j}$, we are going to prove the third inequality only.
\\Fix $T>0$, a change of variable gives us
$$\psi^\eps_{2j}(u_\eps) = \sum\limits_{n \in \Z^d-\{0\}}e^{ik.x}\chi_{\abs{\eps n}\leq n_0}\int_0^Te^{\frac{i\alpha_js}{\eps}\abs{n}-\beta_js\abs{n}^2 +\frac{s}{\eps^2}\gamma_j(\abs{\eps n})}\eps\abs{n}^2\hat{f}_\eps(T-s)ds.$$
We can see that
$$\abs{\int_0^Te^{\frac{i\alpha_js}{\eps}\abs{n}-\beta_js\abs{n}^2 +\frac{s}{\eps^2}\gamma_j(\abs{\eps n})}\eps\abs{n}^2\hat{f}_\eps(T-s)ds} \leq \eps\abs{n}^2\int^T_0 e^{-\frac{\beta_js}{2}\abs{n}^2}\abs{\hat{f}_\eps(T-s)}ds.$$
This bound is of the same form as equation $\eqref{ineqpsi1j}$. Therefore we have the same result.

\bigskip
\paragraph{\textbf{The term $\psi^\eps_{3j}$:}}
As above, we will show the third inequality only.
\\Fix $T>0$, we can write
$$\psi^\eps_{3j}(u_\eps) = \sum\limits_{n \in \Z^d-\{0\}}e^{ik.x}(\chi_{\abs{\eps n}\leq n_0}-1)\int_0^Te^{\frac{i\alpha_js}{\eps}\abs{n}-\beta_js\abs{n}^2}\abs{n} \hat{f}_\eps(T-s,n,v)ds.$$
Looking at the fact that $\abs{\chi_{\abs{\eps n}\leq n_0}-1} \leq \frac{\eps \abs{n}}{n_0}$, we find the same kind of inequality as equation $\eqref{ineqpsi1j}$. Thus, we reach the same result.


\subsubsection{Proof of Lemma $\ref{psiepsR}$}
We remind the reader that
$$\Psi^\eps_R( u_\eps) = \int_0^t \frac{1}{\eps}U^\eps_R(t-s)f_\eps(s) ds,$$
and that, by Theorem $\ref{fourier}$,
$$\norm{U^\eps_Rf_\eps}^2_{L^2_xL^2_v} \leq C_R^2 e^{-2\frac{\sigma t}{\eps^2}}\norm{f_\eps}^2_{L^2_xL^2_v}.$$
Hence, a Cauchy-Schwarz inequality gives us the third inequality for $\norm{\psi^\eps_{R}(u_\eps)(T)}^2_{L^2_xL^2_v}$, and then the two others inequality stated above.


\subsubsection{Proof of Lemma $\ref{psiuepsu}$}
We remind the reader that
$$\Psi(u) = \mathcal{F}^{-1}_x\left[\psi^\eps_{00} (u)+\psi^\eps_{02} (u)\right]\mathcal{F}_x.$$
As above, and because in that case $\alpha_j =0$, we can write $\psi^\eps_{0j} (u_\eps -u)(T)$, for some $T>0$, and apply a Cauchy-Schwarz inequality:

\begin{eqnarray*}
\norm{\psi^\eps_{0j} (u_\eps - u)}^2_{L^2_xL^2_v}(T) &=& \sum\limits_{n \in \Z^d-\{0\}} \abs{n}^2\int_{\R^d}\abs{\int_0^Te^{-s\beta_j\abs{n}^2}P_{1j}\hat{\Gamma}(h_\eps -h,h_\eps + h)ds}^2dv
\\ &\leq& \frac{M_{1j}^2}{\beta_j^2} \underset{t\in [0,T]}{\mbox{Sup}} \norm{\Gamma(h_\eps -h,h_\eps + h)}^2_{L^2_xL^2_v}.
\end{eqnarray*}

But because $\T^d$ is bounded in $\R^d$ and thanks to (H4) and the boundedness of $(h_\eps)_\eps$ and $h$ (both bounded by $M$) in $H^s_xL^2_v$ (Theorem $\ref{perturb}$), we can have the following control:
$$\norm{\Gamma(h_\eps -h,h_\eps + h)}^2_{L^2_xL^2_v} \leq 4M^2 C_\Gamma^2  \mbox{Volume}(\T^d)\norm{h_\eps - h}_{L^\infty_xL^2_v}.$$

Therefore we obtain the last inequality and the first two just come from Cauchy-Schwarz inequality.

\bibliographystyle{acm}
\bibliography{bibliography}

\begin{thebibliography}{10}

\bibitem{AMUXY}
{\sc Alexandre, R., Morimoto, Y., Ukai, S., Xu, C.-J., and Yang, T.}
\newblock The {B}oltzmann equation without angular cutoff in the whole space:
  {II}, {G}lobal existence for hard potential.
\newblock {\em Anal. Appl. (Singap.) 9}, 2 (2011), 113--134.

\bibitem{BM}
{\sc Baranger, C., and Mouhot, C.}
\newblock Explicit spectral gap estimates for the linearized {B}oltzmann and
  {L}andau operators with hard potentials.
\newblock {\em Rev. Mat. Iberoamericana 21}, 3 (2005), 819--841.

\bibitem{BGL}
{\sc Bardos, C., Golse, F., and Levermore, D.}
\newblock Fluid dynamic limits of kinetic equations. {I}. {F}ormal derivations.
\newblock {\em J. Statist. Phys. 63}, 1-2 (1991), 323--344.

\bibitem{BU}
{\sc Bardos, C., and Ukai, S.}
\newblock The classical incompressible {N}avier-{S}tokes limit of the
  {B}oltzmann equation.
\newblock {\em Math. Models Methods Appl. Sci. 1}, 2 (1991), 235--257.

\bibitem{Br}
{\sc Brezis, H.}
\newblock {\em Analyse fonctionnelle}.
\newblock Collection Math\'ematiques Appliqu\'ees pour la Ma\^\i trise.
  [Collection of Applied Mathematics for the Master's Degree]. Masson, Paris,
  1983.
\newblock Th{\'e}orie et applications. [Theory and applications].

\bibitem{CCG}
{\sc C{\'a}ceres, M.~J., Carrillo, J.~A., and Goudon, T.}
\newblock Equilibration rate for the linear inhomogeneous relaxation-time
  {B}oltzmann equation for charged particles.
\newblock {\em Comm. Partial Differential Equations 28}, 5-6 (2003), 969--989.

\bibitem{Ce}
{\sc Cercignani, C.}
\newblock {\em The {B}oltzmann equation and its applications}, vol.~67 of {\em
  Applied Mathematical Sciences}.
\newblock Springer-Verlag, New York, 1988.

\bibitem{Ce1}
{\sc Cercignani, C., Illner, R., and Pulvirenti, M.}
\newblock {\em The mathematical theory of dilute gases}, vol.~106 of {\em
  Applied Mathematical Sciences}.
\newblock Springer-Verlag, New York, 1994.

\bibitem{Ch}
{\sc Chai, X.}
\newblock The {B}oltzmann equation near {M}axwellian in the whole space.
\newblock {\em Commun. Pure Appl. Anal. 10}, 2 (2011), 435--458.

\bibitem{DMEL}
{\sc De~Masi, A., Esposito, R., and Lebowitz, J.~L.}
\newblock Incompressible {N}avier-{S}tokes and {E}uler limits of the
  {B}oltzmann equation.
\newblock {\em Comm. Pure Appl. Math. 42}, 8 (1989), 1189--1214.

\bibitem{DL}
{\sc DiPerna, R.~J., and Lions, P.-L.}
\newblock On the {C}auchy problem for {B}oltzmann equations: global existence
  and weak stability.
\newblock {\em Ann. of Math. (2) 130}, 2 (1989), 321--366.

\bibitem{EP}
{\sc Ellis, R.~S., and Pinsky, M.~A.}
\newblock The first and second fluid approximations to the linearized
  {B}oltzmann equation.
\newblock {\em J. Math. Pures Appl. (9) 54\/} (1975), 125--156.

\bibitem{GST}
{\sc Gallagher, I., Saint-Raymond, L., and Texier, B.}
\newblock {\em From {N}ewton to {B}oltzmann: hard spheres and short-range
  potentials}.
\newblock Zurich Lectures in Advanced Mathematics. European Mathematical
  Society (EMS), Z\"urich, 2013.

\bibitem{Go}
{\sc Golse, F.}
\newblock From kinetic to macroscopic models.
\newblock Lecture notes (1998).

\bibitem{GolSt1}
{\sc Golse, F., and Saint-Raymond, L.}
\newblock The {N}avier-{S}tokes limit of the {B}oltzmann equation for bounded
  collision kernels.
\newblock {\em Invent. Math. 155}, 1 (2004), 81--161.

\bibitem{GolSt2}
{\sc Golse, F., and Saint-Raymond, L.}
\newblock The incompressible {N}avier-{S}tokes limit of the {B}oltzmann
  equation for hard cutoff potentials.
\newblock {\em J. Math. Pures Appl. (9) 91}, 5 (2009), 508--552.

\bibitem{Gr1}
{\sc Grad, H.}
\newblock Principles of the kinetic theory of gases.
\newblock In {\em Handbuch der {P}hysik (herausgegeben von {S}. {F}l\"ugge),
  {B}d. 12, {T}hermodynamik der {G}ase}. Springer-Verlag, Berlin, 1958,
  pp.~205--294.

\bibitem{Gr}
{\sc Grad, H.}
\newblock Asymptotic equivalence of the {N}avier-{S}tokes and nonlinear
  {B}oltzmann equations.
\newblock In {\em Proc. {S}ympos. {A}ppl. {M}ath., {V}ol. {XVII}}. Amer. Math.
  Soc., Providence, R.I., 1965, pp.~154--183.

\bibitem{Gu2}
{\sc Guo, Y.}
\newblock The {L}andau equation in a periodic box.
\newblock {\em Comm. Math. Phys. 231}, 3 (2002), 391--434.

\bibitem{Gu3}
{\sc Guo, Y.}
\newblock The {V}lasov-{P}oisson-{B}oltzmann system near {M}axwellians.
\newblock {\em Comm. Pure Appl. Math. 55}, 9 (2002), 1104--1135.

\bibitem{Gu1}
{\sc Guo, Y.}
\newblock Classical solutions to the {B}oltzmann equation for molecules with an
  angular cutoff.
\newblock {\em Arch. Ration. Mech. Anal. 169}, 4 (2003), 305--353.

\bibitem{Gu4}
{\sc Guo, Y.}
\newblock Boltzmann diffusive limit beyond the {N}avier-{S}tokes approximation.
\newblock {\em Comm. Pure Appl. Math. 59}, 5 (2006), 626--687.

\bibitem{Ka}
{\sc Kato, T.}
\newblock {\em Perturbation theory for linear operators}.
\newblock Classics in Mathematics. Springer-Verlag, Berlin, 1995.
\newblock Reprint of the 1980 edition.

\bibitem{La}
{\sc Lanford, III, O.~E.}
\newblock Time evolution of large classical systems.
\newblock In {\em Dynamical systems, theory and applications ({R}econtres,
  {B}attelle {R}es. {I}nst., {S}eattle, {W}ash., 1974)}. Springer, Berlin,
  1975, pp.~1--111. Lecture Notes in Phys., Vol. 38.

\bibitem{Le}
{\sc Leray, J.}
\newblock Sur le mouvement d'un liquide visqueux emplissant l'espace.
\newblock {\em Acta Math. 63}, 1 (1934), 193--248.

\bibitem{LevMas}
{\sc Levermore, C.~D., and Masmoudi, N.}
\newblock From the {B}oltzmann equation to an incompressible
  {N}avier-{S}tokes-{F}ourier system.
\newblock {\em Arch. Ration. Mech. Anal. 196}, 3 (2010), 753--809.

\bibitem{Li1}
{\sc Lions, P.-L.}
\newblock {\em Mathematical topics in fluid mechanics. {V}ol. 1}, vol.~3 of
  {\em Oxford Lecture Series in Mathematics and its Applications}.
\newblock The Clarendon Press Oxford University Press, New York, 1996.
\newblock Incompressible models, Oxford Science Publications.

\bibitem{BerMaj}
{\sc Majda, A.~J., and Bertozzi, A.~L.}
\newblock {\em Vorticity and incompressible flow}, vol.~27 of {\em Cambridge
  Texts in Applied Mathematics}.
\newblock Cambridge University Press, Cambridge, 2002.

\bibitem{MatNish}
{\sc Matsumura, A., and Nishida, T.}
\newblock Initial-boundary value problems for the equations of motion of
  compressible viscous and heat-conductive fluids.
\newblock {\em Comm. Math. Phys. 89}, 4 (1983), 445--464.

\bibitem{Mo1}
{\sc Mouhot, C.}
\newblock Explicit coercivity estimates for the linearized {B}oltzmann and
  {L}andau operators.
\newblock {\em Comm. Partial Differential Equations 31}, 7-9 (2006),
  1321--1348.

\bibitem{Mo}
{\sc Mouhot, C.}
\newblock Quelques r\'esultats d'hypocoercitivit\'e en th\'eorie cin\'etique
  collisionnelle.
\newblock In {\em S\'eminaire: \'{E}quations aux {D}\'eriv\'ees {P}artielles.
  2007--2008}, S\'emin. \'Equ. D\'eriv. Partielles. \'Ecole Polytech.,
  Palaiseau, 2009, pp.~Exp. No. XVI, 21.

\bibitem{MN}
{\sc Mouhot, C., and Neumann, L.}
\newblock Quantitative perturbative study of convergence to equilibrium for
  collisional kinetic models in the torus.
\newblock {\em Nonlinearity 19}, 4 (2006), 969--998.

\bibitem{MS}
{\sc Mouhot, C., and Strain, R.~M.}
\newblock Spectral gap and coercivity estimates for linearized {B}oltzmann
  collision operators without angular cutoff.
\newblock {\em J. Math. Pures Appl. (9) 87}, 5 (2007), 515--535.

\bibitem{NS}
{\sc Neumann, L., and Schmeiser, C.}
\newblock Convergence to global equilibrium for a kinetic fermion model.
\newblock {\em SIAM J. Math. Anal. 36}, 5 (2005), 1652--1663 (electronic).

\bibitem{NI}
{\sc Nishida, T., and Imai, K.}
\newblock Global solutions to the initial value problem for the nonlinear
  {B}oltzmann equation.
\newblock {\em Publ. Res. Inst. Math. Sci. 12}, 1 (1976/77), 229--239.

\bibitem{NovStra}
{\sc Novotn{\'y}, A., and Stra{\v{s}}kraba, I.}
\newblock Convergence to equilibria for compressible {N}avier-{S}tokes
  equations with large data.
\newblock {\em Ann. Mat. Pura Appl. (4) 179\/} (2001), 263--287.

\bibitem{Sa}
{\sc Saint-Raymond, L.}
\newblock {\em Hydrodynamic limits of the {B}oltzmann equation}, vol.~1971 of
  {\em Lecture Notes in Mathematics}.
\newblock Springer-Verlag, Berlin, 2009.

\bibitem{Se1}
{\sc Serrin, J.}
\newblock On the interior regularity of weak solutions of the {N}avier-{S}tokes
  equations.
\newblock {\em Arch. Rational Mech. Anal. 9\/} (1962), 187--195.

\bibitem{Se2}
{\sc Serrin, J.}
\newblock The initial value problem for the {N}avier-{S}tokes equations.
\newblock In {\em Nonlinear {P}roblems ({P}roc. {S}ympos., {M}adison, {W}is.,
  1962)}. Univ. of Wisconsin Press, Madison, Wis., 1963, pp.~69--98.

\bibitem{Sone}
{\sc Sone, Y.}
\newblock Asymptotic theory of flow of rarefied gas over a smooth boundary ii.
\newblock {\em Rarefied Gas Dynamics 2\/} (1971), 737--749.

\bibitem{Te}
{\sc Temam, R.}
\newblock {\em Navier-{S}tokes equations}.
\newblock AMS Chelsea Publishing, Providence, RI, 2001.
\newblock Theory and numerical analysis, Reprint of the 1984 edition.

\bibitem{Uk}
{\sc Ukai, S.}
\newblock On the existence of global solutions of mixed problem for non-linear
  {B}oltzmann equation.
\newblock {\em Proc. Japan Acad. 50\/} (1974), 179--184.

\bibitem{Uk1}
{\sc Ukai, S.}
\newblock The incompressible limit and the initial layer of the compressible
  {E}uler equation.
\newblock {\em J. Math. Kyoto Univ. 26}, 2 (1986), 323--331.

\bibitem{Vi}
{\sc Villani, C.}
\newblock Limites hydrodynamiques de l'\'equation de {B}oltzmann (d'apr\`es
  {C}. {B}ardos, {F}. {G}olse, {C}. {D}.\ {L}evermore, {P}.-{L}.\ {L}ions,
  {N}.\ {M}asmoudi, {L}. {S}aint-{R}aymond).
\newblock {\em Ast\'erisque}, 282 (2002), Exp. No. 893, ix, 365--405.
\newblock S{\'e}minaire Bourbaki, Vol. 2000/2001.

\bibitem{Vi2}
{\sc Villani, C.}
\newblock A review of mathematical topics in collisional kinetic theory.
\newblock In {\em Handbook of mathematical fluid dynamics, {V}ol. {I}}.
  North-Holland, Amsterdam, 2002, pp.~71--305.

\bibitem{Yu}
{\sc Yu, H.}
\newblock Global classical solutions of the {B}oltzmann equation near
  {M}axwellians.
\newblock {\em Acta Math. Sci. Ser. B Engl. Ed. 26}, 3 (2006), 491--501.

\end{thebibliography}


\signmb

\end{document}